\newcommand{\Tcal}{\mathcal{T}}
\newcommand{\N}{\mathbb{N}}
\newcommand{\R}{\mathbb{R}}
\newcommand{\bbP}{\mathbb{P}}
\newcommand{\eps}{\varepsilon}
\newcommand{\sconc}{\odot}
\let\emptyset\varnothing
\let\epsilon\varepsilon
\newtheorem{theorem}{Theorem}[section]
\newtheorem*{theorem*}{Theorem}
\newtheorem{remark}[theorem]{Remark}
\newtheorem{definition}[theorem]{Definition}
\newtheorem{proposition}[theorem]{Proposition}
\newtheorem{lemma}[theorem]{Lemma}
\newtheorem{corollary}[theorem]{Corollary}
\newtheorem*{remark*}{Remark}
\newtheorem*{proposition*}{Proposition}
\numberwithin{equation}{section}
\DeclareMathOperator*{\interior}{int\,}
\definecolor{purple}{rgb}{0.6, 0.0, 1.0}
\newcommand{\rev}[1]{#1}
\definecolor{dkgreen}{rgb}{0.0, 0.5, 0.0}
\newcommand{\normc}[2][]{\left\| #2 \right\|_{#1}} 
\newcommand{\snormc}[2][]{\left| #2 \right|_{#1}} 
\DeclareMathOperator{\FPar}{FP}
\DeclareMathOperator{\Par}{P}
\DeclareMathOperator{\depth}{L}
\DeclareMathOperator{\size}{M}
\DeclareMathOperator{\Realiz}{R}
\newcommand{\Dop}{{\mathcal{L}}} 
\newcommand{\Bop}{{\mathcal{B}}} 
\newtheorem{setting}[theorem]{Setting}
\newcommand{\bbc}{\boldsymbol c} 
\DeclareMathOperator{\Id}{Id}
\DeclareMathOperator{\spn}{span}
\DeclareMathOperator{\supp}{supp}
\DeclareMathOperator{\dist}{dist}
\DeclareMathOperator{\vvec}{vec}
\DeclareMathOperator{\ptch}{patch}
\newcommand{\Trace}{T}
\newcommand{\lnorm}{\left\vert\kern-0.25ex\left\vert\kern-0.25ex\left\vert}
\newcommand{\rnorm}{\right\vert\kern-0.25ex\right\vert\kern-0.25ex\right\vert}
\newcommand{\hu}{\hat{u}}
\newcommand{\hv}{\hat{v}}
\newcommand{\hK}{{\widehat{K}}}
\newcommand{\Hmix}{H_{\mathrm{mix}}}
\newcommand{\Wmix}{W_{\mathrm{mix}}}
\newcommand{\hPi}{{\widehat{\Pi}}}
\newcommand{\hpi}{{\widehat{\pi}}}
\newcommand{\tu}{{\tilde{u}}}
\newcommand{\tv}{{\tilde{v}}}
\newcommand{\tgamma}{{\widetilde{\gamma}}}
\newcommand{\ugamma}{{\underline{\gamma}}}
\newcommand{\utgamma}{{\underline{\tgamma}}}
\newcommand{\xpar}{x_\parallel}
\newcommand{\hpar}{h_\parallel}
\newcommand{\hperp}{h_\bot}
\newcommand{\hperpone}{h_{\bot,1}}
\newcommand{\hperptwo}{h_{\bot,2}}
\newcommand{\xperpone}{x_{\bot,1}}
\newcommand{\xperptwo}{x_{\bot,2}}
\newcommand{\pipar}{\pi_\parallel}
\newcommand{\piperp}{\pi_\bot}
\newcommand{\alphapar}{{\alpha_\parallel}}
\newcommand{\alphaperp}{{\alpha_\bot}}
\newcommand{\alphaperpm}{{|\alpha_\bot|}}
\newcommand{\alphaparm}{{|\alpha_\parallel|}}
\newcommand{\dalpha}{\partial^\alpha}
\newcommand{\dalphaperp}{\partial^{\alphaperp}}
\newcommand{\dalphapar}{\partial^{\alphapar}}
\newcommand{\dpar}{\partial_{\parallel}}
\newcommand{\dperpone}{\partial_{\bot,1}}
\newcommand{\dperptwo}{\partial_{\bot,2}}
\newcommand{\alpham}{{|\alpha|}}
\newcommand{\alphainf}{{|\alpha|_\infty}}
\newcommand{\tc}{\widetilde{c}}
\newcommand{\fK}{{\mathfrak{K}}}
\newcommand{\cN}{\mathcal{N}}
\newcommand{\cG}{\mathcal{G}}
\newcommand{\tcG}{\widetilde{\mathcal{G}}}
\newcommand{\Eset}{\mathcal{E}}
\newcommand{\Cset}{\mathcal{C}}
\newcommand{\cT}{\mathcal{T}}
\newcommand{\cJ}{\mathcal{J}}
\newcommand{\bigtimesdim}{\bigtimes_{i=1}^d}
\newcommand{\Xhpelldim}{X_{\mathsf{hp}, d}^{\ell, p}}
\newcommand{\tXhpelldim}{\widetilde{X}_{\mathsf{hp}, d}^{\ell, p}}
\newcommand{\XhpellCpdim}{X_{\mathsf{hp}, d}^{\ell, c_p\ell}}
\newcommand{\Xhpellone}{X_{\mathsf{hp}, 1}^{\ell, p}}
\newcommand{\tXhpellone}{\widetilde{X}_{\mathsf{hp}, 1}^{\ell, p}}
\newcommand{\Pihpelldim}{\Pi_{\mathsf{hp}, d}^{\ell, p}}
\newcommand{\PihpellCpdim}{\Pi_{\mathsf{hp}, d}^{\ell, c_p\ell}}
\newcommand{\tPihpelldim}{\widetilde{\Pi}_{\mathsf{hp}, d}^{\ell, p}}
\newcommand{\tPihpellCpdim}{\widetilde{\Pi}_{\mathsf{hp}, d}^{\ell, c_p\ell}}
\newcommand{\pihpell}{\pi_{\mathsf{hp}}^{\ell, p}}
\newcommand{\tpihpell}{\widetilde{\pi}_{\mathsf{hp}}^{\ell, p}}
\newcommand{\Cappxi}{C_{\mathrm{appx}1}}
\newcommand{\Cappxii}{C_{\mathrm{appx}2}}
\newcommand{\Ndof}{N_{\mathrm{dof}}}
\newcommand{\hx}{\hat{x}}
\newcommand{\fbf}{\bm{f}}
\newcommand{\hp}{\mathsf{hp}}
\newcommand{\hepsilon}{{\widehat{\epsilon}}}
\newcommand{\hvarepsilon}{{\widehat{\varepsilon}}}
\newcommand{\Noned}{N_{\mathrm{1d}}}
\newcommand{\tNoned}{\widetilde{N}_{\mathrm{1d}}}
\newcommand{\Phibasis}{\Phi_{\mathrm{basis}}}
\newcommand{\sumis}{\sum_{i_1, \dots, i_d=1}^{\Noned}}
\newcommand{\is}{{i_1\dots i_d}}
\newcommand{\ps}{{p_1\dots p_d}}
\newcommand{\iscomma}{{i_1,\dots, i_d}}
\newcommand{\corn}{c}
\newcommand{\Omegac}{\Omega_c}
\newcommand{\Omegace}{\Omega_{ce}}
\newcommand{\Omegae}{\Omega_{e}}
\newcommand{\Qc}{Q_c}
\newcommand{\Qce}{Q_{ce}}
\newcommand{\oned}{{\mathrm{1d}}}
\newcommand{\Nint}{N_{\mathrm{int}}}
\newcommand{\tNint}{\widetilde{N}_{\mathrm{int}}}
\newcommand{\sigmaratio}{\tau_\sigma}
\newcommand{\epsprod}{\epsilon_{\times}}
\newcommand{\Mprod}{M_{\times}}
\newcommand{\range}[1]{\{1, \dots, #1\}}
\newcommand{\hcN}{\widehat{\mathcal{N}}}
\newcommand{\distomega}{\dist_{\Omega}}
\newcommand{\hOmega}{\widehat{\Omega}}
\newcommand{\tOmega}{\widetilde{\Omega}}
\newcommand{\cTOmega}{{\cT_\Omega}}
\newcommand{\cNOmega}{{\cN_\Omega}}
\title{Exponential ReLU Neural Network Approximation Rates\\ for Point and Edge Singularities}
\author{\hspace{-1cm}Carlo Marcati\thanks{Seminar for Applied Mathematics, ETH Z\"urich, CH8092 Z\"urich, Switzerland,\newline \texttt{email:} \texttt{\{carlo.marcati, joost.opschoor, christoph.schwab\}@sam.math.ethz.ch}}
\and Joost A. A. Opschoor\footnotemark[1]
\and Philipp C. Petersen\thanks{Faculty of Mathematics and Research Platform Data Science @ Uni Vienna,
  University of Vienna, 1090, Vienna, Austria,\newline \texttt{email:} \texttt{Philipp.Petersen@univie.ac.at}}
\and Christoph Schwab\footnotemark[1]\hspace{-1cm}
}
\date{\today}
\begin{document}
\maketitle

\abstract{
We prove exponential expressivity with stable ReLU Neural Networks (ReLU NNs)
in $H^1(\Omega)$ for weighted analytic function classes
in certain polytopal domains $\Omega$, 
in space dimension $d=2,3$.
Functions in these classes are 
locally analytic on open subdomains $D\subset \Omega$,
but may exhibit isolated point singularities in the interior of $\Omega$ 
or
corner and edge singularities at the boundary $\partial \Omega$.
The exponential expression rate bounds proved here
imply uniform exponential expressivity 
by ReLU NNs of solution families 
for several elliptic boundary and eigenvalue problems with analytic data. 
The exponential approximation rates are 
shown to hold in space dimension $d = 2$ on 
Lipschitz polygons with straight sides, and 
in space dimension $d=3$ on 
Fichera-type polyhedral domains with plane faces.
The constructive proofs indicate in particular that 
NN depth and size increase poly-logarithmically 
with respect to the target NN approximation accuracy $\varepsilon>0$ in $H^1(\Omega)$.
The results cover in particular 
solution sets of linear, second order elliptic PDEs
with analytic data and certain nonlinear elliptic eigenvalue problems with 
analytic nonlinearities and singular, weighted analytic potentials 
as arise in electron structure models.
In the latter case, the functions correspond to electron densities
that exhibit isolated point singularities at the positions of the nuclei. 
Our findings provide in particular 
mathematical foundation of recently reported, 
successful uses of deep neural networks in 
variational electron structure algorithms.

\medskip
\noindent
\textbf{Keywords:} 
Neural networks, finite element methods, exponential convergence, analytic regularity, singularities

\smallskip 
\noindent
\textbf{Subject Classification:} 35Q40, 41A25, 41A46, 65N30
}

{\small
\tableofcontents
}%
\section{Introduction}
\label{sec:Intro}
The application of neural networks (NNs) as approximation architecture
in numerical solution methods of partial differential equations (PDEs),
possibly on high-dimensional parameter- and state-spaces,
has attracted significant and increasing attention in recent years.
We mention only
\cite{SirSpil2018,EDeepRitz,RaisKarnHidPhys,RaisKarnPINN,sheng2020pfnn}
and the references therein.
In these works,
the solution of elliptic and parabolic boundary value problems
is approximated by NNs which are found by
minimization of a residual of the NN in the PDE.

A necessary condition for the performance of
the mentioned NN-based numerical approximation methods
is a high rate of approximation which is to hold
uniformly over the solution set associated with the PDE under consideration.
For elliptic boundary and eigenvalue problems,
the function classes that weak solutions of the problems belong to 
are well known.
Moreover, in many cases, representation systems such as splines or polynomials that
achieve optimal linear or nonlinear approximation rates for
functions belonging to these function classes have been identified.
For functions belonging to a Sobolev or Besov type smoothness space
of finite differentiation order
such as continuously differentiable 
or Sobolev-regular functions on a bounded domain,
upper bounds for the approximation rate by 
NNs were established for example in 
\cite{YAROTSKY2017103,guhring2019error,yarotsky2018optimal,lu2020deep,suzuki2018adaptivity}.
Here, we only mentioned results that use the ReLU activation function.
Besides, for PDEs, the solutions of which have a particular structure,
approximation rates of the solution that go beyond classical smoothness-based results can be achieved,
such as in \cite{EGJS2018,schwab2017deep,laakmann2020efficient,berner2018analysis,jentzen2018proof}.
Again, we confine the list to publications with approximation rates for
NNs with the ReLU activation function (referred to as ReLU NNs below).

In the present paper, we analyze approximation rates provided by ReLU NNs
for solution classes of linear and nonlinear elliptic source and eigenvalue problems
on polygonal and polyhedral domains.
Mathematical results on weighted analytic regularity
\cite{GuoBab1,GuoBab2,GuoBab3,GuoBabCurv,BDC85,GuoScStokes,Mazya2010,CoDaNi12,MadMarc2019,MS19_2743}
imply that these classes consist of functions that are
\emph{analytic with possible corner, edge, and corner-edge singularities}.

Our analysis provides, for the aforementioned functions, 
approximation errors in Sobolev norms that decay exponentially
in terms of the number of parameters $M$ of the ReLU NNs.
\subsection{Contribution}
\label{S:Contrib}
The principal contribution of this work is threefold:
\begin{enumerate}
\item
We prove, in Theorem \ref{th:ReLUapprox},
a general result on the approximation by ReLU NNs 
of weighted analytic function classes on $Q \coloneqq (0,1)^d$, 
where $d = 2,3$.
The analytic regularity of solutions is quantified via
countably normed, analytic classes, 
based on weighted Sobolev spaces of Kondrat'ev type in $Q$,
which admit corner and, in space dimension $d=3$, also edge singularities.
Such classes were introduced, e.g.,
in \cite{BDC85,GuoBabCurv,GuoBab3,GuoBab1,GuoBab2,CoDaNi12} and in the references there.
We prove exponential expression rates by ReLU NNs
in the sense that for a number $M$ of free parameters of the NNs,
the approximation error is bounded, in the $H^1$-norm,
by $C\exp(-bM^{1/(2d+1)})$ for constants $b,C > 0$.
\item
Based on the ReLU NN approximation rate bound of Theorem \ref{th:ReLUapprox},
we establish, in Section \ref{sec:applications}, approximation results for
solutions of different types of PDEs by NNs with ReLU activation.
Concretely, in Section \ref{sec:NonlSchrEq}, we study the reapproximation of
solutions of nonlinear Schr\"odinger equations with singular potentials in space dimension $d=2,3$.
We prove that for solutions which are contained in weighted, analytic classes in $\Omega$, 
ReLU NNs (whose realizations are continuous, piecewise affine)
with at most $M$ free parameters yield an approximation 
with accuracy of the order $\exp(-bM^{1/(2d+1)})$ for some $b>0$. 
Importantly, this convergence is in the $H^1(\Omega)$-norm.
In Section \ref{sec:HF}, 
we establish the same exponential approximation rates for the eigenstates
of the Hartree-Fock model with singular potential in $\R^3$.
This result constitutes the first, to our knowledge, mathematical underpinning
of the recently reported, high efficiency of
various NN-based approaches in variational electron structure computations,
e.g., \cite{pfau2019abinitio,hermann2019deep,ESchroed2019}.

In Section \ref{sec:polygonal},
we demonstrate the same approximation rates also for
elliptic boundary value problems
with analytic coefficients and analytic right-hand sides, in plane, polygonal domains $\Omega$.
The approximation error of the NNs is, again, bound in the $H^1(\Omega)$-norm.
We also infer an exponential NN expression rate bound for corresponding traces,
in $H^{1/2}(\partial\Omega)$ and for viscous, incompressible flow.

Finally, in Section \ref{sec:EllPDEFichera},
we obtain the same asymptotic exponential rates for the
approximation of solutions to elliptic boundary value problems,
with analytic data,
on so-called Fichera-type domains $\Omega\subset {\mathbb R}^3$
(being, roughly speaking, finite unions of tensorized hexahedra).
These solutions exhibit corner, edge and corner-edge singularities.
\item
The exponential approximation rates of the ReLU NNs established
here are based on emulating corresponding variable grid and degree (``$hp$'')
piecewise polynomial approximations.
In particular, our construction comprises 
tensor product $hp$-approximations on
Cartesian products of geometric partitions of intervals.
In Theorem \ref{prop:internal}, we establish
novel \emph{tensor product $hp$-approximation results}
for weighted analytic functions on $Q$ of the form
$\| u - u_{\hp} \|_{H^1(Q)} \leq C \exp(-b\sqrt[2d]{N})$ for $d=1,2,3$,
where $N$ is the number of degrees of freedom in the representation of $u_{\hp}$ and 
$C,b>0$ are independent of $N$ (but depend on $u$).
The geometric partitions employed in $Q$ and the architectures of the constructed ReLU NNs
are of tensor product structure, and generalize to space dimension $d>3$.

We note that $hp$-approximations based on non-tensor-product, geometric partitions
of $Q$ into hexahedra
have been studied before e.g. 
in \cite{SSS15_2016,SchSch2018} in space dimension $d=3$.
There, the rate of $\| u - u_{\hp} \|_{H^1(Q)} \lesssim \exp(-b\sqrt[5]{N})$ was found.
Being based on tensorization,
the present construction of exponentially convergent,
tensorized $hp$-approximations in Appendix \ref{sec:hp-analysis} does not invoke the rather
involved polynomial trace liftings in \cite{SSS15_2016,SchSch2018}, and is interesting in its
own right: the geometric and mathematical simplification comes
at the expense of a slightly smaller (still exponential) rate of approximation.
Moreover, we expect that this construction of $u_{\hp}$ will allow a
rather direct derivation of rank bounds for tensor structured function approximation of $u$ in $Q$,
generalizing results in \cite{KORS17_2264,KS18_2116} 
and extending \cite{MRS19_872} from point to edge and corner-edge singularities.
\end{enumerate}
\subsection{Neural network approximation of weighted analytic function classes}
\label{sec:ReapprNNs}
The proof strategy that yields the main result, 
Theorem \ref{th:ReLUapprox}, is as follows.
We first establish exponential approximation rates in
the Sobolev space $H^1$ for tensor-product, so-called
$hp$-finite elements for weighted analytic functions.
Then, 
we re-approximate the corresponding quasi-interpolants
by ReLU NNs.

The emulation of $hp$-finite element approximation by ReLU NNs
is fundamentally based on the approximate
multiplication network formalized in \cite{YAROTSKY2017103}.
Based on the approximate multiplication
operation and an extension thereof to errors measured with respect to
$W^{1,q}$-norms, for $q \in [1,\infty]$,
we established already in \cite{OPS19_811} a reapproximation theorem
of univariate splines of order $p\in \N$ on arbitrary meshes
with $N\in \N$ cells.
There,
we observed that there exists a NN that reapproximates
a variable-order, free-knot spline $u$ in the $H^1$-norm
up to an error of $\epsilon>0$ with a number of free parameters that scales logarithmically
in $\epsilon$ and $|u|_{H^1}$, linearly in $N$ and quadratically in $p$.
We recall this result in Proposition \ref{prop:relupwpolynom} below.

From this, it is apparent by the triangle inequality that, in univariate
approximation problems where $hp$-finite elements yield exponential
approximation rates, also ReLU NNs achieve exponential approximation rates,
(albeit with a possibly smaller exponent, because of the quadratic
dependence on $p$, see \cite[Theorem 5.12]{OPS19_811}).

The extension of this result to higher dimensions for high-order
finite elements that are built from univariate finite elements by tensorization
is based on the underlying compositionality of NNs.
Because of that, it is possible to compose a NN implementing
a multiplication of $d$ inputs with $d$ approximations of univariate finite elements.
We introduce a formal framework describing these operations in Section \ref{sec:ReLUCalc}.

We remark that for high-dimensional functions with a radial structure,
of which the univariate radial profile allows an exponentially convergent $hp$-approximation,
exponential convergence was obtained in \cite[Section 6]{OPS19_811}
by composing ReLU approximations of univariate splines with an
exponentially convergent approximation of the Euclidean norm,
obtaining exponential convergence without the curse of dimensionality.
\subsection{Outline}
\label{sec:outline}
The manuscript is structured as follows:
in Section~\ref{sec:setting},
in particular Section~\ref{sec:WgtSpcNonHomNrm},
we review the weighted function spaces which will be used to describe the weighted analytic function
classes in polytopes $\Omega$ that underlie our approximation results.
In Section~\ref{sec:hpTP},
we present an approximation result by tensor-product $hp$-finite elements for functions
from the weighted analytic class.
A proof of this result is provided in Appendix~\ref{sec:hp-analysis}.
In Section~\ref{sec:ReLUCalc} we review definitions of NNs and a ``ReLU calculus''
from \cite{EGJS2018,PETERSEN2018296} 
whose operations will be required in the ensuing NN approximation results.

In Section~\ref{sec:hpReapproxReLU},
we state and prove the key results of the present paper.
In Section \ref{sec:applications},
we illustrate our results by deducing novel NN expression rate bounds
for solution classes of
several concrete examples of elliptic boundary-value and eigenvalue problems
where solutions belong to the weighted analytic function classes
introduced in Section~\ref{sec:setting}.
Some of the more technical proofs of Section \ref{sec:applications} are
deferred to Appendix \ref{sec:proofs-appendix}.
In Section~\ref{sec:ConclExt}, we briefly recapitulate the principal mathematical
results of this paper and indicate possible consequences and further directions.

\section{Setting and functional spaces}
\label{sec:setting}
We start by recalling some general notation that will be used throughout the
paper. We also introduce some tools that are required to describe two and three dimensional
domains as well as the associated weighted Sobolev spaces.
\subsection{Notation}
\label{sec:Notat}
For $\alpha\in \mathbb{N}^d_0$, 
define
$\alpham \coloneqq \alpha_1+\dots+\alpha_d$ and $\alphainf \coloneqq \max\{\alpha_1, \dots, \alpha_d\}$. 
When we indicate a relation on $\alpham$ or $\alphainf$ in the subscript of a
sum, we mean the sum over all multi-indices that fulfill that relation:
e.g., for a $k\in \mathbb{N}_0$
\begin{equation*}
\sum_{\alpham \leq k} = \sum_{\alpha\in \mathbb{N}^d_0:\alpham\leq k}.
\end{equation*}

For a domain $\Omega\subset\mathbb{R}^d$, $k\in\mathbb{N}_0$ and for $1\leq p\leq \infty$, 
we indicate by $W^{k,p}(\Omega)$ the classical $L^p(\Omega)$-based Sobolev space of order $k$. 
We write $H^k(\Omega) = W^{k,2}(\Omega)$. 
We introduce the norms $\| \cdot \|_{\Wmix^{1,p}(\Omega)}$ as
\begin{equation*}
\| v\|_{\Wmix^{1,p}(\Omega)}^{p} \coloneqq \sum_{\alphainf\leq 1} \| \dalpha v\|^p_{L^p(\Omega)},
\end{equation*}
with associated spaces
\begin{equation*}
\Wmix^{1,p}(\Omega) \coloneqq \left\{ v\in L^p(\Omega): \|v\|_{\Wmix^{1,p}(\Omega)} < \infty \right\}.
\end{equation*}
We denote $\Hmix^1(\Omega) = \Wmix^{1,2}(\Omega)$. 
For $\Omega = I_1\times\dots\times I_d$, 
with bounded intervals $I_j\subset\mathbb{R}$, $j=1, \dots, d$, 
$\Hmix^{1}(\Omega) = H^1(I_1)\otimes\dots\otimes H^1(I_d)$
with Hilbertian tensor products.
Throughout, 
$C$ will denote a generic positive constant whose value may change at each appearance, 
even within an equation. 

The $\ell^p$ norm, $1\leq p\leq \infty$, on $\R^n$ is denoted by $\normc[p]{x}$. The number of nonzero entries of a vector or matrix $x$ is denoted by $\|x\|_0$.

\paragraph{Three dimensional domain.}

Let $\Omega \subset \mathbb{R}^3$ be a bounded, polygonal/polyhedral domain. 
Let $\Cset $ denote a set of isolated points, situated either at the corners of $\Omega$
or in the interior of $\Omega$ (that we refer to as the singular corners
in either case, for simplicity), 
and let $\Eset$ be a subset of the edges of $\Omega$ (the singular edges).
Furthermore, denote by $\Eset_\corn \subset \Eset$ 
the set of singular edges abutting at a corner $\corn$.
For each $\corn\in \Cset$ and each $e\in \Eset$, 
we introduce the following weights:
\begin{equation*}
  r_c(x) \coloneqq |x-\corn| = \dist(x, \corn),\qquad r_e(x) \coloneqq \dist(x, e),\qquad \rho_{ce}(x) \coloneqq \frac{r_e(x)}{r_c(x)}\quad \text{ for }x \in \Omega.
\end{equation*}
For $\varepsilon>0$, we define edge-, corner-, and corner-edge neighborhoods: 
\begin{align*}
\Omegae^\varepsilon &\coloneqq \bigg\{ x\in \Omega: r_e(x)< \varepsilon \text{ and }r_c(x)>\varepsilon, \forall \corn\in\Cset\bigg\},\\
  \Omegac^\varepsilon &\coloneqq \bigg\{ x\in \Omega: r_c(x)< \varepsilon \text{ and }\rho_{ce}(x)>\varepsilon, \forall e\in \Eset\bigg\},\quad
    \Omegace^\varepsilon \coloneqq \bigg\{ x\in \Omega: r_c(x)< \varepsilon \text{ and }\rho_{ce}(x)<\varepsilon\bigg\}.
\end{align*}
We fix a value $\hvarepsilon>0$ small enough so that $\Omega_c^{\hvarepsilon}\cap \Omega^{\hvarepsilon}_{c'} = \emptyset$
for all $\corn\neq \corn'\in \Cset$ and
$\Qce^\hvarepsilon \cap \Omega^\hvarepsilon_{ce'} = \Omega^\hvarepsilon_e \cap \Omega^\hvarepsilon_{e'}=\emptyset$ 
for all singular edges $e\neq e'$. In the sequel, we 
omit the dependence of the subdomains on $\hvarepsilon$. 
Let also 
\begin{equation*}
  \Omega_{\Cset}\coloneqq \bigcup_{c\in\Cset}\Omega_c,\qquad
  \Omega_{\Eset}\coloneqq\bigcup_{e\in\Eset}\Omega_e,\qquad
  \Omega_{\Cset\Eset} \coloneqq \bigcup_{c\in\Cset}\bigcup_{e\in\Eset_c}\Omega_{ce},
\end{equation*}
and
\begin{equation*}
  \Omega_0 \coloneqq \Omega\setminus\overline{(\Omega_{\Cset}\cup \Omega_{\Eset}\cup \Omega_{\Cset\Eset})}.
\end{equation*}
In each subdomain $\Omega_{ce}$ and $\Omega_e$, for any multi-index $\alpha\in
\mathbb{N}_0^3$, we denote by $\alphapar$ the multi-index whose component
in the coordinate direction parallel to $e$ is equal to the component of
$\alpha$ in the same direction, and which is zero in every other component.
Moreover, we set $\alphaperp \coloneqq \alpha -\alphapar$.

\paragraph{Two dimensional domain.}
Let $\Omega \subset \mathbb{R}^2$ be a polygon. We adopt the convention that
$\Eset \coloneqq \emptyset$.
For $\corn\in\Cset$, we define
\begin{equation*}
  \Qc^\varepsilon \coloneqq \bigg\{ x\in \Omega: r_c(x)< \varepsilon \bigg\} \;.
\end{equation*}
As in the three dimensional case, we fix a sufficiently small 
$\hvarepsilon>0$ so that $\Omega^{\hvarepsilon}_{c}\cap \Omega^{\hvarepsilon}_{c'}=\emptyset$ for $\corn\neq \corn'\in
\Cset$ and write $\Omegac = \Omegac^\hvarepsilon$. 
Furthermore, $\Omega_{\Cset}$ is defined as for $d=3$, 
and $\Omega_0 \coloneqq \Omega\setminus \overline{\Omega_{\Cset}}$.

\subsection{Weighted spaces with nonhomogeneous norms}
\label{sec:WgtSpcNonHomNrm}
We introduce classes of weighted, analytic functions in space dimension $d = 3$,
as arise in analytic regularity theory for linear, elliptic boundary value problems
in polyhedra, in the particular form introduced in \cite{CoDaNi12}. 
There, the 
structure of the weights is in terms of Cartesian coordinates which is particularly
suited for the presently adopted, tensorized approximation architectures.

The definition of the corresponding classes when $d=2$ is analogous.
For a \emph{weight exponent vector}
$\ugamma = \{\gamma_c, \gamma_e, \, c\in \Cset, e\in \Eset\}$, 
we introduce the \emph{nonhomogeneous, weighted Sobolev norms}
\begin{align*}
\|v\|_{\cJ^{k}_\ugamma(\Omega)} \coloneqq 
\sum_{\alpham\leq k}\|\dalpha v\|_{L^2(\Omega_0)}
  & + \sum_{c\in\Cset}\sum_{\alpham\leq k}\|r_c^{(\alpham -\gamma_c)_+}\dalpha v \|_{L^2(\Omega_c)}\\
  & + \sum_{e\in \Eset}\sum_{\alpham\leq k}\|r_e^{(\alphaperpm -\gamma_e)_+}\dalpha v \|_{L^2(\Omega_e)}\\
  &+ \sum_{c\in\Cset}\sum_{e\in\Eset_c}
   \sum_{\alpham\leq k}\|r_c^{(\alpham -\gamma_c)_+}\rho_{ce}^{(\alphaperpm-\gamma_e)_+}\dalpha v\|_{L^2(\Omega_{ce})}
\end{align*}
where $(x)_+ = \max\{0, x\}$. 
Moreover, we define the associated function space by
\begin{equation*}
  \cJ^k_\ugamma (\Omega; \Cset, \Eset) \coloneqq \bigg\{
  v\in L^2(\Omega): \| v\|_{\cJ^k_\ugamma(\Omega)}< \infty\bigg\}.
  \end{equation*}
Furthermore,
\begin{equation*}
  \cJ^\infty_\ugamma (\Omega;\Cset, \Eset) = \bigcap_{k\in \mathbb{N}_0} \cJ^k_\ugamma(\Omega;\Cset, \Eset).
\end{equation*}
For $A, C>0$, we define the space of weighted analytic functions with nonhomogeneous norm as 
\begin{equation}
    \label{eq:analytic}
    \begin{aligned}
    \cJ^{\varpi}_\ugamma(\Omega;\Cset, \Eset;C, A) \coloneqq \bigg\{v\in \cJ^\infty_\ugamma(\Omega;\Cset, \Eset):
{}&\sum_{\alpham=k}\|\dalpha v\|_{L^2(\Omega_0)}\leq CA^kk!,\\ 
  &\sum_{\alpham=k}\|r_c^{(\alpham -\gamma_c)_+}\dalpha v \|_{L^2(\Omega_c)}\leq CA^kk!\quad \forall c\in \Cset,\\
  &\sum_{\alpham=k}\|r_e^{(\alphaperpm -\gamma_e)_+}\dalpha v \|_{L^2(\Omega_e)}\leq CA^kk!\quad \forall e\in \Eset,\\
  &\sum_{\alpham=k}\|r_c^{(\alpham -\gamma_c)_+}\rho_{ce}^{(\alphaperpm-\gamma_e)_+}\dalpha v\|_{L^2(\Omega_{ce})}\leq CA^kk!\quad \\ 
  &\forall c\in \Cset\text{ and }\forall e\in \Eset_c,
  \forall k\in \mathbb{N}_0
  \bigg\}.
    \end{aligned}
  \end{equation}
  Finally, we denote
  \begin{equation*}
    \cJ^\varpi_\ugamma(\Omega;\Cset, \Eset) \coloneqq \bigcup_{C, A>0} \cJ^\varpi_\ugamma(\Omega;\Cset, \Eset; C, A).
  \end{equation*}

\subsection{Approximation of weighted analytic functions on tensor product geometric meshes}
\label{sec:hpTP}
The approximation result of weighted analytic functions via NNs that we present below is based on emulating an approximation strategy of tensor product $hp$-finite elements.
In this section, we present this $hp$-finite element approximation. Let $I \subset \R$ be an interval. 
A \emph{partition of $I$ into $N \in \N$ intervals} is a set $\cG$ such that $|\cG|= N$, 
all elements of $\cG$ are disjoint, connected, and open subsets of $I$, 
and 
$$
\bigcup_{U \in \cG} \overline{U} = \overline{I}. 
$$
We denote, for all $p\in \mathbb{N}_0$, by
$\mathbb{Q}_p(\cG)$ the piecewise polynomials of degree $p$ on $\cG$.

One specific partition of $I= [0,1]$ is given by the 
\emph{one dimensional geometrically graded grid}, 
which for $\sigma\in (0, 1/2]$ and $\ell\in \mathbb{N}$, is given by
\begin{equation} \label{eq:1dmesh}
\cG^\ell_{1} \coloneqq \left\{J^\ell_k, \, k=0, \dots, \ell\right\},\quad \text{where }  
J_0^\ell \coloneqq (0, \sigma^\ell)\quad\text{and} \quad J_k^{\ell} \coloneqq (\sigma^{\ell-k+1}, \sigma^{\ell-k}), \, k=1, \dots, \ell.
\end{equation}
\begin{theorem}\label{thm:Interface}
Let $d \in \{2,3\}$ and $Q \coloneqq (0,1)^d$. Let $\Cset =\{\corn\}$ where $\corn$ is
  one of the corners of $Q$ and
let $\Eset = \Eset_\corn$ contain the edges adjacent to $c$ when $d=3$,
$\Eset=\emptyset$ when $d=2$. 
Further assume given constants $C_f, A_f>0$, 
and  
\begin{alignat*}{3}
&\ugamma = \{\gamma_\corn: \corn\in \Cset\}, &&\text{with } \gamma_\corn>1,\;
\text{for all } \corn\in\Cset &&\text{ if } d = 2,\\ %
&\ugamma = \{\gamma_\corn, \gamma_e: \corn\in \Cset, e\in \Eset\}, \quad&&\text{with
} \gamma_\corn>3/2\text{ and } \gamma_e>1,\; \text{for all }\corn\in\Cset\text{
  and }e\in \Eset\quad &&\text{ if } d = 3. %
\end{alignat*}
 Then, there exist $C_p>0$, $C_L>0$ such that, for every $0< \epsilon<1$, 
 there exist $p, L \in \N$ with
 \begin{equation*}
  p \leq C_p(1+\left|\log(\epsilon)\right|),\quad L \leq C_L (1+\left|\log(\epsilon)\right|), 
 \end{equation*}
 so that there exist piecewise polynomials
 \begin{equation*}
 v_{i}\in \mathbb{Q}_p(\cG^L_1)\cap H^1(I),\qquad   i=1, \dots, \Noned,
 \end{equation*}
 with $\Noned = (L+1)p + 1$, and, for all $f\in \cJ^{\varpi}_\ugamma(Q;\Cset, \Eset;C_f,A_f)$
there exists a $d$-dimensional array of coefficients
\[ 
	c  = \left\{c_{\is}: (\iscomma) \in \{1, \dots, \Noned\}^d\right\}
 \]
 such that
\begin{enumerate}
\item \label{item:vli}For every $i = 1, \dots \Noned$,
    $\supp(v_{i})$ intersects either a single interval or 
   two neighboring subintervals of $\cG^L_1$. 
  Furthermore, there exist constants $C_v$, $b_v$ depending only on $C_f$,
  $A_f$, $\sigma$ such that
  \begin{equation}
    \label{eq:vli}
  \|v_{ i}\|_\infty \leq 1, \qquad  \|v_{i}\|_{H^1(I)} \leq  C_v \epsilon^{-b_v}, \qquad  \forall i=1, \dots, \Noned.
  \end{equation}
  \item\label{item:appx-eps} There holds
  \begin{equation}
    \label{eq:appx-eps}
    \|f - \sum_{\iscomma = 1}^{\Noned} c_{\is} \phi_{\is}\|_{H^1(Q)} \leq \epsilon \qquad \text{with}\qquad\phi_{\is} = \bigotimes_{j=1}^d v_{ i_j} ,\,\forall \iscomma=1, \dots, \Noned.
  \end{equation}
    \item\label{item:c}  $\|c\|_\infty \leq C_2 (1+\left| \log(\epsilon) \right|)^d$ and $\|c\|_1 \leq C_c (1+\left| \log(\epsilon) \right|)^{2d}$, for $C_2, C_c>0$ independent of $p$, $L$, $\epsilon$.
\end{enumerate}
\end{theorem}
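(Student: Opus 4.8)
The plan is to reduce the $d$-dimensional statement to one-dimensional $hp$-approximation on the geometric grid $\cG^L_1$ via tensorization, using the weighted analytic regularity encoded in $\cJ^\varpi_\ugamma$. First I would recall (or cite from Appendix~\ref{sec:hp-analysis}, i.e.\ Theorem~\ref{prop:internal}) the univariate $hp$-approximation result: on $[0,1]$ with the geometrically graded grid $\cG^L_1$ with $L\sim|\log\epsilon|$ levels and polynomial degree $p\sim|\log\epsilon|$, weighted analytic functions with a corner singularity at $0$ are approximated in $H^1$ to accuracy $\epsilon$. The degrees of freedom of $\mathbb{Q}_p(\cG^L_1)\cap H^1(I)$ number exactly $\Noned=(L+1)p+1$, and I would fix once and for all a basis $\{v_i\}_{i=1}^{\Noned}$ of this space consisting of the standard hat functions at the grid nodes together with (integrated-Legendre or Babuška–Szabó) internal bubble functions on each subinterval; these are locally supported (item~\ref{item:vli}, support meeting one or two cells), can be normalized so that $\|v_i\|_\infty\le 1$, and have $H^1$-norm bounded by the inverse of the smallest mesh width $\sim\sigma^L\sim\epsilon^{C}$, giving the bound $C_v\epsilon^{-b_v}$ in \eqref{eq:vli}.

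Second, for the tensor-product approximation I would invoke the tensor structure of the weighted analytic class: the key point is that $\cJ^\varpi_\ugamma(Q;\Cset,\Eset)$ with a single corner (and, for $d=3$, its abutting edges) embeds into an anisotropic analytic class in each coordinate, so that the tensor-product interpolation operator $\Pi = \pi_1\otimes\cdots\otimes\pi_d$, where each $\pi_j$ is the univariate $hp$-quasi-interpolant onto $\spn\{v_i\}$, is bounded on $\Hmix^1(Q)=H^1(I)\otimes\cdots\otimes H^1(I)$ and reproduces the tensor-product polynomial space. A standard telescoping/triangle-inequality argument $\|f-\Pi f\|_{H^1(Q)}\le\sum_{j=1}^d\|(I-\pi_j)$ applied in the $j$-th variable with the other variables interpolated$\|$ then reduces the error to a sum of $d$ univariate errors, each controlled by the one-dimensional estimate; this is exactly the content of Theorem~\ref{prop:internal}, from which \eqref{eq:appx-eps} follows with $c_\is$ the coefficients of $\Pi f$ in the tensor basis $\phi_\is=\bigotimes_j v_{i_j}$. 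Matching $L,p\sim|\log\epsilon|$ gives the stated bounds on $p$ and $L$ with constants $C_p,C_L$ depending only on $\sigma$, $C_f$, $A_f$, $\ugamma$.

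Third, the coefficient bounds in item~\ref{item:c}. Since the $v_i$ form a locally supported basis with $\|v_i\|_\infty\le 1$, the coefficient $c_\is$ of the interpolant at a node/bubble is, up to a fixed constant, a bounded linear combination of point values and local averages of $\Pi f$ (hence of $f$) on the supporting cells; because $f$ is in $H^1(Q)\hookrightarrow C(\overline Q)$ for $d=2$ and the weighted class still yields uniformly bounded local $hp$-quasi-interpolation data, one gets $\|c\|_\infty\le C_2(1+|\log\epsilon|)^d$ — the $(1+|\log\epsilon|)^d$ factor absorbing the at most $\sim p^d$ overlap of tensor bubbles at a point together with the $\ell^\infty$-to-coefficient transfer constants of the one-dimensional bubble basis (which grow at most polynomially in $p$, logarithmically in $\epsilon$). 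For the $\ell^1$-bound, $\|c\|_1\le\Ndof\,\|c\|_\infty$ with $\Ndof=\Noned^d\le C(1+|\log\epsilon|)^{2d}$ (since $\Noned=(L+1)p+1\lesssim(1+|\log\epsilon|)^2$), yielding $\|c\|_1\le C_c(1+|\log\epsilon|)^{2d}$ after adjusting constants — note this already gives the $2d$ power, and the $\|c\|_\infty$ factor only improves the constant.

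The main obstacle is the second step: establishing that the tensor-product $hp$-interpolant of a function from the \emph{nonhomogeneous}, weighted analytic class $\cJ^\varpi_\ugamma$ actually converges exponentially in $H^1(Q)$. The difficulty is that the corner/edge weights couple the radial and tangential behavior in a way that is not literally a tensor product of one-dimensional weighted classes, so one must verify that a function bounded in $\cJ^\varpi_\ugamma$ has, after the coordinate-wise decomposition $Q=\Omega_0\cup\Omega_c\cup\Omega_e\cup\Omega_{ce}$, anisotropic analytic estimates $\|\partial^\alpha f\|_{L^2}$ compatible with the univariate $hp$-theory on each subdomain — in particular that the mixed derivative $\partial_\parallel^{\alpha_\parallel}\partial_\perp^{\alpha_\perp}$ bounds needed for tensorized interpolation follow from the isotropic-in-$|\alpha|$ bounds in \eqref{eq:analytic}. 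This is precisely why the detailed verification is deferred to Appendix~\ref{sec:hp-analysis}; here I would structure the proof as: (i) quote the univariate building block, (ii) quote Theorem~\ref{prop:internal} for the tensorized $H^1$ error, (iii) choose the basis and read off \eqref{eq:vli} and item~\ref{item:c} by elementary counting.
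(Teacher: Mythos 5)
Your steps for items 1 and 2 match the paper's route: the same hat-plus-integrated-Legendre basis on the geometric mesh (giving $\|v_i\|_\infty\le 1$ and $\|v_i\|_{H^1}\lesssim\sigma^{-L/2}\lesssim\epsilon^{-b_v}$), and the same deferral of the tensorized weighted-analytic $hp$ error bound to the appendix (the paper invokes Proposition \ref{lemma:exp-conv} and Proposition \ref{prop:compactbasis} for the single patch $Q$). However, your argument for the $\ell^1$ bound in item \ref{item:c} has a genuine gap. You estimate $\|c\|_1\le \Ndof\,\|c\|_\infty$ with $\Ndof=\Noned^d\lesssim(1+|\log\epsilon|)^{2d}$ and claim this ``already gives the $2d$ power'' with the $\|c\|_\infty$ factor ``only improving the constant''. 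But $\|c\|_\infty$ is not a constant: by your own bound it is of order $(1+|\log\epsilon|)^{d}$, so the product is $(1+|\log\epsilon|)^{3d}$, which misses the claimed exponent $2d$. The paper obtains the sharper bound in Lemma \ref{lemma:cbound}, estimate \eqref{eq:c-2}: for a \emph{fixed} multi-index $\is$, the element-wise coefficients of internal modes are integrals of the mixed derivative $\partial_{x_1}\cdots\partial_{x_d}u$ against $L^\infty$-normalized Legendre polynomials over \emph{disjoint} elements, so summing over all elements costs only $\prod_{j}(2i_j-3)\,\|u\|_{\Wmix^{1,1}(Q)}$, with no factor of the number of elements; face, edge and nodal modes pick up at most a factor $(\ell+1)^{t_{\is}}$ via trace estimates. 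Only then does one sum over multi-indices, obtaining $\sum_{t=0}^d(\ell+1)^{t}(p+1)^{2(d-t)}\lesssim(1+|\log\epsilon|)^{2d}$ (Proposition \ref{prop:compactbasis}). Your crude product bound discards exactly this cancellation, and without it the hypothesis $\|c\|_1\lesssim(1+|\log\epsilon|)^{2d}$ fed into Theorem \ref{th:ReLU-hp} is not established.

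A secondary slip: you justify the $\ell^\infty$ coefficient bound via ``$f\in H^1(Q)\hookrightarrow C(\overline Q)$ for $d=2$'', but $H^1$ does not embed into $C^0$ in dimension two. The embedding actually used is $\cJ^d_\ugamma(Q;\Cset,\Eset)\hookrightarrow\Wmix^{1,1}(Q)\hookrightarrow L^\infty(Q)$ (Lemma \ref{lemma:W11J3}), which is where the hypotheses $\gamma_\corn>d/2$ and $\gamma_e>1$ enter; the factor $(1+|\log\epsilon|)^{d}$ in $\|c\|_\infty$ then comes from the Legendre normalization $\prod_j(2i_j-3)\lesssim p^d$ in \eqref{eq:c-1}, not from an overlap count of bubbles.
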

We present the proof in Subsection \ref{subsec:ProofOfInterface} after developing an appropriate framework of $hp$-approximation in Section \ref{sec:hp-analysis}.

\section{Basic ReLU neural network calculus}
\label{sec:ReLUCalc}
 
In the sequel,
we distinguish between a neural network, as a collection of weights, 
and the associated \emph{realization of the NN}.
This is a function that is determined through the weights and an activation function. 
In this paper, we only consider the so-called ReLU activation:
\begin{equation*}
\varrho: \R \to \R: x \mapsto \max\{0, x\}.
\end{equation*}
\begin{definition}[{\cite[Definition 2.1]{PETERSEN2018296}}] 
\label{def:NeuralNetworks}
Let $d, L\in \N$. 
A \emph{neural network $\Phi$ with input dimension $d$ and $L$ layers} 
is a sequence of matrix-vector tuples 
\[
    \Phi = \big((A_1,b_1),  (A_2,b_2),  \dots, (A_L, b_L)\big), 
\]
where $N_0 \coloneqq d$ and $N_1, \dots, N_{L} \in \N$, and 
where $A_\ell \in \R^{N_\ell\times N_{\ell-1}}$ and $b_\ell \in \R^{N_\ell}$
for $\ell =1,...,L$.

For a NN $\Phi$, 
we define the associated
\emph{realization of the NN $\Phi$} as 
\[
 \mathrm{R}(\Phi): \R^d \to \R^{N_L} : x\mapsto x_L \rev{\eqqcolon} \mathrm{R}(\Phi)(x),
\]
where the output $x_L \in \R^{N_L}$ results from 
\begin{equation}
    \label{eq:NetworkScheme}
    \begin{split}
        x_0 &\coloneqq x, \\
        x_{\ell} &\coloneqq \varrho(A_{\ell} \, x_{\ell-1} + b_\ell) \quad \text{ for } \ell = 1, \dots, L-1,\\
        x_L &\coloneqq A_{L} \, x_{L-1} + b_{L}.
    \end{split}
\end{equation}
Here $\varrho$ is understood to act component-wise on vector-valued inputs, 
i.e., for $y = (y^1, \dots, y^m) \in \R^m$,  $\varrho(y) := (\varrho(y^1), \dots, \varrho(y^m))$.
We call $N(\Phi) \coloneqq d + \sum_{j = 1}^L N_j$ the \emph{number of neurons of the NN} 
$\Phi$, $\depth(\Phi)\coloneqq L$ the \emph{number of layers} or 
\emph{depth}, $\size_j(\Phi)\coloneqq \| A_j\|_{0} + \| b_j \|_{0}$ 
the \emph{number of nonzero weights in the $j$-th layer}, 
and
$\size(\Phi) \coloneqq \sum_{j=1}^L \size_j(\Phi)$ the \emph{number of nonzero weights of $\Phi$}, 
also referred to as its \emph{size}. 
We refer to $N_L$ as the \emph{dimension of the output layer of $\Phi$}.
\end{definition}

\subsection{Concatenation, parallelization, emulation of identity}
\label{S:ConcParEm}
An essential component in the ensuing proofs is to 
    construct NNs out of simpler building blocks. 
For instance, given two NNs, 
we would like to identify another NN 
so that the realization of it equals the sum or the composition of the first two NNs. 
To describe these operations precisely, 
we introduce a formalism of operations on NNs below. 
The first of these operations is the concatenation.

\begin{proposition}[NN concatenation, {{\cite[Remark 2.6]{PETERSEN2018296}}}]
\label{prop:conc}
Let $L_1, L_2 \in \N$, and let 
$\Phi^1, \Phi^2$ 
be two NNs of respective depths $L_1$ and $L_2$ such that $N^1_0 = N^2_{L_2}\eqqcolon d$, i.e.,
the input layer of $\Phi^1$ has the same dimension as the output layer of $\Phi^2$. 

Then, there exists a NN $\Phi^1 \sconc \Phi^2$, called 
the \emph{sparse concatenation of $\Phi^1$ and $\Phi^2$}, 
such that $\Phi^1 \sconc \Phi^2$ has $L_1+L_2$ layers,   
$\mathrm{R}(\Phi^1 \sconc \Phi^2) = \mathrm{R}(\Phi^1) \circ \mathrm{R}(\Phi^2)$ 
and 
$\size\left(\Phi^1 \sconc \Phi^2\right) \leq 2\size\left(\Phi^1\right)  + 2\size\left(\Phi^2\right)$.
\end{proposition}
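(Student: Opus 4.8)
The plan is to exhibit $\Phi^1\sconc\Phi^2$ explicitly as a sequence of weight tuples and then read off the three assertions (layer count, realization identity, size bound) directly from the construction. Write $\Phi^i = \big((A^i_1,b^i_1),\dots,(A^i_{L_i},b^i_{L_i})\big)$ for $i=1,2$. The naive attempt of merging the affine last layer $z\mapsto A^2_{L_2}z + b^2_{L_2}$ of $\Phi^2$ into the first layer $z\mapsto\varrho(A^1_1 z + b^1_1)$ of $\Phi^1$ fails: it would yield only $L_1+L_2-1$ layers and, more fundamentally, it would require $\varrho$ to commute past the affine map, which it does not. The device that repairs this is the exact ReLU representation of the identity, $t=\varrho(t)-\varrho(-t)$, applied componentwise on $\R^d$: one extra layer ``splits'' the vector $y\coloneqq\mathrm{R}(\Phi^2)(x)=A^2_{L_2}x_{L_2-1}+b^2_{L_2}\in\R^d$ into $(\varrho(y),\varrho(-y))$, and the subsequent layer recombines it. Concretely I would set
\[
\Phi^1 \sconc \Phi^2 \coloneqq \Big( (A^2_1, b^2_1), \dots, (A^2_{L_2-1}, b^2_{L_2-1}), \Big( \begin{pmatrix} A^2_{L_2} \\ -A^2_{L_2} \end{pmatrix}, \begin{pmatrix} b^2_{L_2} \\ -b^2_{L_2} \end{pmatrix} \Big), \big( A^1_1 \begin{pmatrix} I_d & -I_d \end{pmatrix}, b^1_1 \big), (A^1_2, b^1_2), \dots, (A^1_{L_1}, b^1_{L_1}) \Big),
\]
where $x_{L_2-1}$ denotes the state of $\Phi^2$ after its first $L_2-1$ layers.

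The layer count is immediate: the construction uses layers $1,\dots,L_2-1$ of $\Phi^2$, one inserted ``splitting'' layer, one ``recombining'' layer, and layers $2,\dots,L_1$ of $\Phi^1$, for a total of $(L_2-1)+1+1+(L_1-1)=L_1+L_2$. For the realization identity I would propagate an input $x$ through the forward scheme \eqref{eq:NetworkScheme}: the first $L_2-1$ layers reproduce verbatim the computation of $\Phi^2$ up to $x_{L_2-1}$; the splitting layer outputs $\varrho\big((A^2_{L_2}x_{L_2-1}+b^2_{L_2},\,-(A^2_{L_2}x_{L_2-1}+b^2_{L_2}))\big)=(\varrho(y),\varrho(-y))$ with $y=\mathrm{R}(\Phi^2)(x)$; the recombining layer then forms $A^1_1(\varrho(y)-\varrho(-y))+b^1_1=A^1_1 y+b^1_1$ and applies $\varrho$, yielding exactly the first post-activation state of $\Phi^1$ evaluated at $y$; and layers $2,\dots,L_1$ (the last one affine) complete $\mathrm{R}(\Phi^1)(y)$. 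Hence $\mathrm{R}(\Phi^1\sconc\Phi^2)(x)=\mathrm{R}(\Phi^1)\big(\mathrm{R}(\Phi^2)(x)\big)$. One should check the two degenerate cases $L_1=1$ and $L_2=1$ separately, but they are handled identically (when $L_1=1$ the recombining layer becomes the final affine layer; when $L_2=1$ the list $(A^2_1,b^2_1),\dots,(A^2_{L_2-1},b^2_{L_2-1})$ is empty and the construction begins with the splitting layer).

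For the size bound I would count nonzero entries layer by layer: layers $1,\dots,L_2-1$ of $\Phi^2$ contribute $\sum_{j=1}^{L_2-1}\size_j(\Phi^2)$; the splitting layer has exactly $2\|A^2_{L_2}\|_0+2\|b^2_{L_2}\|_0=2\,\size_{L_2}(\Phi^2)$ nonzero entries; the recombining layer, whose matrix is $(A^1_1\mid -A^1_1)$, has $2\|A^1_1\|_0+\|b^1_1\|_0\le 2\,\size_1(\Phi^1)$ nonzero entries; and layers $2,\dots,L_1$ of $\Phi^1$ contribute $\sum_{j=2}^{L_1}\size_j(\Phi^1)$. Using $\sum_{j=1}^{L_2-1}\size_j(\Phi^2)+2\,\size_{L_2}(\Phi^2)\le 2\,\size(\Phi^2)$ and $2\,\size_1(\Phi^1)+\sum_{j=2}^{L_1}\size_j(\Phi^1)\le 2\,\size(\Phi^1)$ gives $\size(\Phi^1\sconc\Phi^2)\le 2\,\size(\Phi^1)+2\,\size(\Phi^2)$.

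The argument is entirely elementary, so there is no real obstacle; the one point worth emphasizing is that the inserted identity layer is what keeps the depth at exactly $L_1+L_2$ and makes the composition of realizations \emph{exact} rather than merely approximate — merging layers instead would alter the function computed. The doubling of the intermediate dimension in the splitting/recombining layers is precisely what accounts for the factor $2$ in the size estimate, and the verification above confirms it is no worse than that.
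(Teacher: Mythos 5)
Your construction is correct and is precisely the sparse concatenation of \cite[Remark 2.6]{PETERSEN2018296} that the paper invokes without proof: the two-layer ReLU identity trick $t=\varrho(t)-\varrho(-t)$, with the split layer $\bigl(\begin{smallmatrix}A^2_{L_2}\\ -A^2_{L_2}\end{smallmatrix}\bigr)$ and the recombining matrix $(A^1_1\mid -A^1_1)$, and your layer count, exactness of the composed realization (including the degenerate cases $L_1=1$, $L_2=1$), and the factor-$2$ size bound all check out. Nothing to add.
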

The second fundamental operation on NNs is parallelization, 
achieved with the following construction.
\begin{proposition}[NN parallelization, {\cite[Definition 2.7]{PETERSEN2018296}}]\label{prop:parall}
Let $L, d \in \N$ and let 
$\Phi^1, \Phi^2$ 
be two NNs with $L$ layers and with $d$-dimensional input each.
Then there exists a NN 
$\mathrm{P}(\Phi^1, \Phi^2)$ with $d$-dimensional input and $L$ layers, 
which we call the \emph{parallelization of $\Phi^1$ and $\Phi^2$}, 
such that 
\begin{equation*}
\mathrm{R}\left(\mathrm{P}\left(\Phi^1,\Phi^2\right)\right) (x) 
= 
\left(\mathrm{R}\left(\Phi^1\right)(x), \mathrm{R}\left(\Phi^2\right)(x)\right), 
\text{ for all } x \in \R^d
\end{equation*}
and
$\size(\mathrm{P}(\Phi^1, \Phi^2)) = \size(\Phi^1) + \size(\Phi^2)$.
\end{proposition}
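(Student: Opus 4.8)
The plan is to build $\mathrm{P}(\Phi^1,\Phi^2)$ by placing the two networks side by side: since they share the input dimension $d$ and the number of layers $L$, we can merge their first affine maps and keep all subsequent layers block-diagonal. Write $\Phi^i = ((A^i_1,b^i_1),\dots,(A^i_L,b^i_L))$ with layer widths $N^i_0 = d$ and $N^i_1,\dots,N^i_L$. Define $\Psi \coloneqq ((A_1,b_1),\dots,(A_L,b_L))$ by
\begin{equation*}
A_1 \coloneqq \begin{pmatrix} A^1_1 \\ A^2_1 \end{pmatrix}, \quad b_1 \coloneqq \begin{pmatrix} b^1_1 \\ b^2_1 \end{pmatrix}, \qquad A_\ell \coloneqq \begin{pmatrix} A^1_\ell & 0 \\ 0 & A^2_\ell \end{pmatrix}, \quad b_\ell \coloneqq \begin{pmatrix} b^1_\ell \\ b^2_\ell \end{pmatrix}, \quad \ell = 2,\dots,L .
\end{equation*}
Then $\Psi$ has input dimension $d$, exactly $L$ layers, and widths $N_0 = d$, $N_\ell = N^1_\ell + N^2_\ell$ for $\ell = 1,\dots,L$. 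We set $\mathrm{P}(\Phi^1,\Phi^2) \coloneqq \Psi$ and must check the two claimed properties.

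First I would verify the realization identity by induction on the layer index $\ell$. Let $x^i_\ell$ denote the intermediate vectors produced by $\Phi^i$ via \eqref{eq:NetworkScheme} on input $x$, and $x_\ell$ those produced by $\Psi$. For $\ell = 1$ we have $A_1 x + b_1 = (A^1_1 x + b^1_1,\, A^2_1 x + b^2_1)$, and since $\varrho$ acts componentwise it respects this block split, so $x_1 = (x^1_1, x^2_1)$. For the inductive step with $2 \le \ell \le L-1$, block-diagonality of $A_\ell$ gives $A_\ell x_{\ell-1} + b_\ell = (A^1_\ell x^1_{\ell-1} + b^1_\ell,\, A^2_\ell x^2_{\ell-1} + b^2_\ell)$, and applying $\varrho$ componentwise yields $x_\ell = (x^1_\ell, x^2_\ell)$. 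The final layer $\ell = L$ is the same computation without the activation, so $\mathrm{R}(\Psi)(x) = x_L = (x^1_L, x^2_L) = \big(\mathrm{R}(\Phi^1)(x), \mathrm{R}(\Phi^2)(x)\big)$ for all $x \in \R^d$.

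For the size identity, note that the stacked matrix $A_1$ contains precisely the nonzero entries of $A^1_1$ together with those of $A^2_1$, so $\|A_1\|_0 = \|A^1_1\|_0 + \|A^2_1\|_0$, and similarly $\|b_1\|_0 = \|b^1_1\|_0 + \|b^2_1\|_0$; for $\ell \ge 2$ the off-diagonal blocks of $A_\ell$ are identically zero, hence $\|A_\ell\|_0 = \|A^1_\ell\|_0 + \|A^2_\ell\|_0$ and $\|b_\ell\|_0 = \|b^1_\ell\|_0 + \|b^2_\ell\|_0$. Summing $\size_\ell(\Psi) = \|A_\ell\|_0 + \|b_\ell\|_0$ over $\ell = 1,\dots,L$ gives $\size(\Psi) = \size(\Phi^1) + \size(\Phi^2)$. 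There is no genuine obstacle in this argument; the only place the hypotheses are used is in aligning the layers — one needs $\depth(\Phi^1) = \depth(\Phi^2) = L$ and a common input dimension $d$ so that the first layers can be merged by vertical stacking and the rest by block-diagonal assembly. (Had the depths differed, one would first have to pad the shorter network using an identity-emulation network, but this is unnecessary under the stated assumptions.)
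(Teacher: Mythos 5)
Your construction is correct, and it is exactly the standard one the paper relies on: the proposition is quoted from \cite[Definition 2.7]{PETERSEN2018296}, where the parallelization is defined by precisely this vertical stacking of the first-layer affine maps and block-diagonal assembly of all subsequent layers, with the realization and size identities following just as you argue.
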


Proposition \ref{prop:parall} requires two NNs to have the same depth. 
If two NNs have different depth, then we can artificially enlarge one of them by concatenating 
with a NN that implements the identity. One possible construction of such a NN is presented next. 

\begin{proposition}[NN emulation of $\mathrm{Id}$, {{\cite[Remark 2.4]{PETERSEN2018296}}}]\label{prop:Id}
For every $d,L\in \N$ there exists a NN 
$\Phi^{\mathrm{Id}}_{d,L}$ with $\depth(\Phi^{\mathrm{Id}}_{d,L}) = L$
and
$\size(\Phi^{\mathrm{Id}}_{d,L}) \leq 2 d L$,
such that 
$\mathrm{R} (\Phi^{\mathrm{Id}}_{d,L}) = \mathrm{Id}_{\R^d}$.
\end{proposition}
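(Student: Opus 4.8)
\emph{Proof proposal.} The plan is to realize the identity map on $\R^d$ by a ReLU network of the prescribed depth, exploiting the elementary decomposition $t=\varrho(t)-\varrho(-t)$, valid for every $t\in\R$, which lets a network transport a (possibly sign-indefinite) vector through hidden layers at the price of doubling the width to $2d$. Throughout, $\bI_n\in\R^{n\times n}$ denotes the identity matrix and $\bzero$ a zero vector of the appropriate size.

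First I would dispose of the degenerate case $L=1$, where no hidden layer is available: set $\Phi^{\mathrm{Id}}_{d,1}\coloneqq\big((\bI_d,\bzero)\big)$. The affine last step of \eqref{eq:NetworkScheme} gives $\mathrm{R}(\Phi^{\mathrm{Id}}_{d,1})(x)=\bI_d x+\bzero=x$, and $\size(\Phi^{\mathrm{Id}}_{d,1})=\|\bI_d\|_0=d\le 2d$. For $L\ge 2$ I would set $N_0=N_L=d$, $N_\ell=2d$ for $1\le\ell\le L-1$, and choose: $A_1\in\R^{2d\times d}$ the vertical stacking of $\bI_d$ over $-\bI_d$ with $b_1=\bzero$; $A_\ell=\bI_{2d}$ and $b_\ell=\bzero$ for $2\le\ell\le L-1$; and $A_L\in\R^{d\times 2d}$ the horizontal concatenation of $\bI_d$ and $-\bI_d$ with $b_L=\bzero$.

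Tracing \eqref{eq:NetworkScheme}, the first hidden layer outputs $x_1=\varrho(A_1 x)=\big(\varrho(x),\varrho(-x)\big)$, whose two blocks are componentwise nonnegative. Since $\varrho$ acts as the identity on nonnegative arguments, each subsequent hidden layer merely reproduces its input, $x_\ell=\varrho(\bI_{2d}x_{\ell-1})=x_{\ell-1}$, so $x_{L-1}=\big(\varrho(x),\varrho(-x)\big)$; the final affine step then returns $x_L=\varrho(x)-\varrho(-x)=x$. Hence $\mathrm{R}(\Phi^{\mathrm{Id}}_{d,L})=\mathrm{Id}_{\R^d}$ and $\depth(\Phi^{\mathrm{Id}}_{d,L})=L$. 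Finally I would count nonzero weights: $\size_1=\|A_1\|_0=2d$, $\size_\ell=\|\bI_{2d}\|_0=2d$ for the $L-2$ layers $2\le\ell\le L-1$, and $\size_L=\|A_L\|_0=2d$, while all biases vanish; summing gives $\size(\Phi^{\mathrm{Id}}_{d,L})=2d+(L-2)2d+2d=2dL$. Together with the $L=1$ case this yields $\size(\Phi^{\mathrm{Id}}_{d,L})\le 2dL$ in all cases.

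There is essentially no obstacle in this argument; the only points deserving (minimal) attention are the idempotence of $\varrho$ on the nonnegative orthant — which is what makes the intermediate ``carry'' layers plain identity blocks rather than anything more elaborate — and the separate treatment of $L=1$, for which the doubling trick has no hidden layer to live in and one falls back on the bare identity matrix.
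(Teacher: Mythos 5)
Your construction is correct and is precisely the standard identity emulation via $t=\varrho(t)-\varrho(-t)$ that the paper invokes by citing \cite[Remark 2.4]{PETERSEN2018296}, including the width-$2d$ pass-through layers and the weight count $2dL$ (with the separate $L=1$ case handled correctly). Nothing to add.
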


Finally, we sometimes require a parallelization of NNs that do not share inputs. 

\begin{proposition}[Full parallelization of NNs with distinct inputs, {\cite[Setting 5.2]{EGJS2018}}] 
\label{prop:parallSep}
Let $L \in \N$ and let
$$
\Phi^1 = \left(\left(A_1^1,b_1^1\right), \dots, \left(A_{L}^1,b_{L}^1\right)\right), 
\quad 
\Phi^2 = \left(\left(A_1^2,b_1^2\right), \dots, \left(A_{L}^2,b_{L}^2\right)\right)
$$
be two NNs with $L$ layers each and with 
input dimensions $N^1_0=d_1$ and $N^2_0=d_2$, respectively. 

Then there exists a NN, denoted by $\mathrm{FP}(\Phi^1, \Phi^2)$, 
with $d$-dimensional input where $d = (d_1+d_2)$ and $L$ layers, 
which we call the \emph{full parallelization of $\Phi^1$ and $\Phi^2$}, such that 
for all $x = (x_1,x_2) \in \R^d$ with $x_i \in \R^{d_i}, i = 1,2$ 
\begin{equation*}
\mathrm{R}\left(\mathrm{FP}\left(\Phi^1,\Phi^2\right)\right) (x_1,x_2) 
= 
  \left(\mathrm{R}\left(\Phi^1\right)(x_1), \mathrm{R}\left(\Phi^2\right)(x_2)\right)
\end{equation*}
and
$\size(\mathrm{FP}(\Phi^1, \Phi^2)) = \size(\Phi^1) + \size(\Phi^2)$.
%
\end{proposition}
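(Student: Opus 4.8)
The plan is to exhibit $\mathrm{FP}(\Phi^1,\Phi^2)$ explicitly as the ``block-diagonal sum'' of the two given networks and then to verify the two claimed properties by a short layerwise induction. Writing $\Phi^i = ((A_1^i,b_1^i),\dots,(A_L^i,b_L^i))$ with $A_\ell^i \in \R^{N_\ell^i \times N_{\ell-1}^i}$ and $b_\ell^i \in \R^{N_\ell^i}$ (so that $N_0^1 = d_1$ and $N_0^2 = d_2$), I would define $\mathrm{FP}(\Phi^1,\Phi^2) \coloneqq ((A_1,b_1),\dots,(A_L,b_L))$ by
\begin{equation*}
A_\ell \coloneqq \begin{pmatrix} A_\ell^1 & 0 \\ 0 & A_\ell^2 \end{pmatrix} \in \R^{(N_\ell^1+N_\ell^2)\times(N_{\ell-1}^1+N_{\ell-1}^2)}, \qquad b_\ell \coloneqq \begin{pmatrix} b_\ell^1 \\ b_\ell^2 \end{pmatrix} \in \R^{N_\ell^1+N_\ell^2},
\end{equation*}
for $\ell = 1,\dots,L$. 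By construction this is a NN with $L$ layers and input dimension $N_0^1 + N_0^2 = d_1 + d_2 = d$, as required; note that the hypothesis $\depth(\Phi^1) = \depth(\Phi^2) = L$ is exactly what makes the blocks line up layer by layer.

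Next I would check that the realization splits. Denote by $x_\ell^i$ the intermediate vectors produced by $\Phi^i$ on input $x_i$ as in \eqref{eq:NetworkScheme}. I claim that the intermediate vectors $x_\ell$ of $\mathrm{FP}(\Phi^1,\Phi^2)$ evaluated at $x = (x_1,x_2)$ satisfy $x_\ell = (x_\ell^1, x_\ell^2)$ for all $\ell = 0,\dots,L$. The case $\ell = 0$ is the definition of the input. For the inductive step one computes, using the block form of $A_\ell$,
\begin{equation*}
A_\ell x_{\ell-1} + b_\ell = A_\ell \,(x_{\ell-1}^1, x_{\ell-1}^2) + b_\ell = \big(A_\ell^1 x_{\ell-1}^1 + b_\ell^1,\; A_\ell^2 x_{\ell-1}^2 + b_\ell^2\big),
\end{equation*}
and, since $\varrho$ acts componentwise, applying it (for $\ell < L$) preserves this concatenation, so $x_\ell = (x_\ell^1, x_\ell^2)$; for $\ell = L$ the same identity holds without the application of $\varrho$. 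Evaluating at $\ell = L$ gives $\mathrm{R}(\mathrm{FP}(\Phi^1,\Phi^2))(x_1,x_2) = (x_L^1,x_L^2) = (\mathrm{R}(\Phi^1)(x_1), \mathrm{R}(\Phi^2)(x_2))$.

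Finally, for the size identity I would observe that the off-diagonal blocks of every $A_\ell$ are identically zero, so $\|A_\ell\|_0 = \|A_\ell^1\|_0 + \|A_\ell^2\|_0$, and likewise $\|b_\ell\|_0 = \|b_\ell^1\|_0 + \|b_\ell^2\|_0$; summing over $\ell = 1,\dots,L$ yields $\size(\mathrm{FP}(\Phi^1,\Phi^2)) = \size(\Phi^1) + \size(\Phi^2)$. I do not expect a genuine obstacle here: the only points requiring care are that $\varrho$ acts componentwise, so the two blocks never interact — neither in a hidden layer nor in the affine output layer — and that equal depth is what allows this simple construction; for networks of differing depth one would first pad the shallower one using Proposition \ref{prop:Id}. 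In effect, $\mathrm{FP}(\Phi^1,\Phi^2)$ is nothing but the direct sum of $\Phi^1$ and $\Phi^2$.
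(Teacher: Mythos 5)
Your proposal is correct and uses exactly the construction the paper gives: the block-diagonal direct sum of the two networks, with the realization and size identities following immediately from the block structure. The only difference is that you spell out the layerwise induction and the counting of nonzero weights, which the paper leaves as "follows immediately from the construction."
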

\begin{proof}
Set 
$\mathrm{FP}\left(\Phi^1,\Phi^2\right) \coloneqq \left(\left(A_1^3, b_1^3\right), \dots, \left(A_L^3, b_L^3\right)\right)$ 
where, for $j = 1, \dots, L$, we define
\begin{align*}
    A_{j}^3 \coloneqq \left(\begin{array}{cc}
        A_j^1 & 0 \\
        0 & A_j^2
    \end{array}\right) \text{ and } 
    b_{j}^3 \coloneqq \left(\begin{array}{c}
        b_j^1 \\
        b_j^2
    \end{array}\right).
\end{align*}
All properties of $\mathrm{FP}\left(\Phi^1,\Phi^2\right)$ claimed in the statement of the proposition follow immediately from the construction.
\end{proof}
\subsection{Emulation of multiplication and piecewise polynomials}
\label{S:EmMult}
In addition to the basic operations above, 
we use two types of functions that we can approximate especially efficiently with NNs. 
These are high dimensional multiplication functions and univariate piecewise polynomials. 
We first give the result of an emulation of a multiplication in arbitrary dimension.

\begin{proposition}[{\cite[Lemma C.5]{guhring2019error}, \cite[Proposition 2.6]{OSZ19_839}}]
\label{prop:Multiplication}
There exists a constant $C>0$ such that, for every $0<\eps< 1$, $d \in \N$ and $M \geq 1$ 
there is a NN $\Pi_{\epsilon, M}^{d}$ with $d$-dimensional input- and one-dimensional output, 
so that 
\begin{align*}
&\left|\prod_{\ell = 1}^d x_\ell  - \Realiz(\Pi_{\epsilon, M}^{d})(x)\right| \leq \epsilon, 
           \text{ for all } x=(x_1, \dots, x_d) \in [-M,M]^d, 
\\
&\left|\frac{\partial}{\partial x_j} 
 \prod_{\ell = 1}^d x_\ell  - \frac{\partial}{\partial x_j} 
  \Realiz(\Pi_{\epsilon, M}^{d})(x)\right| \leq \epsilon, 
  {\begin{aligned}\text{ for almost every }x =(x_1, \dots, x_d) \in [-M,M]^{d} \\
  \text{ and all } j = 1, \dots, d,\end{aligned}}
\end{align*}
and $\Realiz(\Pi_{\epsilon, M}^{d})(x) = 0$ if $\prod_{\ell=1}^dx_\ell = 0$,
for all $x = (x_1, \dots, x_d)\in \mathbb{R}^d$.
Additionally,  $\Pi_{\epsilon, M}^{d}$ satisfies
\begin{align*} 
\max\left\{ \depth\left(\Pi_{\epsilon, M}^{d}\right), \size\left(\Pi_{\epsilon, M}^{d}\right)\right\} 
\leq 
C \left( 1+ d \log(d M^d/\epsilon)\right).
\end{align*}
\end{proposition}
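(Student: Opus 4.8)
The plan is to follow \cite[Lemma C.5]{guhring2019error} and \cite[Proposition 2.6]{OSZ19_839}, building the $d$-fold product from Yarotsky's approximate squaring network \cite{YAROTSKY2017103}: first obtain the bivariate case $d=2$ by the polarisation identity, and then iterate the bivariate block using the NN calculus of Section~\ref{S:ConcParEm}.

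First I would recall the univariate building block. With $g\colon\R\to\R$, $g(t)=2\varrho(t)-4\varrho(t-\tfrac12)+2\varrho(t-1)$, i.e. the hat function with $g(0)=g(1)=0$ and $g(\tfrac12)=1$, the $s$-fold self-composition $g^{\circ s}$ is a sawtooth and $f_s(t):=t-\sum_{k=1}^{s}4^{-k}g^{\circ k}(t)$ is the continuous, piecewise affine interpolant of $t\mapsto t^2$ at the nodes $j2^{-s}$, $0\le j\le 2^s$. The facts I would use are: $f_s$ is realised by a NN of depth and size $O(s)$ (reusing $g$ in each layer); $f_s(0)=0$; $\|f_s-(\cdot)^2\|_{L^\infty(0,1)}\le 4^{-s}$; and, since on each subinterval the interpolation slope is the midpoint of the slopes of $t^2$, $\|f_s'-2(\cdot)\|_{L^\infty(0,1)}\le 2^{-s}$. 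Precomposing with $t\mapsto|t|=\varrho(t)+\varrho(-t)$ yields an \emph{even} approximate squaring on $[-1,1]$ that still vanishes at $0$, and the affine rescaling $t\mapsto M^2 f_s(|t|/M)$ moves this to $[-M,M]$; reaching accuracy $\eta$ in both value and derivative there costs $s=O(\log(M/\eta))$.

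Next, for $d=2$ I would set $\Pi_{\eta,M}^{2}$ through $xy=\tfrac14\big((x+y)^2-(x-y)^2\big)$, feeding $(x\pm y)/(2M)\in[-1,1]$ into two copies of the even squaring block and combining the outputs linearly; Propositions~\ref{prop:conc} and \ref{prop:parall} then give a single NN with $\depth,\size=O(\log(M/\eta))$ satisfying $|xy-\Realiz(\Pi_{\eta,M}^{2})(x,y)|\le\eta$ and $|\partial_x(xy)-\partial_x\Realiz(\Pi_{\eta,M}^{2})(x,y)|\le\eta$ a.e.\ on $[-M,M]^2$ (and symmetrically in $y$), and, crucially, $\Realiz(\Pi_{\eta,M}^{2})(x,y)=0$ on all of $\{xy=0\}$ because evenness of the squaring block makes the two polarisation terms cancel when either argument vanishes. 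For general $d$ I would then define $p^{(1)}(x)=x_1$ and $p^{(k)}(x)=\Realiz(\Pi_{\eta,M^d}^{2})(p^{(k-1)}(x),x_k)$ for $k=2,\dots,d$, and realise $p^{(d)}$ as the sparse concatenation of the $d-1$ bivariate blocks, inserting identity networks (Proposition~\ref{prop:Id}, or Proposition~\ref{prop:parallSep} if one prefers a balanced binary tree of multiplications of depth $\lceil\log_2 d\rceil$) to carry each coordinate $x_k$ to the layer where it is consumed. Since $|x_\ell|\le M$ forces $|p^{(k-1)}|\le M^{k-1}\le M^d$, every block is evaluated in its valid range; a telescoping estimate expressing the total error as the sum of the per-block errors propagated through the remaining multiplications (Lipschitz constants $\le M^d$) gives value and a.e.\ derivative errors $\le C d M^{d}\eta$, so choosing $\eta:=\epsilon/(C d M^{d})$ closes the bounds. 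The exact-zero property is inherited by induction, since any $x_\ell=0$ makes some $p^{(k)}\equiv0$ and downstream blocks preserve zero; finally $\log(M^d/\eta)=O(\log(dM^d/\epsilon))$ together with the counts of Section~\ref{S:ConcParEm} yields $\max\{\depth(\Pi_{\epsilon,M}^{d}),\size(\Pi_{\epsilon,M}^{d})\}\le C(1+d\log(dM^d/\epsilon))$.

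The hard part will be the simultaneous control of the value error and the a.e.\ derivative error through the $O(d)$ composed bivariate blocks: differentiating $p^{(k)}$ produces, by the product rule, the accumulated derivative error of $p^{(k-1)}$ amplified by $|\partial_1\Pi^2|\approx|x_k|\le M$, and one must verify that after up to $d$ levels this amplification remains bounded by a fixed power of $M^d$ times a polynomial in $d$, while the per-block accuracy that compensates for it enters only logarithmically. One also has to check that the a.e.\ chain and product rules are legitimate off a Lebesgue-null union of hyperplanes where the piecewise-affine kinks of the various sub-networks align. The remaining points --- routing coordinates via identity networks without inflating the size past $O(d\log(\cdot))$, and synchronising depths in the binary-tree variant --- are bookkeeping that follows directly from Propositions~\ref{prop:conc}--\ref{prop:parallSep}.
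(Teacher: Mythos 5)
The paper gives no in-text proof of Proposition \ref{prop:Multiplication}: it is quoted from \cite[Lemma C.5]{guhring2019error} and \cite[Proposition 2.6]{OSZ19_839}, and those references argue exactly as you do — Yarotsky's sawtooth approximation of $t^2$, an even version via $|t|=\varrho(t)+\varrho(-t)$ rescaled to $[-M,M]$, the polarisation identity for the bivariate product (which also yields the exact-zero property, by the cancellation you describe), and then a reduction of the $d$-fold product to bivariate blocks with simultaneous $L^\infty$ and a.e.\ first-derivative error control. So in substance your route coincides with the cited proofs, and the univariate estimates ($4^{-s}$ for values, $2^{-s}$ for slopes), the polarisation step, the a.e.\ chain-rule justification for piecewise affine functions, and the inductive zero-propagation are all sound.

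The one point that is more than bookkeeping is the network-size accounting for your primary ($d$-fold sequential) arrangement. If you chain $p^{(k)}=\Realiz(\Pi^{2}_{\eta,M^d})(p^{(k-1)},x_k)$ for $k=2,\dots,d$, then the raw coordinate $x_k$ must be transported by identity subnetworks through roughly $(k-2)$ bivariate blocks, each of depth $L_b=\mathcal{O}(1+\log(dM^d/\epsilon))$; carrying one scalar through one ReLU layer costs at least two nonzero weights, so the total routing cost is of order $\sum_{k}(k-2)L_b\sim d^2L_b$, which exceeds the claimed bound $C(1+d\log(dM^d/\epsilon))$ by a factor $d$ (already for fixed $M$ and $\epsilon$). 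The balanced binary tree that you mention only parenthetically is therefore not an optional variant but the arrangement actually needed: there all $d$ coordinates are consumed at the first level, the $d-1$ blocks contribute total size $\mathcal{O}(d\,L_b)$, the depth is $\mathcal{O}(\lceil\log_2 d\rceil\,L_b)$, and only $\mathcal{O}(\log d)$ identity subnetworks are required for unpaired intermediate values — this is how \cite{OSZ19_839} meets the stated complexity. A second, minor point: the inputs to the intermediate blocks are the \emph{approximate} partial products, which can exceed $M^{k-1}$ by the accumulated error, so "$|p^{(k-1)}|\le M^{k-1}\le M^d$" is not literally true (consider $M=1$); one should either take the bivariate blocks valid on a slightly enlarged box, say $[-2M^d,2M^d]^2$, or carry the error bound in the induction — a trivial fix, but it should be stated.
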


In addition to the high-dimensional multiplication, 
we can efficiently approximate univariate continuous, piecewise polynomial functions 
by realizations of NNs with the ReLU activation function.
\begin{proposition}[{\cite[Proposition 5.1]{OPS19_811}}]
\label{prop:relupwpolynom}
There exists a constant $C>0$ such that, 
for all $\bm p = (p_i)_{i\in\{1,\ldots,{N_{\mathrm{int}}}\}} \subset \N$,
for all partitions $\Tcal$ of $I=(0,1)$ into ${N_{\mathrm{int}}}$ open, disjoint, 
connected subintervals $I_i$, $i=1,\ldots,N_{\mathrm{int}}$, 
for all $v\in {S_{\bm p} (I,\Tcal)} \coloneqq \{v\in H^1(I): v|_{I_i} \in \bbP_{p_i}(I_i), i=1,\ldots,N_{\mathrm{int}}\}$, 
and for every $0<\eps< 1$, 
there exist NNs $\{\Phi^{v,\Tcal,\bm p}_{\eps}\}_{\eps\in(0,1)}$ 
such that for all $1\leq q'\leq \infty$ it holds that
\begin{align*}
\normc[W^{1,q'}(I)]{v-\mathrm{R}\left(\Phi^{v,\Tcal,\bm p}_{\eps}\right)}
	\leq &\, \eps \snormc[W^{1,q'}(I)]{v},
	\\
\depth\left(\Phi^{v,\Tcal,\bm p}_{\eps}\right)
	\leq &\, C (1+\log(p_{\max})) 
        \left( p_{\max} + \left|\log\eps\right| \right),
	\\
\size\left(\Phi^{v,\Tcal,\bm p}_{\eps}\right)
	\leq &\, C{N_{\mathrm{int}}} (1+\log(p_{\max})) \left( p_{\max} + \left|\log\eps\right| \right) 
                + C \sum_{i=1}^{N_{\mathrm{int}}} p_i\left(p_i + |\log\eps| \right) 
,
\end{align*}
where $p_{\max} \coloneqq \max \{p_i \colon i = 1, \dots, N_{\mathrm{int}}\}$.
In addition, 
$\mathrm{R}\left( \Phi^{v,\Tcal,\bm p}_{\eps} \right)(x_j)=v(x_j)$ 
for all $j\in\{0,\ldots,{N_{\mathrm{int}}}\}$, 
where $\{x_j\}_{j=0}^{N_{\mathrm{int}}}$ are the nodes of $\Tcal$. 
\end{proposition}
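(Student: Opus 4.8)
The plan is to reduce the statement, by a stable decomposition of $v$, to the emulation of a single polynomial on a single mesh cell, and then to reassemble the pieces with the ReLU calculus of Section~\ref{sec:ReLUCalc}. First I would write $v = \Pi_1 v + \sum_{i=1}^{N_{\mathrm{int}}} b_i$, where $\Pi_1 v$ is the continuous piecewise linear interpolant of $v$ at the mesh nodes $x_0 < \dots < x_{N_{\mathrm{int}}}$ and $b_i \coloneqq (v-\Pi_1 v)|_{I_i} \in \bbP_{p_i}(I_i)$ vanishes at the two endpoints of $I_i$. This split achieves three things at once: a continuous piecewise linear function with $N_{\mathrm{int}}$ breakpoints is represented \emph{exactly} by a two-layer ReLU NN $\Phi_1$ of size $O(N_{\mathrm{int}})$, of the form $x\mapsto \alpha+\beta x+\sum_j\gamma_j\varrho(x-x_j)$, which in particular already reproduces the nodal values $v(x_j)$; since the slope of $\Pi_1 v$ on each cell is the cell-average of $v'$, one has $|b_i|_{W^{1,q'}(I_i)} \le 2|v|_{W^{1,q'}(I_i)}$ uniformly in $q'\in[1,\infty]$ while the $W^{1,q'}$-seminorm is additive over the partition; and each $b_i$ vanishing at $\partial I_i$ will let its NN surrogate be forced to vanish there and identically outside $\overline{I_i}$, so that adding the surrogates back preserves both continuity and the exact nodal interpolation.

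Next I would emulate a single bubble. Mapping $I_i$ affinely onto $[0,1]$ turns $b_i$ into a polynomial that I write as $x(1-x)\hat r(x)$ with $\hat r\in\bbP_{p_i-2}$, and I would build a surrogate $\Psi_i$ that (i) clamps its input to $[0,1]$ with two ReLU units; (ii) emulates $\hat r$ by first computing the powers $x,x^2,x^4,\dots,x^{2^m}$ with $m=\lceil\log_2 p_i\rceil$ through repeated use of the approximate-multiplication network of Proposition~\ref{prop:Multiplication} (kept in $[0,1]$ by the clamp), then assembling each needed monomial as a product of at most $m$ of these, and forming the appropriate linear combination; and (iii) multiplies the outcome by the clamped endpoint factors $\varrho(x)$ and $\varrho(1-x)$ via Proposition~\ref{prop:Multiplication}, so that by the zero‑preservation property of the multiplication networks the output of $\Psi_i$, transported back to $I_i$, vanishes at $\partial I_i$ and identically outside $\overline{I_i}$. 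Choosing the internal tolerance $\delta$ so that $\log(1/\delta) = O\!\big(p_i + |\log\eps| + \log N_{\mathrm{int}}\big)$ — which accommodates both the accumulated multiplication errors and the size of the monomial coefficients of a cell polynomial — and using the first-derivative estimates of Proposition~\ref{prop:Multiplication} together with a Markov inverse inequality (to control the $W^{1,\infty}$, hence on the unit interval every $W^{1,q'}$, error), norm equivalence on $\bbP_{p_i}$ and a one-dimensional Poincaré inequality (each contributing only $O(\log p_i)$ to the effective $|\log\eps|$, and the latter valid since $b_i$ and $\mathrm R(\Psi_i)$ vanish at $\partial I_i$), I would obtain $\|b_i - \mathrm R(\Psi_i)\|_{W^{1,q'}(I_i)} \le \tfrac{\eps}{2}|b_i|_{W^{1,q'}(I_i)}$ together with $\depth(\Psi_i) \le C(1+\log p_{\max})(p_{\max}+|\log\eps|)$ and $\size(\Psi_i)\le C\,p_i(p_i+|\log\eps|)$.

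To finish I would pad $\Phi_1$ and each $\Psi_i$ to the common depth $L\coloneqq\max_i\depth(\Psi_i)$ with the identity networks of Proposition~\ref{prop:Id}, put them in parallel using Propositions~\ref{prop:parall} and~\ref{prop:parallSep}, and append one summation layer, obtaining $\Phi^{v,\Tcal,\bm p}_\eps$ with $\mathrm R(\Phi^{v,\Tcal,\bm p}_\eps) = \mathrm R(\Phi_1)+\sum_i\mathrm R(\Psi_i)$. Continuity and the exact nodal values $\mathrm R(\Phi^{v,\Tcal,\bm p}_\eps)(x_j)=v(x_j)$ are inherited from $\Phi_1$, because each $\mathrm R(\Psi_i)$ vanishes at the nodes and outside $\overline{I_i}$; the depth is $L\le C(1+\log p_{\max})(p_{\max}+|\log\eps|)$; the size is $\sum_i\size(\Psi_i)+O(N_{\mathrm{int}})$ plus the $O(L)$-per-output cost of the identity padding, i.e.\ $\le CN_{\mathrm{int}}(1+\log p_{\max})(p_{\max}+|\log\eps|)+C\sum_i p_i(p_i+|\log\eps|)$; and, since the pieces $b_i-\mathrm R(\Psi_i)$ have disjoint supports and vanish at the nodes while the $W^{1,q'}$-seminorm is additive over the partition (with the obvious change for $q'=\infty$),
\[
\|v-\mathrm R(\Phi^{v,\Tcal,\bm p}_\eps)\|_{W^{1,q'}(I)} = \Big(\textstyle\sum_i\|b_i-\mathrm R(\Psi_i)\|_{W^{1,q'}(I_i)}^{q'}\Big)^{1/q'} \le \tfrac{\eps}{2}\Big(\textstyle\sum_i|b_i|_{W^{1,q'}(I_i)}^{q'}\Big)^{1/q'} \le \eps\Big(\textstyle\sum_i|v|_{W^{1,q'}(I_i)}^{q'}\Big)^{1/q'} = \eps\,|v|_{W^{1,q'}(I)} .
\]

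The step I expect to be the main obstacle is the single-polynomial emulation in the second paragraph: producing a ReLU NN for a degree-$p$ polynomial whose depth grows only like $p\log p$ (rather than $p^2$) and in which $|\log\eps|$ is multiplied merely by $\log p$. This is what dictates working in a suitable basis, arranging the $O(p)$ elementary multiplications (powers by repeated squaring, monomials by shallow products) so that neither the number of sequential multiplications nor the required internal accuracy spoils the depth bound, and controlling the first-derivative error — hence, via the Markov inequality, every $W^{1,q'}$ error — of the approximate-multiplication blocks simultaneously; it is precisely the technical core carried out in \cite[Proposition~5.1]{OPS19_811}. Everything else — the stability of the linear/bubble split, the \emph{relative} bound $\eps|v|_{W^{1,q'}}$ uniform in $q'$, the preservation of exact nodal interpolation, and the parallelization/padding accounting — is routine bookkeeping given the ReLU calculus of Section~\ref{sec:ReLUCalc} and Proposition~\ref{prop:Multiplication}.
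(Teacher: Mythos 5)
There is nothing in this paper to compare your argument against: Proposition \ref{prop:relupwpolynom} is imported verbatim from \cite[Proposition 5.1]{OPS19_811} and the paper gives no proof of it. Judged on its own terms, your outer scaffolding is sound and in fact mirrors the strategy of the cited work: split $v$ into the piecewise linear nodal interpolant (exactly representable by a two-layer ReLU NN, which secures the nodal exactness claim) plus per-element bubbles vanishing at the nodes, emulate each bubble through the approximate multiplication networks of Proposition \ref{prop:Multiplication} while forcing the surrogate to vanish outside its cell, and reassemble by parallelization and identity padding; the stability estimate $|b_i|_{W^{1,q'}(I_i)}\le 2|v|_{W^{1,q'}(I_i)}$ and the disjoint-support summation over cells are correct, and your remarks on how to get the \emph{relative} bound uniformly in $q'$ (norm equivalence on $\bbP_{p_i}$, Markov, Poincar\'e, all contributing only $O(\log p_i)$ to $\log(1/\delta)$) are the right ingredients.

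The genuine gap is exactly where you place it, and it is not closed by your sketch: the single-cell polynomial emulation within the stated depth \emph{and} size budgets. Your concrete scheme (powers $x,x^2,\dots,x^{2^m}$ by repeated squaring, then each of the $\sim p_i$ monomials as a product of up to $m\approx\log_2 p_i$ such powers, each product network of size $O(\log p_i\,(p_i+|\log\eps|))$, plus the $2dL$-per-channel cost of carrying intermediate values through identity networks) yields a per-cell size of order $p_i\log p_i\,(p_i+|\log\eps|)$, which exceeds the claimed contribution $C\,p_i(p_i+|\log\eps|)$ by a factor $\log p_i$; the obvious repair (computing all powers iteratively, $x^{k+1}=x^k\cdot x$) restores the size at the cost of depth $O(p_i(p_i+|\log\eps|))$, violating the claimed depth $C(1+\log p_{\max})(p_{\max}+|\log\eps|)$. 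Avoiding both losses simultaneously requires the more careful simultaneous-monomial (or Legendre-basis) construction of \cite{OPS19_811}, which is precisely the step you explicitly defer to that reference. So as written your proposal is a faithful reconstruction of the architecture of the cited proof, but it does not independently establish the quantitative depth/size bounds of the statement; it reduces, for the decisive step, to the citation itself.
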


\begin{remark}\label{rem:RemarkGeneralIntervals}
It is not hard to see that the result holds also for $I = (a,b)$, 
where $a,b\in \R$, with $C>0$ depending on $(b-a)$. 
Indeed, 
for any $v \in H^1((a,b))$ the concatenation of $v$ with the invertible, 
affine map $T \colon x \mapsto (x-a)/(b-a)$ is in $H^1((0,1))$. 
Applying Proposition \ref{prop:relupwpolynom} yields 
NNs $\{\Phi^{v,\Tcal,\bm p}_{\eps}\}_{\eps\in(0,1)}$ approximating $v \circ T$ to an appropriate accuracy. 
Concatenating these networks with the $1$-layer NN $(A_1,b_1)$, 
where $A_1x + b_1 = T^{-1}x$ yields the result.
The explicit dependence of $C>0$ on $(b-a)$ 
can be deduced from the error bounds in $(0,1)$ by affine transformation.
\end{remark}

\section{Exponential approximation rates by realizations of NNs}
\label{sec:hpReapproxReLU}
We now establish several technical results on the \emph{exponentially consistent} 
approximation by realizations of NNs with ReLU activation 
of univariate and multivariate tensorized polynomials.
These results will be used to establish Theorem \ref{th:ReLUapprox},
which yields exponential approximation rates of NNs for functions 
in the weighted, analytic classes introduced in 
Section \ref{sec:WgtSpcNonHomNrm}. 
They are of independent interest, as they imply that 
spectral and pseudospectral methods can, in principle, 
be emulated by realizations of NNs with ReLU activation.
\subsection{NN-based approximation of univariate, piecewise polynomial functions}
\label{S:1dBasFct}
We start with the following corollary to Proposition \ref{prop:relupwpolynom}. 
It quantifies stability and consistency of realizations of NNs with ReLU
activation for the
emulation of the univariate, piecewise polynomial basis functions
in Theorem \ref{thm:Interface}.
\begin{corollary} \label{cor:basis-NN}
Let $I=(a,b)\subset \mathbb{R}$ be a bounded interval. 
Fix $C_p>0$, $C_v>0$, and $b_v>0$. 
Let $0<\epsilon_{\hp} < 1$ and $p, \Noned, N_{\mathrm{int}}\in \mathbb{N}$ 
be such that 
   $p \leq C_p(1+\left| \log\epsilon_{\hp} \right|)$ and let $\cG_\oned$ 
be a partition of $I$ into $\Nint$ open, disjoint, connected subintervals and, 
for $i\in\{1, \dots, \Noned\}$, let $v_i\in \mathbb{Q}_p(\cG_{\oned})
\cap H^1(I)$ be such that $\supp(v_i)$ intersects either a single interval 
or two adjacent intervals in $\cG_\oned$ and $ \|v_i\|_{H^1(I)}\leq C_v \epsilon_{\hp}^{-b_v}$, 
for all $i\in \{1, \dots, \Noned\}$.  

Then, 
for every $0 < \epsilon_1 \leq  \epsilon_{\hp}$, 
and 
for every $i\in\{1, \dots, \Noned\}$, 
there exists a NN $\Phi^{v_{i}}_{\epsilon_1}$ 
such that
\begin{align}
\normc[H^1(I)]{v_{i}-\Realiz\left(\Phi^{v_{i}}_{\epsilon_1}\right)}
	\leq{} & \epsilon_1 |v_i|_{H^1(I)} , \label{eq:Corboundvjk1}
	\\
\depth\left(\Phi^{v_{i}}_{\epsilon_1}\right)
	\leq{} &  C_4  (1 + \left|\log(\epsilon_1)\right|)(1 + \log(1+\left|\log(\epsilon_1)\right|))  ,\label{eq:Corboundvjk2}	\\
\size\left(\Phi^{v_{i}}_{\epsilon_1}\right) \leq{} & C_5 (1 +  \left|\log(\epsilon_1)\right|^2) 	, \label{eq:Corboundvjk3}
\end{align}
for constants $C_4, C_5>0$ depending on $C_p>0$, $C_v>0$, $b_v>0$ and $(b-a)$ only. 
In addition, 
$\Realiz\left( \Phi^{v_i}_{\eps_1} \right)(x_j)=v_i(x_j)$ 
for all $i\in\{1,\ldots,\Noned\}$ and $j\in\{0,\ldots,{N_{\mathrm{int}}}\}$, 
where $\{x_j\}_{j=0}^{N_{\mathrm{int}}}$ are the nodes of $\cG_{\oned}$.
\end{corollary}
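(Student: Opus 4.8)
The plan is to apply Proposition \ref{prop:relupwpolynom} directly to each basis function $v_i$, taking advantage of Remark \ref{rem:RemarkGeneralIntervals} to handle the general interval $I=(a,b)$, and then to simplify the resulting depth and size bounds using the hypothesis $p \leq C_p(1+|\log\epsilon_{\hp}|)$ and the fact that each $v_i$ is supported on at most two subintervals of $\cG_\oned$. Concretely, for each $i$, apply Proposition \ref{prop:relupwpolynom} with $q'=2$, with the partition $\Tcal$ taken to be $\cG_\oned$ (or, since only two subintervals matter for $v_i$, an effective partition with $N_{\mathrm{int}}$ replaced by $2$ plus the trivial complementary piece), with degrees $p_i = p$, and with target accuracy $\epsilon_1$. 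This yields a NN $\Phi^{v_i}_{\epsilon_1} \coloneqq \Phi^{v_i,\Tcal,\bm p}_{\epsilon_1}$ satisfying \eqref{eq:Corboundvjk1} immediately, since $\|v_i - \mathrm{R}(\Phi^{v_i}_{\epsilon_1})\|_{H^1(I)} \leq \epsilon_1 |v_i|_{H^1(I)}$, and the node-interpolation property at the end of the statement also follows verbatim from the last assertion of Proposition \ref{prop:relupwpolynom}.

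For the depth bound \eqref{eq:Corboundvjk2}, I would substitute $p_{\max}=p \leq C_p(1+|\log\epsilon_{\hp}|) \leq C_p(1+|\log\epsilon_1|)$ into the bound $\depth(\Phi^{v_i}_{\epsilon_1}) \leq C(1+\log p_{\max})(p_{\max}+|\log\epsilon_1|)$ from Proposition \ref{prop:relupwpolynom}. Since $p_{\max}+|\log\epsilon_1| \leq (1+C_p)(1+|\log\epsilon_1|)$ and $1+\log p_{\max} \leq 1 + \log(1+C_p(1+|\log\epsilon_1|)) \leq C'(1+\log(1+|\log\epsilon_1|))$ for a constant $C'$ depending on $C_p$, the product is bounded by $C_4(1+|\log\epsilon_1|)(1+\log(1+|\log\epsilon_1|))$ with $C_4$ depending only on $C_p$ and $(b-a)$ (the latter through Remark \ref{rem:RemarkGeneralIntervals}). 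For the size bound \eqref{eq:Corboundvjk3}, the key observation is that although $\cG_\oned$ has $N_{\mathrm{int}}$ cells, the function $v_i$ is polynomial of degree $p$ on at most two of them and identically zero elsewhere, so we may apply Proposition \ref{prop:relupwpolynom} with an effective partition into $O(1)$ subintervals (two nontrivial ones plus at most two trivial "zero" pieces on either side), making the first term $C N_{\mathrm{int}}(\ldots)$ into $C(\ldots)$ and the sum $\sum p_i(p_i+|\log\epsilon_1|)$ into $O(p(p+|\log\epsilon_1|)) = O((1+|\log\epsilon_1|)^2)$ after substituting the bound on $p$. This gives $\size(\Phi^{v_i}_{\epsilon_1}) \leq C_5(1+|\log\epsilon_1|^2)$ with $C_5$ depending only on $C_p$, $C_v$, $b_v$, $(b-a)$.

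The one point requiring a little care — and the main (though minor) obstacle — is the reduction of the effective number of intervals from $N_{\mathrm{int}}$ to $O(1)$. One must argue that restricting $v_i$ to a coarsened partition consisting only of the at-most-two active cells, with the rest of $I$ merged into one or two cells on which $v_i \equiv 0$ (so $v_i$ restricted there lies in $\bbP_0$), does not change $v_i$ as an element of $H^1(I)$ and is a legitimate partition in the sense required by Proposition \ref{prop:relupwpolynom}; continuity of $v_i$ at the interface between the active block and the zero block is guaranteed because $v_i \in H^1(I) \subset C(\overline I)$ and $\supp(v_i)$ is contained in the closure of the active cells. The stability hypothesis $\|v_i\|_{H^1(I)} \leq C_v \epsilon_{\hp}^{-b_v}$ is not actually needed to obtain the stated relative error bound \eqref{eq:Corboundvjk1}, but it is recorded in the hypotheses because it will be used downstream (e.g.\ to convert relative into absolute errors and to control coefficient magnitudes); within this corollary it plays no role beyond ensuring $|v_i|_{H^1(I)}<\infty$. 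With these observations the three displayed bounds follow by direct substitution, and the constants' stated dependence is exactly what the substitutions produce.
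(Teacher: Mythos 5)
Your proposal is correct and matches the paper's proof essentially step for step: the paper likewise replaces $\cG_\oned$ by a partition $\Tcal_i$ of $I$ into at most four subintervals (the one or two active cells plus the complementary pieces on which $v_i\equiv 0$), applies Proposition \ref{prop:relupwpolynom} with $q'=2$ together with Remark \ref{rem:RemarkGeneralIntervals}, and then substitutes $p\lesssim 1+|\log\epsilon_1|$ into the depth and size bounds exactly as you do. Your side remarks — that continuity at the interfaces of the coarsened partition is automatic and that the stability hypothesis on $\|v_i\|_{H^1(I)}$ is not actually used to obtain \eqref{eq:Corboundvjk1} — are both accurate.
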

\begin{proof}
Let $i=1, \dots, \Noned$. 
For $v_{i}$ as in the assumption of the corollary, 
we have that 
either $\supp(v_{i}) = \overline{J}$ for a unique $J\in \cG_\oned$ 
or 
$\supp(v_{i}) =\overline{J \cup J'}$ for two neighboring intervals $J, J'\in \cG_\oned$.
Hence, there exists a partition $\Tcal_{i}$ of $I$ of at most four subintervals so that 
$v_{i} \in S_{\bm p} (I,\Tcal_{i})$,  where $\bm p = (p)_{i\in\{1,\ldots,4\}}$. 

Because of this, 
an application of Proposition \ref{prop:relupwpolynom} with $q' = 2$ 
and Remark \ref{rem:RemarkGeneralIntervals}  yields that 
for every  $0<\epsilon_1 \leq \epsilon_{\hp}< 1$ 
there exists a 
NN $\Phi^{v_{i}}_{\epsilon_1} := \Phi^{v_{i},\Tcal_{i},\bm p}_{\epsilon_1}$ 
such that 
\eqref{eq:Corboundvjk1} 
holds. 
In addition, by invoking 
$p \lesssim 1+\left|\log(\epsilon_\hp)  \right|\leq 1+\left|\log(\epsilon_1)  \right|$, 
we observe that
\begin{align*}
\depth\left(\Phi^{v_{i}}_{\epsilon_1}\right)
	&\leq \, C (1+\log(p)) \left( p + \left|\log\left({\epsilon_1}\right)\right| \right) 
\lesssim 1 + \left|\log(\epsilon_1)\right|(1 + \log(1+\left|\log(\epsilon_1)\right|)). 
\end{align*}
Therefore, 
there exists $C_4 >0$ such that \eqref{eq:Corboundvjk2} holds.
Furthermore, 
\begin{align*}
\size\left(\Phi^{v_{i}}_{\epsilon_1}\right)
		\leq &\, 4C(1+\log(p)) \left( p + \left|\log\left(\epsilon_1\right)\right| \right) 
		+C \sum_{i=1}^{4} p(p+\left|\log\left({\epsilon_1}\right)\right| )\\
\lesssim &\, p^2 +  \left|\log\left({\epsilon_1}\right)\right| p
	+  (1+\log(p)) \left( p + \left|\log\left({\epsilon_1}\right)\right|\right).
\end{align*}
We use 
$p \lesssim 1+\left|\log(\epsilon_1)\right|$ 
and obtain that 
there exists $C_5 >0$ such that \eqref{eq:Corboundvjk3} holds.
\end{proof}
\subsection{Emulation of functions with singularities in cubic domains by NNs}
\label{S:ApprCub}
Below we state a result describing the efficiency of re-approximating continuous, 
piecewise tensor product polynomial functions 
in a cubic domain, as introduced in Theorem \ref{thm:Interface},
by realizations of  NNs with the ReLU activation function.

\begin{theorem} \label{th:ReLU-hp}
Let $d\in \{2,3\}$, let $I = (a,b)\subset \mathbb{R}$ be a bounded interval, and let $Q=I^d$.
Suppose that there exist constants
   $C_p>0$, $C_{\Noned}>0$, $C_v>0$, $C_c>0$, $b_v>0$,
   and, for $0< \epsilon \leq 1$,
assume there exist $p, \Noned, N_{\mathrm{int}}\in \mathbb{N}$, 
and $c\in \mathbb{R}^{\Noned\times\dots\Noned}$, such that
   \begin{equation*}
       \Noned \leq C_{\Noned}(1+\left| \log\epsilon \right|^2),\quad \|c\|_{1} \leq C_{c}(1+\left| \log\epsilon \right|^{2d}),\quad
       p \leq C_p(1+\left| \log\epsilon \right|).
     \end{equation*}
Further, let $\cG_\oned$ be a partition of $I$ into $\Nint$ open, disjoint, connected subintervals and let, 
   for all $i\in\{1, \dots, \Noned\}$, $v_i\in\mathbb{Q}_p(\cG_{\oned}) \cap H^1(I)$ be 
   such that $\supp(v_i)$ intersects either a single interval or 
   two neighboring subintervals of $\cG_\oned$ and
     \begin{equation*}
      \|v_i\|_{H^1(I)}\leq C_v \epsilon^{-b_v}, \qquad \|v_i\|_{L^\infty(I)}\leq 1,\qquad \forall i\in \{1, \dots, \Noned\}.
   \end{equation*}

Then, 
there exists a NN $\Phi_{\epsilon, c}$ 
such that
\begin{align}\label{eq:ReLuhp-approx}
\left \| \sum_{\iscomma=1}^{\Noned} c_{\is}\bigotimes_{j=1}^dv_{i_j} - \Realiz\left(\Phi_{\epsilon, c}\right) \right\|_{H^1(Q)} 
\leq \epsilon.
\end{align}
Furthermore, 
there holds
$
\left\|\Realiz\left(\Phi_{\epsilon, c}\right) \right\|_{L^\infty(Q)} 
\leq  
(2^d+1)C_c (1 + \left| \log\epsilon \right|^{2d}),
$
\begin{equation*}
\size(\Phi_{\epsilon, c}) \leq C (1+\left|\log\epsilon\right|^{2d+1}),
\;
\depth(\Phi_{\epsilon, c}) \leq C (1+ \left|\log\epsilon\right|\log(\left|\log\epsilon\right|)),
\end{equation*}
where $C>0$ depends on $C_p$, $C_{\Noned}$, $C_v$, $C_c$, $b_v$, $d$, and $(b-a)$ only.
\end{theorem}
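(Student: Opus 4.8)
The plan is to build $\Phi_{\epsilon,c}$ as a parallelization of the univariate basis-function networks from Corollary~\ref{cor:basis-NN}, composed with the multiplication network from Proposition~\ref{prop:Multiplication}, and then to bound the resulting error in $H^1(Q)$ via the triangle inequality, splitting it into a "product emulation" error and a "univariate basis" error. Throughout, all target accuracies will be chosen to be small polynomial multiples of $\epsilon$ (divided by suitable powers of $\|c\|_1$, $\Noned$, etc.), so that the $\log$ of each sub-accuracy is still $O(1+|\log\epsilon|)$, which is what keeps the final depth and size bounds of the claimed form.

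\textbf{Step 1: univariate network ingredients.} First I would apply Corollary~\ref{cor:basis-NN} with a sub-accuracy $\epsilon_1 \coloneqq c_\star \epsilon / \big(\|c\|_1 (1+|\log\epsilon|)^{?}\big)$, chosen so that the accumulated errors below are controlled; since $\|c\|_1 \leq C_c(1+|\log\epsilon|^{2d})$, we have $|\log\epsilon_1| \lesssim 1+|\log\epsilon|$, so each $\Phi^{v_i}_{\epsilon_1}$ has depth $\lesssim (1+|\log\epsilon|)\log(1+|\log\epsilon|)$ and size $\lesssim 1+|\log\epsilon|^2$. From $\|v_i\|_{H^1(I)}\leq C_v\epsilon^{-b_v}$ I get $|v_i|_{H^1(I)}\leq C_v\epsilon^{-b_v}$, hence $\|v_i - \Realiz(\Phi^{v_i}_{\epsilon_1})\|_{H^1(I)} \leq \epsilon_1 C_v \epsilon^{-b_v}$; crucially, since $\epsilon_1$ has a factor $\epsilon^{b_v+1}$ built in (absorb the $\epsilon^{-b_v}$), this is still $\lesssim \epsilon/\|c\|_1$ times logarithmic factors. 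I would also record the $L^\infty$ and $W^{1,\infty}$ norms of $\Realiz(\Phi^{v_i}_{\epsilon_1})$: using $\|v_i\|_\infty\leq 1$ and the $H^1$ (hence, via an inverse inequality on the at-most-four-piece polynomial of degree $p$, $W^{1,\infty}$) bound, these are bounded by $1+\epsilon_1 C_v\epsilon^{-b_v} \lesssim$ a fixed constant, and $\lesssim p^2\epsilon^{-b_v}$ respectively — polynomially bounded, which is all the multiplication network needs.

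\textbf{Step 2: assemble the tensor-product network.} For each multi-index $\is$ I form the full parallelization $\mathrm{FP}$ (Proposition~\ref{prop:parallSep}) of $\Phi^{v_{i_1}}_{\epsilon_1},\dots,\Phi^{v_{i_d}}_{\epsilon_1}$ acting on disjoint copies of the input (so I first duplicate $x\in\R^d$ into $\R^{d^2}$ via a trivial one-layer network, or rather route coordinate $x_j$ to the $j$-th block), then concatenate with $\Pi^d_{\epsilon_2,M}$ where $M$ is the polynomial bound from Step~1 and $\epsilon_2 \coloneqq c_\star\epsilon/\|c\|_1$. This produces a network computing $\Realiz(\Pi^d_{\epsilon_2,M})(\Realiz(\Phi^{v_{i_1}}_{\epsilon_1}),\dots)$, an approximate $d$-fold product of the approximate basis functions. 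Then $\Phi_{\epsilon,c}$ is the linear combination $\sum_{\is} c_{\is}(\cdots)$, realized by parallelizing all these over $\is$ (first padding to common depth with $\Phi^{\mathrm{Id}}$, Proposition~\ref{prop:Id}) and contracting with the weights $c$ in the output layer; the number of summands is $\Noned^d\lesssim (1+|\log\epsilon|^2)^d = (1+|\log\epsilon|)^{2d}$, each contributing size $\lesssim 1+|\log\epsilon|^2$ (the basis nets) plus $\lesssim 1+|\log\epsilon|$ (the multiplication net, since $\log(dM^d/\epsilon_2)\lesssim 1+|\log\epsilon|$), and the depth is the sum: $O((1+|\log\epsilon|)\log(1+|\log\epsilon|))$ from the basis nets plus $O(1+|\log\epsilon|)$ from $\Pi^d$. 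This yields $\size \lesssim (1+|\log\epsilon|)^{2d}\cdot(1+|\log\epsilon|)^2$? — here one must be careful: the size of $\Pi^d_{\epsilon_2,M}$ is only $\lesssim 1+|\log\epsilon|$ and there are $\Noned^d$ of them, giving $\lesssim (1+|\log\epsilon|)^{2d+1}$; the basis nets, however, can be \emph{shared} across all $\is$ (there are only $d\Noned \lesssim 1+|\log\epsilon|^2$ distinct ones), so their total contribution is $\lesssim (1+|\log\epsilon|^2)(1+|\log\epsilon|^2)$, which is absorbed. This bookkeeping — reusing the univariate subnetworks rather than instantiating one per multi-index — is what makes the $2d+1$ exponent tight, and is the step I expect to require the most care.

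\textbf{Step 3: error estimate.} Write the total error as $\|\sum_\is c_\is \bigotimes_j v_{i_j} - \Realiz(\Phi_{\epsilon,c})\|_{H^1(Q)} \leq \sum_\is |c_\is|\,\big\|\bigotimes_j v_{i_j} - \Realiz(\Pi^d_{\epsilon_2,M})(\Realiz(\Phi^{v_{i_1}}_{\epsilon_1}),\dots)\big\|_{H^1(Q)}$, and split each summand as (a) $\|\bigotimes_j v_{i_j} - \bigotimes_j \Realiz(\Phi^{v_{i_j}}_{\epsilon_1})\|_{H^1(Q)}$ plus (b) $\|\bigotimes_j \Realiz(\Phi^{v_{i_j}}_{\epsilon_1}) - \Realiz(\Pi^d_{\epsilon_2,M})(\cdots)\|_{H^1(Q)}$. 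For (a), use the telescoping identity for products of $d$ factors together with the fact that $H^1$ of a tensor product is controlled by products of $H^1$ and $L^\infty$ (really $\Hmix^1 = \bigotimes H^1$) norms of the factors, reducing it to $\sum_j \|v_{i_j}-\Realiz(\Phi^{v_{i_j}}_{\epsilon_1})\|_{H^1(I)}$ times bounded factors $\lesssim \epsilon_1$; summing over $\is$ with weights $|c_\is|$ gives $\lesssim \|c\|_1\epsilon_1\cdot(\text{const}) \lesssim \epsilon/2$ by choice of $\epsilon_1$. For (b), the pointwise value and a.e. derivative bounds in Proposition~\ref{prop:Multiplication} give an $H^1(Q)$ bound $\lesssim \epsilon_2$ (the $L^2$-part from the value bound directly, the gradient part from the $d$ partial-derivative bounds, using $|Q|<\infty$); summing with $|c_\is|$ gives $\lesssim\|c\|_1\epsilon_2 \lesssim \epsilon/2$. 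Adding, the total is $\leq\epsilon$. Finally the $L^\infty(Q)$ bound follows from $\|\Realiz(\Phi_{\epsilon,c})\|_\infty \leq \sum_\is |c_\is| \|\Realiz(\Pi^d)(\cdots)\|_\infty$ and $\|\Realiz(\Pi^d)(\cdots)\|_\infty \leq \prod_j\|\Realiz(\Phi^{v_{i_j}}_{\epsilon_1})\|_\infty + \epsilon_2 \leq 2^d + \epsilon_2$ (using $\|\Realiz(\Phi^{v_{i_j}}_{\epsilon_1})\|_\infty\leq 2$, say), times $\|c\|_1$, giving the stated $(2^d+1)C_c(1+|\log\epsilon|^{2d})$ after absorbing constants.

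The main obstacle is the size bookkeeping in Step~2: naively parallelizing one complete subnetwork (basis nets plus multiplication net) per multi-index $\is$ would cost $\Noned^d$ copies of the $O(|\log\epsilon|^2)$-sized basis nets, giving $O(|\log\epsilon|^{2d+2})$ rather than $O(|\log\epsilon|^{2d+1})$; one must instead compute the $d\Noned$ distinct univariate realizations once, route their outputs appropriately (a sparse linear layer) into the $\Noned^d$ multiplication subnetworks, and only then the dominant term is $\Noned^d\cdot\size(\Pi^d) \lesssim |\log\epsilon|^{2d}\cdot|\log\epsilon| = |\log\epsilon|^{2d+1}$. Getting the routing layer's sparsity and the depth-padding right, while keeping the error argument of Step~3 valid, is the delicate part.
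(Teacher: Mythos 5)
Your proposal follows essentially the same route as the paper's proof: the shared bank of $d\Noned$ univariate subnetworks routed by sparse selection matrices into $\Noned^d$ multiplication networks (the paper's $\Phibasis$ and $E^{(\is)}$), the contraction with $\vvec(c)$, and the identical triangle-inequality split of the $H^1$ error into a tensor-product term and a chain-rule-controlled multiplication term. The only quantitative slip is that your $\epsilon_2 = c_\star\epsilon/\normc[1]{c}$ should also carry a factor $(c_{v,\max}+1)^{-1}\sim\epsilon^{b_v}$ to absorb the $\| \partial_x\Realiz(\Phi^{v_{i_k}}_{\epsilon_1})\|_{L^2}$ factor arising in the chain rule (as the paper does), which your general framework already anticipates and which does not affect the logarithmic bookkeeping.
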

 \begin{proof}
Assume $I \neq \emptyset$ as otherwise there is nothing to show. Let $C_I\geq1$  be such that $C_I^{-1}\leq (b-a) \leq C_I$.
Let $c_{v, \mathrm{max}} \coloneqq \max\{\|v_i\|_{H^1(I)}\colon  i \in \{1, \dots, \Noned\}\} \leq C_v  \epsilon^{-b_v}$, 
let $\epsilon_1 \coloneqq \min \{\epsilon/ (2 \cdot d \cdot (c_{v, \mathrm{max}}+1)^{d} \cdot \|c\|_1), 1/2,
C_I^{-1/2}C_v^{-1}\epsilon^{b_v}\}$, 
and let $\epsilon_2 \coloneqq \min\{\epsilon /(2 \cdot (\sqrt{d}+1) \cdot (c_{v, \mathrm{max}}+1) \cdot \|c\|_1), 1/2 \}$. 

\paragraph{Construction of the neural network.}
Invoking Corollary \ref{cor:basis-NN} we choose, for $i=1, \dots, \Noned$, NNs 
$\Phi_{\epsilon_1}^{v_{i}}$ so that
\[
\left\|\Realiz(\Phi_{\epsilon_1}^{v_{i}}) - v_{ i}
\right\|_{H^1(I)} \leq  C_v \epsilon_1 \epsilon^{-b_v} \leq 1. 
\]
It follows that for all  $i \in \{1, \dots, \Noned\}$
\begin{align}
\left\|\Realiz\left(\Phi_{\epsilon_1}^{v_{i}}\right) \right\|_{H^1(I)} 
\leq & \left\|\Realiz\left(\Phi_{\epsilon_1}^{v_{i}}\right) - v_{i}\right\|_{H^1(I)} 
+ \left\|v_{i}\right\|_{H^1(I)} 
\leq 1+c_{v,\max}
\label{eq:vjiH1err}
\end{align}
and that, by Sobolev imbedding,
\begin{equation}
  \begin{aligned}
\left\|\Realiz\left(\Phi_{\epsilon_1}^{v_{i}}\right) \right\|_{\infty} 
&\leq \left\|\Realiz\left(\Phi_{\epsilon_1}^{v_{i}}\right) - v_{i}\right\|_{\infty} 
+ \left\|v_{i}\right\|_\infty 
\leq C_I^{1/2} \left\|\Realiz\left(\Phi_{\epsilon_1}^{v_{i}}\right) - v_{i}\right\|_{H^1(I)} + 1 
\\ &
\leq C_I^{1/2}C_v \epsilon_1 \epsilon^{-b_v} + 1 \leq 2.
  \end{aligned}
\label{eq:vjipointwise}
\end{equation}
Then, let $\Phibasis$ be the NN defined as
\begin{equation}
  \label{eq:Phibasis}
  \Phibasis \coloneqq \FPar\left( \Par(\Phi^{v_{ 1}}_{\epsilon_1}, \dots, \Phi^{v_{ \Noned}}_{\epsilon_1}),
    \dots,
    \Par(\Phi^{v_{ 1}}_{\epsilon_1}, \dots, \Phi^{v_{\Noned}}_{\epsilon_1})
  \right),
\end{equation}
where the full parallelization is of $d$ copies of $\Par(\Phi^{v_{ 1}}_{\epsilon_1}, \dots, \Phi^{v_{ \Noned}}_{\epsilon_1})$.
Note that $\Phibasis$ is a NN with $d$-dimensional input and
$d\Noned$-dimensional output.
Subsequently, we introduce the $\Noned^d$ matrices 
$E^{(i_1, \dots, i_d)} \in \{0,1\}^{d\times d\Noned }$ such that, 
for all $(i_1, \dots, i_d)\in \{1, \dots,\Noned\}^d$,
\begin{equation*}
 E^{(i_1, \dots, i_d)} a = \{a_{(j-1)\Noned+i_j} : j=1, \dots,d\}
\qquad\text{for all }a = (a_1, \dots, a_{d\Noned})\in \mathbb{R}^{d\Noned}.
\end{equation*}
Note that, for all $(i_1, \dots, i_d)\in \{1, \dots, \Noned\}^d$,
\begin{equation*}
\Realiz((E^{(i_1, \dots, i_d)}, 0)\sconc \Phibasis) : (x_1, \dots, x_d)\mapsto \left\{ \Realiz(\Phi^{v_{i_j}}_{\epsilon_1} )(x_j): j=1, \dots, d \right\}.
\end{equation*}
Then, we set
\begin{align}\label{eq:ConstructionOfPhiEpsilon}
  \Phi_{\epsilon} \coloneqq \Par\left(\Pi_{\epsilon_2, 2}^d \sconc(E^{(i_1, \dots, i_d)}, 0) :(\iscomma)\in \{1, \dots, \Noned\}^d \right)\sconc \Phibasis,
\end{align}
where $\Pi_{\epsilon_2, 2}^d$ is according to Proposition \ref{prop:Multiplication}. Note that, by \eqref{eq:vjipointwise}, the inputs of $\Pi_{\epsilon_2, 2}^d$
are bounded in absolute value by $2$.
Finally, we define 
\[
  \Phi_{\epsilon, c} \coloneqq (( \vvec( c )^\top, 0 )) \sconc \Phi_\epsilon,
\]
where $\vvec(c) \in \R^{\Noned^d}$ is the reshaping such that, 
for all $(\iscomma)\in \{1, \dots, \Noned\}^d$
\begin{equation}\label{def:vec}
 ( \vvec(c))_{i}  = c_{\is}, \qquad \text{with } i = 1+\sum_{j=1}^d (i_j-1) \Noned^{j-1}.
\end{equation}
See Figure \ref{fig:NN-appx} for a schematic representation of the NN
$\Phi_{\epsilon, c}$.
\begin{figure}
  \centering
  \includegraphics[width=.8\textwidth]{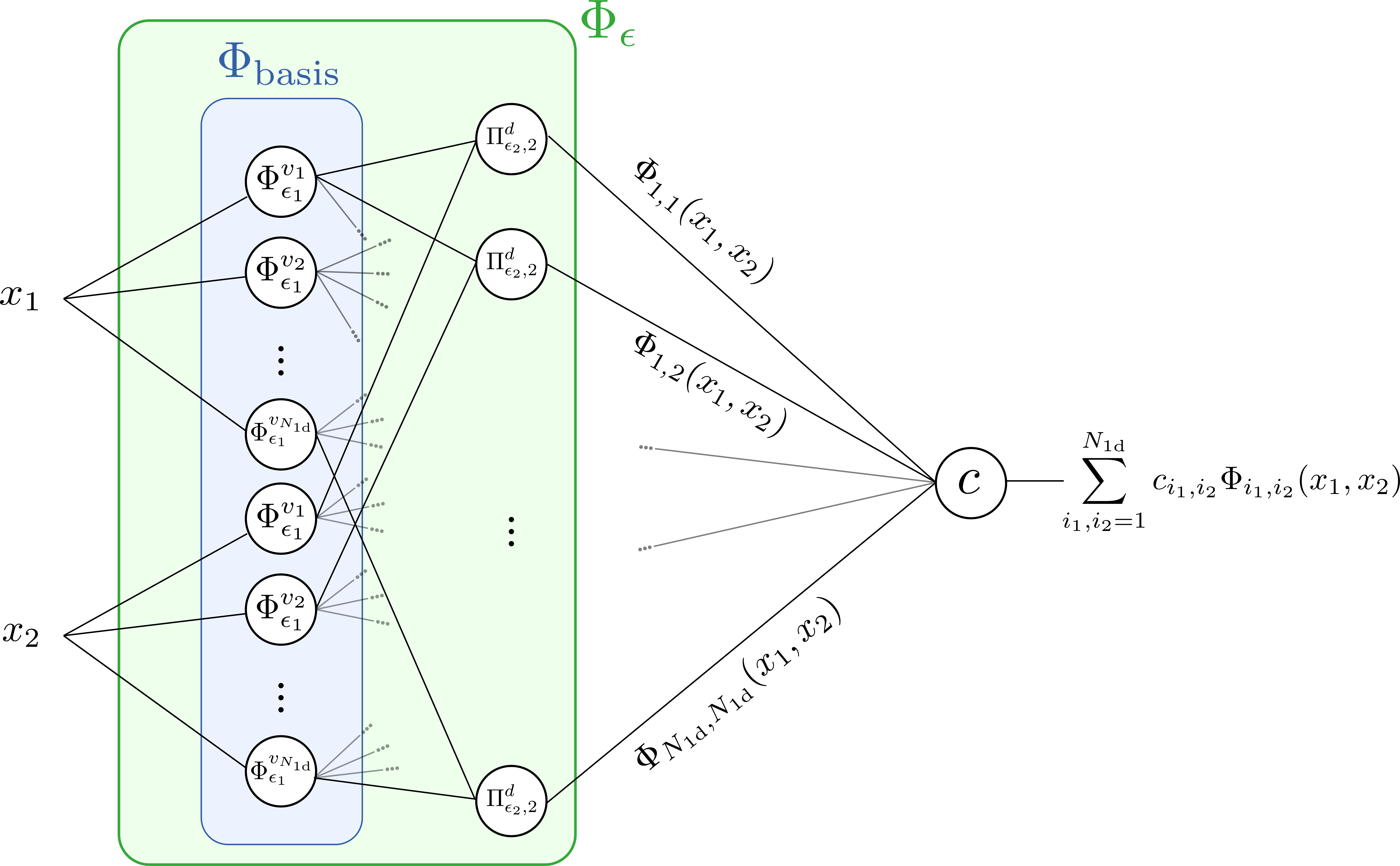}
  \caption{Schematic representation of the neural network $\Phi_{\epsilon, c}$,
    for the case $d=2$ constructed in the proof
    of Theorem \ref{th:ReLU-hp}. The circles represent subnetworks (i.e., the
    neural networks $\Phi^{v_{ i}}_{\epsilon_1}$, $\Pi^d_{\epsilon_2, 2}$, and $((\vvec(c)^\top,0))$).
    Along some branches, we indicate
    $\Phi_{i, k}(x_1, x_2) = \Realiz\left(\Pi^2_{\epsilon_2,2}\sconc ((E^{(i,k)}, 0))
    \sconc \Phibasis\right)(x_1, x_2)$.}
  \label{fig:NN-appx}
\end{figure}

\paragraph{Approximation accuracy.}
Let us now analyze if $\Phi_{\epsilon, c}$ has the asserted approximation
accuracy.
Define, for all $(\iscomma)\in \{1, \dots, \Noned\}^d$ 
\[
  \phi_{i_1\dots i_d} = \bigotimes_{j = 1}^d v_{ i_j} , 
\]
Furthermore, for each $(\iscomma)\in \{1, \dots, \Noned\}^d$, 
let $\Phi_{\is}$ denote the NNs 
\begin{equation*}
\Phi_{\is} = \Pi_{\epsilon_2, 2}^d \sconc((E^{(i_1, \dots, i_d)}, 0)) \sconc \Phibasis.
\end{equation*}
We estimate by the triangle inequality that
\begin{equation}
  \begin{aligned}
  \left\|\sumis c_\is \phi_\is - \Realiz(\Phi_{\epsilon, c}) \right\|_{H^1(Q)}
  &= \left\|\sumis c_\is \phi_\is - \sumis c_\is \Realiz(\Phi_{\is}) \right\|_{H^1(Q)} \\
  & \leq \sumis |c_\is| \left\| \phi_\is - \Realiz(\Phi_{\is}) \right\|_{H^1(Q)}.\label{eq:SecondTriangleInequality}
  \end{aligned}
\end{equation}
We have that \[
\left\| \phi_\is - \Realiz(\Phi_{\is}) \right\|_{H^1(Q)} = \left\|\bigotimes_{j = 1}^d v_{ i_j} - \Realiz\left(\Pi_{\epsilon_2, 2}^d\right) \circ \left[\Realiz\left(\Phi_{\epsilon_1}^{v_{i_1}}\right), \dots, \Realiz\left(\Phi_{\epsilon_1}^{v_{i_d}}\right)\right] \right\|_{H^1(Q)}
\]
and, by another application of the triangle inequality, we have that
\begin{align}
\left\| \phi_\is - \Realiz\left(\Phi_{\is}\right) \right\|_{H^1(Q)}  \leq & \left\|\bigotimes_{j = 1}^d v_{ i_j} - \bigotimes_{j = 1}^d \Realiz\left(\Phi_{\epsilon_1}^{v_{i_j}}\right)\right\|_{H^1(Q)}
\nonumber\\
& \qquad +  \left\|\bigotimes_{j = 1}^d \Realiz\left(\Phi_{\epsilon_1}^{v_{i_j}}\right) -\Realiz\left(\Pi_{\epsilon_2, 2}^d\right) \circ \left[\Realiz\left(\Phi_{\epsilon_1}^{v_{i_1}}\right), \dots, \Realiz\left(\Phi_{\epsilon_1}^{v_{i_d}}\right)\right] \right\|_{H^1(Q)} 
\nonumber\\
\leq & \left\|\bigotimes_{j = 1}^d v_{i_j} - \bigotimes_{j = 1}^d \Realiz\left(\Phi_{\epsilon_1}^{v_{i_j}}\right)\right\|_{H^1(Q)} + (\sqrt{d}+1)\epsilon_2(c_{v,\max}+1),
\label{eq:ThirdTriangleInequality}
\end{align}
where the last estimate follows from Proposition \ref{prop:Multiplication} and
the chain rule:
\begin{equation*}
    \left\|\bigotimes_{j = 1}^d \Realiz\left(\Phi_{\epsilon_1}^{v_{i_j}}\right) 
      -\Realiz\left(\Pi_{\epsilon_2, 2}^d\right) \circ \left[\Realiz\left(\Phi_{\epsilon_1}^{v_{i_1}}\right), \dots, \Realiz\left(\Phi_{\epsilon_1}^{v_{i_d}}\right)\right] \right\|_{L^2(Q)} 
    \leq \epsilon_2
  \end{equation*}
and
  \begin{align*}
    & \left|\bigotimes_{j = 1}^d \Realiz\left(\Phi_{\epsilon_1}^{v_{i_j}}\right) 
-\Realiz\left(\Pi_{\epsilon_2, 2}^d\right) \circ \left[\Realiz\left(\Phi_{\epsilon_1}^{v_{i_1}}\right), \dots, \Realiz\left(\Phi_{\epsilon_1}^{v_{i_d}}\right)\right] \right|_{H^1(Q)}^2
\\ 
&\qquad =  
\sum_{k=1}^d 
 \left\| \frac{\partial}{\partial x_k} \bigotimes_{j = 1}^d \Realiz\left(\Phi_{\epsilon_1}^{v_{i_j}}\right) 
 - \frac{\partial}{\partial x_k} \Realiz\left(\Pi_{\epsilon_2, 2}^d\right) \circ \left[\Realiz\left(\Phi_{\epsilon_1}^{v_{i_1}}\right), \dots, \Realiz\left(\Phi_{\epsilon_1}^{v_{i_d}}\right)\right] \right\|_{L^2(Q)}^2
 \\
& \qquad =  
\sum_{k=1}^d
\left\| \left( \bigotimes_{\substack{j = 1 \\ j\neq k}}^d \Realiz\left(\Phi_{\epsilon_1}^{v_{i_j}}\right) 
 - \left( \frac{\partial}{\partial x_k} \Realiz\left(\Pi_{\epsilon_2, 2}^d\right) \right) \circ \left[\Realiz\left(\Phi_{\epsilon_1}^{v_{i_1}}\right), \dots, \Realiz\left(\Phi_{\epsilon_1}^{v_{i_d}}\right)\right] \right)
\left( \frac{\partial}{\partial x} \Realiz\left(\Phi_{\epsilon_1}^{v_{i_k}} \right) \right)
\right\|_{L^2(Q)}^2
\\
& \qquad \leq  \sum_{k=1}^d \epsilon_2^2
\left\| \frac{\partial}{\partial x} \Realiz\left(\Phi_{\epsilon_1}^{v_{i_k}} \right) \right\|_{L^2(I)}^2
\leq d \epsilon_2^2 (c_{v,\max}+1)^2,
\end{align*}%
where we used \eqref{eq:vjiH1err}.
We now use \eqref{eq:vjipointwise} to bound the first term in
\eqref{eq:ThirdTriangleInequality}: for $d = 3$, 
we have that, for all $(\iscomma) \in \{1, \dots, \Noned\}^d$, 
\begin{align*}
\left\|\bigotimes_{j = 1}^d v_{ i_j} - \bigotimes_{j = 1}^d \Realiz\left(\Phi_{\epsilon_1}^{v_{i_j}}\right)\right\|_{H^1(Q)} &\leq \left\|(v_{ i_1} - \Realiz(\Phi_{\epsilon_1}^{v_{i_1}}))\otimes \bigotimes_{j = 2}^d v_{i_j}\right\|_{H^1(Q)}\\
&\qquad  + \left\|\Realiz\left(\Phi_{\epsilon_1}^{v_{i_j}}\right) \otimes \left(v_{ i_2} - \Realiz\left(\Phi_{\epsilon_1}^{v_{i_2}}\right)\right) \otimes v_{ i_d}\right\|_{H^1(Q)}\\
&\qquad \qquad + \left\|\bigotimes_{j = 1}^{d-1}\Realiz(\Phi_{\epsilon_1}^{v_{i_j}}) \otimes (v_{ i_d} - \Realiz(\Phi_{\epsilon_1}^{v_{i_d}}))\right\|_{H^1(Q)} \eqqcolon \mathrm{(I)}.
\end{align*}
For $d = 2$, we end up with a similar estimate with only two terms.  
By the tensor product structure, it is clear that $\mathrm{(I)} \leq d \epsilon_1 (c_{v, \mathrm{max}}+1)^{d}$. We have from \eqref{eq:SecondTriangleInequality} and the considerations above that 
\begin{align*}
\left\|\sumis c_\is \phi_\is - \Realiz\left(\Phi_{\epsilon, c}\right) \right\|_{H^1(Q)} \leq \|c\|_1 \left( d \epsilon_1 (c_{v, \mathrm{max}}+1)^{d} + (\sqrt{d}+1) \epsilon_2 (c_{v,\max}+1) \right) \leq \epsilon.
\end{align*}
This yields \eqref{eq:ApproximationPartOfReLUStatement}.

\paragraph{Bound on the $L^\infty$ norm of the neural network.}
  As we have already shown, 
  $\left\|\Realiz\left(\Phi_{\epsilon_1}^{v_{i}}\right) \right\|_{\infty} \leq 2$.
  Therefore, by Proposition \ref{prop:Multiplication},
  $  \left\|\Realiz\left(\Phi_\epsilon\right) \right\|_{\infty} \leq 2^d + \epsilon_2$.
  It follows that
  \begin{equation*}
  \left\|\Realiz\left(\Phi_{\epsilon, c}\right) \right\|_{\infty} 
  \leq 
  \|c\|_1\left(2^d + \epsilon_2  \right) \leq  (2^d+1)C_c (1 + \left| \log\epsilon \right|^{2d}).
  \end{equation*}
\paragraph{Size of the neural network.} 
Bounds on the size and depth of $\Phi_{\epsilon, c}$ follow 
from Proposition \ref{prop:Multiplication} and Corollary \ref{cor:basis-NN}. 
Specifically, we start by remarking that 
there exists a constant $C_1 > 0$ depending on $C_v$,
$b_v$, $C_I$ and $d$ only, such that 
$\left|\log(\epsilon_1)\right|\leq C_1 (1+\left| \log\epsilon  \right|)$. 
Then, by Corollary \ref{cor:basis-NN}, there exist constants 
$C_{4}$, $C_{5}>0$ depending on $C_p, C_v, b_v, (b-a),$ and $d$ only
such that for all $i=1, \dots, \Noned$,
\begin{equation*}
\depth\left(\Phi^{v_{i}}_{\epsilon_1}\right)
	\leq{}   C_{4}  (1 + \left|\log\epsilon\right|)(1 + \log(1+\left|\log\epsilon\right|)) \text{ and }
 \size\left(\Phi^{v_{i}}_{\epsilon_1}\right) \leq{}  C_{5} (1 +  \left|\log\epsilon\right|^2).
\end{equation*}
Hence, by Propositions \ref{prop:parallSep} and \ref{prop:parall}, 
there exist $C_{6}$, $C_{7}>0$ depending on $C_p, C_v, b_v, (b-a),$ and $d$ only
such that
\begin{equation*} 
\depth(\Phibasis)\leq C_{6}  (1 + \left|\log\epsilon\right|)(1 + \log(1+\left|\log\epsilon\right|)) \text{ and }
  \size(\Phibasis)\leq C_{7} d \Noned (1 +  \left|\log\epsilon\right|^2).
\end{equation*}
Then, 
remarking that for all $(\iscomma)\in \{1, \dots, \Noned\}^d$ there holds
$\|E^{(\iscomma)}\|_0 =d$ and, 
by Propositions \ref{prop:conc}, \ref{prop:Multiplication}, and \ref{prop:parall}, 
we have
\begin{equation*}
  \depth(\Phi_{\epsilon})\leq C_{8} (1 + \left|\log\epsilon\right|)(1 + \log(1+\left|\log\epsilon\right|)) , 
\;\;
  \size(\Phi_{\epsilon})\leq C_{9}\left( \Noned^d(1+\left| \log\epsilon \right|)  + \size(\Phibasis)\right).
\end{equation*}
For $C_{8}, C_{9}>0$ depending on $C_p, C_v, b_v, (b-a), d$ and $C_c$ only.
Finally, we conclude that there exists a
constant $C_{10}>0$ depending on $C_p, C_v, b_v, (b-a), d$ and $C_c$ only 
such that
\begin{equation*}
\depth( \Phi_{\epsilon, c} ) 
\leq 
C_{10}(1 + \left|\log\epsilon\right|)(1 + \log(1+\left|\log\epsilon\right|)).
\end{equation*}
Using also the fact that $\Noned \leq C
(1+\left| \log\epsilon \right|^2)$ for $C>0$ independent of $\epsilon$ and since $d\geq 2$,
\begin{equation*}
\size(\Phi_{\epsilon, c})\leq C_{11} (1+\left|\log\epsilon\right|)^{2d+1},
\end{equation*}
for a constant $C_{11}>0$ depending on $C_p, C_v, b_v, (b-a), d$ and $C_c$ only. 
\end{proof}

Next, we state our main approximation result, which describes the approximation of singular functions in
$(0,1)^d$ by realizations of NNs.
\begin{theorem}
  \label{th:ReLUapprox}
Let $d \in \{2,3\}$ and $Q \coloneqq (0,1)^d$. Let $\Cset =\{\corn\}$ where $\corn$ is
  one of the corners of $Q$ and
let $\Eset = \Eset_\corn$ contain the edges adjacent to $c$ when $d=3$,
$\Eset=\emptyset$ when $d=2$.
Assume furthermore that $C_f, A_f>0$, and  
\begin{alignat*}{3}
&\ugamma = \{\gamma_\corn: \corn\in \Cset\}, &&\text{with } \gamma_\corn>1,\;
\text{for all } \corn\in\Cset &&\text{ if } d = 2,\\ %
&\ugamma = \{\gamma_\corn, \gamma_e: \corn\in \Cset, e\in \Eset\}, \quad&&\text{with
} \gamma_\corn>3/2\text{ and } \gamma_e>1,\; \text{for all }\corn\in\Cset\text{
  and }e\in \Eset\quad &&\text{ if } d = 3. %
\end{alignat*}
%
 Then, for every $f\in \cJ^{\varpi}_\ugamma(Q;\Cset, \Eset;C_f,A_f)$ and every $0< \epsilon <1$, there exists a NN $\Phi_{\epsilon, f}$ so that 
\begin{equation}\label{eq:ApproximationPartOfReLUStatement}
\left \| f - \Realiz\left(\Phi_{\epsilon, f}\right) \right\|_{H^1(Q)} \leq \epsilon.
\end{equation}
In addition, $\|\Realiz\left(\Phi_{\epsilon, f}\right)\|_{L^\infty(Q)} = \mathcal{O}(\left| \log\epsilon \right|^{2d})$ for $\epsilon \to 0$. Also, $\size(\Phi_{\epsilon, f}) = \mathcal{O}(\left|\log\epsilon\right|^{2d+1})$ 
and 
$\depth(\Phi_{\epsilon, f}) = \mathcal{O}(\left|\log\epsilon\right|\log(\left|\log\epsilon\right|))$, 
for $\epsilon \to 0$.
\end{theorem}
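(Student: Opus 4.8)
The plan is to obtain $\Phi_{\epsilon,f}$ by gluing together the two approximation steps already prepared above: the tensor‑product $hp$-finite element approximation of Theorem \ref{thm:Interface} and its ReLU emulation from Theorem \ref{th:ReLU-hp}. First I would invoke Theorem \ref{thm:Interface} with target accuracy $\epsilon/2$ in place of $\epsilon$. Since the hypotheses on $d$, $\Cset$, $\Eset$, $\ugamma$, $C_f$, $A_f$ coincide with those of the present theorem, this yields a polynomial degree $p\le C_p(1+|\log(\epsilon/2)|)$, a number of geometric levels $L\le C_L(1+|\log(\epsilon/2)|)$, the geometric grid $\cG^L_1$ on $I=(0,1)$ (with $\Nint=L+1$ subintervals), univariate functions $v_i\in\mathbb{Q}_p(\cG^L_1)\cap H^1(I)$, $i=1,\dots,\Noned$ with $\Noned=(L+1)p+1$, and a coefficient array $c=\{c_{\is}\}$, such that $\bigl\| f - \sum_{\iscomma=1}^{\Noned} c_{\is}\,\phi_{\is} \bigr\|_{H^1(Q)} \le \epsilon/2$ with $\phi_{\is}=\bigotimes_{j=1}^d v_{i_j}$, together with the support property, the bounds $\|v_i\|_{L^\infty(I)}\le1$ and $\|v_i\|_{H^1(I)}\le C_v(\epsilon/2)^{-b_v}$, and the coefficient bounds $\|c\|_\infty\le C_2(1+|\log(\epsilon/2)|)^d$, $\|c\|_1\le C_c(1+|\log(\epsilon/2)|)^{2d}$.

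Second, I would verify that these objects fulfil the hypotheses of Theorem \ref{th:ReLU-hp}. The only points needing a line of bookkeeping are the reconciliation of the two shapes of poly‑logarithmic bounds: using $(1+x)^{k}\le 2^{k-1}(1+x^{k})$ for $x\ge0$ and $|\log(\epsilon/2)|\le 2(1+|\log\epsilon|)$, one gets $\Noned=(L+1)p+1\le C(1+|\log\epsilon|)^2\le C_{\Noned}(1+|\log\epsilon|^2)$ and $\|c\|_1\le C_c(1+|\log(\epsilon/2)|)^{2d}\le C_c'(1+|\log\epsilon|^{2d})$, while $p\le C_p(1+|\log\epsilon|)$ and $\|v_i\|_{H^1(I)}\le C_v(\epsilon/2)^{-b_v}$ are exactly of the required form when Theorem \ref{th:ReLU-hp} is applied with accuracy $\epsilon/2$. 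Applying Theorem \ref{th:ReLU-hp} with partition $\cG_\oned=\cG^L_1$ and target accuracy $\epsilon/2$ then produces a NN $\Phi_{\epsilon/2,c}$ with $\bigl\| \sum_{\iscomma=1}^{\Noned} c_{\is}\,\phi_{\is} - \Realiz(\Phi_{\epsilon/2,c}) \bigr\|_{H^1(Q)} \le \epsilon/2$, with $\|\Realiz(\Phi_{\epsilon/2,c})\|_{L^\infty(Q)}\le (2^d+1)C_c'(1+|\log(\epsilon/2)|^{2d})$, $\size(\Phi_{\epsilon/2,c})\le C(1+|\log(\epsilon/2)|^{2d+1})$, and $\depth(\Phi_{\epsilon/2,c})\le C(1+|\log(\epsilon/2)|\log|\log(\epsilon/2)|)$.

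Third, I would set $\Phi_{\epsilon,f}:=\Phi_{\epsilon/2,c}$ and conclude by the triangle inequality:
\[
\left\| f - \Realiz(\Phi_{\epsilon,f}) \right\|_{H^1(Q)} \le \left\| f - \sum_{\iscomma=1}^{\Noned} c_{\is}\,\phi_{\is} \right\|_{H^1(Q)} + \left\| \sum_{\iscomma=1}^{\Noned} c_{\is}\,\phi_{\is} - \Realiz(\Phi_{\epsilon/2,c}) \right\|_{H^1(Q)} \le \frac\epsilon2 + \frac\epsilon2 = \epsilon,
\]
which is \eqref{eq:ApproximationPartOfReLUStatement}. Since $|\log(\epsilon/2)|=O(|\log\epsilon|)$ as $\epsilon\to0$ (and $d\ge2$), the three stated asymptotic bounds on $\|\Realiz(\Phi_{\epsilon,f})\|_{L^\infty(Q)}$, $\size(\Phi_{\epsilon,f})$ and $\depth(\Phi_{\epsilon,f})$ are inherited from the corresponding bounds for $\Phi_{\epsilon/2,c}$.

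I expect no genuine obstacle in this argument itself: it is a short composition of Theorems \ref{thm:Interface} and \ref{th:ReLU-hp}, so essentially all the difficulty is already absorbed into those statements (and, for Theorem \ref{thm:Interface}, into the tensor‑product $hp$-analysis of Appendix \ref{sec:hp-analysis}). The one thing one must be careful about is precisely the elementary bookkeeping described above — passing between the forms $(1+|\log\epsilon|)^{k}$ output by Theorem \ref{thm:Interface} and the forms $(1+|\log\epsilon|^{k})$ required as input by Theorem \ref{th:ReLU-hp}, and absorbing the harmless $\epsilon\mapsto\epsilon/2$ rescaling — all of which only affects multiplicative constants and not the poly‑logarithmic exponents.
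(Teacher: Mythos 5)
Your proposal is correct and follows essentially the same route as the paper's own proof: apply Theorem \ref{thm:Interface} at accuracy $\epsilon/2$, feed the resulting basis functions and coefficients into Theorem \ref{th:ReLU-hp} at accuracy $\epsilon/2$, set $\Phi_{\epsilon,f}:=\Phi_{\epsilon/2,c}$, and conclude by the triangle inequality with the size, depth, and $L^\infty$ bounds inherited from Theorem \ref{th:ReLU-hp}. The extra bookkeeping you include for reconciling the $(1+|\log\epsilon|)^k$ and $(1+|\log\epsilon|^k)$ forms is a harmless elaboration of what the paper leaves implicit.
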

\begin{proof}
Denote $I\coloneqq (0,1)$ and let  $f\in \mathcal{J}^{\varpi}_\ugamma(Q;\Cset, \Eset;
  C_f,A_f)$ and $0<\epsilon<1$.
Then, by Theorem \ref{thm:Interface} (applied with $\epsilon/2$ instead of $\epsilon$)
there exists $\Noned\in \N$ so that $\Noned =
\mathcal{O}((1+\left|\log\epsilon\right|)^{2})$, $c \in
\R^{\Noned\times\dots\times\Noned}$ with $\|c\|_1 \leq C_c (1+\left|\log\epsilon\right|^{2d})$, and, for all $(i_1,
  \dots, i_d)\in\{1, \dots, \Noned\}^d$,
\[
  \phi_{i_1\dots i_d} = \bigotimes_{j = 1}^d v_{ i_j} , 
\]
such that the hypotheses of Theorem \ref{th:ReLU-hp} are met, and 
\[
	\left\|f - \sum_{i_1, \dots i_d=1}^{\Noned} c_{i_1\dots i_d} \phi_{i_1\dots i_d} \right\|_{H^1(Q)} \leq \frac\epsilon2.
\]
We have, by Theorem \ref{thm:Interface} and the triangle inequality, that for $\Phi_{\epsilon, f} := \Phi_{\epsilon/2, c}$
\begin{align*}
\left\|f - \Realiz(\Phi_{\epsilon, f}) \right\|_{H^1(Q)} 
	\leq \frac\epsilon2 + \left\|\sumis c_\is \phi_\is - \Realiz(\Phi_{\epsilon/2, c}) \right\|_{H^1(Q)}.
\end{align*}
Then, the application of Theorem \ref{th:ReLU-hp} (with $\epsilon/2$ instead of
$\epsilon$) concludes the proof of
  \eqref{eq:ApproximationPartOfReLUStatement}. Finally, the bounds on
  $\depth(\Phi_{\epsilon, f}) = \depth(\Phi_{\epsilon/2, c})$,
  $\size(\Phi_{\epsilon, f}) = \size(\Phi_{\epsilon/2, c})$, and on
  $\|\Realiz(\Phi_{\epsilon, f})\|_{L^\infty(Q)} = \|\Realiz(\Phi_{\epsilon/2,
    c})\|_{L^\infty(Q)} $ follow from the corresponding estimates of Theorem \ref{th:ReLU-hp}.
\end{proof}

Theorem \ref{th:ReLUapprox} admits a straightforward generalization to functions with multivariate output, so that each coordinate is a weighted analytic function with the same regularity.
Here, we denote for a NN $\Phi$ with $N$-dimensional output, $N\in \mathbb{N}$, by $\Realiz(\Phi)_n$ the $n$-th component of the output (where $n\in \{1, \dots, N\}$).
\begin{corollary}
  \label{cor:ReLUapprox-vector}
Let $d \in \{2,3\}$ and $Q \coloneqq (0,1)^d$. Let $\Cset =\{\corn\}$ where $\corn$ is
  one of the corners of $Q$ and
let $\Eset = \Eset_\corn$ contain the edges adjacent to $\corn$ when $d=3$;
$\Eset=\emptyset$ when $d=2$. Let $N_f\in \mathbb{N}$. Further assume that $C_f,
  A_f>0$, and  
\begin{alignat*}{3}
&\ugamma = \{\gamma_\corn: \corn\in \Cset\}, &&\text{with } \gamma_\corn>1,\;
\text{for all } \corn\in\Cset &&\text{ if } d = 2,\\ %
&\ugamma = \{\gamma_\corn, \gamma_e: \corn\in \Cset, e\in \Eset\}, \quad&&\text{with
} \gamma_\corn>3/2\text{ and } \gamma_e>1,\; \text{for all }\corn\in\Cset\text{
  and }e\in \Eset\quad &&\text{ if } d = 3. %
\end{alignat*}
 Then, for all $\fbf = (f_1, \dots, f_{N_f}) \in
 \left[\cJ^{\varpi}_\ugamma(Q;\Cset, \Eset;C_f,A_f)  \right]^{N_f}$ and every $0< \epsilon
 <1$, there exists a NN $\Phi_{\epsilon, \fbf}$ with $d$-dimensional
 input and $N_f$-dimensional output such that, for all $ n=1, \dots, N_f$,
\begin{equation}\label{eq:ReLUapprox-vector}
\left \| f_n - \Realiz\left(\Phi_{\epsilon, \fbf}\right)_n \right\|_{H^1(Q)} \leq \epsilon.
\end{equation}
In addition,  
$\|\Realiz(\Phi_{\epsilon, \fbf})_n\|_{L^\infty(Q)} = \mathcal{O}(\left| \log\epsilon \right|^{2d})$ 
for every $n = \{1, \dots, N_f\}$,  
$\size(\Phi_{\epsilon, f}) 
 = 
 \mathcal{O}(\left|\log\epsilon\right|^{2d+1} + N_f\left|\log\epsilon\right|^{2d})$ 
and 
$\depth(\Phi_{\epsilon, f}) = \mathcal{O}(\left|\log\epsilon\right|\log(\left|\log\epsilon\right|))$, 
for $\epsilon \to 0$.
\end{corollary}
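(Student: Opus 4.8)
The plan is to treat the vector-valued case by applying Theorem \ref{th:ReLUapprox} componentwise and then stitching the resulting networks together with the parallelization calculus of Section \ref{sec:ReLUCalc}. Concretely, for each $n \in \{1, \dots, N_f\}$ I would apply Theorem \ref{th:ReLUapprox} to $f_n \in \cJ^\varpi_\ugamma(Q;\Cset,\Eset;C_f,A_f)$ with the same target accuracy $\epsilon$, obtaining NNs $\Phi_{\epsilon, f_n}$ with one-dimensional output such that $\|f_n - \Realiz(\Phi_{\epsilon,f_n})\|_{H^1(Q)} \le \epsilon$, together with the stated depth, size, and $L^\infty$ bounds, all with constants uniform in $n$ (since $C_f, A_f, \ugamma$ are common to all components). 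Then I would define $\Phi_{\epsilon,\fbf}$ as the parallelization $\Par(\Phi_{\epsilon,f_1}, \dots, \Phi_{\epsilon,f_{N_f}})$ of Proposition \ref{prop:parall}, which has $d$-dimensional input, $N_f$-dimensional output, and realization $x \mapsto (\Realiz(\Phi_{\epsilon,f_1})(x), \dots, \Realiz(\Phi_{\epsilon,f_{N_f}})(x))$, so that \eqref{eq:ReLUapprox-vector} holds for every $n$.

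The one technical wrinkle is that Proposition \ref{prop:parall} requires all the $\Phi_{\epsilon,f_n}$ to have the same number of layers. This is not automatic: although the depth bound from Theorem \ref{th:ReLUapprox} is the same order for each $n$, the exact layer counts may differ. I would resolve this in the standard way: let $L_{\max} \coloneqq \max_n \depth(\Phi_{\epsilon,f_n})$, and replace each $\Phi_{\epsilon,f_n}$ by its sparse concatenation $\Phi^{\mathrm{Id}}_{1, L_{\max}-\depth(\Phi_{\epsilon,f_n})} \sconc \Phi_{\epsilon,f_n}$ with an identity network of the appropriate depth (Proposition \ref{prop:Id}, applied in output dimension $1$), using Proposition \ref{prop:conc} for the concatenation. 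Since each identity padding network has size $\le 2 L_{\max} = \mathcal{O}(\left|\log\epsilon\right|\log(\left|\log\epsilon\right|))$ and the concatenation at most doubles sizes, the per-component depth becomes exactly $L_{\max}$ and each per-component size remains $\mathcal{O}(\left|\log\epsilon\right|^{2d+1})$. (Alternatively, one may pad inside the construction of Theorem \ref{th:ReLU-hp}, but padding afterwards is cleaner.)

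Finally, for the complexity bounds I would invoke $\size(\Par(\cdot)) = \sum_n \size(\cdot)$ and $\depth(\Par(\cdot)) = L_{\max}$ from Proposition \ref{prop:parall}. The depth is then $\mathcal{O}(\left|\log\epsilon\right|\log(\left|\log\epsilon\right|))$ as claimed. For the size, summing $N_f$ copies of the $\mathcal{O}(\left|\log\epsilon\right|^{2d+1})$ bound would only give $\mathcal{O}(N_f\left|\log\epsilon\right|^{2d+1})$, which is slightly weaker than the asserted $\mathcal{O}(\left|\log\epsilon\right|^{2d+1} + N_f\left|\log\epsilon\right|^{2d})$; to recover the sharper bound I would instead share the basis subnetwork $\Phibasis$ (and the multiplication subnetworks $\Pi^d_{\epsilon_2,2}$) across all $N_f$ outputs, as in the construction of Theorem \ref{th:ReLU-hp}, so that only the final linear layer $((\vvec(c_n)^\top,0))_{n=1}^{N_f}$ is replicated. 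Then the dominant $\mathcal{O}(\left|\log\epsilon\right|^{2d+1})$ cost from $\Phibasis$ and the multiplications is incurred once, while the $N_f$ replicated output maps each contribute $\mathcal{O}(\Noned^d) = \mathcal{O}(\left|\log\epsilon\right|^{2d})$, giving the stated total. The main obstacle is thus bookkeeping: making the shared-subnetwork construction precise enough to justify the additive rather than multiplicative dependence on $N_f$, while keeping the layer counts aligned for parallelization.
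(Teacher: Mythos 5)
Your final construction—sharing the basis and multiplication subnetworks $\Phi_\epsilon$ across all outputs and parallelizing only the single-layer coefficient maps $((\vvec(c^{(n)})^\top,0))$—is exactly the paper's proof, and it correctly yields the additive $\mathcal{O}(\left|\log\epsilon\right|^{2d+1} + N_f\left|\log\epsilon\right|^{2d})$ size bound via $\size(\Par(\cdot)) \leq N_f \Noned^d$. Note that in this shared construction the layer-alignment issue you worry about disappears, since the replicated output maps are all one-layer networks.
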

\begin{proof}
Let $\Phi_\epsilon$ be as in \eqref{eq:ConstructionOfPhiEpsilon} 
and 
let $c^{(n)} \in\R^{\Noned\times\cdots\times\Noned}$, $n=1, \dots, N_f$ 
be the matrices of coefficients such that, in the notation of the proof of 
Theorems \ref{th:ReLU-hp} and \ref{th:ReLUapprox}, 
for all $n\in \{1, \dots, N_f\}$,
\[
	\left\|f_n - \sum_{i_1, \dots i_d=1}^{\Noned} c^{(n)}_{i_1\dots i_d} \phi_{i_1\dots i_d} \right\|_{H^1(Q)} \leq \frac\epsilon2.
\]
We define, for $\vvec$ as defined in \eqref{def:vec}, the NN $\Phi_{\epsilon, \fbf}$ as
\begin{equation*}
   \Phi_{\epsilon, \fbf}  \coloneqq \Par\left(((\vvec(c^{(1)}  )^\top, 0)), \dots, ((\vvec(c^{(N_f)}  )^\top,0))\right) \sconc \Phi_\epsilon.
  \end{equation*}%
  The estimate \eqref{eq:ReLUapprox-vector} and the $L^\infty$-bound then follow from Theorem
  \ref{th:ReLU-hp}. The bound on $\depth(\Phi_{\epsilon, \fbf})$ follows
  directly from Theorem \ref{th:ReLU-hp} and Proposition \ref{prop:conc}. Finally, the bound on $\size(\Phi_{\epsilon,
    \fbf})$ follows by Theorem \ref{th:ReLU-hp} and Proposition \ref{prop:conc}, as well as, from the observation that
  \begin{equation*}
    \size\left( \Par\left(((\vvec(c^{(1)}  )^\top,0)), \dots, ((\vvec(c^{(N_f)}  )^\top, 0 ))\right) \right)\leq N_f \Noned^{d}\leq C N_f (1+\left| \log\epsilon \right|^{2d}),
  \end{equation*}
  for a constant $C>0$ independent of $N_f$ and $\epsilon$.
\end{proof}

\section{Exponential 
         expression rates for solution classes of PDEs}
\label{sec:applications}
In this section, 
we develop Theorem \ref{th:ReLUapprox} into several exponentially decreasing 
upper bounds for the rates of approximation, by realizations of NNs
with ReLU activation, for
solution classes to elliptic PDEs with singular data (such as 
singular coefficients or domains with nonsmooth boundary).
In particular,
we consider elliptic PDEs in two-dimensional \emph{general} polygonal
domains, in three-dimensional domains that are a union of cubes,
and elliptic eigenvalue problems with isolated point singularities 
in the potential which arise in models of electron structure in
quantum mechanics.

In each class of examples, 
the solution sets belong to the class of weighted analytic functions 
introduced in Subsection \ref{sec:WgtSpcNonHomNrm}. However, the approximation
rates established in Section \ref{sec:hpReapproxReLU} only hold on tensor
product domains with singularities on the boundary. 
Therefore, we will first extend the exponential NN approximation rates to 
functions which exhibit singularities on a set of isolated
points internal to the domain, arising from singular
potentials of nonlinear Schrödinger operators.
In Section \ref{sec:polygonal}, we demonstrate,
using an argument based on a partition of unity, that the approximation problem 
on general polygonal domains can be reduced to 
that on tensor product domains and Fichera-type domains,
and establish exponential NN expression rates for 
linear elliptic source and eigenvalue problems.
In Section \ref{sec:EllPDEFichera}, 
we show  exponential NN expression rates for classes of
weighted analytic functions on two- and three-dimensional Fichera-type domains. 
\subsection{Nonlinear eigenvalue problems with isolated point singularities}
\label{sec:EVPPtSing}
Point singularities emerge in the solutions of elliptic eigenvalue problems,
as arise, for example, for electrostatic interactions between charged particles 
that are modelled mathematically as point sources in $\R^3$.
Other problems that exhibit point singularities appear in general relativity, 
and for electron structure models in quantum mechanics.
We concentrate here on the expression rate of ``ab initio'' 
NN approximation of the electron density near isolated singularities of the
nuclear potential.
Via a ReLU-based partition of unity argument, an exponential approximation rate bound
for a single, isolated point singularity in Theorem \ref{prop:Schrodinger}
is extended in Corollary \ref{coro:multnucl} to
electron densities corresponding to 
potentials with multiple point singularities
at a priori known locations, 
modeling (static) molecules.

The numerical approximation in ab initio 
electron structure computations with NNs has been
recently reported to be competitive with other established, methodologies 
(e.g. \cite{pfau2019abinitio,hermann2019deep} and the references there). 
The exponential ReLU expression rate bounds obtained here can, in part, underpin
competitive performances of NNs in (static) electron structure computations.

We recall that all NNs are realized with the ReLU activation
function, see \eqref{eq:NetworkScheme}.
\subsubsection{Nonlinear Schr\"{o}dinger equations}
\label{sec:NonlSchrEq}
Let $\Omega = \mathbb{R}^d/(2\mathbb{Z})^d$, 
where $d \in \{2,3\}$, be a flat torus and 
let $V:\Omega\to \mathbb{R}$ be a potential such that 
$V(x)\geq V_0>0$ for all $x\in \Omega$ and 
there exists $\delta>0$ and $A_V>0$ such that
\begin{equation}
  \label{eq:V}
  \|r^{2+\alpham-\delta} \dalpha V\|_{L^\infty(\Omega)} 
  \leq 
  A_V^{\alpham+1}\alpham!\qquad \forall \alpha\in \mathbb{N}_0^d,
\end{equation}
where $r(x) = \dist(x, (0, \dots, 0))$.
For $k \in \{0, 1, 2\}$, we introduce the Schr\"{o}dinger eigenproblem that consists in finding
the smallest eigenvalue $\lambda \in \R$ and an associated eigenfunction $u \in
H^1(\Omega)$ such that
\begin{equation}
  \label{eq:Schrodinger}
  \begin{aligned}
  (-\Delta +V +|u|^k) u = \lambda u \quad \text{in }\Omega, \quad 
  \|u\|_{L^2(\Omega)} = 1.
  \end{aligned}
\end{equation}

There holds the following approximation result.
\begin{theorem}
\label{prop:Schrodinger}
Let $k \in \{0,1,2\}$ and $(\lambda, u)\in \mathbb{R}\times H^1(\Omega)\backslash \{ 0 \}$ 
be a solution of the eigenvalue problem \eqref{eq:Schrodinger}
with minimal $\lambda$, where $V$ satisfies \eqref{eq:V}. 

Then, for every $0< \epsilon \leq 1$ 
there exists a NN $\Phi_{\epsilon, u}$ 
such that
\begin{equation}\label{eq:SchrodingerNN}
\left \| u - \Realiz\left(\Phi_{\epsilon, u}\right) \right\|_{H^1(Q)} \leq \epsilon.
\end{equation}
In addition, as $\epsilon \to 0$, 
$$
\size(\Phi_{\epsilon, u}) = \mathcal{O}(|\log(\epsilon)|^{2d+1}),
\;
\depth(\Phi_{\epsilon, u}) = \mathcal{O}(|\log(\epsilon)|\log(|\log(\epsilon)|))
\;.
$$
\end{theorem}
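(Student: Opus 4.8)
The plan is to reduce the eigenvalue problem \eqref{eq:Schrodinger} to the approximation result of Theorem \ref{th:ReLUapprox} by establishing weighted analytic regularity of the eigenfunction $u$ and then transporting the approximation from the reference cube to the torus $\Omega$. First I would invoke known analytic regularity theory for semilinear Schr\"odinger-type eigenproblems with singular (weighted analytic) potentials: under assumption \eqref{eq:V} on $V$, any eigenpair $(\lambda,u)$ with minimal $\lambda$ has $u$ belonging to a weighted analytic class with a single isolated point singularity at the origin, i.e.\ $u \in \cJ^\varpi_\ugamma(\cdot;\{0\},\emptyset)$ for appropriate weight exponent $\gamma_\corn$. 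The nonlinear terms $|u|^k u$ for $k\in\{0,1,2\}$ are harmless here: since $H^1(\Omega) \hookrightarrow L^p(\Omega)$ for the relevant $p$ in dimensions $d=2,3$, and since the analytic class is closed under the pointwise operations involved (products of weighted analytic functions remain weighted analytic with suitably combined weight exponents), a bootstrap argument starting from $u\in H^1$ promotes $u$ to the weighted analytic class; the potential term $Vu$ is controlled using \eqref{eq:V}, which is exactly the decay rate that makes $r^{2}V$ a bounded weighted-analytic multiplier. I would cite the corresponding regularity results (e.g.\ from the electron-structure regularity literature referenced in the introduction) rather than reprove them.

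Next I would handle the geometry. The torus $\Omega = \R^d/(2\Z)^d$ with the single singular point at the origin is covered by finitely many charts: a neighborhood of the origin which, after an affine change of coordinates, is (a subset of) a cube $(0,1)^d$ with the singularity sitting at one corner — precisely the configuration of Theorem \ref{th:ReLUapprox} — together with finitely many cubes on which $u$ is (globally) analytic, which is the $\gamma_\corn \to \infty$ (or simply $\Cset=\emptyset$) special case of the same theorem. Using a smooth (or piecewise-polynomial, hence ReLU-representable) partition of unity subordinate to this cover, I would write $u = \sum_m \chi_m u$, approximate each $\chi_m u$ by a NN via Theorem \ref{th:ReLUapprox} (the cutoff $\chi_m u$ again lies in the appropriate weighted analytic class on the associated cube, since multiplication by an analytic cutoff preserves the class), and then assemble the global NN by parallelization (Proposition \ref{prop:parall}) followed by a summation layer, composing with the affine chart maps as in Remark \ref{rem:RemarkGeneralIntervals}. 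The number of patches is a fixed constant depending only on $d$ and the geometry, so the asymptotic size and depth bounds $\size = \CO(|\log\epsilon|^{2d+1})$ and $\depth = \CO(|\log\epsilon|\log|\log\epsilon|)$ are inherited directly from Theorem \ref{th:ReLUapprox}, up to the constant factor from the number of patches and the $O(1)$ overhead of the partition-of-unity multiplications (which can be realized exactly or to negligible error using the multiplication network of Proposition \ref{prop:Multiplication}); one distributes the target accuracy $\epsilon$ equally among the patches.

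The main obstacle I expect is not the NN assembly — that is routine given the calculus in Section \ref{sec:ReLUCalc} — but rather cleanly citing or establishing the weighted analytic regularity of $u$ in the presence of the nonlinearity and of the merely-weighted-analytic potential $V$, and in particular verifying that the weight exponent one obtains satisfies the hypothesis $\gamma_\corn > 1$ (for $d=2$) or $\gamma_\corn > 3/2$ (for $d=3$) required by Theorem \ref{th:ReLUapprox}. This is a constraint on the admissible singularity strength: it holds because the Coulomb-type singularity $r^{-1}$ permitted by \eqref{eq:V} produces eigenfunctions whose leading singular behavior is mild enough (indeed $u$ is bounded and even Lipschitz-like near the nucleus in the physically relevant cases), so the relevant $H^1$-type weighted norms with an exponent above the stated thresholds are finite. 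A secondary technical point is that the partition-of-unity cutoffs must be chosen compatible with the $hp$/ReLU tensor-product structure near the corner; I would address this by taking the cutoff near the origin to depend only on the $\ell^\infty$ distance to the corner (so it is itself a tensor product of univariate piecewise-polynomial bumps), which keeps everything inside the tensorized framework of Theorem \ref{thm:Interface} and Theorem \ref{th:ReLU-hp}. The final step is to absorb the constants and state the asymptotics, which is immediate.
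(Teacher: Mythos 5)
Your first step (citing weighted analytic regularity $u\in\cJ^\varpi_{\gamma_\corn}(\Omega;\{0\},\emptyset)$ with $\gamma_\corn>d/2$, which is exactly the threshold $\gamma_\corn>1$ for $d=2$ and $\gamma_\corn>3/2$ for $d=3$) matches the paper, which quotes this from the electron-structure regularity literature rather than reproving it. The divergence, and the gap, is in how you handle the geometry. You claim that a neighborhood of the origin ``after an affine change of coordinates, is (a subset of) a cube $(0,1)^d$ with the singularity sitting at one corner.'' This cannot be: the origin is an \emph{interior} point of the torus, affine maps send interior points to interior points, so no single chart puts the singularity at a corner of the reference cube. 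You must split the neighborhood into the $2^d$ orthants, each having the origin as a corner, and then you face a matching problem across the shared faces of these orthants: $2^d$ independent applications of Theorem \ref{th:ReLUapprox} produce approximants that need not agree on the interfaces, and localizing them with cutoffs supported at the singular point reintroduces corner-singular products on every orthant. Your proposed fix (cutoffs depending on the $\ell^\infty$-distance to the origin) does not remove this, since each localized piece still lives on an orthant with the singularity at its corner. A second, related soft spot: summing NN approximations of $\chi_m u$ over patches is not enough, because each patch network is only controlled on its own patch; one must multiply the patch approximant by an \emph{exactly represented} cutoff via the network of Proposition \ref{prop:Multiplication} (whose realization vanishes where the cutoff vanishes) to kill the contribution outside the patch, as the paper does in Theorem \ref{th:polygon} and Corollary \ref{coro:multnucl}. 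You gesture at both mechanisms but do not commit to the one that works.

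The paper avoids all of this: it applies the multi-patch tensor-product $hp$ result, Theorem \ref{prop:internal}, on $\Omega=(-1,1)^d$ with the origin among the singular corners of the $4^d$ tensorized sub-patches. Because the global quasi-interpolant there is a tensor product of univariate operators that are nodally exact, it is automatically continuous across all patch interfaces and is expressed as a \emph{single} sum $\sum_{\is}\tc_{\is}\bigotimes_j \tv_{i_j}$ of tensor products of univariate piecewise polynomials; Theorem \ref{th:ReLU-hp} then emulates this by one NN, with no partition of unity, no chart maps, and no extra multiplication networks. So the interior location of the singularity is handled at the level of the $hp$ approximation, not at the level of the NN assembly. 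Your partition-of-unity route can be repaired (use the $2^d$ orthant charts, exact piecewise-linear cutoffs, and the multiplication network for localization), and it is essentially the argument the paper deploys for genuinely non-tensor-product situations such as general polygons and multiple nuclei, but for this theorem it is both more complicated and, as written, incomplete at the singular point.
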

\begin{proof}
Let $\Cset = \{(0, \dots, 0)\}$ and $\Eset=\emptyset$. 
The regularity of $u$
is a consequence of \cite[Theorem 2]{Maday2019b} (see also \cite[Corollary 3.2]{MadMarc2019}
for the linear case $k=0$):
  there exists $\gamma_\corn> d/2$ and $C_u, A_u>0$ such that $u\in
  \cJ^\varpi_{\gamma_\corn}(\Omega; \Cset, \Eset; C_u, A_u)$.  Here, $\gamma_\corn$
  and the constants $C_u$ and $A_u$ depend only on, $V_0$, $A_V$ and $\delta$ in
  \eqref{eq:V}, and on $k$ in \eqref{eq:Schrodinger}.

  Then, for all $0 < \epsilon \leq 1$,  
  by Theorem \ref{th:ReLU-hp} and Proposition \ref{prop:internal},  
  there exists a NN $\Phi_{\epsilon, u}$ 
  such that \eqref{eq:SchrodingerNN} holds. 
  Furthermore,
  there exist constants $C_1$, $C_2 > 0$
  dependent only on $V_0$, $A_V$, $\delta$, and $k$,
  such that
  \begin{equation*}
    \size(\Phi_{\epsilon, u}) \leq C_1(1 + |\log(\epsilon)|^{2d+1}) \text{ and } 
    \depth(\Phi_{\epsilon, u}) \leq C_2 \big(1+|\log(\epsilon)|\big)\big(1+\log(1+|\log(\epsilon)|)\big).
  \end{equation*}
\end{proof}
\subsubsection{Hartree-Fock model}
\label{sec:HF}
The Hartree-Fock model is an approximation of the full many-body representation
of a quantum system under the Born-Oppenheimer approximation, where the many-body wave function is replaced
by a sum of Slater determinants. Under this hypothesis, for $M, N \in \N$, the
Hartree-Fock energy of a system with $N$ electrons and $M$ nuclei with positive charges $Z_i$ at
isolated locations $R_i\in \mathbb{R}^3$, reads
\begin{multline}
  \label{eq:EHF}
  E_{\mathrm{HF}} = \inf\bigg\{
    \sum_{i=1}^N\int_{\mathbb{R}^3}\left(|\nabla\varphi_i|^2 + V|\varphi_i|^2   \right)
    +\frac{1}{2} \int_{\mathbb{R}^3}\int_{\mathbb{R}^3} \frac{\rho(x)\rho(y)}{|x-y|}dxdy
    -\frac{1}{2} \int_{\mathbb{R}^3}\int_{\mathbb{R}^3} \frac{\tau(x,y)^2}{|x-y|}dxdy
    :\\
(\varphi_1, \dots, \varphi_N)\in
H^1(\mathbb{R}^3)^N\text{ such that }\int_{\mathbb{R}^3} \varphi_i\varphi_j = \delta_{ij}
  \bigg\},
\end{multline}
where $\delta_{ij}$ is the Kronecker delta, 
$V(x) = -\sum_{i=1}^{M}
Z_i/|x-R_i|$, $\tau(x, y) = \sum_{i=1}^N\varphi_i(x)\varphi_i(y)$, and $\rho(x)
= \tau(x,x)$, see, e.g., \cite{Lieb1977,Lions1987}. The Euler-Lagrange equations of \eqref{eq:EHF} read
\begin{equation}
  \label{eq:HF}
  (-\Delta+V(x))\varphi_i(x)  +  \int_{\mathbb{R}^3}\frac{\rho(y)}{|x-y|}dy\varphi_i(x) - \int_{\mathbb{R}^3} \frac{\tau(x, y)}{|x-y|}\varphi_i(y) dy= \lambda_i \varphi_i(x), \qquad i=1, \dots, N, \, \text{and } x\in\mathbb{R}^3
\end{equation}
with $\int_{\mathbb{R}^3}\varphi_i\varphi_j=\delta_{ij}$.
  \begin{remark} \label{rmk:ExGrndStat}
    It has been shown in \cite{Lieb1977} that, if $\sum_{k=1}^MZ_k>N-1$, 
    there exists a ground state $\varphi_1,\dots, \varphi_N $ of \eqref{eq:EHF}, solution to \eqref{eq:HF}.
  \end{remark}
The following statement gives exponential expression rate bounds of the 
NN-based approximation of electronic wave functions in the vicinity 
of one singularity (corresponding to the location of a nucleus) of the potential.
\begin{theorem}
  \label{prop:HF}
  Assume that \eqref{eq:HF} has $N$ real eigenvalues $\lambda_1, \dots,
  \lambda_N$ with associated eigenfunctions $\varphi_1, \dots, \varphi_N$, 
  such that $\int_{\R^3}\varphi_i\varphi_j = \delta_{ij}$.
  Fix
  $k\in\{1, \dots, M\}$, let $R_k$
  be one of the singularities of $V$ and let $a>0$ such that $|R_j-R_k|>2a$ for
  all $j\in \{1, \dots, M\}\setminus \{k\}$. Let $\Omega_k$ be the cube 
    $\Omega_k = \left\{ x\in \mathbb{R}^3:\|x - R_k\|_{\infty}\leq a \right\}$.

  Then there exists a NN $\Phi_{\epsilon, \varphi}$ 
  such that
  $\Realiz(\Phi_{\epsilon, \varphi}) : \mathbb{R}^3\to \mathbb{R}^N$, 
  satisfies
\begin{equation}\label{eq:ReLUapprox-HF}
\left \| \varphi_i - \Realiz(\Phi_{\epsilon, \varphi})_i \right\|_{H^1(\Omega_k)} \leq \epsilon,\qquad\forall i\in\range{N}.
\end{equation}
In addition, as $\epsilon \to 0$, 
$\|\Realiz(\Phi_{\epsilon, \varphi})_i\|_{L^\infty(\Omega_k)} 
   = \mathcal{O}(\left| \log\epsilon \right|^{6})$ for every $i = \{1, \dots, N\}$, 
$$
\size(\Phi_{\epsilon, \varphi}) 
= 
\mathcal{O}(\left|\log(\epsilon)\right|^{7} 
+
N\left|\log(\epsilon)\right|^{6}),
\;\; 
\depth(\Phi_{\epsilon, \varphi}) 
 = 
 \mathcal{O}(\left|\log(\epsilon)\right|\log(\left|\log(\epsilon)\right|)).
$$
\end{theorem}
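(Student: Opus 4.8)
The plan is to mirror the two-stage proof of Theorem~\ref{prop:Schrodinger}. First, analytic regularity theory places the restriction of each orbital $\varphi_i$ to $\Omega_k$ in a weighted analytic class with a single \emph{interior} point singularity at $R_k$; second, the neural network approximation machinery of Section~\ref{sec:hpReapproxReLU}, in its interior-singularity and vector-valued form, reapproximates all $N$ orbitals simultaneously with a single ReLU NN. We only need the orbitals on the cube $\Omega_k$, so their decay at infinity plays no role.

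\textbf{Step 1 (weighted analytic regularity on $\Omega_k$).} The assumption $|R_j-R_k|>2a$ for $j\neq k$ forces $R_j\notin\overline{\Omega_k}$ with positive distance, so in $\overline{\Omega_k}$ the potential $V$ splits into the Coulomb singularity $x\mapsto -Z_k/|x-R_k|$ plus a term that is real-analytic up to $\partial\Omega_k$. The Hartree potential $x\mapsto\int_{\R^3}\rho(y)/|x-y|\,dy$ and the exchange operator appearing in \eqref{eq:HF} are, by the (elliptic-system) analytic regularity theory for Hartree--Fock-type problems near nuclei (cf.\ \cite{MadMarc2019} and the references therein, and \cite{Maday2019b} for the companion nonlinear Schr\"odinger setting used in Theorem~\ref{prop:Schrodinger}), compatible with the Cartesian-coordinate weighted analytic scale of Section~\ref{sec:WgtSpcNonHomNrm}. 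Hence, with $\Cset=\{R_k\}$ and $\Eset=\emptyset$, there exist $\gamma_\corn>3/2$ and $C_\varphi,A_\varphi>0$ such that $\varphi_i\in\cJ^\varpi_{\gamma_\corn}(\Omega_k;\Cset,\Eset;C_\varphi,A_\varphi)$ for every $i\in\range{N}$; a single triple $(\gamma_\corn,C_\varphi,A_\varphi)$ serves all $N$ orbitals by taking a minimum (for $\gamma_\corn$) and maxima (for $C_\varphi,A_\varphi$) over the finite index set.

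\textbf{Step 2 (simultaneous reapproximation).} Let $T_k$ be the affine bijection carrying $\Omega_k$ onto $(0,1)^3$ and $R_k$ onto its centre; precomposing the constructed network with the single affine layer $T_k$ (Proposition~\ref{prop:conc}) reduces matters to approximating the $\varphi_i\circ T_k^{-1}$ on $(0,1)^3$ with the singularity in the interior. Splitting $(0,1)^3$ into the $2^3$ congruent sub-cubes meeting at its centre, every $\varphi_i\circ T_k^{-1}$ restricts on each sub-cube to a weighted analytic function whose singularity sits at a \emph{corner} of that sub-cube; applying Proposition~\ref{prop:internal} (whose $hp$ mesh and univariate basis depend only on the class, hence are shared by all $N$ orbitals and all sub-cubes) together with the vector-valued reapproximation Corollary~\ref{cor:ReLUapprox-vector} with $N_f=N$, and gluing the $2^3$ resulting vector-valued sub-networks with the ReLU partition of unity subordinate to the sub-cubes — exactly as in the interior-singularity argument underlying Proposition~\ref{prop:internal} — produces a NN $\Phi_{\epsilon,\varphi}$ with $\Realiz(\Phi_{\epsilon,\varphi})\colon\R^3\to\R^N$ satisfying \eqref{eq:ReLUapprox-HF}.

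\textbf{Step 3 (complexity bookkeeping and main obstacle).} Since the $N$ output components are read off from one common feature subnetwork through $N$ distinct final linear maps, Corollary~\ref{cor:ReLUapprox-vector} gives $\size(\Phi_{\epsilon,\varphi})=\mathcal{O}(|\log\epsilon|^{2d+1}+N|\log\epsilon|^{2d})$, $\depth(\Phi_{\epsilon,\varphi})=\mathcal{O}(|\log\epsilon|\log|\log\epsilon|)$, and $\|\Realiz(\Phi_{\epsilon,\varphi})_i\|_{L^\infty(\Omega_k)}=\mathcal{O}(|\log\epsilon|^{2d})$; specialising to $d=3$ yields the stated rates. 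The only genuinely delicate point is Step~1: unlike the local Schr\"odinger operator, \eqref{eq:HF} couples the orbitals through the nonlocal Hartree and exchange operators, and one must know that their action preserves the Cartesian-coordinate weighted analytic class near $R_k$ (with $\Eset=\emptyset$) — which is precisely what the cited analytic regularity results supply; granted that, Steps~2--3 are a routine instantiation of the constructions of Section~\ref{sec:hpReapproxReLU} and of the bookkeeping already done for Corollary~\ref{cor:ReLUapprox-vector}.
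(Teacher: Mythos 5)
Your overall route coincides with the paper's: weighted analytic regularity of the orbitals near the isolated nucleus (the paper cites \cite[Corollary 1]{MadayMarcati2020}, together with \cite{Flad2008,Fournais2009}, for exactly the point you flag as delicate, namely that the nonlocal Hartree and exchange terms are compatible with the class $\cJ^\varpi_{\gamma_\corn}(\Omega_k;\{R_k\},\emptyset)$ with $\gamma_\corn>3/2$), followed by Proposition \ref{prop:internal} centered at $R_k$ by translation and the NN emulation of Theorem \ref{th:ReLU-hp} in the vector-valued form of Corollary \ref{cor:ReLUapprox-vector}. Your complexity bookkeeping in Step 3 is the correct specialization to $d=3$.

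However, your description of the mechanism in Step 2 misrepresents Proposition \ref{prop:internal} in a way that would be a genuine gap if carried out literally. That proposition does \emph{not} glue per-subcube networks with a ReLU partition of unity; it builds a \emph{single} globally continuous tensor-product quasi-interpolant $\sum_{\is}\tc_{\is}\bigotimes_j \tv_{i_j}$ on the multipatch geometric mesh $\tcG^\ell_d$ (with $4^d$, not $2^d$, patches), continuity across patch interfaces coming from nodal exactness of the univariate projectors, and Theorem \ref{th:ReLU-hp} then emulates this one tensor expansion with one network. A partition-of-unity gluing ``subordinate to the sub-cubes'' cannot work here: the closed sub-cubes have essentially disjoint interiors, so no continuous partition of unity is subordinate to them, and any fattening into a genuine open cover places the singular point $R_k$ in the \emph{interior} of some patch rather than at a corner, which destroys the hypothesis needed to invoke Theorem \ref{th:ReLUapprox} on that patch. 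This is precisely why the paper reserves the partition-of-unity argument for configurations where each singularity sits at a patch corner (Theorem \ref{th:polygon}, Corollary \ref{coro:multnucl}) and handles the interior point singularity through the tensorized multipatch $hp$ construction instead. Replace your gluing step by a direct appeal to the tensor-product expansion of Proposition \ref{prop:internal} fed into Theorem \ref{th:ReLU-hp}, and the proof is the paper's.
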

\begin{proof}
  Let $\Cset = \{(0, 0,0)\}$ and $\Eset = \emptyset$ and fix $k\in\{1 ,\dots, M\}$.
  From the regularity result in \cite[Corollary 1]{MadayMarcati2020}, see also \cite{Flad2008,Fournais2009},
  there exist $C_\varphi$, $A_\varphi$, and $\gamma_\corn>3/2$ such that $(\varphi_1, \dots, \varphi_N)
  \in \left[\cJ^\varpi_{\gamma_\corn}(\Omega_k; \Cset, \Eset; C_\varphi, A_\varphi)\right]^N$. 
Then, \eqref{eq:ReLUapprox-HF}, the $L^\infty$ bound and the depth and size bounds 
on the NN $\Phi_{\epsilon, \varphi}$ follow 
from the $hp$ approximation result in Proposition \ref{prop:internal} (centered in $R_k$ by translation), 
from Theorem \ref{th:ReLU-hp}, 
as in Corollary \ref{cor:ReLUapprox-vector}.
\end{proof}
The arguments in the preceding subsections applied to wave functions
for a single, isolated nucleus modelled by the singular potential $V$ as in \eqref{eq:V}
can then be extended to give upper bounds on the approximation rates achieved by
realizations of NNs of the wave functions in a bounded, sufficiently large
domain containing all singularities of the nuclear potential in \eqref{eq:EHF}.
\begin{corollary} \label{coro:multnucl}
Assume that \eqref{eq:HF} has $N$ real eigenvalues $\lambda_1, \dots, \lambda_N$ 
with associated eigenfunctions $\varphi_1, \dots, \varphi_N$, such
that $\int_{\R^3}\varphi_i\varphi_j = \delta_{ij}$.
Let $a_i, b_i\in\mathbb{R}$, $i=1,2,3$, and $\Omega = \bigtimes_{i=1}^d(a_i, b_i)$ 
such that $\{R_j\}_{j=1}^M\subset \Omega$.
Then, for every $0 < \epsilon<1$, 
there exists a NN $\Phi_{\epsilon, \varphi}$ such that
  $\Realiz(\Phi_{\epsilon, \varphi}) : \mathbb{R}^3\to \mathbb{R}^N$ and
\begin{equation}\label{eq:ReLUapprox-HF-multising}
\left \| \varphi_i - \Realiz(\Phi_{\epsilon, \varphi})_i \right\|_{H^1(\Omega)} 
\leq \epsilon,\qquad\forall i=1, \dots, N.
\end{equation}
Furthermore, as $\epsilon \to 0$
$\size(\Phi_{\epsilon, \varphi}) 
= \mathcal{O}(\left|\log(\epsilon)\right|^{7} + N\left|\log(\epsilon)\right|^{6})$ 
and 
$\depth(\Phi_{\epsilon, \varphi}) 
= 
\mathcal{O}(\left|\log(\epsilon)\right|\log(\left|\log(\epsilon)\right|))$.
\end{corollary}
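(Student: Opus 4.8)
The plan is to run the same ReLU-based partition of unity argument announced in the introduction to Section~\ref{sec:applications}, gluing together the single-nucleus networks of Theorem~\ref{prop:HF} with networks handling the nucleus-free part of $\Omega$. First I would fix $a>0$ so small that the closed cubes $\Omega_k\coloneqq\{x\in\R^3:\|x-R_k\|_\infty\le a\}$, $k=1,\dots,M$, are pairwise disjoint and contained in $\Omega$ (possible since the $R_j$ are distinct interior points), and then cover $\overline{\Omega}$ by these $M$ cubes together with finitely many additional closed axis-parallel boxes $\Omega_{M+1},\dots,\Omega_{M+M'}\subset\R^3$ whose closures avoid every $R_j$; the number $M'$ and the geometry of these boxes depend only on $\Omega$, $\{R_j\}_{j=1}^M$ and $a$, not on $\epsilon$. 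I would then fix, once and for all, a continuous piecewise linear partition of unity $\{\theta_\ell\}_{\ell=1}^{M+M'}$ subordinate to the open cover $\{\interior\Omega_\ell\}_\ell$: $\supp\theta_\ell\subset\interior\Omega_\ell$, $0\le\theta_\ell\le1$, $\sum_\ell\theta_\ell\equiv1$ on $\overline{\Omega}$, and $\|\theta_\ell\|_{W^{1,\infty}}\le C_\theta$ with $C_\theta$ independent of $\epsilon$.

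Next I would build local NN approximants on each patch, all to a common tolerance $\delta\in(0,1)$ to be fixed at the end. On a singular cube $\Omega_k$, the regularity result \cite[Corollary~1]{MadayMarcati2020} (see also \cite{Flad2008,Fournais2009}) provides $\gamma_\corn>3/2$ and $C_\varphi,A_\varphi>0$ with $(\varphi_1,\dots,\varphi_N)\in[\cJ^\varpi_{\gamma_\corn}(\Omega_k;\{R_k\},\emptyset;C_\varphi,A_\varphi)]^N$, so exactly as in the proof of Theorem~\ref{prop:HF} (Proposition~\ref{prop:internal} centred at $R_k$, then Theorem~\ref{th:ReLU-hp} as in Corollary~\ref{cor:ReLUapprox-vector}) there is a NN $\Psi_k\colon\R^3\to\R^N$ with $\|\varphi_i-\Realiz(\Psi_k)_i\|_{H^1(\Omega_k)}\le\delta$, $\|\Realiz(\Psi_k)_i\|_{L^\infty(\Omega_k)}=\CO(|\log\delta|^{6})$, $\size(\Psi_k)=\CO(|\log\delta|^{7}+N|\log\delta|^{6})$, $\depth(\Psi_k)=\CO(|\log\delta|\log|\log\delta|)$. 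On a regular box $\Omega_{M+j}$, the same regularity result restricted to a region bounded away from the singular set collapses to classical analyticity of $\varphi_i$ on $\overline{\Omega_{M+j}}$; hence, after an affine rescaling to $(0,1)^3$ (under which the weighted analytic class is preserved, in the spirit of Remark~\ref{rem:RemarkGeneralIntervals}), $\varphi_i$ lies in $\cJ^\varpi_{\gamma}(\,\cdot\,;\{\corn'\},\emptyset;C',A')$ for any fixed corner $\corn'$ and any admissible $\gamma$, with $C',A'$ depending only on the analyticity constants of $\varphi_i$ on $\Omega_{M+j}$ and on $\dist(\Omega_{M+j},\{R_j\}_j)$, so Corollary~\ref{cor:ReLUapprox-vector} supplies a NN $\Psi_{M+j}$ satisfying the same bounds. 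Finally, since each $\theta_\ell$ is a fixed continuous piecewise linear function, Proposition~\ref{prop:relupwpolynom} yields a NN $\Theta_\ell$ with $\|\theta_\ell-\Realiz(\Theta_\ell)\|_{W^{1,\infty}}\le\delta$ and $\size(\Theta_\ell),\depth(\Theta_\ell)=\CO(1)$.

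Then I would glue. Set $M_0\coloneqq 1+\max_{\ell,i}\|\Realiz(\Psi_\ell)_i\|_{L^\infty}=\CO(|\log\delta|^{6})$; note that $\|\Realiz(\Psi_\ell)_i\|_{H^1}\le\|\varphi_i\|_{H^1}+\delta$ is bounded \emph{independently of $\delta$}. For each $\ell$ and each $i\in\range{N}$ I would compose the multiplication network $\Pi^2_{\delta,M_0}$ of Proposition~\ref{prop:Multiplication} with $[\Realiz(\Theta_\ell),\Realiz(\Psi_\ell)_i]$, parallelize the resulting $(M+M')N$ products and add a final linear layer that, for each $i$, sums the contributions over $\ell$; calling the result $\Phi_{\epsilon,\varphi}\colon\R^3\to\R^N$, one has on $\Omega$, using $\sum_\ell\theta_\ell\equiv1$,
\[
\varphi_i-\Realiz(\Phi_{\epsilon,\varphi})_i
=\sum_{\ell=1}^{M+M'}\Big(\theta_\ell\,\varphi_i-\Realiz(\Pi^2_{\delta,M_0})\circ[\Realiz(\Theta_\ell),\Realiz(\Psi_\ell)_i]\Big).
\]
Each summand is supported in $\Omega_\ell$, and by the triangle inequality, the product rule, the $H^1$- and $W^{1,\infty}$-error bounds above, the $\epsilon$-independent bounds on $\|\varphi_i\|_{H^1(\Omega_\ell)}$, $\|\Realiz(\Psi_\ell)_i\|_{H^1(\Omega_\ell)}$ and $C_\theta$, and the $H^1$-accuracy of $\Pi^2_{\delta,M_0}$ on $[-M_0,M_0]^2$, its $H^1(\Omega_\ell)$-norm is $\le C\delta$ with $C$ independent of $\ell$, $i$, $\delta$ and $\epsilon$; summing over the finitely many patches gives $\|\varphi_i-\Realiz(\Phi_{\epsilon,\varphi})_i\|_{H^1(\Omega)}\le C(M+M')\delta$. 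Choosing $\delta\coloneqq\epsilon/(C(M+M'))$ yields \eqref{eq:ReLUapprox-HF-multising}; since $M,M',C,C_\theta$ are $\epsilon$-independent, $|\log\delta|=\CO(|\log\epsilon|)$, and because $\Phi_{\epsilon,\varphi}$ is assembled, via Propositions~\ref{prop:conc}, \ref{prop:parall} and \ref{prop:parallSep}, from $\CO(1)$ many local networks of size $\CO(|\log\epsilon|^{7}+N|\log\epsilon|^{6})$ and $\CO(N)$ multiplication networks each of size and depth $\CO(|\log\epsilon|)$, the asserted bounds $\size(\Phi_{\epsilon,\varphi})=\CO(|\log\epsilon|^{7}+N|\log\epsilon|^{6})$ and $\depth(\Phi_{\epsilon,\varphi})=\CO(|\log\epsilon|\log|\log\epsilon|)$ follow.

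The main obstacle I anticipate is the analyticity statement on the regular boxes: one must be sure that $\varphi_i$ is genuinely (weighted-)analytic away from the nuclei with constants that do not degenerate as the box approaches a nucleus, i.e.\ that the nonlocal Hartree and exchange terms in \eqref{eq:HF} inherit analyticity off the singular set; the cleanest route is to read this off \cite[Corollary~1]{MadayMarcati2020} directly rather than reprove elliptic regularity. A secondary point requiring care is keeping the gluing constant $C$ genuinely $\delta$-independent --- this is why the growing $L^\infty$-bounds of the $\Psi_\ell$ must be fed only into the range parameter $M_0$ of the multiplication network (costing at most a $\log\log$ factor in its size), never into the accuracy estimate. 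The remaining verification that every auxiliary object (the number $M'$, the partition of unity, the constants in the product-rule estimates) is independent of $\epsilon$, so that the polylogarithmic rates of Theorem~\ref{th:ReLU-hp} survive the gluing, is routine bookkeeping.
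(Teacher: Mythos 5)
Your overall architecture --- cover $\overline{\Omega}$ by patches each containing at most one nucleus, approximate the eigenfunctions patchwise by the networks of Theorem~\ref{prop:HF} (resp.\ by Corollary~\ref{cor:ReLUapprox-vector} on nucleus-free patches, which is consistent with the paper's uniform treatment of patches with possibly empty singular set), and glue with a partition of unity fed through the multiplication network of Proposition~\ref{prop:Multiplication} --- is exactly the paper's. The genuine gap is in your treatment of the partition of unity. First, Proposition~\ref{prop:relupwpolynom} is a statement about \emph{univariate} piecewise polynomials on a partition of an interval; it does not produce networks approximating your trivariate CPwL functions $\theta_\ell:\R^3\to\R$. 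Second, and more seriously, replacing $\theta_\ell$ by an approximation $\Realiz(\Theta_\ell)$ destroys the support localization on which your error estimate rests: the claim that each summand $\theta_\ell\varphi_i-\Realiz(\Pi^2_{\delta,M_0})\circ[\Realiz(\Theta_\ell),\Realiz(\Psi_\ell)_i]$ is supported in $\Omega_\ell$ is false, because $\Realiz(\Theta_\ell)$ need not vanish outside $\Omega_\ell$, while the local network $\Realiz(\Psi_\ell)_i$ is controlled (in $H^1$ error and in $L^\infty$) \emph{only on} $\Omega_\ell$; outside that patch its values may well exceed the range parameter $M_0$, so neither the accuracy bound of $\Pi^2_{\delta,M_0}$ nor your $L^\infty$ control applies, and the summand is uncontrolled on $\Omega\setminus\Omega_\ell$.

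The fix is what the paper does: emulate the CPwL partition of unity \emph{exactly} by ReLU networks (\cite[Theorem 5.2]{he2020}, based on \cite{TM1999}), taking the hat functions $\kappa_k$ of a regular triangulation of $\Omega$ refined so that each $\supp(\kappa_k)$ lies in a cube $\tOmega_k$ meeting at most one $R_j$. Then $\Realiz(\Phi^{\kappa_k})=\kappa_k$ vanishes identically outside $\tOmega_k$, and the property $\Realiz(\Pi^{2}_{\eps,M})(x)=0$ whenever one input vanishes --- which Proposition~\ref{prop:Multiplication} guarantees on all of $\R^2$, not just on $[-M,M]^2$ --- annihilates the product outside $\tOmega_k$, confining the entire analysis to the patches where the local approximants are controlled. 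This also removes the $\theta_\ell$-approximation error from the estimate; the remaining partition-of-unity error is bounded as in the paper via \cite[Theorem 2.1]{Melenk1996}. With this replacement, the rest of your bookkeeping (common tolerance, feeding the growing $L^\infty$ bounds only into the range parameter of the multiplication network, depth equalization via identity networks, and the size/depth counts) matches the paper's proof and yields the asserted rates.
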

\begin{proof}
The proof is based on a partition of unity argument. 
We only sketch it at this point, but will develop it in detail in the proof of
Theorem \ref{th:polygon}.
Let $\cT$ be a tetrahedral, regular triangulation of $\Omega$, and let 
$\{\kappa_k \}_{k=1}^{N_\kappa}$ be the hat-basis functions associated to it. 
We suppose that the triangulation is sufficiently refined to ensure that, 
for all $k\in\range{N_\kappa}$, exists a cube $\tOmega_{k}\subset \Omega$ 
such that
$\supp(\kappa_k) \subset \tOmega_{k}$ and that there exists at most
one $j\in\range{M}$ such that $\overline{\tOmega}_k\cap R_j \neq \emptyset$.

For all $k\in \range{N_\kappa}$, 
by \cite[Theorem 5.2]{he2020}, which is based on \cite{TM1999},
there exists a NN $\Phi^{\kappa_k}$ such that
 \begin{equation*}
  \Realiz (\Phi^{\kappa_k}) (x) = \kappa_k(x), \qquad \forall x\in \Omega.
 \end{equation*}
 For all $0<\epsilon<1$, let
\begin{equation*}
       \epsilon_1 \coloneqq \frac{\epsilon}{
         2 N_\kappa\left(\max_{k\in\{1, \dots, N_\kappa\}}\|\kappa_k \|_{W^{1,\infty}(\Omega)}  \right) }.
     \end{equation*}
     For all $k\in\range{N_\kappa}$ and $i\in\{1,\ldots,N\}$, there holds 
     $\varphi_i|_{\tOmega_k} \in \cJ^{\varpi}_\gamma(\tOmega_k; \{R_1, \dots, R_M\}\cap
       \overline{\tOmega}_k, \emptyset)$. 
     Then there exists a NN $\Phi_{\epsilon_1, \varphi}^{k}$, 
     as defined in Theorem \ref{prop:HF}, such that
\begin{equation}
\label{eq:HF-multising-element}
\| \varphi_i - \Realiz(\Phi_{\epsilon_1, \varphi}^k)_i \|_{H^1(\tOmega_k)} \leq \epsilon_1,\qquad\forall i\in\range{N}.
\end{equation}
Let 
\begin{equation*}
  C_\infty\coloneqq 
  \max_{k \in \range{N_\kappa}}  
  \sup_{\hat{\epsilon} \in (0,1)} \frac{\|\Realiz(\Phi^{k}_{\hat{\epsilon}, \varphi}) \|_{L^\infty(\tOmega_k)}}{1+\left| \log\hat{\epsilon} \right|^6} < \infty
\end{equation*}
where the finiteness is due to Theorem \ref{prop:HF}. 
Then, we denote
\begin{equation*}
     \epsprod\coloneqq \frac{\epsilon}{2N_\kappa({|\Omega|^{1/2}+} 1+\max_{i=1, \dots, N}|\varphi_i|_{H^1(\Omega)} 
      + \max_{k=1, \dots, N_\kappa}\|\kappa_k\|_{W^{1, \infty}(\Omega)}|\Omega|^{1/2})}
   \end{equation*} 
      and $\Mprod(\epsilon_1) \coloneqq C_\infty (1+\left| \log\epsilon_1 \right|^6)$.
As detailed in the proof of Theorem \ref{th:polygon} below,
after concatenating with identity NNs and possibly after increasing the constants,
we assume that $\depth(\Phi_{\epsilon_1, \varphi}^k)$ is independent of $k$ 
and that the bound on $\size(\Phi_{\epsilon_1, \varphi}^k)$ is independent of $k$,
and that the same holds for $\Phi^{\kappa_k}$, $k=1,\ldots,N_\kappa$.

Let now, for $i\in\range{N}$, $E_i:\mathbb{R}^{N+1}\to\mathbb{R}^2$ be the
matrices such that, for all $x = (x_1, \dots, x_{N+1})$, $E_i x = (x_i,
x_{N+1})$. Let also $A \in \mathbb{R}^{N\times N_\kappa}$ be a matrix of ones.
Then, we introduce the NN
\begin{equation} \label{eq:NN-HF-multising-def}
  \Phi_{\epsilon, \varphi} 
 = 
  (A, 0)\sconc \Par\left(\left\{\Par\left( \left\{ \Pi^2_{\epsprod, \Mprod(\epsilon_1)} \sconc (E_i, 0) \right\}_{i=1}^N\right)\sconc \Par(\Phi^{k}_{\epsilon_1, \varphi}, \Phi^{\Id}_{1,L} \sconc 
       \Phi^{\kappa_k})\right\}_{k=1}^{N_\kappa} \right),
\end{equation}
where $L\in\N$ is such that 
$\depth(\Phi^{\Id}_{1,L} \sconc  \Phi^{\kappa_k})
= \depth(\Phi^{k}_{\epsilon_1, \varphi})$,
from which it follows that 
$
\size(\Phi^{\Id}_{1,L}) \leq C\depth(\Phi^{k}_{\epsilon_1, \varphi})$.
There holds, for all $i\in \range{N}$,
\begin{equation*}
  \Realiz(\Phi_{\epsilon, \varphi})(x)_i = \sum_{k=1}^{N_\kappa} \Realiz(\Pi^2_{\epsprod, \Mprod(\epsilon_1) })  (\Realiz(\Phi^k_{\epsilon_1, \varphi})(x)_i, \kappa_k(x)), \qquad \forall x\in \Omega.
\end{equation*}
By the triangle inequality, \cite[Theorem 2.1]{Melenk1996},
\eqref{eq:HF-multising-element}, and Proposition \ref{prop:Multiplication},
for all $i\in \range{N}$,
\begin{equation*}
  \begin{aligned}
    &\| \varphi_i - \Realiz(\Phi_{\epsilon, \varphi})_i \|_{H^1(\Omega)}  
    \\
    &\quad \leq
    \| \varphi_i - \sum_{i=1}^{N_\kappa}\kappa_k \Realiz(\Phi^{k}_{\epsilon_1, \varphi})_i \|_{H^1(\Omega)}
    +
    \sum_{k=1}^{N_\kappa}
    \| \Realiz(\Pi^2_{\epsprod, \Mprod(\epsilon_1)})
    \left(\Realiz(\Phi^{k}_{\epsilon_1, \varphi})_i, \kappa_k \right)  
    - 
    \kappa_k\Realiz(\Phi^{k}_{\epsilon_1, \varphi})_i \|_{H^1(\Omega_k)}
    \\ & \quad
    \leq N_\kappa \left(\max_{k\in \{1, \dots, N_\kappa\}}\|\kappa_k\|_{W^{1, \infty}(\Omega)} \right)\epsilon_1\\
    &\quad \qquad + N_\kappa
    (|\Omega|^{1/2}+ 1+\max_{i=1, \dots, N}|\varphi_i|_{H^1(\Omega)} + \max_{k=1,
         \dots, N_\kappa}\| \kappa_k \|_{W^{1, \infty}(\Omega)}|\Omega|^{1/2})
    \epsprod
    \\ &\quad \leq \epsilon.
  \end{aligned}
     \end{equation*}
     The asymptotic bounds on the size and depth of $\Phi_{\epsilon, \varphi}$
     can then be derived from \eqref{eq:NN-HF-multising-def}, 
using Theorem \ref{prop:HF},
as developed in more detail in the proof of Theorem \ref{th:polygon} below.
\end{proof}

\subsection{Elliptic PDEs in polygonal domains}
\label{sec:polygonal}
We establish exponential expressivity for realizations of NNs with ReLU activation
of solution classes to elliptic PDEs in polygonal domains $\Omega$,
the boundaries $\partial \Omega$ of which are Lipschitz and 
consist of a finite number of straight line segments. 
Notably, $\Omega \subset \R^2$ need not be a finite union of axiparallel rectangles.

In the following lemma, we construct a partition of unity in $\Omega$ 
subordinate to an open covering,
of which each element is the affine image of one out of three \emph{canonical patches}.
Remark that we admit corners with associate angle of aperture $\pi$; this
  will be instrumental, in Corollaries \ref{cor:polygon-BVP} and
  \ref{cor:Eigen}, for the imposition of different boundary
  conditions on $\partial \Omega$.
The three canonical patches that we consider are listed in Lemma \ref{lemma:exist-PU}, item [P2].
Affine images of $(0,1)^2$ are used away from corners of $\partial\Omega$
and when the internal angle of a corner is smaller than $\pi$.
Affine images of $(-1,1)\times(0,1)$ are used near corners with internal angle $\pi$.
PDE solutions may exhibit point singularities near such corners
e.g. if the two neighboring edges have different types of boundary conditions.
Affine images of $ (-1,1)^2\setminus (-1, 0]^2$ are used near corners 
with internal angle larger than $\pi$. 
In the proof of Theorem \ref{th:polygon}, we use on each patch Theorem \ref{th:ReLUapprox} 
or a result from Subsection \ref{sec:EllPDEFichera} below.

A triangulation $\cT$ of $\Omega$ is defined as 
a finite partition of $\Omega$ into
open triangles $K$ such that $\bigcup_{K\in\cT} \overline{K} = \overline{\Omega}$.
A \emph{regular triangulation} of $\Omega$ is, additionally, a triangulation $\cT$ of
$\Omega$ such that, for any two neighboring elements $K_1, K_2\in \cT$, 
$\overline{K}_1\cap \overline{K}_2$ is either a corner of both $K_1$ and $K_2$ or
an entire edge of both $K_1$ and $K_2$.
For a regular triangulation $\cT$ of $\Omega$, 
we denote by $S_1(\Omega, \cT)$
the space of functions $v\in C(\Omega)$ such that for every $K \in \cT$, $v|_{K} \in \mathbb{P}_1$.
%
%

We postpone the proof of Lemma \ref{lemma:exist-PU} to Appendix \ref{sec:triangulationpolygon}.
\begin{lemma}
  \label{lemma:exist-PU}
  Let $\Omega\subset\mathbb{R}^2$ 
  be a 
  polygon with Lipschitz boundary, consisting of straight sides, 
  and with a finite set $\Cset$ of corners.
Then, 
there exists $N_p\in \mathbb{N}$, a regular triangulation $\cT$ of $\R^2$, 
such that for all $K\in\Tcal$ either $K\subset\Omega$ or $K\subset\Omega^c$. 
Moreover, there exists a partition of unity
$\{\phi_i\}_{i=1}^{N_p}\subset\left[S_1(\Omega,\cT)\right]^{N_p}$ 
such that
  	\begin{itemize}
  		\item[\rm{[P1]}] $\supp(\phi_i)\cap\Omega\subset\Omega_i$ for all $i=1,\ldots,N_p$,
  		\item[\rm{[P2]}] for each $i\in\range{N_p}$, 
                there exists an affine map $\psi_i \colon \R^2 \to \R^2$ 
                such that $\psi_i^{-1}(\Omega_i) = \hOmega_i$ for 
  		$$
  		\hOmega_i \in \{(0,1)^2, \Omega_{DN}, \Omega\}, 
  		\quad  \text{ with } \quad \Omega_{DN} := (-1,1)\times(0,1),
  		\quad \Omega := (-1,1)^2\setminus (-1, 0]^2;
  		$$
  		\item[\rm{[P3]}] $\mathcal{C} \cap\overline{\Omega}_i \subset \psi_i(\{(0,0)\})$ for all $i\in \{1, \dots, N_p\}$.
  	\end{itemize}
\end{lemma}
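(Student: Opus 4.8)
The plan is to take $\{\phi_i\}$ to be the continuous, piecewise affine nodal (``hat'') basis of a conforming simplicial mesh aligned with $\partial\Omega$, and then, vertex by vertex, to surround the support of each hat by an affine image of one of the three canonical patches. First I would fix, for every $\corn\in\Cset$, a radius $R_\corn>0$ such that the balls $B(\corn,2R_\corn)$ are pairwise disjoint and $\Omega\cap B(\corn,2R_\corn)$ equals the solid plane sector $S_\omega$ of opening $\omega=\omega(\corn)\in(0,2\pi)$ with vertex $\corn$ spanned by the two incident sides (possible since the sides are straight segments of positive length). Then I would build a regular triangulation $\cT$ of $\R^2$ that resolves $\partial\Omega$ — so every $K\in\cT$ lies in $\Omega$ or in its open complement, $\Cset$ consists of vertices, and each side of $\partial\Omega$ is a union of edges — that near each $\corn$ is a fan of finitely many triangles, all incident to $\corn$, filling $\Omega\cap B(\corn,R_\corn)$ (and likewise on the complementary side), and that away from $\Cset$ is refined finely enough that the closed star $\omega_z:=\bigcup\{\overline K:z\in\overline K\in\cT\}$ of each vertex is small relative to the local geometry of $\partial\Omega$; such a $\cT$ exists by standard meshing arguments. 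Letting $\{z_i\}_{i=1}^{N_p}$ be the vertices lying in $\overline\Omega$ and $\phi_i\in S_1(\Omega,\cT)$ the corresponding hats, one has $\sum_i\phi_i\equiv1$ on $\overline\Omega$: any triangle meeting $\Omega$ in positive measure lies in $\Omega$ and hence has all vertices in $\overline\Omega$, while hats of exterior vertices vanish on $\overline\Omega$. Since also $\supp\phi_i\cap\Omega\subset\omega_{z_i}\cap\Omega$, this is the partition of unity.

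It remains to attach to each $i$ an affine map $\psi_i$ and to set $\Omega_i:=\psi_i(\hOmega_i)$, with $\hOmega_i$ one of $(0,1)^2$, $\Omega_{DN}$, or the L-shaped patch $(-1,1)^2\setminus(-1,0]^2$, so that $\omega_{z_i}\cap\Omega\subset\Omega_i$ and $\overline\Omega_i\cap\Cset\subset\psi_i(\{(0,0)\})$. Two cases arise. If $\overline{\omega_{z_i}}\cap\Cset=\emptyset$, then either $z_i\in\Omega$, in which case $\omega_{z_i}$ lies in a ball $B(z_i,\rho)\subset\Omega$ and $\psi_i$ is the dilation/translation carrying $(0,1)^2$ onto the square of side $2\rho$ about $z_i$; or $z_i$ lies on an open side of $\partial\Omega$, where $\Omega$ is locally a half-plane bounded by a line $\ell\ni z_i$ and $\omega_{z_i}\cap\Omega$ lies in a half-disc about $z_i$, so $\psi_i$ is the similarity sending $(\tfrac12,0)\mapsto z_i$, the bottom edge of $(0,1)^2$ into $\ell$, and $\{y>0\}$ to the $\Omega$-side. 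In both sub-cases $\overline\Omega_i\cap\Cset=\emptyset$, so [P1]--[P3] hold. If instead $\overline{\omega_{z_i}}\cap\Cset=\{\corn\}$ (a single point, by disjointness of the corner balls), then $\supp\phi_i\cap\Omega\subset S_\omega\cap B(\corn,2R_\corn)$ and I set $\psi_i(x):=\corn+\lambda T x$, with $\lambda>0$ and a linear $T$ chosen so that $\psi_i(\hOmega_i)$ has vertex $\corn$ with the same opening $\omega$: for $\omega<\pi$, take $\hOmega_i=(0,1)^2$ and $T$ sending $e_1,e_2$ to the two incident side directions; for $\omega=\pi$, take $\hOmega_i=\Omega_{DN}$ and $T$ a rotation placing $\{0<y<1\}$ on the $\Omega$-side and $\{-1<x<1\}$ straddling the side through $\corn$; for $\omega>\pi$, take $\hOmega_i=(-1,1)^2\setminus(-1,0]^2$, which near the origin is the complement of the closed quarter-plane cone, and $T$ carrying that cone onto the convex cone of opening $2\pi-\omega\in(0,\pi)$ exterior to $S_\omega$ at $\corn$. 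An elementary computation in the $T$-basis gives $\psi_i(\hOmega_i)\supset S_\omega\cap B(\corn,c\lambda)$ with $c=c(\omega)>0$, so $\lambda$ can be taken large enough that $\Omega_i\supset\supp\phi_i\cap\Omega$ yet small enough that $\overline\Omega_i\subset B(\corn,2R_\corn)$; then $\overline\Omega_i\cap\Cset=\{\corn\}=\psi_i(\{(0,0)\})$, and [P1]--[P3] all hold.

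The only point needing real care is carrying out the two constructions consistently: the aligned, regular mesh with its corner fans, together with the opposing scale conditions $c(\omega)\lambda\gtrsim\operatorname{diam}(\omega_{z_i})$ (from below, for [P1]) and $\lambda\,\|T\|\lesssim R_\corn$ (from above, for [P3]) at every vertex whose star meets $\Cset$. Because $\Cset$ is finite and $c(\omega)$, $\|T\|$, $\|T^{-1}\|$, $R_\corn$ depend only on $\Omega$, the two are compatible once the non-corner part of the mesh is refined enough (with the corner fans of radius $\le R_\corn$), after which the remaining checks — that $\cT$ is regular, that $\sum_i\phi_i\equiv1$, and that [P1]--[P3] hold in the corner-free cases — are routine. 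A secondary point worth stating explicitly is why these three patches suffice: an invertible linear map takes the first quadrant to an arbitrary convex sector of opening in $(0,\pi)$ and hence its complement to an arbitrary reentrant sector of opening in $(\pi,2\pi)$, while the borderline $\omega=\pi$ is precisely the case not of this form and is covered by $\Omega_{DN}$; together with the use of $(0,1)^2$ at interior and mid-edge vertices this exhausts all corner angles in $(0,2\pi)$.
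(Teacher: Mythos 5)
Your proposal is correct and follows essentially the same route as the paper's proof: a conforming triangulation of $\R^2$ that resolves $\partial\Omega$, the hat-function basis at vertices in $\overline{\Omega}$ as the partition of unity, and patches that are squares at interior vertices, boundary-aligned squares at edge vertices, and affine images of the three canonical sectors at the corners. The only organizational difference is that you use corner fans to guarantee that every vertex close to a corner has that corner in its closed star, whereas the paper assigns one fixed affine patch $G_c$ to \emph{all} nodes within a macroscopic distance $d_{\Cset,1}-h(\cT_{\Omega})$ of $c$ and then rules out the problematic near-corner edge nodes by an explicit bisector argument; both devices resolve the same issue, and your remaining informalities (mesh fine enough, fan radius a sufficiently small fraction of $R_c$ so that the two scale conditions on $\lambda$ are compatible) are routine.
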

The following statement, then, provides expression rates for the NN
approximation of functions in weighted analytic classes in polygonal domains.

We recall that all NNs are realized with the ReLU activation
function, see \eqref{eq:NetworkScheme}.
\begin{theorem}
  \label{th:polygon} 
  Let $\Omega\subset\mathbb{R}^2$ be a polygon with Lipschitz boundary 
  consisting of straight sides and with a finite set $\Cset$ of corners.
  Let $\ugamma = \{\gamma_c: c\in \Cset\}$ such that $\min\ugamma >1$.
Then, for all $u\in \cJ^\varpi_\ugamma(\Omega; \Cset, \emptyset)$ and
for every $0  < \epsilon < 1$, there exists a NN $\Phi_{\epsilon, u}$
such that
  \begin{equation}
    \label{eq:polygon}
    \| u - \Realiz(\Phi_{\epsilon, u}) \|_{H^1(\Omega)}\leq \epsilon.
  \end{equation}
In addition, as $\epsilon \to 0$,
$$
\size(\Phi_{\epsilon, u}) = \mathcal{O}(\left|\log(\epsilon)\right|^{5}),
\;\;
\depth(\Phi_{\epsilon,u}) 
= 
\mathcal{O}(\left|\log(\epsilon)\right|\log(\left|\log(\epsilon)\right|)).
$$
\end{theorem}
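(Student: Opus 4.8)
The plan is to glue together local ReLU NN approximations on the finite cover of $\Omega$ furnished by Lemma~\ref{lemma:exist-PU}, using the subordinate partition of unity $\{\phi_i\}_{i=1}^{N_p}$ and the multiplication networks of Proposition~\ref{prop:Multiplication} to emulate the products $\phi_i\cdot(\text{local approximant})$. Since $\sum_i\phi_i\equiv 1$ on $\Omega$ one has $u=\sum_{i=1}^{N_p}\phi_i u$, so the global approximant will be $\sum_i\Realiz(\Pi^2_{\epsilon_2,M})(\Realiz(\Phi^i),\phi_i)$ for suitable local NNs $\Phi^i$ and a suitable multiplication accuracy $\epsilon_2$ and input bound $M$. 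This mirrors, in detail, the partition-of-unity argument only sketched in the proof of Corollary~\ref{coro:multnucl}.

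First I would exploit that, by [P2]--[P3], each patch $\Omega_i$ is the affine image under $\psi_i$ of one of the three reference domains $(0,1)^2$, $(-1,1)\times(0,1)$ or $(-1,1)^2\setminus(-1,0]^2$, and that at most one corner of $\Cset$ lies in $\overline\Omega_i$, namely $\psi_i(0,0)$. As weighted analytic classes with Cartesian weights are stable under invertible affine maps (distances and all partial derivatives are distorted by bounded factors only), the pullback $u\circ\psi_i$ lies in $\cJ^\varpi_{\tilde\gamma}(\hOmega_i;\widehat{\Cset}_i,\emptyset)$ with $\widehat{\Cset}_i\subseteq\{(0,0)\}$ and some $\tilde\gamma>1$ inherited from $\ugamma$, with constants controlled by those of $u$ and by the finitely many fixed maps $\psi_i$. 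On each reference patch I would produce, for an accuracy $\delta=\delta(\epsilon)$ to be fixed, a NN $\widehat\Phi^i$ with $\|u\circ\psi_i-\Realiz(\widehat\Phi^i)\|_{H^1(\hOmega_i)}\le\delta$, $\|\Realiz(\widehat\Phi^i)\|_{L^\infty(\hOmega_i)}=\mathcal{O}(|\log\delta|^{4})$, $\size(\widehat\Phi^i)=\mathcal{O}(|\log\delta|^{5})$ and $\depth(\widehat\Phi^i)=\mathcal{O}(|\log\delta|\log|\log\delta|)$: for $\hOmega_i=(0,1)^2$ this is exactly Theorem~\ref{th:ReLUapprox}; for $\hOmega_i=(-1,1)^2\setminus(-1,0]^2$ it is the two-dimensional Fichera-type result of Section~\ref{sec:EllPDEFichera}; and for $\hOmega_i=(-1,1)\times(0,1)$ I would split into the two unit squares sharing the interface $\{0\}\times(0,1)$, apply Theorem~\ref{th:ReLUapprox} on each, and observe that the two tensorised geometric $hp$-meshes are compatible across that interface, so that --- by the nodal interpolation property in Proposition~\ref{prop:relupwpolynom} --- the two realisations glue conformingly in $H^1$. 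Concatenating $\widehat\Phi^i$ with the one-layer NN realising $\psi_i^{-1}$ (Proposition~\ref{prop:conc}) yields $\Phi^i$ with $\|u-\Realiz(\Phi^i)\|_{H^1(\Omega_i)}\lesssim\delta$ and the same asymptotics.

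Next I would realise each $\phi_i\in S_1(\Omega,\cT)$ exactly by a ReLU NN $\Phi^{\phi_i}$ of $\epsilon$-independent size and depth, via \cite[Theorem~5.2]{he2020} (building on \cite{TM1999}), and then, after padding all $\Phi^i$ and $\Phi^{\phi_i}$ with identity networks (Proposition~\ref{prop:Id}) to a common depth $L=\mathcal{O}(|\log\epsilon|\log|\log\epsilon|)$, assemble
$$\Phi_{\epsilon,u}:=(A,0)\sconc\Par\Big(\big\{\Pi^2_{\epsilon_2,M(\delta)}\sconc\Par(\Phi^i,\Phi^{\phi_i})\big\}_{i=1}^{N_p}\Big),\qquad A=(1,\dots,1)\in\R^{1\times N_p},$$
with $M(\delta)=C(1+|\log\delta|^{4})$ dominating $\max_i\max(\|\phi_i\|_{L^\infty(\Omega)},\|\Realiz(\Phi^i)\|_{L^\infty(\Omega_i)})$, with $\epsilon_2\sim\epsilon/\big(N_p(1+\max_i|\phi_i|_{W^{1,\infty}(\Omega)})(1+\|u\|_{H^1(\Omega)}+\max_i\|\phi_i\|_{W^{1,\infty}(\Omega)}|\Omega|^{1/2})\big)$, and $\delta\sim\epsilon$ chosen small enough; note $\Realiz(\Pi^2_{\epsilon_2,M(\delta)})$ vanishes where $\phi_i$ does (Proposition~\ref{prop:Multiplication}), so by [P1] the inputs always lie in $[-M(\delta),M(\delta)]^2$ on the region that matters. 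The error then splits by the triangle inequality into $\|\sum_i\phi_i(u-\Realiz(\Phi^i))\|_{H^1(\Omega)}$, bounded by $C\big(\sum_i\|u-\Realiz(\Phi^i)\|_{H^1(\Omega_i)}^2\big)^{1/2}\le\epsilon/2$ through the partition-of-unity stability estimate \cite[Theorem~2.1]{Melenk1996} (using [P1] and the uniform finite overlap of $\{\Omega_i\}$), plus $\sum_i\|\phi_i\Realiz(\Phi^i)-\Realiz(\Pi^2_{\epsilon_2,M(\delta)})(\Realiz(\Phi^i),\phi_i)\|_{H^1(\Omega)}$, which by the product rule and the pointwise function- and derivative bounds of Proposition~\ref{prop:Multiplication} is $\lesssim\epsilon_2\sum_i\big(|\Omega|^{1/2}+\|\phi_i\|_{W^{1,\infty}(\Omega)}|\Omega|^{1/2}+\|\Realiz(\Phi^i)\|_{H^1(\Omega_i)}\big)\le\epsilon/2$, using that $\|\Realiz(\Phi^i)\|_{H^1(\Omega_i)}\le\|u\|_{H^1(\Omega)}+C\delta$ is uniformly bounded; this proves \eqref{eq:polygon}. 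For the size and depth, since $|\log\delta|,|\log\epsilon_2|=\mathcal{O}(|\log\epsilon|)$ and $\log M(\delta)=\mathcal{O}(\log|\log\epsilon|)$, each $\Phi^i$ contributes $\mathcal{O}(|\log\epsilon|^5)$ to the size, each padded $\Phi^{\phi_i}$ contributes $\mathcal{O}(L)=\mathcal{O}(|\log\epsilon|\log|\log\epsilon|)$, and each $\Pi^2_{\epsilon_2,M(\delta)}$ contributes $\mathcal{O}(1+\log(M(\delta)^2/\epsilon_2))=\mathcal{O}(|\log\epsilon|)$; as $N_p$ is a fixed constant, Propositions~\ref{prop:conc}, \ref{prop:parall} and \ref{prop:Id} give $\size(\Phi_{\epsilon,u})=\mathcal{O}(|\log\epsilon|^5)$ and $\depth(\Phi_{\epsilon,u})=\mathcal{O}(|\log\epsilon|\log|\log\epsilon|)$.

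The hard part will be the second step, i.e. the two \emph{non-square} reference patches: for the L-shaped domain one must have in hand the Fichera-type rate $\mathcal{O}(|\log\epsilon|^5)$ in $H^1$ from Section~\ref{sec:EllPDEFichera}, and for $(-1,1)\times(0,1)$ one must carefully arrange that the two square-patch realisations are $H^1$-conforming across the interior interface --- which hinges on the tensorised geometric meshes matching there and on the exact nodal interpolation built into Proposition~\ref{prop:relupwpolynom}. The remaining difficulties are organisational: matching depths before each parallelisation, verifying the affine stability of the weighted classes, and checking that the polylogarithmic $L^\infty$-growth of the local approximants enters the multiplication networks only through $\log M(\delta)=\mathcal{O}(\log|\log\epsilon|)$ and therefore does not degrade the overall $|\log\epsilon|^5$ size bound.
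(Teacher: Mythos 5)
Your overall architecture --- partition of unity from Lemma \ref{lemma:exist-PU}, pullback by the affine patch maps, local NN approximants multiplied by exact ReLU realizations of the hat functions via $\Pi^2_{\epsilon_\times,M}$, error split into a Melenk-type partition-of-unity term and a multiplication-error term, depth padding and the final size/depth accounting --- is exactly the paper's proof, and those parts are sound. The treatment of the L-shaped reference patch via Theorem \ref{prop:Fichera-appx} is also what the paper does.

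The gap is in your handling of the reference patch $\Omega_{DN}=(-1,1)\times(0,1)$. You propose to apply Theorem \ref{th:ReLUapprox} separately on the two unit squares $(-1,0)\times(0,1)$ and $(0,1)\times(0,1)$ and to ``glue'' the two realisations across $\{0\}\times(0,1)$. This fails for two reasons. First, there is no NN operation that produces a single network whose realization equals $\Realiz(\Phi^1)$ on one square and $\Realiz(\Phi^2)$ on the other: a ReLU realization is a globally defined CPwL function, and $\Realiz(\Phi^1)\neq\Realiz(\Phi^2)$ away from the interface, so a piecewise definition is not realizable without reintroducing cutoff functions and products (i.e.\ without solving the very problem you are trying to solve). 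Second, even as functions the traces need not coincide: the nodal exactness in Proposition \ref{prop:relupwpolynom} holds only at the finitely many nodes of a univariate mesh, whereas on the face you would need $\Realiz(\Pi^2_{\epsilon_2,2})(1,b)=b$ for all $b$, which Proposition \ref{prop:Multiplication} does not provide (only $\Realiz(\Pi^2)(0,b)=0$ is exact). The paper circumvents both issues in Lemma \ref{lem:straightcorner}: $u$ is extended from $\Omega_{DN}$ to $(-1,1)^2$ (constant in $x_2$ for $x_2<0$, stably in $\Wmix^{1,1}$), and then the \emph{single} multi-patch tensor-product construction of Theorem \ref{prop:internal} is applied, with one global set of continuous univariate basis functions on $(-1,1)$ geometrically refined toward the interior point $0$; one application of Theorem \ref{th:ReLU-hp} then yields one NN, and $H^1$-conformity is automatic. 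Replacing your splitting-and-gluing step by this single tensor-product construction over the whole rectangle (or by the extension argument of Lemma \ref{lem:straightcorner}) closes the gap; the rest of your argument then goes through as written.
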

\begin{proof}
We introduce, using Lemma \ref{lemma:exist-PU}, a regular triangulation $\cT$ of $\R^2$, 
an open cover $\{\Omega_i\}_{i=1}^{N_p}$ of $\Omega$, and 
a partition of unity $\{\phi_i\}_{i=1}^{N_p} \in \left[ S_1(\Omega,\cT) \right]^{N_p}$ 
such that the properties \textrm{[P1]} -- \textrm{[P3]} of Lemma \ref{lemma:exist-PU} hold.

We define $\hu_i \coloneqq u_{|_{\Omega_i}}\circ \psi_i : \hOmega_i\to \mathbb{R}$.
Since $u\in \cJ^\varpi_{\ugamma}(\Omega;  \Cset, \emptyset)$ with
$\min\ugamma>1$ and since the maps $\psi_i$ are affine, 
we observe that for every
$i\in\{1, \dots, N_p\}$, there exists $\ugamma$ such that $\min\ugamma>1$
and $\hu_i\in \cJ^\varpi_{\ugamma}(\hOmega_i, \{(0,0)\}, \emptyset)$, 
because of \textrm{[P2]} and \textrm{[P3]}. 
Let
\begin{equation*}
\epsilon_1 \coloneqq 
\frac{\epsilon}{2 N_p\max_{i\in\{1, \dots, N_p\}} 
\|\phi_i\|_{W^{1,\infty}(\Omega)} 
\left(\| \det J_{\psi_i}\|_{L^\infty((0,1)^2)}  
\left( 1 + \|\| J_{\psi^{-1}_i}\|_2\|_{L^\infty(\Omega_i)}^{2} \right) \right)^{1/2} }.
\end{equation*}
By Theorem \ref{th:ReLUapprox} and by 
Lemma \ref{lem:straightcorner} and 
Theorem \ref{prop:Fichera-appx} 
in the forthcoming Subsection \ref{sec:EllPDEFichera}, 
there exist $N_p$ NNs
$\Phi^{\hu_i}_{\epsilon_1}$, $i\in \{1,\dots, N_p\}$, 
such that
     \begin{equation}
      \label{eq:Phi-hu}
      \|\hu_i - \Realiz(\Phi^{\hu_i}_{\epsilon_1}) \|_{H^1(\hOmega_i)}\leq \epsilon_1, \qquad \forall i\in \{1, \dots, N_p\},
     \end{equation}
     and there exists $C_\infty>0$ independent of $\epsilon_1$ such that, 
     for all $i \in \{1, \dots, N_p\}$ and all $\hat{\epsilon} \in (0,1)$
     \begin{equation*}
     \|\Realiz(\Phi^{\hu_i}_{\hat{\epsilon}}) \|_{L^\infty(\hOmega_i)} 
     \leq 
     C_\infty (1+\left| \log\hat{\epsilon} \right|^4).
     \end{equation*}
     The NNs given by Theorem \ref{th:ReLUapprox}, 
     Lemma \ref{lem:straightcorner} and Theorem \ref{prop:Fichera-appx},
     which we here denote by $\widetilde{\Phi}^{\hu_i}_{\epsilon_1}$ for $i = 1,\ldots,N_p$,
     may not have equal depth. 
     Therefore, for all $i=1,\ldots,N_p$ and suitable $L_i\in\N$ we define
     $\Phi^{\hu_i}_{\epsilon_1} := \Phi^{\Id}_{1,L^1_i} \sconc \widetilde{\Phi}^{\hu_i}_{\epsilon_1}$,
     so that the depth is the same for all $i=1,\ldots,N_p$.
     To estimate the size of the enlarged NNs, we use the fact that the size of a 
     NN is not smaller than the depth unless the associated realization is constant. 
     In the latter case, we could replace the NN by a NN with one non-zero weight without changing the realization.
     By this argument, we obtain for all $i=1,\ldots,N_p$ that
     $\size(\Phi^{\hu_i}_{\epsilon_1}) 
     \leq 2\size(\Phi^{\Id}_{1,L_i}) + 2\size(\widetilde{\Phi}^{\hu_i}_{\epsilon_1})
     \leq C \max_{j=1,\ldots,N_p} \depth(\widetilde{\Phi}^{\hu_j}_{\epsilon_1}) 
     	+ C \size(\widetilde{\Phi}^{\hu_i}_{\epsilon_1})
     \leq C \max_{j=1,\ldots,N_p} \size(\widetilde{\Phi}^{\hu_j}_{\epsilon_1})$.
     Furthermore, as shown in \cite{he2020}, there exist NNs
     $\Phi^{\phi_i}$, $i\in\range{N_p}$, such that 
     \begin{equation*}
       \Realiz(\Phi^{\phi_i})(x) = \phi_i(x), \qquad \forall x\in \Omega,\; \forall i\in\{1, \dots, N_p\}.
     \end{equation*}
     Here we use that $\cT$ is a partition $\R^2$, 
     so that $\phi_i$ is defined on all of $\R^2$ and \cite[Theorem 5.2]{he2020} applies,
     which itself is based on \cite{TM1999}.
     Similarly to the previously handled case of $\Phi^{\hu_i}_{\epsilon_1}$, 
     we can assume that $\Phi^{\phi_i}$ for $i=1,\ldots,N_p$ all have equal depth 
     and that the size of $\Phi^{\phi_i}$ is bounded independent of $i$.
     
     Since by \textrm{[P2]} the mappings $\psi_i$ are affine and invertible, it follows that 
     $\psi_i^{-1}$ is affine for every $i \in \{1, \dots, N_p\}$. 
     Thus, there exist NNs $\Phi^{\psi^{-1}_i}$, $i\in\range{N_p}$, of depth $1$,
     such that
     \begin{equation}
       \label{eq:Phi-psi}
       \Realiz(\Phi^{\psi_i^{-1}})(x) = \psi_i^{-1}(x), \qquad \forall x\in \Omega_i,\; \forall i\in\{1, \dots, N_p\}.
     \end{equation}
     Next, we define
     \begin{equation*}
     \epsprod\coloneqq \frac{\epsilon}{2N_p(|\Omega|^{1/2}+ 1+|u|_{H^1(\Omega)} + \max_{i=1,
         \dots, N_p}\|\phi_i\|_{W^{1, \infty}(\Omega)}|\Omega|^{1/2})}
   \end{equation*} 
   and $\Mprod(\epsilon_1)\coloneqq C_\infty (1+\left| \log\epsilon_1 \right|^4)$.
     Finally, we set
     \begin{equation}
       \label{eq:NN-polygon-def}
       \Phi_{\epsilon, u} \coloneqq ((\underbrace{1, \dots, 1}_{N_p\text{ times}}), 0)
       \sconc \Par\left(\left\{\Pi^2_{\epsprod, \Mprod(\epsilon_1)}
       \sconc \Par(\Phi^{\hu_i}_{\epsilon_1}\sconc \Phi^{\psi^{-1}_i}, \Phi^{\Id}_{1,L}  
       \sconc \Phi^{\phi_i})\right\}_{i=1}^{N_p} \right),
     \end{equation}
     where $L\in\N$ is such that 
     $\depth(\Phi^{\hu_1}_{\epsilon_1}\sconc \Phi^{\psi^{-1}_1}) 
     = \depth(\Phi^{\Id}_{1,L} \sconc \Phi^{\phi_1})$,
     which yields that 
     $
     \size(\Phi^{\Id}_{1,L})
     \leq C \depth(\Phi^{\hu_1}_{\epsilon_1}\sconc \Phi^{\psi^{-1}_1})$.
     \paragraph{Approximation accuracy.}
     By \eqref{eq:NN-polygon-def}, 
     we have for all $x\in \Omega$,
     \begin{equation*}
\Realiz(\Phi_{\epsilon, u})(x) 
= 
\sum_{i=1}^{N_p} \Realiz(\Pi^2_{\epsprod, \Mprod(\epsilon_1)})
   \left(  \Realiz(\Phi^{\hu_i}_{\epsilon_1}\sconc\Phi^{\psi^{-1}_i})(x), \Realiz(\Phi^{\phi_i})(x)\right).
     \end{equation*}
     Therefore,
     \begin{equation}
       \label{eq:polygon-appx-triangle}
     \begin{aligned}
       \| u - \Realiz(\Phi_{\epsilon,u}) \|_{H^1(\Omega)}
       &\leq
       \| u - \sum_{i=1}^{N_p}\phi_i\Realiz(\Phi^{\hu_i}_{\epsilon_1}\sconc \Phi^{\psi^{-1}_i}) \|_{H^1(\Omega)}
         \\ &\qquad + 
 \sum_{i=1}^{N_p}
       \| \Realiz(\Pi^2_{\epsprod, \Mprod(\epsilon_1)})
 \left(\Realiz(\Phi^{\hu_i}_{\epsilon_1}\sconc \Phi^{\psi^{-1}_i}), \phi_i \right)  
  - 
   \phi_i\Realiz(\Phi^{\hu_i}_{\epsilon_1}\sconc \Phi^{\psi_i^{-1}}) \|_{H^1(\Omega)}
              \\ & = (I) + (II).
     \end{aligned}
     \end{equation}
We start by considering term $(I)$.
     For each $i\in \{1, \dots, N_p\}$,
     thanks to \eqref{eq:Phi-hu}, there holds,
     with $\| J_{\psi^{-1}_i} \|_2^2$  
         denoting the square of the matrix $2$-norm of the Jacobian of $\psi_i^{-1}$,
     \begin{equation}
       \label{eq:appx-omegai}
\begin{aligned}
& \| u - \Realiz(\Phi^{\hu_i}_{\epsilon_1}\sconc \Phi^{\psi^{-1}_i}) \|_{H^1(\Omega_i)}
\\
&\qquad =  
\| \hu_i\circ \psi^{-1}_i - \Realiz(\Phi^{\hu_i}_{\epsilon_1})\circ \psi^{-1}_i \|_{H^1(\Omega_i)}
       \\ & \qquad
       = \left( \int_{\hOmega_i} \left(\snormc{\hu_i}^2 + \normc[2]{J_{\psi_i^{-1}} \nabla 
     	\left(\hu_i - \Realiz(\Phi^{\hu_i}_{\epsilon_1}) \right)}^2 
    	 \right) \det J_{\psi_i} dx \right)^{1/2} 
     \\ &\qquad
     \leq \epsilon_1 
         \left( \| \det J_{\psi_i}\|_{L^\infty(\hOmega_i)} 
              + \| \det J_{\psi_i}\|_{L^\infty(\hOmega_i)} \| \| J_{\psi^{-1}_i} \|_2^2 \|_{L^\infty(\Omega_i)} \right)^{1/2} 
\\ & \qquad
\leq 
\epsilon_2:= \epsilon_1 
\max_i   
\left( 
\| \det J_{\psi_i}\|_{L^\infty(\hOmega_i)} + \| \det J_{\psi_i}\|_{L^\infty(\hOmega_i)} 
                          \| \| J_{\psi^{-1}_i} \|_2^2 \|_{L^\infty(\Omega_i)} 
\right)^{1/2}.
\end{aligned}
\end{equation}
     By \cite[Theorem 2.1]{Melenk1996},
     \begin{equation}
       \label{eq:polygon-appx-I}
       (I) \leq N_p \epsilon_2\max_{i\in \{1, \dots, N_p\}}\|\phi_i\|_{W^{1, \infty}(\Omega)}  \leq \frac\epsilon2.
     \end{equation}
     We now consider term $(II)$ in \eqref{eq:polygon-appx-triangle}.
     There holds, by Theorem \ref{th:ReLUapprox} and \eqref{eq:Phi-psi},
     \begin{equation*}
       \|\Realiz(\Phi^{\hu_i}_{\epsilon_1}\sconc \Phi^{\psi^{-1}_i})\|_{L^\infty(\Omega_i)}
         = \|\Realiz(\Phi^{\hu_i}_{\epsilon_1})\|_{L^\infty(\hOmega_i)} 
       \leq C_{\infty} (1 + \left| \log\epsilon_1 \right|^4)
     \end{equation*}
     for all $i\in \{1, \dots, N_p\}$. 
     Furthermore, by [P1], $\phi_i(x) = 0$ for all $x\in \Omega\setminus\Omega_i$ and, 
     by Proposition \ref{prop:Multiplication},
       \begin{equation*}
         \Realiz(\Pi^2_{\epsprod, \Mprod(\epsilon_1)})
       \left(\Realiz(\Phi^{\hu_i}_{\epsilon_1}\sconc \Phi^{\psi^{-1}_i})(x), \phi_i (x)\right) = 0, 
           \qquad \forall x\in \Omega\setminus\Omega_i.
       \end{equation*}
       From \eqref{eq:appx-omegai}, we also have 
         \begin{equation*}
           | \Realiz(\Phi^{\hu_i}_{\epsilon_1}\sconc \Phi^{\psi^{-1}_i}) |_{H^1(\Omega_i)}  \leq
           | u |_{H^1(\Omega_i)} + \| u - \Realiz(\Phi^{\hu_i}_{\epsilon_1}\sconc \Phi^{\psi^{-1}_i}) \|_{H^1(\Omega_i)} 
           \leq 1+ | u |_{H^1(\Omega_i)}.
         \end{equation*}
     Hence,
     \begin{equation}
       \label{eq:polygon-appx-II}
       \begin{aligned}
         (II) & = \sum_{i=1}^{N_p}
       \| \Realiz(\Pi^2_{\epsprod, \Mprod(\epsilon_1)})
      \left(\Realiz(\Phi^{\hu_i}_{\epsilon_1}\sconc \Phi^{\psi^{-1}_i}), \phi_i \right) 
       - \phi_i\Realiz(\Phi^{\hu_i}_{\epsilon_1}\sconc \Phi^{\psi_i^{-1}}) \|_{H^1(\Omega_i)}
       \\
       &\leq \sum_{i=1}^{N_p}\bigg( \| \Realiz(\Pi^{2}_{\epsprod, \Mprod(\epsilon_1)}) (a,b)  
                    - ab\|_{W^{1, \infty}([-\Mprod(\epsilon_1), \Mprod(\epsilon_1)]^2)}
       \\  &\qquad \qquad\cdot
\left(|\Omega|^{1/2}+ | \Realiz(\Phi^{\hu_i}_{\epsilon_1}\sconc \Phi^{\psi^{-1}_i}) |_{H^1(\Omega_i)} +| \phi_i |_{H^1(\Omega_i)}  \right) \bigg)
       \\
       &\leq N_p \epsprod \left(|\Omega|^{1/2}+ 1 + | u |_{H^1(\Omega_i)}  + |\Omega|^{1/2}\max_{i=1,\dots, N_p} \|\phi_i\|_{W^{1, \infty}(\Omega)}\right)
       \\ & \leq \frac{\epsilon}{2}.
       \end{aligned}
     \end{equation}
     The asserted approximation accuracy follows by combining \eqref{eq:polygon-appx-triangle}, \eqref{eq:polygon-appx-I}, 
     and \eqref{eq:polygon-appx-II}.
     \paragraph{Size of the neural network.} 
     To bound the size of the NN, we
     remark that $N_p$ and the sizes of $\Phi^{\psi_i^{-1}}$ and of
     $\Phi^{\phi_i}$ only depend on the domain $\Omega$. 
     Furthermore, there
     exist constants $C_{\Omega, i}$, $i=1,2,3$, that depend only on
     $\Omega$ and $u$ such that
     \begin{equation}
       \label{eq:epsilons-polygon}
       \begin{gathered}
       \left| \log\epsilon_1 \right| \leq C_{\Omega, 1} (1+\left| \log\epsilon \right|),\qquad \qquad
       \left| \log\epsilon_\times \right| \leq C_{\Omega, 2}(1+ \left| \log\epsilon \right|), \\
       \left| \log M_\times(\epsilon_1) \right| \leq C_{\Omega, 3}(1+  \log(1+\left|\log\epsilon \right|)).
       \end{gathered}
     \end{equation}
     From Theorem \ref{th:ReLUapprox} and Proposition \ref{prop:Multiplication},
     in addition, there exist constants $C^L_{\hu}, C^M_{\hu}, C_{\times}>0$
     such that, for all $0< \epsilon_1 , \epsilon_\times\leq 1$,
     \begin{equation}
       \label{eq:sizes-polygon}
       \begin{gathered}
       \depth(\Phi^{\hu_i}_{\epsilon_1})  \leq C^L_{\hu} (1+\left|
         \log\epsilon_1 \right|)(1 + \log(1+\left| \log\epsilon_1 \right|)),
       \qquad \qquad
       \size(\Phi^{\hu_i}_{\epsilon_1})  \leq C^M_{\hu} (1+\left|
         \log\epsilon_1 \right|^{5}),\\
       \max(\size(\Pi^2_{\epsprod, \Mprod(\epsilon_1)}), \depth(\Pi^2_{\epsprod,
         \Mprod(\epsilon_1)}))
         \leq C_\times (1+\log(\Mprod(\epsilon_1)^2/\epsprod)).
     \end{gathered}
   \end{equation}
   Then, by \eqref{eq:NN-polygon-def}, we have
   \begin{equation}
     \label{eq:depth-size-polygon}
     \begin{aligned}
     &
       \depth(\Phi_{\epsilon, u}) = 1 + \depth(\Pi^2_{\epsprod, \Mprod(\epsilon_1)}) 
       +\max_{i=1, \dots, N_p} \left(\depth(\Phi^{\hu_i}_{\epsilon_1}) 
       + \depth(\Phi^{\psi_i^{-1}}) 
       \right),
       \\ & 
       \size(\Phi_{\epsilon, u}) 
         \leq {C}\left( N_p + \size(\Pi^2_{\epsprod, \Mprod(\epsilon_1)}) +\sum_{i=1}^{N_p} \left(\size(\Phi^{\hu_i}_{\epsilon_1}) 
       + \size(\Phi^{\psi_i^{-1}}) + \size(\Phi^{\Id}_{1,L})
       + \size(\Phi^{\phi_i})  \right) \right).
     \end{aligned}
   \end{equation}
   The desired depth and size bounds follow from \eqref{eq:epsilons-polygon}, \eqref{eq:sizes-polygon},
     and \eqref{eq:depth-size-polygon}. This concludes the proof.
\end{proof}
The exponential expression rate for the class of weighted, analytic functions in $\Omega$ 
by realizations of NNs with ReLU activation in the $H^1(\Omega)$-norm
established in Theorem \ref{th:polygon}
implies an exponential expression rate bound on $\partial\Omega$, 
via the trace map and the fact that 
$\partial\Omega$ can be \emph{exactly parametrized by the realization 
of a shallow NN with ReLU activation}.
This is relevant for NN-based solution of boundary integral equations.

\begin{corollary}{(NN expression of Dirichlet traces)}\label{cor:polygontrace} 
Let $\Omega\subset \R^2$ be a polygon with Lipschitz boundary 
and a finite set $\Cset$ of corners.
  Let $\ugamma = \{\gamma_c: c\in \Cset\}$ such that $\min\ugamma >1$.
For any connected component $\Gamma$ of $\partial\Omega$,
let $\ell_\Gamma>0$ be the length of $\Gamma$, 
such that there exists a continuous, piecewise affine parametrization 
$\theta:[0,\ell_\Gamma]\to\R^2:t\mapsto\theta(t)$ of $\Gamma$
with finitely many affine linear pieces and $\normc[2]{\tfrac{d}{dt}\theta} = 1$ 
for almost all $t\in[0,\ell_\Gamma]$.

Then, for all $u\in \cJ^\varpi_\ugamma(\Omega; \Cset, \emptyset)$ and
for all $0  < \epsilon < 1$, there exists a NN $\Phi_{\epsilon, u, \theta}$
approximating the trace $\Trace u := u_{|_{\Gamma}}$ 
such that
  \begin{equation}
    \label{eq:polygontrace}
    \| \Trace u- \Realiz(\Phi_{\epsilon, u, \theta})\circ\theta^{-1} \|_{H^{1/2}(\Gamma)}\leq \epsilon.
  \end{equation}
In addition, as $\epsilon \to 0$,
$$
\size(\Phi_{\epsilon, u, \theta}) 
= 
\mathcal{O}(\left|\log(\epsilon)\right|^{5}),
\;\;
\depth(\Phi_{\epsilon, u, \theta}) 
= 
\mathcal{O}(\left|\log(\epsilon)\right|\log(\left|\log(\epsilon)\right|)). 
$$
\end{corollary}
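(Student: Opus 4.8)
The strategy is to reduce the claim about the trace on $\Gamma$ to the bulk approximation result Theorem~\ref{th:polygon}, exploiting the continuity of the trace operator $T\colon H^1(\Omega)\to H^{1/2}(\partial\Omega)$ and the fact that the composition with an affine, arc-length parametrization is an isomorphism of the relevant Sobolev spaces on intervals. First I would invoke Theorem~\ref{th:polygon} to obtain, for a tolerance $\epsilon_1$ to be fixed below, a NN $\Phi_{\epsilon_1,u}$ with $\|u-\Realiz(\Phi_{\epsilon_1,u})\|_{H^1(\Omega)}\leq\epsilon_1$ and the stated size and depth bounds. Since $\Realiz(\Phi_{\epsilon_1,u})$ is itself a continuous, piecewise affine function on $\Omega$, it lies in $H^1(\Omega)$, so its trace $T\Realiz(\Phi_{\epsilon_1,u})$ is well defined in $H^{1/2}(\Gamma)$. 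By the trace theorem there is a constant $C_{\mathrm{tr}}=C_{\mathrm{tr}}(\Omega)$ with
\[
\|Tu - T\Realiz(\Phi_{\epsilon_1,u})\|_{H^{1/2}(\Gamma)}
\leq \|Tu - T\Realiz(\Phi_{\epsilon_1,u})\|_{H^{1/2}(\partial\Omega)}
\leq C_{\mathrm{tr}}\,\|u-\Realiz(\Phi_{\epsilon_1,u})\|_{H^1(\Omega)}
\leq C_{\mathrm{tr}}\,\epsilon_1 .
\]

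**Construction of the NN and change of variables on $\Gamma$.** The next step is to realize $Tu\circ\theta$ rather than $Tu$ itself: because $\theta$ is a continuous, piecewise affine, arc-length parametrization of $\Gamma$ with finitely many pieces, the map $w\mapsto w\circ\theta$ is a bounded isomorphism from $H^{1/2}(\Gamma)$ onto a finite product of $H^{1/2}$-spaces on the affine pieces, with constants depending only on $\Gamma$; hence it suffices to control $\|Tu\circ\theta - \Realiz(\Phi_{\epsilon,u,\theta})\circ\theta\cdot\theta^{-1}\|$ up to such equivalence, and taking $\Phi_{\epsilon,u,\theta}:=\Phi_{\epsilon_1,u}$ and noting $\Realiz(\Phi_{\epsilon_1,u})\circ\theta^{-1}$ is exactly the trace of the piecewise affine NN realization restricted to $\Gamma$, the bound
\[
\|Tu - \Realiz(\Phi_{\epsilon,u,\theta})\circ\theta^{-1}\|_{H^{1/2}(\Gamma)} = \|Tu - T\Realiz(\Phi_{\epsilon_1,u})\|_{H^{1/2}(\Gamma)}\leq C_{\mathrm{tr}}\,\epsilon_1
\]
follows. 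Choosing $\epsilon_1 := \epsilon/C_{\mathrm{tr}}$ yields \eqref{eq:polygontrace}. (The remark in the corollary statement that $\partial\Omega$ is exactly parametrized by a shallow ReLU NN is what makes $\theta$, and hence $\theta^{-1}$ on each affine piece, representable; this is used only to interpret the left-hand side of \eqref{eq:polygontrace}, not in the error estimate itself.)

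**Size and depth bookkeeping.** Finally, since $\Phi_{\epsilon,u,\theta}=\Phi_{\epsilon_1,u}$ with $\epsilon_1=\epsilon/C_{\mathrm{tr}}$ and $C_{\mathrm{tr}}$ depends only on $\Omega$, we have $|\log\epsilon_1| \leq |\log\epsilon| + |\log C_{\mathrm{tr}}| = \mathcal{O}(1+|\log\epsilon|)$, so the asymptotic size bound $\size(\Phi_{\epsilon,u,\theta}) = \size(\Phi_{\epsilon_1,u}) = \mathcal{O}(|\log\epsilon|^5)$ and depth bound $\depth(\Phi_{\epsilon,u,\theta}) = \mathcal{O}(|\log\epsilon|\log(|\log\epsilon|))$ transfer directly from Theorem~\ref{th:polygon}.

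**Main obstacle.** The only nontrivial point is the continuity of the trace map from $H^1(\Omega)$ into $H^{1/2}(\Gamma)$ on a Lipschitz polygon and the compatibility of this with the arc-length reparametrization $\theta$: one must check that $\|{\cdot}\circ\theta\|$ and $\|{\cdot}\|_{H^{1/2}(\Gamma)}$ are equivalent up to the breakpoints of $\theta$ (the corners of $\Gamma$), where $H^{1/2}$ along $\Gamma$ does not decouple as a plain product across pieces. This is handled by the standard fact that for a Lipschitz arc the intrinsic $H^{1/2}(\Gamma)$ norm (Gagliardo seminorm along the arc-length variable) is equivalent to the trace norm, so no extra loss beyond a $\Gamma$-dependent constant is incurred; everything else is a direct application of Theorem~\ref{th:polygon} and elementary manipulations.
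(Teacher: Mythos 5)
Your error estimate is exactly the paper's argument: apply Theorem \ref{th:polygon} with tolerance $\epsilon/C_{\mathrm{tr}}$ and push the $H^1(\Omega)$ bound through the continuous trace operator into $H^{1/2}(\Gamma)$; the worry about whether $w\mapsto w\circ\theta$ is an isomorphism of $H^{1/2}$-spaces is unnecessary, since the norm in \eqref{eq:polygontrace} is the intrinsic one on $\Gamma$ and the function being measured is just the restriction of the bulk network to $\Gamma$. The one concrete defect is your definition $\Phi_{\epsilon,u,\theta}:=\Phi_{\epsilon_1,u}$: this network has two-dimensional input, so the expression $\Realiz(\Phi_{\epsilon,u,\theta})\circ\theta^{-1}$ in the statement does not parse ($\theta^{-1}$ produces a scalar $t\in[0,\ell_\Gamma]$). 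The paper instead sets $\Phi_{\epsilon,u,\theta}:=\Phi_{\epsilon/C_\Gamma,u}\sconc\Phi^{\theta}$, where $\Phi^{\theta}$ is a depth-two ReLU network realizing the piecewise affine $\theta$ exactly; this makes the composition well-defined and adds only an $\epsilon$-independent amount to size and depth, so the asymptotic bounds are unchanged. You should incorporate that concatenation; otherwise the argument is complete.
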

\newcommand{\Ctrace}{C_{\mathrm{\Gamma}}}
\begin{proof}
We note that both components of $\theta$ are continuous, piecewise affine functions on $[0,\ell_\Gamma]$,
thus they can be represented exactly as realization of a NN of depth two, 
with the ReLU activation function.  
Moreover, the number of 
weights of these NNs is of the order of the number of affine linear pieces of $\theta$.
We denote the parallelization of the NNs emulating exactly the two components of $\theta$ by $\Phi^{\theta}$. 

By continuity of the trace operator
$\Trace: H^1(\Omega) \to H^{1/2}(\partial\Omega)$ 
(e.g. \cite{Gagliardo1957,Brenner2008}),
there exists a constant $\Ctrace>0$ such that for all $v\in H^{1}(\Omega)$
it holds
$
\normc[H^{1/2}(\Gamma)]{\Trace v }
	\leq \Ctrace \normc[H^{1}(\Omega)]{ v },
$
and without loss of generality we may assume $\Ctrace\geq1$. 

Next, for any $\eps\in(0,1)$, let $\Phi_{\epsilon/\Ctrace, u}$ 
be as given by Theorem \ref{th:polygon}.
Define $\Phi_{\epsilon, u, \theta} := \Phi_{\epsilon/\Ctrace, u} \sconc \Phi^{\theta}$.
It follows that
\begin{align*}
\normc[H^{1/2}(\Gamma)]{\Trace u -  \Realiz(\Phi_{\epsilon, u, \theta})\circ\theta^{-1} }
   = \normc[H^{1/2}(\Gamma)]{\Trace \left(u - \Realiz(\Phi_{\epsilon/\Ctrace, u})  \right) }
   \leq \Ctrace \normc[H^{1}(\Omega)]{ u - \Realiz(\Phi_{\epsilon/\Ctrace, u}) }
	\leq \eps.
\end{align*}
The bounds on its depth and size follow directly from 
Proposition \ref{prop:conc}, Theorem \ref{th:polygon},
and the fact that the depth and size of $\Phi^{\theta}$ are independent of $\eps$.
This finishes the proof.
\end{proof}
\begin{remark}
The exponent $5$ in the bound on the NN size 
$\size(\Phi_{\epsilon, u, \theta})$ 
in Corollary \ref{cor:polygontrace} is likely not optimal, 
due to it being transferred from the NN rate in $\Omega$.
\end{remark}

The proof of Theorem \ref{th:polygon} established exponential expressivity of
realizations of NNs with ReLU activation
for the analytic class $\cJ^\varpi_{\ugamma}(\Omega;  \Cset, \emptyset)$ in $\Omega$.
This implies that realizations of NNs can approximate, with exponential
expressivity, solution classes of elliptic PDEs in polygonal domains 
$\Omega$. 
We illustrate this by formulating concrete results for 
three problem classes: 
second order, linear, elliptic source and eigenvalue problems in $\Omega$,
and viscous, incompressible flow.
To formulate the results, we specify the assumptions on $\Omega$.
\begin{definition}[Linear, second order, elliptic divergence-form
differential operator with analytic coefficients]
\label{def:Dop}
Let $d\in\{2, 3\}$ and let $\Omega\subset\R^d$ be a bounded domain.
Let the coefficient functions
$a_{ij},b_i, c:\overline{\Omega}\to \R$ be real analytic in $\overline{\Omega}$,
and such that the matrix function $A = (a_{ij})_{1\leq i,j\leq d}:\Omega \to \R^{d \times d}$
is symmetric and uniformly positive definite in $\Omega$. 
With these functions, 
we define the linear, second order, elliptic divergence-form
differential operator 
$\Dop$ acting on $w\in C^\infty_0(\Omega)$ via
(summation over repeated indices $i,j\in \{1,\dots, d\}$)
$$
(\Dop w)(x) := -\partial_i(a_{ij}(x)\partial_j w(x)) + b_j(x)\partial_j w(x) + c(x)w(x) \;,
\quad 
x\in \Omega\;.
$$
\end{definition}
\begin{setting}
  \label{setting:polygon}
We assume that $\Omega\subset \R^2$ is an open, bounded polygon 
with 
boundary $\partial\Omega$ that is Lipschitz and connected.
In addition, $\partial\Omega$ is the closure of a finite number
$J \geq 3$ of straight, open sides $\Gamma_j$, i.e., 
$\Gamma_{i} \cap \Gamma_j =\emptyset$ for $i\ne j$ and 
$\partial\Omega = \bigcup_{1\leq j \leq J} \overline{\Gamma_j}$.
We assume the sides are enumerated cyclically, according to arc length, i.e.
$\Gamma_{J+1} = \Gamma_1$.
By $n_j$, 
we denote the exterior unit normal vector to $\Omega$
on $\Gamma_j$ and by 
$\bbc_j := \overline{\Gamma_{j-1}} \cap \overline{\Gamma_j}$ 
the corner $j$ of $\Omega$.

With $\Dop$ as in Definition \ref{def:Dop}, 
we associate on boundary segment $\Gamma_j$ 
a boundary operator $\Bop_j \in \{\gamma^j_0,\gamma^j_1\}$,
i.e. 
either the Dirichlet trace $\gamma_0$ 
or the distributional (co-)normal derivative operator $\gamma_1$, 
acting on 
$w\in C^1(\overline{\Omega})$ 
via
\begin{equation}
  \label{eq:boundarycond}
\gamma^j_0 w := w|_{\Gamma_j}
\, ,\qquad 
\gamma^j_1 w := (A\nabla w) \cdot n_j|_{\Gamma_j},
\quad j=1,...,J\;.
\end{equation}
We collect the boundary operators $\Bop_j$ in $\Bop := \{ \Bop_j \}_{j=1}^J$.
\end{setting}
The first corollary addresses exponential ReLU expressibility of solutions
of the source problem corresponding to $(\Dop,\Bop)$.
\begin{corollary}
  \label{cor:polygon-BVP}
  Let $\Omega$, $\Dop$, and $\Bop$ be as in Setting \ref{setting:polygon} with $d=2$.
For $f$ analytic in $\overline{\Omega}$, 
  let $u$ denote a solution to the boundary value problem
  \begin{equation}\label{eq:HomDiri}
    \Dop u = f \;\text{ in }\;\Omega, \qquad \Bop u = 0\;\text{ on }\;\partial \Omega \;.
  \end{equation}
Then, 
for every $0  < \epsilon < 1$, there exists a 
NN $\Phi_{\epsilon, u}$ such that
  \begin{equation}
    \label{eq:polygon-BVP}
    \| u - \Realiz(\Phi_{\epsilon, u}) \|_{H^1(\Omega)}\leq \epsilon.
  \end{equation}
In addition, $\size(\Phi_{\epsilon, u}) = \mathcal{O}(\left|\log(\epsilon)\right|^{5})$ 
and 
$\depth(\Phi_{\epsilon,u}) 
= \mathcal{O}(\left|\log(\epsilon)\right|\log(\left|\log(\epsilon)\right|))$, 
as $\epsilon \to 0$.
\end{corollary}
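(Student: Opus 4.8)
The plan is to reduce the statement almost entirely to Theorem~\ref{th:polygon}, exactly as Theorems~\ref{prop:Schrodinger} and \ref{prop:HF} were reduced to the corresponding reapproximation results: first invoke the analytic regularity theory for linear elliptic boundary value problems on polygons to place $u$ in a weighted analytic class with non-homogeneous norm, and then apply Theorem~\ref{th:polygon} as a black box.

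Concretely, I would first recall that, since the coefficients $a_{ij},b_i,c$ of $\Dop$ and the datum $f$ are real analytic on $\overline{\Omega}$, and $\partial\Omega$ consists of finitely many straight segments with corner set $\Cset$, any given solution $u\in H^1(\Omega)$ of \eqref{eq:HomDiri} is weighted analytic with non-homogeneous norm: there exist $C_u,A_u>0$ and a weight vector $\ugamma=\{\gamma_c:c\in\Cset\}$ with $u\in\cJ^\varpi_\ugamma(\Omega;\Cset,\emptyset;C_u,A_u)$. This is classical; see e.g.\ \cite{BDC85,GuoBab1,GuoBab2,CoDaNi12}. The use of the \emph{non-homogeneous} weighted scale $\cJ$ (rather than the homogeneous Kondrat'ev scale) is what is appropriate here, because $f$ need not vanish at the corners, so near each $\bbc_j$ the solution carries both a smooth (Taylor) part and corner singular terms.

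The one point requiring care is to verify that the exponents can be chosen with $\min\ugamma>1$, which is the hypothesis of Theorem~\ref{th:polygon}. Near a corner $\bbc_j$ of interior opening angle $\omega_j$ with incident boundary operators $(\Bop_{j-1},\Bop_j)$, the solution is a finite linear combination of corner functions $r^{\lambda}(\log r)^m\Theta(\varphi)$ plus an analytic remainder, where the exponents $\lambda$ are the positive roots of the transcendental equation of the associated model problem; the smallest one, $\lambda_j^{\min}$, equals $\pi/\omega_j$ for boundary conditions of the same type and $\pi/(2\omega_j)$ for mixed type, and in all cases $\lambda_j^{\min}>0$ because $u\in H^1(\Omega)$. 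Since $r^{\lambda}(\log r)^m$ belongs to the local non-homogeneous weighted analytic class for any exponent $\gamma_c<1+\lambda_j^{\min}$ (the logarithms do not affect the strict $L^2$-integrability of $r^{\lambda-\gamma_c}(\log r)^m$ near the vertex in $\R^2$), and since an opening angle $\omega_j=\pi$ is admissible here (with $\lambda_j^{\min}=1/2$ at a mixed corner, so $\gamma_c\in(1,3/2)$ works) — which is precisely why Lemma~\ref{lemma:exist-PU} and Theorem~\ref{th:polygon} allow straight ``corners'' marking a change of boundary condition — we may select $\gamma_c\in(1,\,1+\lambda_j^{\min})$ at each $c=\bbc_j$, obtaining $\min\ugamma>1$.

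Finally, $\Omega$ as in Setting~\ref{setting:polygon} satisfies the hypotheses of Theorem~\ref{th:polygon} (a polygon with Lipschitz boundary, straight sides, finite corner set), so applying Theorem~\ref{th:polygon} to $u\in\cJ^\varpi_\ugamma(\Omega;\Cset,\emptyset)$ yields, for each $0<\epsilon<1$, a ReLU NN $\Phi_{\epsilon,u}$ with $\|u-\Realiz(\Phi_{\epsilon,u})\|_{H^1(\Omega)}\leq\epsilon$ and $\size(\Phi_{\epsilon,u})=\mathcal{O}(|\log\epsilon|^5)$, $\depth(\Phi_{\epsilon,u})=\mathcal{O}(|\log\epsilon|\log|\log\epsilon|)$, which is exactly \eqref{eq:polygon-BVP} together with the claimed complexity bounds. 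The main (and essentially only) obstacle is the bookkeeping of the corner exponents and boundary-condition types needed to secure $\min\ugamma>1$; once the analytic regularity is established, the neural network construction is entirely inherited from Theorem~\ref{th:polygon}, and no separate solvability or uniqueness considerations for \eqref{eq:HomDiri} are needed since $u$ is given.
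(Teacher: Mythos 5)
Your proposal is correct and follows essentially the same route as the paper: establish that $u$ lies in $\cJ^\varpi_\ugamma(\Omega;\Cset,\emptyset)$ with $\min\ugamma>1$ via the weighted analytic regularity theory for elliptic problems on polygons (the paper cites \cite[Theorem 3.1]{GuoBab4} for exactly this), and then apply Theorem~\ref{th:polygon} as a black box. Your additional discussion of the corner exponents $\lambda_j^{\min}$ is a reasonable sketch of why $\min\ugamma>1$ is attainable, but it is already contained in the cited regularity result and is not needed as a separate step.
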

\begin{proof}
The proof is obtained by verifying weighted, analytic regularity of solutions.
By
\cite[Theorem 3.1]{GuoBab4}
there
exists $\ugamma$ such that $\min\ugamma>1$ 
and $u\in \cJ^\varpi_\ugamma(\Omega; \Cset, \emptyset)$. 
Then, the application of Theorem \ref{th:polygon} concludes the proof.
\end{proof}
Next, we address NN expression rates for eigenfunctions of $(\Dop,\Bop)$.
\begin{corollary}\label{cor:Eigen}
Let $\Omega$, $\Dop$, $\Bop$ be as in Setting \ref{setting:polygon} with $d=2$,
and $b_i = 0$ in Definition \ref{def:Dop},
and let $ 0 \ne w\in H^1(\Omega)$ be an eigenfunction 
of the elliptic eigenvalue problem
\begin{equation}\label{eq:EllEVP}
    \Dop w = \lambda w \text{ in }\Omega, \qquad \Bop w = 0 \text{ on }\partial \Omega.
\end{equation}
Then, 
for every $0  < \epsilon < 1$, there exists a 
NN $\Phi_{\epsilon, w}$ such that
  \begin{equation}
    \label{eq:polygon-EVP}
    \| w - \Realiz(\Phi_{\epsilon, w}) \|_{H^1(\Omega)}\leq \epsilon.
  \end{equation}
In addition, $\size(\Phi_{\epsilon, w}) = \mathcal{O}(\left|\log(\epsilon)\right|^{5})$ 
and 
$\depth(\Phi_{\epsilon,w}) = \mathcal{O}(\left|\log(\epsilon)\right|\log(\left|\log(\epsilon)\right|))$, 
as $\epsilon \to 0$.
\end{corollary}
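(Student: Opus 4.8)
The plan is to follow exactly the route of Corollary~\ref{cor:polygon-BVP}: I would first show that the eigenfunction $w$ belongs to a weighted analytic class $\cJ^\varpi_\ugamma(\Omega;\Cset,\emptyset)$ with $\min\ugamma>1$, and then invoke Theorem~\ref{th:polygon} with $u=w$, which at once gives \eqref{eq:polygon-EVP} together with the asserted size and depth bounds. The only step that is not immediate is the weighted analytic regularity of $w$: in contrast to the source problem~\eqref{eq:HomDiri}, whose right-hand side is assumed analytic in $\overline\Omega$, the right-hand side $\lambda w$ of~\eqref{eq:EllEVP} is a priori only as regular as $w$, so the regularity must be obtained by bootstrapping within the equation rather than quoted off the shelf.

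Concretely, I would argue as follows. Since $w\in H^1(\Omega)$ solves $\Dop w=\lambda w$ with $\lambda w\in L^2(\Omega)$, $b_i=0$, and the coefficients of $\Dop$ analytic up to $\overline\Omega$, the finite-order weighted regularity theory for second order elliptic boundary value problems in polygons (the Kondrat'ev/Babu\v{s}ka--Guo theory underlying \cite{GuoBab4}) yields a single weight exponent vector $\ugamma$ with $\min\ugamma>1$ such that $w\in\cJ^2_\ugamma(\Omega;\Cset,\emptyset)$. I would then iterate the elliptic shift estimate for the scale $\{\cJ^k_\ugamma\}_{k\ge0}$: assuming $w\in\cJ^k_\ugamma(\Omega;\Cset,\emptyset)$, the right-hand side $\lambda w$ lies in $\cJ^k_\ugamma(\Omega;\Cset,\emptyset)$ as well, and the shift estimate (using analyticity of the coefficients up to $\partial\Omega$) upgrades this to $w\in\cJ^{k+2}_\ugamma(\Omega;\Cset,\emptyset)$, with constants growing only by a factor controlled uniformly in terms of the analyticity constants of $A$ and $c$ and of $|\lambda|$. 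Tracking these constants through the induction produces bounds of the type $\sum_{|\alpha|=k}\|\dalpha w\|_{L^2}\le C A^k k!$ on each of the subdomains entering the definition of $\cJ^\varpi_\ugamma$, i.e. $w\in\cJ^\varpi_\ugamma(\Omega;\Cset,\emptyset)$ for suitable $C,A>0$.

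With this in hand the proof closes: Theorem~\ref{th:polygon} applied to $u=w$ furnishes the NN $\Phi_{\epsilon,w}$ with $\|w-\Realiz(\Phi_{\epsilon,w})\|_{H^1(\Omega)}\le\epsilon$, $\size(\Phi_{\epsilon,w})=\mathcal{O}(|\log\epsilon|^5)$ and $\depth(\Phi_{\epsilon,w})=\mathcal{O}(|\log\epsilon|\log(|\log\epsilon|))$, as claimed. I expect the main obstacle to be precisely the regularity bootstrap: one has to check that a single weight exponent vector $\ugamma$ (independent of $k$) can be chosen so that the shift estimate closes at every order, and that the accumulated constants remain of factorial --- rather than faster --- growth in $k$; this is where the hypotheses $b_i=0$ and analyticity of $A,c$ up to $\partial\Omega$ are genuinely used. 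One could alternatively invoke a ready-made analytic regularity result for elliptic eigenvalue problems on polygons, if available, in place of the bootstrap.
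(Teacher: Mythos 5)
Your proposal has the same overall architecture as the paper's proof: establish that $w$ lies in a weighted analytic class $\cJ^\varpi_\ugamma(\Omega;\Cset,\emptyset)$ with $\min\ugamma>1$, then apply Theorem~\ref{th:polygon}, exactly as in Corollary~\ref{cor:polygon-BVP}. The difference is entirely in how the regularity is obtained. The paper does not bootstrap: it simply cites the ready-made analytic regularity result for elliptic eigenvalue problems on polygons, \cite[Theorem 3.1]{Babuska1989}, which is precisely the ``off the shelf'' alternative you mention in your last sentence. Your sketched bootstrap (finite-order shift estimates in the scale $\cJ^k_\ugamma$ with the right-hand side $\lambda w$ inheriting the regularity of $w$ at each step, plus tracking of constants to verify factorial growth) is indeed the standard mechanism by which such results are proved, and you correctly isolate the genuinely delicate point: that a single weight vector $\ugamma$ works at every order and that the accumulated constants stay of the form $CA^k k!$. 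As written, though, that step is only announced, not executed --- the constant tracking is exactly the content of the Babu\v{s}ka--Guo theorem and is not a routine induction. So your argument is complete only via the citation route you offer as a fallback; if you insist on the self-contained bootstrap, you owe the reader the quantitative shift estimate with explicit dependence of the constants on $k$. Since the citation exists and is what the paper uses, the proposal is acceptable, but you should present the reference as the primary justification rather than as an afterthought.
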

\begin{proof}
The statement follows from 
the regularity result \cite[Theorem 3.1]{Babuska1989},
and Theorem \ref{th:polygon} as in Corollary \ref{cor:polygon-BVP}.
\end{proof}

The analytic regularity of solutions $u$ in the proof of Theorem \ref{th:polygon}
also holds for certain nonlinear, elliptic PDEs. 
We illustrate it for the velocity field of viscous, incompressible flow in $\Omega$.
\begin{corollary}\label{cor:NSE}
Let $\Omega\subset \R^2$ be as in Setting \ref{setting:polygon}.
Let $\nu>0$ and let $\bm{u}\in  H^1_0(\Omega)^2$ 
be
the velocity field of the Leray solutions of the 
viscous, incompressible Navier-Stokes equations in $\Omega$,
with homogeneous Dirichlet (``no slip'') boundary conditions
\begin{equation}
  \label{eq:NSE}
  -\nu\Delta \bm{u} + (\bm{u}\cdot\nabla) \bm{u} + \nabla p = \bm{f} \text{ in }\Omega,\qquad
  \nabla\cdot\bm{u} = 0\text{ in }\Omega,\qquad
  \bm{u} =\bm{0}\text{ on }\partial\Omega,
\end{equation}
where the components of $\bm{f}$ are analytic in $\overline{\Omega}$ 
and 
such that
$\|\bm{f}\|_{H^{-1}(\Omega)}/ \nu^2$ is small enough so that $\bm{u}$ is unique.

Then, 
for every $0  < \epsilon < 1$, 
there exists a NN $\Phi_{\epsilon, \bm{u}}$ 
with two-dimensional output
such that
  \begin{equation}
    \label{eq:polygon-NSE}
    \| \bm{u} - \Realiz(\Phi_{\epsilon, \bm{u}}) \|_{H^1(\Omega)}\leq \epsilon.
  \end{equation}
In addition, $\size(\Phi_{\epsilon, \bm{u}})= \mathcal{O}(\left|\log(\epsilon)\right|^{5})$ 
and 
$\depth(\Phi_{\epsilon,\bm{u}})  = \mathcal{O}(\left|\log(\epsilon)\right|\log(\left|\log(\epsilon)\right|))$, 
as $\epsilon \to 0$.
\end{corollary}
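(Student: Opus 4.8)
The plan is to reduce the assertion to Theorem~\ref{th:polygon}, exactly as in the proofs of Corollaries~\ref{cor:polygon-BVP} and~\ref{cor:Eigen}, the only genuinely new ingredient being the weighted analytic regularity of the Navier--Stokes velocity field. First I would invoke the analytic corner-regularity theory for the stationary, incompressible Navier--Stokes equations in plane Lipschitz polygons: under the hypotheses of Setting~\ref{setting:polygon}, with the components of $\bm f$ analytic on $\overline\Omega$ and $\|\bm f\|_{H^{-1}(\Omega)}/\nu^2$ small enough that the Leray solution $\bm u\in H^1_0(\Omega)^2$ is unique, there exist $C_u,A_u>0$ and a weight exponent vector $\ugamma=\{\gamma_c:c\in\Cset\}$ with $\min\ugamma>1$ such that $u_i\in\cJ^\varpi_\ugamma(\Omega;\Cset,\emptyset;C_u,A_u)$ for $i=1,2$. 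Here the corner exponents $\gamma_c$ are governed by the opening angles of $\Omega$, and the smallness of $\|\bm f\|_{H^{-1}(\Omega)}/\nu^2$ is used precisely to make the nonlinear fixed-point argument converge inside the countably-normed analytic class; this is the nonlinear analogue of the linear regularity results cited in Corollaries~\ref{cor:polygon-BVP} and~\ref{cor:Eigen}, obtained from weighted analytic Kondrat'ev-type regularity for the Stokes system together with a bootstrap/perturbation estimate for the quadratic convection term $(\bm u\cdot\nabla)\bm u$. I would cite the corresponding reference for this step.

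With each component of $\bm u$ placed in $\cJ^\varpi_\ugamma(\Omega;\Cset,\emptyset)$, the NN construction is then a direct transcription of the proof of Theorem~\ref{th:polygon}: running the partition-of-unity argument of that proof with the vector-valued patch networks furnished by Corollary~\ref{cor:ReLUapprox-vector} (applied to the two-component field $\bm u$ on each canonical patch, after the affine pullbacks $\psi_i$), one obtains for every $0<\epsilon<1$ a single NN $\Phi_{\epsilon,\bm u}$ with two-dimensional output such that $\|\bm u-\Realiz(\Phi_{\epsilon,\bm u})\|_{H^1(\Omega)}\leq\epsilon$, i.e.\ \eqref{eq:polygon-NSE}. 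Since the number of output components $N_f=2$ is a fixed constant, the additional $N_f|\log\epsilon|^{2d}$-type contribution to the size coming from Corollary~\ref{cor:ReLUapprox-vector} is absorbed into the generic constant, and the size and depth estimates of Theorem~\ref{th:polygon} carry over unchanged: $\size(\Phi_{\epsilon,\bm u})=\mathcal{O}(|\log\epsilon|^{5})$ and $\depth(\Phi_{\epsilon,\bm u})=\mathcal{O}(|\log\epsilon|\log(|\log\epsilon|))$ as $\epsilon\to0$.

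The hard part is the first step. Unlike the linear source and eigenvalue problems, there is no off-the-shelf, purely linear regularity statement to quote; one needs the weighted analytic (as opposed to merely weighted Sobolev) regularity of $\bm u$, and establishing it requires controlling the nonlinearity in the analytic scale, which is where the smallness hypothesis on $\|\bm f\|_{H^{-1}(\Omega)}/\nu^2$ is essential. Note that the pressure $p$ enters the regularity analysis through the coupled system but plays no role in the NN statement itself, since only $\bm u$ is approximated. Once the analytic regularity of $\bm u$ is available, everything else is a routine application of the machinery already developed for Theorem~\ref{th:polygon}.
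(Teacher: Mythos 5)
Your proposal matches the paper's proof: the paper likewise reduces the statement to Theorem \ref{th:polygon} after citing the weighted analytic regularity $\bm{u}\in\big[\cJ^\varpi_{\ugamma}(\Omega;\Cset,\emptyset)\big]^2$ with $\min\ugamma>1$ for Leray solutions (the reference is \cite{MS19_2743}), exactly the reduction you describe. Your additional remarks on handling the two components and absorbing the $N_f=2$ factor into the constants are consistent with, if slightly more detailed than, the paper's two-line argument.
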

\begin{proof}
The velocity fields of Leray solutions of the Navier-Stokes equations in $\Omega$
satisfy the weighted, analytic regularity 
$\bm{u} \in \big[\cJ^\varpi_{\ugamma}(\Omega; \Cset,\emptyset)\big]^2$, 
with $\min\ugamma>1$, see \cite{MS19_2743}.
Then, the application of Theorem \ref{th:polygon} concludes the proof.
\end{proof}
%
\subsection{Elliptic PDEs in Fichera-type polyhedral domains}
\label{sec:EllPDEFichera}
Fichera-type polyhedral domains $\Omega\subset \R^3$ are, loosely speaking, 
closures of finite, disjoint unions of (possibly affinely mapped)
axiparallel hexahedra with $\partial\Omega$ Lipschitz. 
In Fichera-type domains, analytic regularity of solutions
of linear, elliptic boundary value problems from acoustics
and linear elasticity in displacement formulation has been
established in \cite{CoDaNi12}.
As an example of a boundary value problem covered by \cite{CoDaNi12} and our theory,
consider $\Omega \coloneqq (-1, 1)^d \setminus (-1, 0]^d$ 
for $d=2,3$, displayed for $d=3$ in Figure \ref{fig:Fichera}.
\begin{figure}
  \centering
  \includegraphics[width=0.3\textwidth]{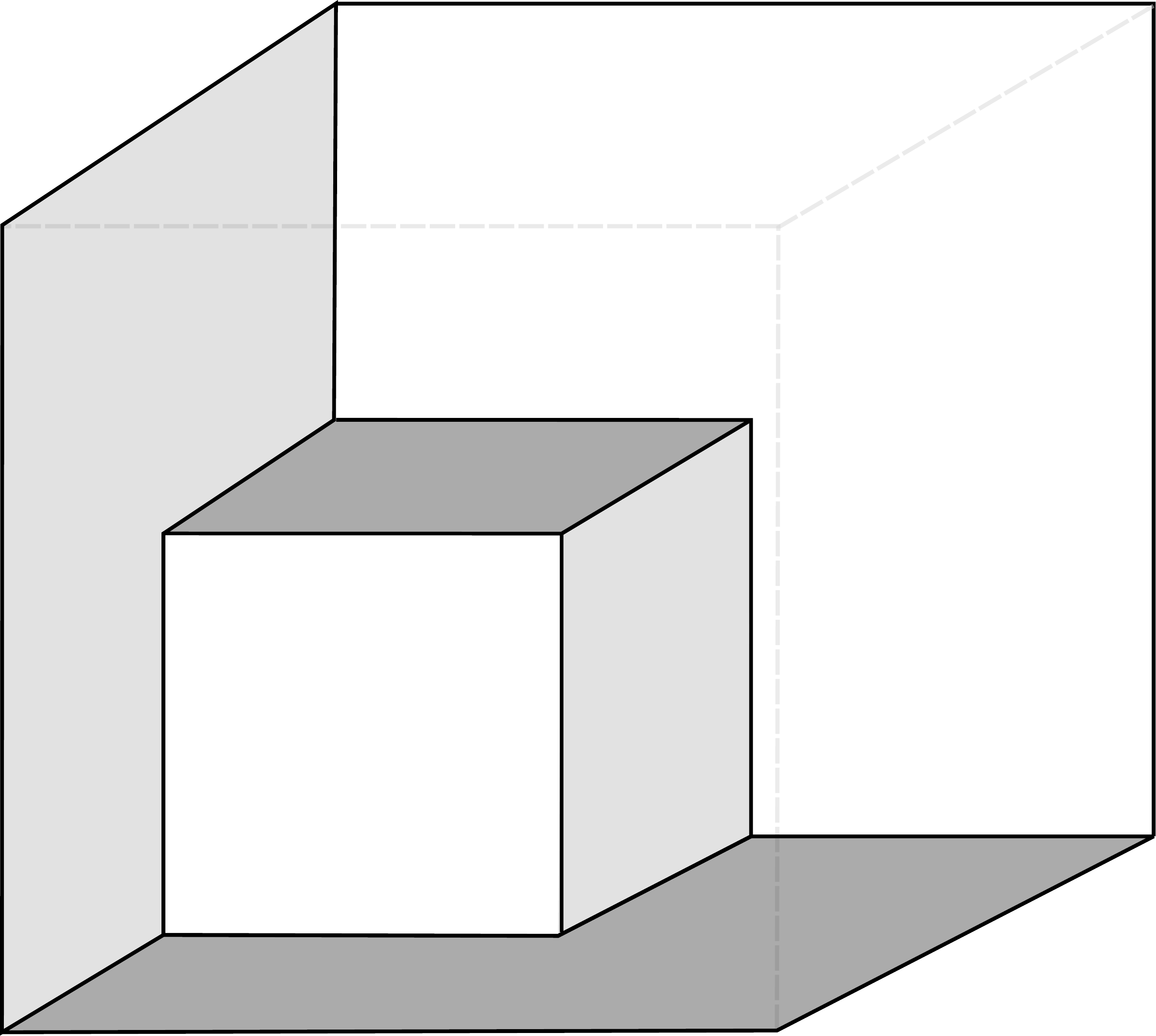}
  \caption{Example of Fichera-type corner domain.}
  \label{fig:Fichera}
\end{figure}

We recall that all NNs are realized with the ReLU activation
function, see \eqref{eq:NetworkScheme}.

We introduce the 
\emph{setting for elliptic problems with analytic coefficients in $\Omega$}.
Note that the boundary of $\Omega$ is composed of $6$ edges when $d=2$ and
of $9$ faces when $d=3$.
\begin{setting}
\label{setting:Fichera}
We assume that $\Dop$ is an elliptic operator as in Definition \ref{def:Dop}.
When $d=3$, we assume furthermore that the 
diffusion coefficient $A\in \R^{3\times 3}$ is a symmectric, positive
    matrix and $b_i = c = 0$.
On each edge (if $d=2$) or face (if $d=3$)  $\Gamma_j\subset \partial \Omega$, $j\in\range{3d}$, 
we introduce the boundary operator $\Bop_j \in
\{\gamma_0,\gamma_1\}$, where $\gamma_0$ and $\gamma_1$ are defined as in \eqref{eq:boundarycond}.
We collect the boundary operators $\Bop_j$ in $\Bop := \{ \Bop_j \}_{j=1}^{3d}$.
\end{setting}
For a right hand side $f$, 
the elliptic boundary value problem we consider in this section is then
\begin{equation} \label{eq:Fichera}
      \begin{aligned}
        \Dop u = f \text{ in }\Omega, \quad 
        \Bop u = 0 \text{ on }\partial\Omega.
      \end{aligned}
    \end{equation}

The following extension lemma will be useful 
for the approximation of the solution to \eqref{eq:Fichera} by NNs. 
We postpone its proof to Appendix \ref{sec:W11ext}.
\begin{lemma}
  \label{lemma:W11ext}
  Let $d\in\{2,3\}$ and $u\in \Wmix^{1,1}(\Omega)$. 
  Then, 
  there exists a function $v\in \Wmix^{1,1}((-1,1)^d)$ such that $v|_{\Omega} = u$. 
  The extension is stable
  with respect to the $\Wmix^{1,1}$ norm.
\end{lemma}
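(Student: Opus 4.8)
The plan is to construct the extension $v$ explicitly by a reflection formula of inclusion--exclusion type and then to check, hyperplane by hyperplane, that the glued function lies in $\Wmix^{1,1}((-1,1)^d)$; the key structural feature we exploit is that $\Wmix^{1,1}$ only ever involves derivatives $\partial^\alpha$ with $\alphainf\le1$. Recall $\Omega=(-1,1)^d\setminus(-1,0]^d$. The hyperplanes $\{x_i=0\}$ cut $(-1,1)^d$ into $2^d$ open sub-cubes $Q_s$, $s\in\{-,+\}^d$, the $i$-th factor of $Q_s$ being $(-1,0)$ if $s_i=-$ and $(0,1)$ if $s_i=+$; then $\Omega$ equals, up to a null set, the union of all $Q_s$ except the all-minus cube $Q_-:=(-1,0)^d$, which is exactly the piece to be filled. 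For $\emptyset\ne J\subseteq\{1,\dots,d\}$ let $R_J$ denote the coordinate reflection flipping the signs of the coordinates indexed by $J$; for $x\in Q_-$ one has $R_Jx\in Q_{s(J)}\subset\Omega$, where $s(J)_i=+$ iff $i\in J$. I would define
\begin{equation*}
v:=u\ \text{ on }\ \Omega,\qquad v(x):=-\sum_{\emptyset\ne J\subseteq\{1,\dots,d\}}(-1)^{|J|}\,u(R_Jx)\ \text{ for }\ x\in Q_-,
\end{equation*}
which for $d=2$ reads $v(x_1,x_2)=u(-x_1,x_2)+u(x_1,-x_2)-u(-x_1,-x_2)$; a short cancellation shows $v$ is continuous across every face of $\partial Q_-$ interior to $(-1,1)^d$.

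Each summand $u\circ R_J$ is the composition of $u|_{Q_{s(J)}}\in\Wmix^{1,1}(Q_{s(J)})$ with a measure-preserving reflection, which merely flips the signs of some partial derivatives; hence $v|_{Q_-}\in\Wmix^{1,1}(Q_-)$ with $\|v\|_{\Wmix^{1,1}(Q_-)}\le\sum_{\emptyset\ne J}\|u\|_{\Wmix^{1,1}(Q_{s(J)})}\le(2^d-1)\|u\|_{\Wmix^{1,1}(\Omega)}$. Thus $u\mapsto v$ is linear and, once $v\in\Wmix^{1,1}((-1,1)^d)$ is established, $\|v\|_{\Wmix^{1,1}((-1,1)^d)}\le 2^d\|u\|_{\Wmix^{1,1}(\Omega)}$, giving the asserted stability.

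To obtain $v\in\Wmix^{1,1}((-1,1)^d)$ I would invoke the following elementary gluing fact, proved by induction on $|\alpha|$: if $w$ is piecewise $\Wmix^{1,1}$ for the partition $\{Q_s\}$ and, for every $\alpha\in\{0,1\}^d$ and every face of the partition contained in some $\{x_k=0\}$ with $\alpha_k=0$, the one-sided traces of $\partial^\alpha$ of the two adjacent pieces agree, then for all such $\alpha$ the distributional derivative $\partial^\alpha w$ on $(-1,1)^d$ coincides with the piecewise one and lies in $L^1$. Indeed, peeling off a $\partial_k$ with $\alpha_k=1$ reduces the claim to $\beta=\alpha-e_k$, which has $\beta_k=0$, so by hypothesis $\partial^\beta w$ has no jump across the $\{x_k=0\}$-faces and $\partial_k\partial^\beta w$ carries no singular surface measure. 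Within $\Omega$ the hypothesis is automatic: if $\alpha_k=0$ then $\alpha+e_k$ is still admissible, so $\partial_k\partial^\alpha u\in L^1(\Omega)$, which forbids a jump of $\partial^\alpha u$ across an interior $\{x_k=0\}$-face.

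The delicate step --- and the one I expect to be the main obstacle --- is verifying the trace hypothesis across the \emph{new} faces $\{x_k=0,\ x_j\in(-1,0)\ \forall j\ne k\}$, which separate $Q_-$ from $Q^{(k)}:=R_{\{k\}}(Q_-)\subset\Omega$, for the $\alpha\in\{0,1\}^d$ with $\alpha_k=0$. Differentiating the formula for $v|_{Q_-}$ by the chain rule gives $\partial^\alpha v=-\sum_{\emptyset\ne J}(-1)^{|J|}(-1)^{|\{i\in J:\alpha_i=1\}|}(\partial^\alpha u)(R_Jx)$; letting $x_k\to0^-$, so $x\to y$ with $y_k=0$, and pairing each $J$ with $k\notin J$ against $J\cup\{k\}$ --- here $\alpha_k=0$ makes the chain-rule sign the same for both members of a pair, while $R_Jy=R_{J\cup\{k\}}y$ since $y_k=0$ --- the sum telescopes down to the single surviving value $(\partial^\alpha u)(y)$, which is precisely the trace of $\partial^\alpha v=\partial^\alpha u$ from the $Q^{(k)}$ side. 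By contrast, when $\alpha_k=1$ the chain-rule signs differ within a pair and $\partial^\alpha v$ genuinely jumps across $\{x_k=0\}$; this does not matter because $\alphainf\le1$ never requires differentiating again in $x_k$. Combining this with the gluing fact yields $v\in\Wmix^{1,1}((-1,1)^d)$ together with the stability bound above; the case $d=3$ is identical, only with $2^3$ sub-cubes and $\alpha$ ranging over $\{0,1\}^3$.
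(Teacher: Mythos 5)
Your proof is correct, and it takes a genuinely different route from the paper's. The paper also builds the extension on the missing cube $R=(-1,0)^d$ by an inclusion--exclusion formula, but its ingredients are \emph{constant liftings of boundary traces}: it sets $w = u_0 - \sum_{e} u_e + \sum_f u_f$, where $u_f$ extends the trace $u|_f$ constantly in the direction normal to the face $f$ and $u_e$ does the same for the edges abutting the re-entrant corner; membership in $\Wmix^{1,1}(R)$ and stability then follow from the trace inequalities \eqref{eq:trace}--\eqref{eq:trace-2}, and only continuity of the glued function across the faces $f$ is checked explicitly (matching of the mixed derivatives across those faces is implicit, via tangential derivatives of the matching traces). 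You instead use a \emph{reflection} operator of Babich/Nikolskii type, $v(x)=-\sum_{\emptyset\ne J}(-1)^{|J|}u(R_Jx)$ on $Q_-$, which produces a different extension function. What your approach buys: the $\Wmix^{1,1}$ bound on $Q_-$ is immediate (reflections are measure-preserving and only flip signs of derivatives, so no trace-lifting estimates are needed), the constant $2^d$ is explicit, and the construction generalizes verbatim to any $d$; the price is that you must verify matching of the traces of $\partial^\alpha v$ for all $\alpha\in\{0,1\}^d$ with $\alpha_k=0$ across the new faces, which you do correctly via the telescoping pairing $J\leftrightarrow J\cup\{k\}$ (the unpaired term $J=\{k\}$ yields exactly $(\partial^\alpha u)(y)$, and the paired terms cancel because the corresponding faces are interior to $\Omega$, where $\partial_k\partial^\alpha u\in L^1$ rules out jumps). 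Your explicitly stated gluing lemma — that matching of the traces of $\partial^\alpha$ for $\alpha_k=0$ across $\{x_k=0\}$ suffices, because $\alphainf\le1$ never asks for a second $x_k$-derivative — is the right structural observation and is proved correctly by peeling off one derivative at a time; the only caveat, common to both proofs, is that "traces" must be read in the a.e./$L^1$ sense furnished by Fubini for $\Wmix^{1,1}$ functions.
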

We denote the set containing all corners (including the re-entrant one) of
  $\Omega$ as
  \begin{equation*}
    \Cset  = \{-1,0,1\}^d\setminus (-1, \dots, -1).
  \end{equation*}
When $d=3$, for all $\corn\in\Cset$, then we denote by $\Eset_\corn$ 
the set of edges abutting at $\corn$
and we denote 
$\Eset \coloneqq \bigcup_{\corn\in\Cset}\Eset_\corn$.
\begin{theorem}
  \label{prop:Fichera-appx}
  Let $u \in \cJ^\varpi_\ugamma(\Omega; \Cset, \Eset)$ with
\begin{alignat*}{3}
&\ugamma = \{\gamma_\corn: \corn\in \Cset\}, &&\text{with } \gamma_\corn>1,\;
\text{for all } \corn\in\Cset &&\text{ if } d = 2,\\ %
&\ugamma = \{\gamma_\corn, \gamma_e: \corn\in \Cset, e\in \Eset\}, \quad&&\text{with
} \gamma_\corn>3/2\text{ and } \gamma_e>1,\; \text{for all }\corn\in\Cset\text{
  and }e\in \Eset\quad &&\text{ if } d = 3. %
\end{alignat*}
Then, for any $0< \epsilon <1$ 
there exists a NN $\Phi_{\epsilon, u}$ 
so that 
\begin{equation}\label{eq:FicheraNN-appx}
\left \| u - \Realiz\left(\Phi_{\epsilon, u}\right) \right\|_{H^1(\Omega)} \leq \epsilon.
\end{equation}
In addition, $\|\Realiz\left(\Phi_{\epsilon, u}\right)\|_{L^\infty(\Omega)} 
= 
\mathcal{O}(1+\left| \log\epsilon \right|^{2d})$, as $\epsilon \to 0$. 
Also, $\size(\Phi_{\epsilon, u}) = \mathcal{O}(|\log(\epsilon)|^{2d+1})$
and 
$\depth(\Phi_{\epsilon, u})  
 = \mathcal{O}(|\log(\epsilon)|\log(|\log(\epsilon)|))$, 
as $\epsilon \to 0$.
\end{theorem}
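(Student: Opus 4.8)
The plan is to reduce the approximation problem on the Fichera-type domain $\Omega = (-1,1)^d\setminus(-1,0]^d$ to a finite number of approximation problems on axiparallel cubes, each of which contains at most one singular corner of $\Omega$ on its boundary, and then invoke Theorem~\ref{th:ReLUapprox} (together with its vector-valued Corollary~\ref{cor:ReLUapprox-vector} if needed, though here a scalar output suffices) on each cube. The technical glue is, exactly as in the proof of Theorem~\ref{th:polygon}, a partition of unity argument realized by ReLU NNs: one writes $u = \sum_{k} \kappa_k u$ for continuous piecewise-linear hat functions $\kappa_k$ on a sufficiently fine regular simplicial mesh of $\Omega$, emulates each $\kappa_k$ exactly by a shallow ReLU NN via \cite[Theorem~5.2]{he2020}, emulates each localized piece $u|_{\tOmega_k}\circ\psi_k$ (after an affine change of variables $\psi_k$ mapping a reference cube to the cube $\tOmega_k\supset\supp(\kappa_k)$) by the NN from Theorem~\ref{th:ReLUapprox}, multiplies the two using the approximate multiplication network $\Pi^2_{\epsilon',M}$ from Proposition~\ref{prop:Multiplication}, and sums the results with a final linear layer of ones, padding depths with $\Phi^{\mathrm{Id}}$ as needed.

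First I would fix the geometry: choose a regular simplicial triangulation $\cT$ of $\Omega$ fine enough that each hat function $\kappa_k$ has support inside an axiparallel cube $\tOmega_k\subset(-1,1)^d$, and such that $\overline{\tOmega}_k$ meets at most one corner $c\in\Cset$; when it does meet a corner, the localized function $u|_{\tOmega_k}$ lies in a weighted-analytic class $\cJ^\varpi_{\ugamma'}$ on $\tOmega_k$ with a single corner singularity (and, when $d=3$, the adjacent singular edges), with weight exponents inherited from $\ugamma$ and hence still satisfying $\gamma_c>1$ (resp.\ $\gamma_c>3/2$, $\gamma_e>1$). For cubes not touching a corner, $u$ restricted there is analytic (in $\cJ^\varpi$ with a trivial singular set), so Theorem~\ref{th:ReLUapprox} still applies after the affine pullback, which preserves the weighted-analytic class because the maps $\psi_k$ are affine and bi-Lipschitz. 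One subtlety is that the re-entrant corner $0$ of $\Omega$ is not convex, so a hat-function patch near it will only cover a cube \emph{intersected} with $\Omega$; this is where Lemma~\ref{lemma:W11ext} enters: one first extends $u$ (or rather the pulled-back local piece) from $\Omega\cap\tOmega_k$ to the full cube $\tOmega_k$ in a $W^{1,1}_{\mathrm{mix}}$-stable way, preserving weighted-analytic regularity up to the corner, and then approximates the extension. I would organize the $L^\infty$ bound exactly as in Theorem~\ref{th:polygon}: set $C_\infty$ to be the finite supremum over $k$ and over the accuracy parameter of $\|\Realiz(\Phi^k_{\hat\epsilon})\|_{L^\infty}/(1+|\log\hat\epsilon|^{2d})$, which is finite by the $L^\infty$ estimate in Theorem~\ref{th:ReLUapprox}, choose the multiplication truncation $M_\times$ of that order, and choose the product accuracy $\epsilon_\times$ and per-patch accuracy $\epsilon_1$ proportional to $\epsilon$ divided by $N_\kappa$ times the relevant norms of $\kappa_k$, $u$, and the Jacobian factors of the $\psi_k$, so that the two triangle-inequality terms $\|u - \sum_k\kappa_k\Realiz(\Phi^k_{\epsilon_1})\|_{H^1}$ (controlled via \cite[Theorem~2.1]{Melenk1996}) and $\sum_k\|\Realiz(\Pi^2)(\cdot,\kappa_k) - \kappa_k\Realiz(\Phi^k_{\epsilon_1})\|_{H^1}$ each come out below $\epsilon/2$.

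For the complexity bounds, all the per-patch NNs $\Phi^k_{\epsilon_1}$ have size $\mathcal{O}(|\log\epsilon_1|^{2d+1}) = \mathcal{O}(|\log\epsilon|^{2d+1})$ and depth $\mathcal{O}(|\log\epsilon|\log|\log\epsilon|)$ by Theorem~\ref{th:ReLUapprox}; the hat-function NNs $\Phi^{\kappa_k}$ and the affine maps $\psi_k^{-1}$ have size and depth depending only on $\Omega$; the multiplication networks have size and depth $\mathcal{O}(\log(M_\times^2/\epsilon_\times)) = \mathcal{O}(|\log\epsilon|)$; and $N_\kappa$ depends only on $\Omega$. Parallelization over the $k$ (Propositions~\ref{prop:parall}, \ref{prop:parallSep}) adds the sizes, concatenation (Proposition~\ref{prop:conc}) at most doubles them, and depth-padding with $\Phi^{\mathrm{Id}}$ (Proposition~\ref{prop:Id}) contributes size $\mathcal{O}(\mathrm{depth})$, which is lower order. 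Summing gives $\size(\Phi_{\epsilon,u}) = \mathcal{O}(|\log\epsilon|^{2d+1})$ and $\depth(\Phi_{\epsilon,u}) = \mathcal{O}(|\log\epsilon|\log|\log\epsilon|)$, as claimed. I expect the main obstacle to be the treatment of the re-entrant corner at $0$: one must verify that the $W^{1,1}_{\mathrm{mix}}$-extension of Lemma~\ref{lemma:W11ext} can be taken to preserve membership in the weighted-analytic class $\cJ^\varpi_{\ugamma}$ (with possibly adjusted but still admissible weights) on the full reference cube, so that Theorem~\ref{th:ReLUapprox} is genuinely applicable to the extended local piece; the rest is a careful but routine bookkeeping of constants and NN-calculus operations, essentially parallel to the proof of Theorem~\ref{th:polygon}.
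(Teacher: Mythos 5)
Your overall strategy (localize, approximate on cubes, recombine with a ReLU partition of unity) is the one the paper uses for \emph{general polygons} in Theorem \ref{th:polygon}, but for the Fichera domain itself it runs into a gap that you yourself flag as ``the main obstacle'' and then leave unresolved --- and it is not resolvable with the tools you invoke. The problem is the re-entrant corner at the origin (and, similarly, the points of $\Cset$ such as $(0,1)$ that lie in the interior of a face of $\Omega$). Any cube $\tOmega_k$ whose interior contains a neighborhood of the origin in $\Omega$ necessarily contains the origin in its \emph{interior}, whereas Theorem \ref{th:ReLUapprox} requires the singular corner to be a corner of the reference cube $Q$; and Lemma \ref{lemma:W11ext} provides only a $\Wmix^{1,1}$-stable extension built from zeroth- and first-order trace liftings, which is neither claimed nor true to land in $\cJ^\varpi_\ugamma$ on the full cube. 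The alternative of covering the neighborhood of the origin by quadrant-cubes contained in $\Omega$, each with the origin at one of its corners, fails for the partition-of-unity step: those quadrants overlap only along lower-dimensional sets near the origin, so no continuous piecewise linear partition of unity subordinate to such a cover exists. Either way, the localized pieces to which you would apply Theorem \ref{th:ReLUapprox} do not satisfy its hypotheses.

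The paper's proof avoids localization by partition of unity entirely. It extends $u$ to $\tu\in\Wmix^{1,1}((-1,1)^d)$ via Lemma \ref{lemma:W11ext} \emph{only} in order to make the global tensorized multipatch $hp$ quasi-interpolant $\tPihpelldim$ of Theorem \ref{prop:internal} well defined and to bound its coefficients in terms of $\|u\|_{\Wmix^{1,1}}$ (Lemmas \ref{lemma:cbound} and \ref{lemma:W11J3}); the interpolant is globally continuous by nodal exactness across patch interfaces, and the exponential $H^1$ error bound is asserted only on the patches $S\subset\Omega$, where $\tu=u$ is genuinely weighted analytic. The extension is never required to be accurate or regular on $(-1,0]^d$. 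A single application of Theorem \ref{th:ReLU-hp} then emulates the resulting global tensor-product piecewise polynomial. If you want to repair your argument, you should replace the per-patch use of Theorem \ref{th:ReLUapprox} plus approximate multiplications by exactly this step: extend, invoke Theorem \ref{prop:internal} on $(-1,1)^d$ with error measured on $\Omega$, and emulate the one resulting expansion $\sum_{\is}\tc_{\is}\bigotimes_j \tv_{i_j}$ with Theorem \ref{th:ReLU-hp}; your complexity and $L^\infty$ bookkeeping then goes through unchanged.
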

\begin{proof}
    By Lemma \ref{lemma:W11ext}, we extend the function $u$ to a function $\tu$ such that 
  \begin{equation*}
    \tu\in \Wmix^{1,1}((-1, 1)^d) \quad \text{and}\quad\tu|_{\Omega} = u.
  \end{equation*}
  Note that, by the stability of the extension, there exists a constant
    $C_{\mathrm{ext}}>0$ independent of $u$ such that
    \begin{equation}
      \label{eq:ext-fichera-stable}
    \|\tu\|_{\Wmix^{1,1}((-1,1)^d)}\leq C_{\mathrm{ext}} \| u\|_{\Wmix^{1,1}(\Omega)}.
    \end{equation}
    Since $u \in \cJ^\varpi_\ugamma(\Omega; \Cset, \Eset)$, there
      holds $u\in \cJ^\varpi_\ugamma(S; \Cset_S, \Eset_S)$ 
      for all
      \begin{align} \label{eq:insidefichera}
        S \in 
          \left\{ 
          \bigtimes_{j=1}^d(a_j, a_j+1/2): (a_1, \dots, a_d)\in\{-1,-1/2,0,1/2\}^d
        \right\}
        \text{ such that }S\cap \Omega\neq\emptyset
      \end{align}
   with $\Cset_S = \overline{S}\cap \Cset$ and $\Eset_{S} = \{e\in\Eset : e\subset\overline{S}\}$.
   Since $S \subset \Omega$ and $\tu|_{\Omega} = u|_{\Omega}$, 
   we also have 
$$
   \tu\in \cJ^\varpi_\ugamma(S; \Cset_S, \Eset_S) \;\mbox{for all }S 
   \mbox{ satisfying \eqref{eq:insidefichera}}.
$$
By Theorem \ref{prop:internal}  
exist $C_p>0$, $C_{\tNoned}>0$, $C_{\tNint}>0$, $C_{\tv}>0$, $C_{\tc}>0$, and $b_{\tv}>0$ such that, 
for all $0<\epsilon\leq 1$, there exists $p\in\N$, a partition $\cG_\oned$ of $(-1, 1)$
into $\tNint$ open, disjoint, connected subintervals, a $d$-dimensional array
$c\in \R^{\tNoned\times\dots\times\tNoned}$, and piecewise polynomials 
$\tv_i \in \mathbb{Q}_p(\cG_\oned)\cap H^1((-1,1))$, $i=1, \dots, \tNoned$, 
such that
\begin{equation*}
       \tNoned \leq C_{\tNoned}(1+\left| \log\epsilon \right|^2),\quad
       \tNint \leq C_{\tNint}(1+\left| \log\epsilon \right|),\quad
       \|c\|_{1} \leq C_{\tc}(1+\left| \log\epsilon \right|^{2d}),\quad
       p \leq C_p(1+\left| \log\epsilon \right|)
\end{equation*}
and
\begin{equation*}
  \|\tv_i\|_{H^1(I)}\leq C_{\tv} \epsilon^{-b_{\tv}}, \qquad \|{\tv}_i\|_{L^\infty(I)}\leq 1,\qquad \forall i\in \{1, \dots, \tNoned\}.
\end{equation*}
   Furthermore,
  \begin{equation*}
     \| u - v_{\hp} \|_{H^1(\Omega)} = \| \tu - v_{\hp} \|_{H^1(\Omega)}\leq \frac\epsilon2, \qquad 
v_{\hp}  = \sum_{\iscomma=1}^{\tNoned} \tc_{\is} \bigotimes_{j=1}^d\tv_{i_j}.
  \end{equation*}
  Due to the stability \eqref{eq:ext-fichera-stable} and to Lemmas
  \ref{lemma:cbound} and \ref{lemma:W11J3}, there holds
  \begin{equation*}
    \|\tc\|_1 \leq C \Nint^{2d} \| u\|_{\cJ^d_\ugamma(\Omega)}, 
  \end{equation*}
  i.e., the bound on the coefficients $\tc$ is independent of the extension
  $\tu$ of $u$.
  By Theorem \ref{th:ReLU-hp}, there exists a NN ${\Phi}_{\epsilon, u}$ 
  with the stated approximation properties and asymptotic size bounds. 
  The bound on the $L^\infty(\Omega)$ norm of the realization of $\Phi_{\epsilon, u}$ 
  follows as in the proof of Theorem \ref{th:ReLUapprox}.
\end{proof}
  \begin{remark}
    Arguing as in Corollary \ref{cor:polygontrace}, a NN with ReLU activation and
    two-dimensional input can be constructed so that its realization
    approximates the Dirichlet trace of solutions to \eqref{eq:Fichera} in $H^{1/2}(\partial\Omega)$
    at an exponential rate in terms of the NN size $\size$.
  \end{remark}

The following statement now gives expression rate bounds for the
approximation of solutions to the Fichera problem \eqref{eq:Fichera} 
by realizations of NNs with the ReLU activation function.
\begin{corollary} \label{prop:Fichera}
  Let $f$ be an analytic function on $\overline{\Omega}$
  and let $u$ be a solution to \eqref{eq:Fichera} with operators $\Dop$ and
  $\Bop$ as in Setting \ref{setting:Fichera} 
  and with source term $f$.
Then, for any $0< \epsilon <1$ 
there exists a NN $\Phi_{\epsilon, u}$ 
so that 
\begin{equation}\label{eq:FicheraNN}
\left \| u - \Realiz\left(\Phi_{\epsilon, u}\right) \right\|_{H^1(\Omega)} \leq \epsilon.
\end{equation}
In addition, $\size(\Phi_{\epsilon, u}) = \mathcal{O}(|\log(\epsilon)|^{2d+1})$ 
and 
$\depth(\Phi_{\epsilon, u}) = \mathcal{O}(|\log(\epsilon)|\log(|\log(\epsilon)|))$, for $\epsilon \to 0$.
\end{corollary}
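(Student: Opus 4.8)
The plan is to reduce Corollary \ref{prop:Fichera} to Theorem \ref{prop:Fichera-appx} by establishing that the solution $u$ of the elliptic boundary value problem \eqref{eq:Fichera} lies in a weighted analytic class $\cJ^\varpi_\ugamma(\Omega;\Cset,\Eset)$ with the weight exponents required there. Concretely, the proof is a two-line argument once the regularity is in place: first invoke the analytic regularity theory of \cite{CoDaNi12} for second-order elliptic problems with analytic coefficients on Fichera-type polyhedral domains, which guarantees the existence of a weight exponent vector $\ugamma$ (with $\gamma_\corn>1$ for $d=2$, and $\gamma_\corn>3/2$, $\gamma_e>1$ for $d=3$) and constants $C,A>0$ such that $u\in\cJ^\varpi_\ugamma(\Omega;\Cset,\Eset;C,A)$; then apply Theorem \ref{prop:Fichera-appx} directly to obtain the NN $\Phi_{\epsilon,u}$ together with the stated $H^1$-approximation bound, size bound $\mathcal{O}(|\log\epsilon|^{2d+1})$ and depth bound $\mathcal{O}(|\log\epsilon|\log|\log\epsilon|)$.

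The steps, in order, are: (i) recall the setting of Setting \ref{setting:Fichera} — in particular that the coefficients $a_{ij},b_i,c$ are analytic on $\overline\Omega$, that $A$ is symmetric and uniformly elliptic, and (for $d=3$) that $b_i=c=0$, and that the boundary operators $\Bop_j$ are either Dirichlet or conormal traces — since these are exactly the hypotheses under which the weighted analytic regularity result applies; (ii) cite the corresponding regularity theorem from \cite{CoDaNi12} (for $d=3$) and its two-dimensional counterpart, and extract from it a $\ugamma$ satisfying $\min_\corn\gamma_\corn>1$ in $d=2$ and $\gamma_\corn>3/2$, $\gamma_e>1$ in $d=3$, so that $u\in\cJ^\varpi_\ugamma(\Omega;\Cset,\Eset)$; (iii) feed this into Theorem \ref{prop:Fichera-appx}, which provides $\Phi_{\epsilon,u}$ with all the claimed properties, including the $L^\infty$ bound that is not restated in the corollary but comes for free.

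I would write the proof essentially as follows:
\begin{proof}
Under the hypotheses of Setting \ref{setting:Fichera}, the coefficients of $\Dop$ are analytic on $\overline\Omega$, $A$ is symmetric and uniformly positive definite, and the boundary operators on each face (edge) $\Gamma_j$ are Dirichlet or conormal traces. By the analytic regularity theory for such problems on Fichera-type polyhedral domains, see \cite{CoDaNi12}, there exist a weight exponent vector
$\ugamma = \{\gamma_\corn: \corn\in\Cset\}$ with $\gamma_\corn>1$ for all $\corn\in\Cset$ if $d=2$, respectively $\ugamma = \{\gamma_\corn,\gamma_e: \corn\in\Cset, e\in\Eset\}$ with $\gamma_\corn>3/2$ and $\gamma_e>1$ for all $\corn\in\Cset$ and $e\in\Eset$ if $d=3$, and constants $C_u, A_u>0$, such that the solution $u$ of \eqref{eq:Fichera} satisfies
\[
u\in\cJ^\varpi_\ugamma(\Omega;\Cset,\Eset;C_u,A_u).
\]
Theorem \ref{prop:Fichera-appx} then applies and yields, for every $0<\epsilon<1$, a NN $\Phi_{\epsilon,u}$ with $\|u-\Realiz(\Phi_{\epsilon,u})\|_{H^1(\Omega)}\leq\epsilon$, $\size(\Phi_{\epsilon,u})=\mathcal{O}(|\log\epsilon|^{2d+1})$ and $\depth(\Phi_{\epsilon,u})=\mathcal{O}(|\log\epsilon|\log|\log\epsilon|)$ as $\epsilon\to 0$. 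This is the assertion.
\end{proof}

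The main obstacle here is not in the NN construction — all the heavy lifting was already done in Theorem \ref{prop:Fichera-appx} and in the $hp$-analysis of Appendix \ref{sec:hp-analysis} — but in correctly matching the hypotheses of the cited weighted-analytic regularity theorem with the classes $\cJ^\varpi_\ugamma$ used here. In particular, one must be careful that the regularity result of \cite{CoDaNi12} is stated in terms of Cartesian-coordinate weighted norms compatible with the definition in \eqref{eq:analytic} (as was emphasized in Section \ref{sec:WgtSpcNonHomNrm}), that the admissible weight exponents from the regularity theory indeed fall in the ranges $\gamma_\corn>1$ (resp. $\gamma_\corn>3/2$, $\gamma_e>1$), and that the restriction $b_i=c=0$ imposed for $d=3$ in Setting \ref{setting:Fichera} is exactly what the regularity theory requires. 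If the cited reference gives the estimates only in homogeneous-norm Kondrat'ev spaces, a short additional argument (or an appeal to an equivalence result) is needed to pass to the nonhomogeneous classes $\cJ^\varpi_\ugamma$; I would address this by citing the precise form of the regularity result and, if necessary, noting that the embedding of weighted spaces with the relevant exponent shifts is standard.
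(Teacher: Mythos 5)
Your proposal is correct and follows essentially the same route as the paper: cite the weighted analytic regularity theory (the paper uses \cite[Corollary 7.1, Theorems 7.3 and 7.4]{CoDaNi12} for $d=3$ and \cite[Theorem 3.1]{GuoBab4} for $d=2$) to place $u$ in $\cJ^\varpi_\ugamma(\Omega;\Cset,\Eset)$ with $\gamma_\corn - d/2 > 0$ and $\gamma_e > 1$, and then apply Theorem \ref{prop:Fichera-appx}. The weight ranges you state are exactly equivalent to the paper's, so no gap remains.
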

\begin{proof}
  By \cite[Corollary 7.1, Theorems 7.3 and 7.4]{CoDaNi12} if $d=3$ and
    \cite[Theorem 3.1]{GuoBab4} if $d=2$, 
  there exists
  $\ugamma$ such that $\gamma_\corn-d/2>0$ for all $\corn\in \Cset$ and
  $\gamma_e>1$ for all $e\in \Eset$ such that 
  $u \in \cJ^\varpi_\ugamma(\Omega; \Cset, \Eset)$. 
  An application of
  Theorem \ref{prop:Fichera-appx} concludes the proof.
\end{proof}

\begin{remark}\label{rem:weightedanalyticf}
By \cite[Corollary 7.1 and Theorem 7.4]{CoDaNi12}, 
Corollary \ref{prop:Fichera} holds verbatim also under the hypothesis that the right-hand side
$f$ is weighted analytic, with singularities at the corners/edges of the domain;
specifically, \eqref{eq:FicheraNN} and the size bounds on the NN
$\Phi_{\epsilon, u}$ hold under
the assumption that there exists $\ugamma$ such that $\gamma_\corn-d/2>0$ 
for all $\corn\in \Cset$ and $\gamma_e>1$ for all $e\in \Eset$ such that 
  \begin{equation*}
    f \in \cJ^\varpi_{\ugamma-2}(\Omega; \Cset, \Eset).
  \end{equation*}
\end{remark}

\begin{remark}\label{rmk:otheractivat}
    The numerical approximation of solutions for \eqref{eq:Fichera} 
    with a NN in two dimensions 
    has been investigated e.g. in \cite{LuMengKarniadakis2019} using 
    the so-called `PINNs' methodology.
    There, the loss function was based on minimization of the residual 
    of the NN approximation in the strong form of the PDE.
    Evidently, 
    a different (smoother) activation than the ReLU activations considered here
    had to be used. 
    Starting from the approximation of products by NNs with
    smoother activation functions introduced in \cite[Sec.3.3]{schwab2017deep}
    and following the same line of reasoning as in the present paper,
    the results we obtain for ReLU-based realizations of NNs can be extended to large classes of NNs 
    with smoother activations and similar 
    architecture.

    Furthermore, in \cite[Section 3.1]{EDeepRitz}, 
    a slightly different elliptic boundary value problem is numerically approximated 
    by realizations of NNs.  
    Its solutions exhibit the same weighted, analytic regularity as considered in this paper. 
    The presently obtained approximation rates by NN realizations 
    extend also to the approximation of solutions for the problem considered in \cite{EDeepRitz}.
\end{remark}

In the proof of Theorem \ref{th:polygon}, we require in particular the approximation of weighted analytic functions on $(-1,1)\times(0,1)$ 
with a corner singularity at the origin. 
For convenient reference, we detail the argument in this case.

\begin{lemma}\label{lem:straightcorner}
Let $d=2$ and $\Omega_{DN} := (-1,1)\times(0,1)$. 
Denote $\Cset_{DN}  = \{-1,0,1\} \times\{0,1\}$.
Let $u \in \cJ^\varpi_\ugamma(\Omega_{DN}; \Cset_{DN}, \emptyset)$ with
$\ugamma = \{\gamma_\corn: \corn\in \Cset_{DN}\}$, 
with $\gamma_\corn>1$ for all $\corn\in\Cset_{DN}$.

Then, for any $0< \epsilon <1$ 
there exists a NN $\Phi_{\epsilon, u}$ 
so that 
\begin{equation}\label{eq:straightcorner}
\left \| u - \Realiz\left(\Phi_{\epsilon, u}\right) \right\|_{H^1(\Omega_{DN})} \leq \epsilon.
\end{equation}
In addition, $\|\Realiz\left(\Phi_{\epsilon, u}\right)\|_{L^\infty(\Omega_{DN})} 
= 
\mathcal{O}(1+\left| \log\epsilon \right|^{4})$ , for $\epsilon \to 0$. 
Also, $\size(\Phi_{\epsilon, u}) = \mathcal{O}(|\log(\epsilon)|^{5})$
and 
$\depth(\Phi_{\epsilon,u}) 
= 
\mathcal{O}(|\log(\epsilon)|\log(|\log(\epsilon)|))$, 
for $\epsilon \to 0$.
\end{lemma}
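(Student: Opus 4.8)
The plan is to reuse, in a simplified form, the two-step template of the proof of Theorem~\ref{prop:Fichera-appx}: first construct a tensor-product $hp$-finite element approximant of $u$ on $\Omega_{DN}$, and then re-approximate it by the realization of a ReLU NN by means of Theorem~\ref{th:ReLU-hp}. The geometry is easier than in the Fichera case, since $\Omega_{DN}=(-1,1)\times(0,1)$ is already a Cartesian product of intervals, so that no $\Wmix^{1,1}$-extension (Lemma~\ref{lemma:W11ext}) is needed; the only mild subtlety is that the singular set $\Cset_{DN}=\{-1,0,1\}\times\{0,1\}$ contains the points $(0,0)$ and $(0,1)$, which lie in the interior of the horizontal edges of $\Omega_{DN}$ rather than at its geometric corners.

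First I would apply the affine change of variables $(x_1,x_2)\mapsto(x_1,2x_2-1)$, mapping $\Omega_{DN}$ onto $Q\coloneqq(-1,1)^2$ and $\Cset_{DN}$ onto $\Cset'=\{-1,0,1\}\times\{-1,1\}\subset\{-1,0,1\}^2$; since the map is affine the pulled-back function lies in $\cJ^\varpi_{\ugamma'}(Q;\Cset',\emptyset)$ with $\min\ugamma'>1$, the weight exponents and constants changing only by factors depending on the fixed scaling. I would then invoke the tensor-product $hp$-approximation result Proposition~\ref{prop:internal} on $Q$, noting that a one-dimensional geometric mesh on $(-1,1)$ refined towards each of $-1,0,1$, used in both coordinate directions, produces fine layers along the lines $x_1=-1,0,1$ and $x_2=-1,1$ that resolve all six weighted-analytic singularities simultaneously (the layer along $x_1=0$ taking care of both $(0,-1)$ and $(0,1)$). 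This provides, for each $0<\epsilon<1$, a degree $p=\mathcal{O}(|\log\epsilon|)$, a mesh $\cG_\oned$ with $\mathcal{O}(|\log\epsilon|)$ cells, univariate functions $v_i\in\mathbb{Q}_p(\cG_\oned)\cap H^1((-1,1))$, $i=1,\dots,\Noned$, with $\Noned=\mathcal{O}(|\log\epsilon|^2)$, $\|v_i\|_{L^\infty}\leq1$, $\|v_i\|_{H^1}\leq C\epsilon^{-b_v}$ and the support-locality property, and a coefficient array $c$ with $\|c\|_1=\mathcal{O}(|\log\epsilon|^{2d})$, such that $\left\| u-\sum_{\iscomma=1}^{\Noned}c_{\is}\bigotimes_{j=1}^d v_{i_j}\right\|_{H^1(Q)}\leq\epsilon/2$. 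Should Proposition~\ref{prop:internal} be formulated only for a single corner (or centre) singularity, I would instead precede this step with a continuous, piecewise-affine partition of unity on $\overline{Q}$ subordinate to a cover whose members each meet at most one point of $\Cset'$, exactly as in the proof of Theorem~\ref{th:polygon}, the gluing being carried out with the exactly representable hat functions of \cite{he2020,TM1999} and the approximate multiplication network of Proposition~\ref{prop:Multiplication}.

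Second I would apply Theorem~\ref{th:ReLU-hp} with $\epsilon/2$ in place of $\epsilon$ to the data $(v_i)_i$ and $c$ (its proof is insensitive to working over $(-1,1)$ and extends trivially to distinct one-dimensional meshes in the two directions), obtaining a ReLU NN whose realization is within $\epsilon/2$ in $H^1(Q)$ of $\sum_{\iscomma=1}^{\Noned}c_{\is}\bigotimes_{j=1}^d v_{i_j}$, hence within $\epsilon$ of $u$, with $\size=\mathcal{O}(|\log\epsilon|^{2d+1})=\mathcal{O}(|\log\epsilon|^5)$, $\depth=\mathcal{O}(|\log\epsilon|\log(|\log\epsilon|))$, and, by the $L^\infty$-bound in Theorem~\ref{th:ReLU-hp} argued as in the proof of Theorem~\ref{th:ReLUapprox}, $\left\|\Realiz\left(\Phi\right)\right\|_{L^\infty(Q)}\leq(2^d+1)C_c(1+|\log\epsilon|^{2d})=\mathcal{O}(|\log\epsilon|^4)$. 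Finally, composing this network with the affine rescaling of the second coordinate, absorbed into its first layer as in Remark~\ref{rem:RemarkGeneralIntervals}, transports all estimates back to $\Omega_{DN}$ at the cost of constants depending only on the scaling, producing the NN $\Phi_{\epsilon,u}$ with the properties claimed in \eqref{eq:straightcorner}.

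The main, and essentially only non-routine, point is the bookkeeping around the singular set: one must verify that the tensor-product geometric mesh whose one-dimensional factor is refined towards the three points $-1,0,1$ genuinely resolves all six weighted-analytic singularities of $u$ at the admissible weight exponents $\gamma_\corn>1$, and — depending on the exact hypotheses of Proposition~\ref{prop:internal} — either that it applies directly to this multi-point configuration or that the partition-of-unity localization can be inserted without degrading the size exponent $5$, which it cannot since that step only adds sub-networks of logarithmic cost. Everything else is a verbatim specialization of the arguments already carried out for Theorems~\ref{th:ReLUapprox}, \ref{prop:Fichera-appx} and \ref{th:polygon}.
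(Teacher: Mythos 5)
Your proposal is correct and rests on the same two pillars as the paper's own proof: the multipatch $hp$-quasi-interpolant of Theorem~\ref{prop:internal} followed by the ReLU emulation of Theorem~\ref{th:ReLU-hp}, with the size, depth and $L^\infty$ bounds read off exactly as you do. The one place where you genuinely deviate is the preliminary reduction of $\Omega_{DN}=(-1,1)\times(0,1)$ with singular set $\{-1,0,1\}\times\{0,1\}$ to the template $(-1,1)^2$ with corner set $\{-1,0,1\}^2$ on which Theorem~\ref{prop:internal} is formulated. The paper keeps the coordinates fixed and extends $u$ to $(-1,1)^2$ by the constant extension $\tilde u(x_1,x_2)=u(x_1,0)$ for $x_2<0$; this requires verifying $\Wmix^{1,1}$-stability of the extension so that the globally defined projector and the coefficient bounds of Lemma~\ref{lemma:cbound} are controlled by $\|u\|_{\Wmix^{1,1}(\Omega_{DN})}$ alone, and the error estimate is then restricted to the patches contained in $\Omega_{DN}$. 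You instead rescale anisotropically via $(x_1,x_2)\mapsto(x_1,2x_2-1)$, so that no extension is needed and Theorem~\ref{prop:internal} applies with $\mathcal{I}$ equal to the full patch index set; the three image points $\{-1,0,1\}\times\{0\}$ of the midline $x_2=1/2$ are not singular, but analytic functions belong to every weighted class, so the hypotheses on those patches hold trivially and the mesh is merely over-refined there. What your route requires instead --- and you acknowledge --- is that the weighted analytic classes are invariant under bi-Lipschitz affine maps with constants depending only on the map, and that the Jacobian factors incurred when transporting the $H^1$ estimate back are absorbed by running the construction at accuracy $\epsilon/C$; both points are harmless and are used elsewhere in the paper (Theorem~\ref{th:polygon}). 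The two routes give identical asymptotics; yours trades the Lemma~\ref{lemma:W11ext}-type extension bookkeeping for a coordinate change, and your partition-of-unity fallback is not needed since Theorem~\ref{prop:internal} already covers the multi-corner configuration.
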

\begin{proof}
Let $\tu\in \Wmix^{1,1}((-1, 1)^2)$ be defined by 
$$
\begin{cases}
\tu(x_1,x_2) = u(x_1,x_2) & \text{ for all } (x_1,x_2)\in(-1,1)\times[0,1),\\
\tu(x_1,x_2) = u(x_1,0) & \text{ for all } (x_1,x_2)\in(-1,1)\times(-1,0),
\end{cases}
$$
such that $\tu|_{\Omega_{DN}} = u$.
Here we used that there exist continuous imbeddings
$\cJ^\varpi_\ugamma(\Omega_{DN}; \Cset_{DN}, \emptyset)
	\hookrightarrow \Wmix^{1,1}(\Omega_{DN})
	\hookrightarrow C^0(\overline{\Omega_{DN}})$
(see Lemma \ref{lemma:W11J3} for the first imbedding),
i.e. $u$ can be extended to a continuous function on $\overline{\Omega_{DN}}$.

As in the proof of Lemma \ref{lemma:W11ext}, 
this extension is stable, 
i.e. there exists a constant
$C_{\mathrm{ext}}>0$ independent of $u$ 
such that
\begin{equation}
  \label{eq:straightcorner-stable}
\|\tu\|_{\Wmix^{1,1}((-1,1)^d)}\leq C_{\mathrm{ext}} \| u\|_{\Wmix^{1,1}(\Omega_{DN})}.
\end{equation}
Because $u \in \cJ^\varpi_\ugamma(\Omega_{DN}; \Cset_{DN}, \emptyset)$,
it holds with $\Cset_S = \overline{S}\cap \Cset_{DN}$  
that $u\in \cJ^\varpi_\ugamma(S; \Cset_S, \emptyset)$ for all
\[
S\in \left\{
  \bigtimes_{j=1,2}(a_j, a_j+1/2): (a_1, a_2)\in\{-1,-1/2,0,1/2\}\times\{0,1/2\} 
\right\}
\;.
\]
The remaining steps are the same as those in the proof of Theorem \ref{prop:Fichera-appx}.
\end{proof}

\section{Conclusions and extensions}
\label{sec:ConclExt}
We review the main findings of the present paper 
and outline extensions of the present results, and 
perspectives for further research.
\subsection{Principal mathematical results}
\label{sec:MainRes}
We established exponential expressivity of realizations of NNs
with the ReLU activation function in the Sobolev norm $H^1$ for
functions which belong to certain countably normed, 
weighted analytic function spaces in cubes $Q=(0,1)^d$
of dimension $d=2,3$.
The admissible function classes comprise functions which
are real analytic at points $x\in Q$, and which admit analytic
extensions to the open sides $F\subset \partial Q$,
but may have singularities at corners and
(in space dimension $d=3$) edges of $Q$.
We have also extended this result to cover exponential expressivity of 
realizations of NNs with ReLU activation 
for solution classes of linear, second order elliptic PDEs 
in divergence form in plane, polygonal domains 
and of elliptic, nonlinear eigenvalue
problems with singular potentials in three space dimensions.
Being essentially an approximation result,
the DNN expression rate bound in Theorem \ref{th:polygon} 
will apply to any elliptic boundary value problem in polygonal domains
where weighted, analytic regularity is available.
Apart from the source and eigenvalue problems, 
such regularity is in space dimension $d=2$ 
also available for linearized elastostatics,
Stokes flow and general elliptic systems \cite{GuoBab3,GuoScStokes,CoDaNi12}.

The established approximation rates of realizations of NNs with ReLU activation
are fundamentally based on a novel exponential upper bound on approximation 
of weighted analytic functions via tensorized $hp$ approximations 
on multi-patch configurations
in finite unions of axiparallel rectangles/hexahedra. 
The $hp$ approximation result is presented in Theorem \ref{prop:internal} 
and of independent interest in the numerical analysis of spectral elements. 

The proofs of exponential expressivity of NN realizations are, in principle, constructive. 
They are based on explicit bounds on the coefficients of $hp$ projections
and on corresponding emulation rate bounds for the (re)approximation of modal $hp$ bases.
\subsection{Extensions and future work}
\label{sec:ExtFrtWrk}
The tensor structure of the $hp$ approximation 
considered here 
limited geometries of domains that are admissible for our results.
\emph{Curvilinear, mapped domains} with analytic domain 
maps will allow corresponding approximation rates, with the NN
approximations obtained by composing the present constructions
with NN emulations of the domain maps and the fact that
compositions of NNs are again NNs. 

The only activation function considered in this work is the ReLU.
Following the same proof strategy, 
exponential expression rate bounds can be
obtained for functions with smoother, 
nonlinear activation functions. 
We refer to Remark \ref{rmk:otheractivat} and 
to the discussion in \cite[Sec. 3.3]{schwab2017deep}.

The principal results in Section \ref{sec:EVPPtSing} 
yield exponential expressivity of realizations of NNs with ReLU activation
for singular eigenvalue problems with multiple, isolated 
point singularities as arise in electron-structure computations 
\emph{for static molecules with known loci of the nuclei}.
Inspection of our proofs reveals that the expression rate bounds are
robust with respect to perturbations of the nuclei sites;
only interatomic distances enter the constants 
in the expression rate bounds of Section \ref{sec:HF}. 
Given the \emph{closedness of NNs under composition}, 
obtaining similar expression rates also for 
solutions of the \emph{vibrational Schr\"odinger equation} appears 
in principle possible.

The presently proved deep ReLU NN expression rate bounds 
can, 
in connection with recently proposed, 
residual-based DNN training methodologies 
(e.g., \cite{shin2020error}),
imply exponential convergence rates of numerical NN approximations
of PDE solutions based on machine learning approaches.


\appendix
\section{Tensor product $hp$ approximation}
\label{sec:hp-analysis}
In this section, we construct the $hp$ tensor product approximation which
will then be emulated to obtain the NN expression rate estimates.  
We denote $Q=(0,1)^d$, $d\in \{2,3\}$ and introduce the set of corners $\Cset$,
\begin{equation}
\label{eq:Cset}
    \Cset =
    \begin{cases}
      \big\{(0,0)\big\} & \text{if }d=2,\\
      \big\{(0,0,0)\big\} & \text{if }d=3,
    \end{cases}
  \end{equation}
  and the set of edges $\Eset$,
  \begin{equation}
\label{eq:Eset}
    \Eset =
    \begin{cases}
    \emptyset & \text{if }d=2,\\
 \big \{ \{0\}\times\{0\}\times(0,1), \{0\}\times(0,1)\times\{0\}, (0,1)\times \{0\}\times\{0\} \big \}
 & \text{if }d=3.
    \end{cases}
\end{equation}
  The results in this section extend, by rotation or reflection, 
  to the case where $\Cset$ contains any of
  the corners of $Q$ and $\Eset$ is the set of the adjacent edges when $d=3$.
Most of the section addresses the construction of exponentially consistent 
    $hp$-quasiinterpolants in the reference cube $(0,1)^d$; 
    in Section~\ref{sec:patches} the analysis will be extended to domains which
    are specific finite unions of such patches.
%
\subsection{Product geometric mesh and tensor product $hp$ space}
\label{sec:hp-prdmshspc}
%
We fix a geometric mesh grading factor $\sigma \in (0,1/2]$.
Furthermore, let
\begin{equation*}
J_0^\ell 
= (0, \sigma^\ell)\qquad\text{and} \qquad J_k^\ell 
= (\sigma^{\ell-k+1}, \sigma^{\ell-k}), \, k=1, \dots, \ell.
\end{equation*}
In $(0,1)$, 
    the geometric mesh with $\ell$ layers is 
$\cG^\ell_{1} = \left\{J^\ell_k: k=0, \dots, \ell\right\}$. Moreover, we denote the nodes of $\cG^\ell_{1}$ by 
$x_0^\ell = 0$ and $x_k^\ell = \sigma^{\ell-k+1}$ for $k=1,\ldots,\ell+1$.
In $(0,1)^d$, the $d$-dimensional tensor product geometric mesh is\footnote{We assume \emph{isotropic tensorization},
i.e. the same $\sigma$ and the same number of geometric mesh layers in each coordinate direction; 
all approximation results remain valid 
(with possibly better numerical values for the constants in the error bounds)
for anisotropic, co-ordinate dependent choices of $\ell$ and of $\sigma$.}
\begin{equation*}
  \cG^\ell_d = \left\{ \bigtimes_{i=1}^d K_i,\, \text{for all } K_1,\dots, K_d \in  \cG^\ell_{1} \right\}.
\end{equation*}
For an element $K = \bigtimesdim J_{k_i}^{\ell}$, $k_i\in\{0,\ldots,\ell\}$, we denote by
$d^K_\corn$ the distance from the singular corner, and $d^K_e$ the distance from the closest singular edge.
We observe that
\begin{equation}
  \label{eq:dist1}
  d^K_\corn = \left( \sum_{i=1}^{d}\sigma^{2(\ell-k_i+1)} \right)^{1/2}
\end{equation}
and 
\begin{equation}
  \label{eq:dist2}
  d^K_e = \min_{(i_1, i_2)  \in \{1, 2, 3\}^2}   \left( \sum_{i\in \{i_1,i_2\}}\sigma^{2(\ell-k_i+1)} \right)^{1/2}.
\end{equation}

The \emph{$hp$ tensor product space} 
is defined as
\begin{equation*}
  \Xhpelldim  \coloneqq \{ v\in H^1(Q): v_{|_{K}} \in \mathbb{Q}_{p}(K), \text{ for all } K\in \cG^\ell_d\},
\end{equation*}
where $\mathbb{Q}_p(K) \coloneqq \spn\left\{\prod_{i=1}^d (x_i)^{k_i} \colon k_i\leq p, i=1,
  \dots, d\right\}$. Note that, by construction, 
  $\Xhpelldim = \bigotimes_{i=1}^d \Xhpellone$.

For positive integers $p$ and $s$ such that $1\leq s \leq p$, 
we will write
\begin{equation}
  \label{eq:Psi}
  \Psi_{p,s} \coloneqq \frac{(p-s)!}{(p+s)!}.
\end{equation}
Additionally, we will denote, for all $\sigma\in (0, 1/2]$,
  \begin{equation}
    \label{eq:sigmaratio}
    \sigmaratio \coloneqq \frac{1-\sigma}{\sigma} \in [1,\infty).
  \end{equation}
\subsection{Local projector}
\label{sec:loc-proj}
We denote the reference interval by $I=(-1, 1)$ and the reference cube by $\hK =
(-1, 1)^d$. We also write $\Hmix^1(\hK) = \bigotimes_{i=1}^d H^1(I)\supset H^d(\hK)$. Let $p\geq 1$: 
we introduce the univariate projectors 
$\hpi_p : H^1(I) \to \mathbb{P}_p(I)$
as
\begin{equation}
  \label{eq:ondim-proj-explicit}
  \begin{aligned}
    \left(\hpi_p \hv\right)(x)
   & =  \hv(-1) + \sum_{n=0}^{p-1} \left(\hv', \frac{2n+1}{2}L_n\right)\int_{-1}^x L_n(\xi)d\xi\\
   &=  \hv(-1)\left( \frac{1-x}{2} \right) + \hv(1)\left( \frac{1+x}{2} \right) 
    + \sum_{n=1}^{p-1} \left(\hv', \frac{2n+1}{2}L_n\right)\int_{-1}^x L_n(\xi)d\xi,
  \end{aligned}
\end{equation}
where $L_n$ is the $n$th Legendre polynomial, $L^\infty$ normalized, and
$(\cdot, \cdot)$ is the scalar product of $L^2((-1,1))$.
Note that 
\begin{equation}
  \label{eq:onedim-prop}
 \left( \hpi_p \hv \right) (\pm 1) = \hv (\pm 1), \qquad \forall \hv \in H^1(I).
\end{equation}
For $p\in \mathbb{N}$, we introduce the projection on the reference element
$\hK$ as $ \hPi_{p} = \bigotimes_{i=1}^d\hpi_{p} $.
For all $K\in \cG^\ell_d$, we introduce an affine
transformation from $K$ to the reference element 
\begin{equation}
  \label{eq:Phi}
  \Phi_K :  K\to \hK\quad \text{such that}\quad\Phi_K(K) = \hK.
\end{equation}
Remark that since the elements are axiparallel, the affine transformation can be
written as a $d$-fold product of one dimensional affine transformations $\phi_k
: J_k^\ell \to I$, i.e.,
supposing that $K= \bigtimes_{i=1}^d J_{k_i}^{\ell}$,  there holds
\begin{equation*}
  \Phi_K = \bigotimes_{i=1}^d \phi_{k_i}.
\end{equation*}
Let $K\in \cG^\ell_d$ and let $k_i$, $i=1, \dots, d$ be the indices such that
$K=\bigtimes_{i=1}^d J_{k_i}^{\ell}$. Define, for $w\in H^1(J_{k_i}^{\ell})$,
\begin{equation*}
  \pi^{k_i}_p w = \left( \hpi_p (w\circ \phi_{k_i}^{-1}) \right)\circ \phi_{k_i}. 
\end{equation*}
For $v$ defined on $K$ such that $v\circ \Phi_K^{-1} \in \Hmix^1(\hK)$ 
and for $(p_1, \dots, p_d)\in \mathbb{N}^d$, we introduce the local projection operator
\begin{equation}
  \label{eq:PiK_tens}
  \Pi_{\ps}^K = \bigotimes_{i=1}^d \pi^{k_i}_{p_i}.
\end{equation}
We also write
\begin{equation}
  \label{eq:PiK}
  \Pi_{p}^K v =\Pi_{p\dots p}^K v=  \left(\hPi_{p}  (v\circ \Phi_K^{-1}) \right)\circ \Phi_K.
\end{equation}
For later reference, we note the following property of $\Pi_{p}^K v$:
\begin{lemma}
  \label{lemma:reg-cont}
 Let $K_1, K_2$ be two axiparallel cubes that share one regular face $F$ 
 (i.e., $F$ is an entire face of both $K_1$ and $K_2$). 
 Then, for $v\in \Hmix^1(\interior(\overline{K}_1\cup \overline{K}_2))$,
 the piecewise polynomial
 \begin{equation*}
   \Pi^{K_1\cup K_2}_{p} v = 
   \begin{cases}
     \Pi_p^{K_1} v &\text{in }K_1,\\
     \Pi_p^{K_2} v &\text{in }K_2
   \end{cases}
 \end{equation*}
is continuous across $F$. 
\end{lemma}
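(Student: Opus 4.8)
The plan is to exploit the tensor-product structure of $\Pi_{p}^K$ (see \eqref{eq:PiK_tens}--\eqref{eq:PiK}) together with the endpoint-interpolation property \eqref{eq:onedim-prop} of the univariate projector $\hpi_p$. First I would record the elementary geometric fact that, since $K_1$ and $K_2$ are axiparallel cubes whose intersection is an entire face $F$ of each, they must coincide in all but one coordinate direction: after relabeling coordinates there is an index $j$, a box $B' = \bigtimes_{i\neq j}J_i$, and two intervals $J', J''$ meeting only at a common endpoint $a$, such that $K_1 = J'\times B'$, $K_2 = J''\times B'$, and $F = \{x_j = a\}\cap\overline{K}_1 = \{x_j=a\}\cap\overline{K}_2$. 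Writing $x = (x_j, x')$ with $x' = (x_i)_{i\neq j}$, the affine maps $\Phi_{K_1}$ and $\Phi_{K_2}$ differ only in their $j$-th component, which sends $J'$ (resp.\ $J''$) onto $(-1,1)$ with $a$ mapped to one endpoint $\pm 1$ (resp.\ the opposite endpoint $\mp 1$); their remaining components agree and define a common affine map $\Phi' \colon B'\to(-1,1)^{d-1}$. Correspondingly $\Pi_{p}^{K_1} = \pi_j^{(1)}\otimes\pi'$ and $\Pi_{p}^{K_2} = \pi_j^{(2)}\otimes\pi'$, where $\pi'$ is the common $(d-1)$-dimensional projector onto $\mathbb{Q}_p(B')$ and $\pi_j^{(1)}, \pi_j^{(2)}$ are the univariate projectors on $J', J''$.

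Next I would compute the trace on $F$. Since $v\in\Hmix^1(\interior(\overline{K}_1\cup\overline{K}_2))$, for almost every $x'$ the slice $v(\cdot,x')$ lies in $H^1$ of the interval $\interior(\overline{J'}\cup\overline{J''})$, hence by $H^1(I)\hookrightarrow C^0(\overline I)$ is continuous in $x_j$; thus $x'\mapsto v(a,x')$ is well defined, and a Fubini-type argument shows $v|_F\in\Hmix^1(B')$, so $\pi'$ may be applied to it. Because univariate projectors acting on distinct coordinates commute and evaluation at $x_j = a$ commutes with $\pi'$ (which does not involve $x_j$), I may apply $\pi_j^{(1)}$ first: for a.e.\ fixed $x'$, $(\pi_j^{(1)}v(\cdot,x'))(a) = v(a,x')$ by \eqref{eq:onedim-prop}, since $a$ is an endpoint of $J'$. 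Applying $\pi'$ afterwards yields
\[
 \bigl(\Pi_{p}^{K_1} v\bigr)\big|_{F} = \pi'\bigl(v|_F\bigr),
\]
and the identical computation with $J''$ in place of $J'$ (note $a$ is also an endpoint of $J''$) gives $\bigl(\Pi_{p}^{K_2} v\bigr)\big|_{F} = \pi'\bigl(v|_F\bigr)$, the same function.

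Since $\Pi_{p}^{K_1} v$ and $\Pi_{p}^{K_2} v$ are polynomials, their restrictions to $\overline{K}_1$ and $\overline{K}_2$ are continuous up to $F$ and, by the two displays, coincide on $F$; hence $\Pi^{K_1\cup K_2}_{p} v$ is continuous across $F$, which together with its piecewise $H^1$-regularity gives $H^1$-conformity. The only genuinely delicate point is the bookkeeping behind the boxed identity, namely that the restriction of the $d$-dimensional tensor projector to the shared face equals the $(d-1)$-dimensional projector $\pi'$ applied to the trace $v|_F$; this rests on Fubini slicing, on the one-dimensional Sobolev embedding so that pointwise evaluation in the normal variable is meaningful, and on the commutativity of operators acting on disjoint groups of variables. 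All of these are standard in the mixed-regularity tensor-product setting, so I do not expect a substantial obstacle.
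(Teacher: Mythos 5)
Your proposal is correct and is exactly the argument the paper intends: the paper's proof is the single line ``this follows directly from \eqref{eq:onedim-prop}'', i.e.\ the endpoint-interpolation property of $\hpi_p$ combined with the tensor-product structure, which is precisely what you have written out in detail. The additional bookkeeping you supply (Fubini slicing, the one-dimensional Sobolev embedding to make pointwise evaluation in the normal variable meaningful, and commutation of the tangential projector with the trace) is all sound and fills in what the paper leaves implicit.
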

\begin{proof}
	This follows directly from \eqref{eq:onedim-prop}.
\end{proof}
\subsection{Global projectors}
\label{sec:glob-proj}
We introduce, for $\ell, p \in \N$, the univariate projector $\pihpell: H^1((0,1)) \to \Xhpellone$ as
\begin{equation}
  \label{eq:pihpell1d}
  \left(\pihpell u  \right) (x)=
  \begin{cases}
  \left(\pi^0_1 u  \right)(x)    &\text{if } x\in J_0^\ell,\\
\left(    \pi_p^{k}  u\right) (x) &\text{if }x\in J_{k}^\ell,\, k\in \{1, \dots, \ell\}.
  \end{cases}
\end{equation}
Note that for all $\ell\in \N$, for $x\in J_0^\ell$
\begin{equation*}
  \left(\pi^0_1 u  \right)(x) = u(0) + \sigma^{-\ell}\left( u(\sigma^\ell) - u(0) \right)x.
\end{equation*}
The $d$-variate $hp$ quasi-interpolant is then obtained by tensorization, i.e.
\begin{equation}
  \label{eq:Pihpell}
  \Pihpelldim \coloneqq \bigotimes_{i=1}^d \pihpell.
\end{equation}
\begin{remark}
\label{rem:global-continuity}
  By the nodal exactness of the projectors, the operator
  $\Pihpelldim$ is continuous across interelement interfaces
  (see Lemma \ref{lemma:reg-cont}), 
  hence its image is contained in $H^1((0,1)^d)$.
  The continuity can also be observed from the expansion 
  in terms of continuous, globally defined  basis functions
  given in Proposition \ref{prop:compactbasis}.
\end{remark}
\begin{remark}
\label{rem:bigger-space-hp}
  The projector $\Pihpelldim$ is defined on a larger space than
  $\Hmix^1(Q)$ as specified below (e.g. Remark \ref{rem:hpw11mix}).
\end{remark}
\subsection{Preliminary estimates}
\label{sec:prelim-estimates}
The projector on $\hK$ given by
\begin{equation}
  \label{eq:hPi}
  \hPi_{\ps} \coloneqq \bigotimes_{i=1}^d\hpi_{p_i} 
\end{equation}
has the following property.
\begin{lemma}[{{\cite[Propositions 5.2 and 5.3]{SSWII}}}]
  \label{lemma:ref-proj}
  Let $d=3$, $(p_1, p_2, p_3)\in \mathbb{N}^3$, and $(s_1, s_2, s_3)\in
  	\mathbb{N}^3$ with $1\leq s_i\leq p_i$. Then the projector 
  $\hPi_{p_1p_2p_3}:\Hmix^1(\hK) \to \mathbb{Q}_{p_1, p_2, p_3}(\hK)$ satisfies that
  \begin{equation}
    \label{eq:disc-approx}
    \begin{aligned}
    \| v - \hPi_{p_1 p_2 p_3} v\|_{H^1(\hK)}^2 
    \leq \Cappxi \bigg( 
     & \Psi_{p_1,s_1}
      \sum_{\alpha_1, \alpha_2 \leq 1}
      \| \partial^{(s_1+1, \alpha_1, \alpha_2)}v \|^2_{L^2(\hK)} 
    \\ &\qquad +
      \Psi_{p_2,s_2}
      \sum_{\alpha_1, \alpha_2 \leq 1}
      \| \partial^{(\alpha_1, s_2+1, \alpha_2)}v \|^2_{L^2(\hK)} 
    \\ &\qquad +
      \Psi_{p_3,s_3}
      \sum_{\alpha_1, \alpha_2 \leq 1}
      \| \partial^{(\alpha_1, \alpha_2, s_3+1)}v \|^2_{L^2(\hK)} 
    \bigg),
    \end{aligned}
  \end{equation}
 for all $v\in H^{s_1+1}(I)\otimes H^{s_2+1}(I)\otimes H^{s_3+1}(I)$. Here,
 $\Cappxi$ is independent of $(p_1, p_2, p_3)$, $(s_1, s_2, s_3)$ and $v$.
\end{lemma}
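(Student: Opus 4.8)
The statement is that of \cite[Propositions 5.2 and 5.3]{SSWII}; the plan I would follow is to derive it by tensorization from the classical one-dimensional $p$-version approximation estimate. First I would record two one-dimensional facts about the projector $\hpi_p$ from \eqref{eq:ondim-proj-explicit}: (i) its derivative is the $L^2(I)$-orthogonal projection of the derivative, $(\hpi_p\hv)' = P_{p-1}\hv'$, where $P_{p-1}$ denotes the $L^2(I)$-projection onto $\mathbb{P}_{p-1}(I)$, and $(\hv-\hpi_p\hv)(-1)=0$, so that by the Poincaré--Friedrichs inequality on $I$ together with the sharp spectral bound $\|w-P_{p-1}w\|_{L^2(I)}^2 \le C\Psi_{p,s}\|w^{(s)}\|_{L^2(I)}^2$ (valid for $1\le s\le p$) one obtains
\[
\|\hv-\hpi_p\hv\|_{H^1(I)}^2 \;\le\; C\,\|(\mathrm{id}-P_{p-1})\hv'\|_{L^2(I)}^2 \;\le\; C\,\Psi_{p,s}\,\|\hv^{(s+1)}\|_{L^2(I)}^2 ;
\]
and (ii) the $H^1(I)$-stability $\|\hpi_p\hv\|_{H^1(I)}\le C\|\hv\|_{H^1(I)}$ with $C$ independent of $p$ (from the same identity and a one-dimensional trace bound for the endpoint value), together with $\|P_{p-1}\|_{L^2(I)\to L^2(I)}=1$.

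Next I would split the error by the standard telescoping identity
\[
\mathrm{id}-\hpi_{p_1}\otimes\hpi_{p_2}\otimes\hpi_{p_3}
= (\mathrm{id}-\hpi_{p_1})\otimes\mathrm{id}\otimes\mathrm{id}
+\hpi_{p_1}\otimes(\mathrm{id}-\hpi_{p_2})\otimes\mathrm{id}
+\hpi_{p_1}\otimes\hpi_{p_2}\otimes(\mathrm{id}-\hpi_{p_3}),
\]
where $\mathrm{id}$ is the identity on $H^1(I)$, and bound the $H^1(\hK)$-norm of each of the three summands separately by fibering $\hK$ over the relevant one-dimensional factor. For the first summand, $(\mathrm{id}-\hpi_{p_1})$ acting in $x_1$ commutes with $\partial_{2},\partial_{3}$ and satisfies $\partial_1(\mathrm{id}-\hpi_{p_1})=(\mathrm{id}-P_{p_1-1})\partial_1$, so applying fact (i) in the $x_1$-variable for a.e. $(x_2,x_3)$ and integrating yields the $\Psi_{p_1,s_1}$-contribution in \eqref{eq:disc-approx}. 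For the second summand I would first apply fact (i) in $x_2$ (producing the factor $\Psi_{p_2,s_2}$ and derivatives of order at most $s_2+1$ in direction $2$), then absorb the outer projector $\hpi_{p_1}$ using its $H^1(I)$-stability together with $\partial_2(\mathrm{id}-\hpi_{p_2}) = (\mathrm{id}-P_{p_2-1})\partial_2$ and $\|P_{p_2-1}\|=1$, while $\partial_3$ commutes through. The third summand is identical after a coordinate permutation, and summing the three bounds gives \eqref{eq:disc-approx} with $\Cappxi$ depending only on the spatial dimension.

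The one delicate point — and the reason the right-hand side of \eqref{eq:disc-approx} is written with the sums $\sum_{\alpha_1,\alpha_2\le1}$ over \emph{mixed} derivatives rather than pure derivatives — is that $\hpi_p$ is only $H^1(I)$-stable and \emph{not} $L^2(I)$-stable: in the mixed summands the ``inactive'' projector in an outer coordinate therefore costs one extra derivative in that coordinate. This is harmless precisely because $v\in H^{s_1+1}(I)\otimes H^{s_2+1}(I)\otimes H^{s_3+1}(I)$ with each $s_i\ge1$, so $\partial^{(s_1+1,\alpha_1,\alpha_2)}v\in L^2(\hK)$ for $\alpha_1,\alpha_2\in\{0,1\}$ and likewise after permutations. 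The only other place requiring care is to retain the sharp factorial factor $\Psi_{p,s}$ throughout, which forces the use of the precise Legendre-coefficient form of the one-dimensional estimate (a bound of the form $p^{-2s}$ would not suffice for the later exponential-rate analysis); everything else is tensor-product bookkeeping. The two-dimensional analogue follows by the same argument with two factors, or formally by suppressing the third coordinate.
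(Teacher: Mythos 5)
The paper does not prove this lemma at all: it is imported verbatim as a citation to \cite[Propositions 5.2 and 5.3]{SSWII}, so there is no internal proof to compare against. Your reconstruction is correct and is the standard argument behind the cited result: the identity $(\hpi_p \hv)' = P_{p-1}\hv'$ together with nodal exactness at $x=-1$ gives the sharp univariate bound with the factor $\Psi_{p,s}$ via the Legendre-coefficient estimate and Poincar\'e--Friedrichs, the telescoping decomposition of $\mathrm{id}-\hpi_{p_1}\otimes\hpi_{p_2}\otimes\hpi_{p_3}$ reduces the trivariate error to three univariate ones, and the $H^1(I)$-stability (rather than $L^2(I)$-stability) of the outer projectors is exactly what forces the mixed-derivative sums $\sum_{\alpha_1,\alpha_2\leq 1}$ on the right-hand side of \eqref{eq:disc-approx} — a point you correctly identify as the only delicate step. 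No gaps.
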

\begin{remark}
In space dimension $d=2$, 
a result analogous to Lemma \ref{lemma:ref-proj} holds, see \cite{SSWII}.
\end{remark}
	\begin{lemma}
  \label{lemma:ref-proj2}
  Let $d=3$, $(p_1,p_2,p_3)\in\N^3$, and $(s_1,
  	s_2, s_3)\in
  	\mathbb{N}^3$ with $1\leq s_i\leq p_i$. Further, let $\{i, j,
  	k\}$ be a permutation of $\{1,2,3\}$. Then, the projector 
  $\hPi_{p_1p_2p_3}:\Hmix^1(\hK) \to \mathbb{Q}_{p_1, p_2, p_3}(\hK)$ satisfies
  \begin{equation}
    \label{eq:disc-approx2}
    \begin{aligned}
   \| \partial_{x_i}\left( v - \hPi_{p_1 p_2 p_3} v  \right)\|_{L^2(\hK)}^2 
    \leq \Cappxii \bigg(
&\Psi_{p_i,s_i} 
\sum_{\alpha_1, \alpha_2 \leq 1}
    \| \partial_{x_i}^{s_i+1}\partial_{x_j}^{ \alpha_1}\partial_{x_k}^{\alpha_2}v \|^2_{L^2(\hK)} 
\\    &\qquad +
\Psi_{p_j,s_j} 
    \sum_{\alpha_1\leq 1}\| \partial_{x_i}\partial_{x_j}^{s_j+1}\partial_{x_k}^{\alpha_1}v \|^2_{L^2(\hK)} 
 \\   &\qquad +
\Psi_{p_k,s_k}  
    \sum_{\alpha_1\leq 1}\| \partial_{x_i}\partial_{x_j}^{\alpha_1}\partial_{x_k}^{s_k+1}v \|^2_{L^2(\hK)} 
  \bigg),
    \end{aligned}
      \end{equation}
      for all $v\in
      H^{s_1+1}(I)\otimes H^{s_2+1}(I)\otimes H^{s_3+1}(I)$.
   Here, $\Cappxii>0$ is independent of $(p_1, p_2, p_3)$, $(s_1, s_2, s_3)$, and $v$. 
\end{lemma}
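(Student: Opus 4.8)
The plan is to reduce Lemma \ref{lemma:ref-proj2} to Lemma \ref{lemma:ref-proj}, since both concern approximation properties of the same tensor-product projector $\hPi_{p_1p_2p_3} = \hpi_{p_1}\otimes\hpi_{p_2}\otimes\hpi_{p_3}$ on the reference cube. The key observation is that, by the tensor-product structure, $\partial_{x_i}$ commutes with the factors $\hpi_{p_j}$ for $j\neq i$ (these act in other variables), so that $\partial_{x_i}(v - \hPi_{p_1p_2p_3}v)$ can be rewritten so that the univariate estimate for $\hpi_{p_i}$ in direction $x_i$ is applied to $\partial_{x_i}v$ rather than to $v$ itself. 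Concretely, I would first write the standard telescoping decomposition
\[
I - \hPi_{p_1p_2p_3} = (I - \hpi_{p_1})\otimes I\otimes I + \hpi_{p_1}\otimes(I-\hpi_{p_2})\otimes I + \hpi_{p_1}\otimes\hpi_{p_2}\otimes(I-\hpi_{p_3}),
\]
apply $\partial_{x_i}$, and use that $\partial_{x_i}$ commutes with each $\hpi_{p_j}$, $j\neq i$, while $\partial_{x_i}\hpi_{p_i} \neq \hpi_{p_i}\partial_{x_i}$ in general but instead $\partial_{x_i}(I - \hpi_{p_i})w = (I - \tilde\pi_{p_i-1})\partial_{x_i}w$ for a suitable lower-order univariate projector $\tilde\pi$ (this is exactly the structure exploited in the derivative bound \eqref{eq:disc-approx2}, and is the content of the univariate analysis in \cite{SSWII}).

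The main work is then bookkeeping: for each of the three terms in the telescoped sum, after applying $\partial_{x_i}$ one obtains a tensor product in which one factor is a univariate approximation error (either in direction $x_i$ acting on $\partial_{x_i}v$, or in direction $x_j$ or $x_k$ acting on something differentiated once in $x_i$) and the remaining factors are either the identity, a bounded projector $\hpi$, or a single derivative $\partial_{x_j}^{\alpha}$ with $\alpha\le 1$. Using the $L^2$-stability of $\hpi_p$ on $H^1$ (bounded uniformly in $p$, as follows from the univariate estimates underlying Lemma \ref{lemma:ref-proj}) and the univariate $p$-approximation estimate contributing the factor $\Psi_{p,s}$, one bounds the $L^2(\hK)$-norm of each term by the corresponding summand on the right-hand side of \eqref{eq:disc-approx2}. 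Summing the three contributions and absorbing all $p$- and $s$-independent constants into a single $\Cappxii$ gives the claim. I would present this as: (i) telescoping identity; (ii) commutation of $\partial_{x_i}$ through the off-direction factors; (iii) term-by-term estimation invoking the univariate approximation and stability bounds from \cite{SSWII}; (iv) summation.

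The step I expect to be the main obstacle — or at least the one requiring the most care — is the precise handling of the "diagonal" factor, i.e. controlling $\partial_{x_i}(I - \hpi_{p_i})$ in the term where the error is in direction $x_i$. One must verify that differentiating the univariate projection error in its own variable again yields an approximation error of the correct order, producing the factor $\Psi_{p_i,s_i}$ multiplying $\|\partial_{x_i}^{s_i+1}\partial_{x_j}^{\alpha_1}\partial_{x_k}^{\alpha_2}v\|_{L^2}^2$ with $\alpha_1,\alpha_2\le 1$; this is where the explicit Legendre-coefficient formula \eqref{eq:ondim-proj-explicit} for $\hpi_p$ and the corresponding one-dimensional estimates from \cite{SSWII} are needed. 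The off-diagonal terms (error in direction $x_j$ or $x_k$, with only a single $\partial_{x_i}$ derivative, hence the reduced sums $\sum_{\alpha_1\le 1}$ in the last two lines of \eqref{eq:disc-approx2}) are then comparatively routine, following the same pattern as the proof of Lemma \ref{lemma:ref-proj}. Since both lemmas are quoted from \cite{SSWII}, in the write-up I would keep this short and cite \cite{SSWII} for the univariate ingredients, presenting the tensorization argument as the new content.
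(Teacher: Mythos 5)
Your argument is correct in outline, but it takes a genuinely different route from the paper. The paper does not telescope or return to the univariate level at all: it treats Lemma \ref{lemma:ref-proj} as a black box, subtracts the mean $\overline{v}_i(x_j,x_k)=\int_I v\,dx_i$, observes that $\partial_{x_i}\overline{v}_i=\partial_{x_i}\hPi_{p_1p_2p_3}\overline{v}_i=0$ so the left-hand side is unchanged when $v$ is replaced by $\tv=v-\overline{v}_i$, applies \eqref{eq:disc-approx} to $\tv$, and then uses the Poincar\'e inequality in the $x_i$-direction (valid because $\tv$ has zero $x_i$-mean) to trade the missing $\partial_{x_i}$ in the off-diagonal terms for one extra derivative of $v$; the diagonal terms are handled by $\partial_{x_i}\tv=\partial_{x_i}v$. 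Your telescoping-plus-commutation argument also works and is essentially a re-derivation of Lemma \ref{lemma:ref-proj} from the univariate ingredients of \cite{SSWII} while tracking where the $\partial_{x_i}$ lands: the step you flag as the main obstacle is in fact immediate from \eqref{eq:ondim-proj-explicit}, since $(\hpi_p\hv)'$ is by construction the $L^2(I)$-orthogonal projection of $\hv'$ onto $\mathbb{P}_{p-1}$, so $\partial_x(I-\hpi_p)$ is an orthogonal-projection error of the derivative and simultaneously gives the uniform bound $\|\partial_x\hpi_p w\|_{L^2(I)}\le\|\partial_x w\|_{L^2(I)}$ needed for the off-diagonal factors; the reduced sums $\sum_{\alpha_1\le 1}$ then arise because only the remaining factor $\hpi_{p_j}$ (which is merely $H^1$-to-$L^2$ stable) contributes extra first-order derivatives. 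The trade-off: your route is more transparent about the origin of each term but requires re-importing and verifying the univariate stability and approximation estimates, whereas the paper's mean-subtraction/Poincar\'e reduction is shorter and reuses the already-quoted Lemma \ref{lemma:ref-proj} verbatim.
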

\begin{proof}
 Let $(p_1,p_2,p_3)\in\N^3$, and $(s_1,
 s_2, s_3)\in
 \mathbb{N}^3$, be as in the statement of the lemma. Also, let $i\in \{1,2,3\}$ and $\{j, k\} = \{1,2 ,3\} \setminus \{i\}$. By Lemma \ref{lemma:ref-proj}, there holds
 \begin{equation}
   \label{eq:ref-proj2-proof}
  \begin{aligned}
    \| \partial_{x_i}( v - \hPi_p v  )\|_{L^2(\hK)}^2 
    \leq \Cappxi \bigg( 
     & \Psi_{p_1,s_1} 
      \sum_{\alpha_1, \alpha_2 \leq 1}
      \| \partial^{(s_1+1, \alpha_1, \alpha_2)}v \|^2_{L^2(\hK)} 
    \\ &\qquad +
      \Psi_{p_2,s_2} 
      \sum_{\alpha_1, \alpha_2 \leq 1}
      \| \partial^{(\alpha_1, s_2+1, \alpha_2)}v \|^2_{L^2(\hK)} 
    \\ &\qquad +
      \Psi_{p_3,s_3} 
      \sum_{\alpha_1, \alpha_2 \leq 1}
      \| \partial^{(\alpha_1, \alpha_2, s_3+1)}v \|^2_{L^2(\hK)} 
    \bigg).
    \end{aligned}
    \end{equation}
 With a $\Cappxi>0$ independent of $(p_1, p_2, p_3)$, $(s_1, s_2, s_3)$, and $v$.
 Let now, $\overline{v}_i:I^2 \to \mathbb{R}$ such that
 \begin{equation*}
   \overline{v}_i(x_j,x_k) = \int_{I} v (x_1, x_2, x_3)dx_i .
 \end{equation*}
 We denote $\tv \coloneqq v - \overline{v}_i$ and, remarking that $\partial_{x_i}\overline{v}_i
 =  \partial_{x_i}\hPi_p\overline{v}_i = 0$, we apply \eqref{eq:ref-proj2-proof} to $\tv$, so that
\begin{equation}
   \label{eq:ref-proj2-proof2}
  \begin{aligned}
    \| \partial_{x_i}( v - \hPi_p v )\|_{L^2(\hK)}^2 
    \leq C \bigg( 
     & \Psi_{p_1,s_1} 
      \sum_{\alpha_1, \alpha_2 \leq 1}
      \| \partial^{(s_1+1, \alpha_1, \alpha_2)}\tv \|^2_{L^2(\hK)} 
    \\ &\qquad +
      \Psi_{p_2,s_2} 
      \sum_{\alpha_1, \alpha_2 \leq 1}
      \| \partial^{(\alpha_1, s_2+1, \alpha_2)}\tv \|^2_{L^2(\hK)} 
    \\ &\qquad +
      \Psi_{p_3,s_3} 
      \sum_{\alpha_1, \alpha_2 \leq 1}
      \| \partial^{(\alpha_1, \alpha_2, s_3+1)}\tv \|^2_{L^2(\hK)} 
    \bigg).
    \end{aligned}
    \end{equation}
    By the Poincaré inequality, it holds for all $\alpha_1\in \{0,1\}$ that
 \begin{equation*}
    \| \partial_{x_j}^{s_j+1}\partial_{x_k}^{\alpha_1}\tv \|^2_{L^2(\hK)} 
    \leq C
    \| \partial_{x_i}\partial_{x_j}^{s_j+1}\partial_{x_k}^{\alpha_1}v \|^2_{L^2(\hK)} 
 \quad\text{ and }\quad
    \| \partial_{x_j}^{\alpha_1}\partial_{x_k}^{s_k+1}\tv \|^2_{L^2(\hK)} 
    \leq C
    \| \partial_{x_i}\partial_{x_j}^{\alpha_1}\partial_{x_k}^{s_k+1}v \|^2_{L^2(\hK)}.
 \end{equation*}
 Using the fact that $\partial_{x_i}\tv = \partial_{x_i} v$ in the remaining
 terms of \eqref{eq:ref-proj2-proof2} concludes the proof.
\end{proof}

\subsubsection{One dimensional estimate}
\label{sec:oned}
The following result is a consequence of, e.g., \cite[Lemma 8.1]{SchSch2018} and scaling.
\begin{lemma}
  \label{lemma:oned}
  There exists $C>0$ such
  that for all $\ell\in\mathbb{N}$, all integer $0<k\leq \ell$, all integers
  $1\leq s \leq p$, all $\gamma>0$, 
  and all $v\in H^{s+1}(J^\ell_k)$ 
  \begin{equation}
    \label{eq:oned-appx}
    h^{-2}\| v - \pi^k_p v\|_{L^2(J^\ell_k)}^2 +  \| \nabla (v -\pi^k_p v)\|_{L^2(J^\ell_k)}^2 \leq 
    C  \sigmaratio^{2(s+1)}\Psi_{p, s} h^{2(\min\{\gamma-1,s\})} \| |x|^{(s+1-\gamma)_{+}} v^{(s+1)} \|_{L^2(J^\ell_k)}^2
  \end{equation}
  where $h= |J^\ell_k| \simeq \sigma^{\ell-k}$.
\end{lemma}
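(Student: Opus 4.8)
The plan is to reduce the estimate on the arbitrary geometric subinterval $J^\ell_k = (\sigma^{\ell-k+1}, \sigma^{\ell-k})$ to a known estimate on a fixed reference interval by affine scaling, combined with the weighted polynomial approximation result already available in the literature (the cited \cite[Lemma 8.1]{SchSch2018}). First I would record the affine map $\phi_k \colon J^\ell_k \to I = (-1,1)$ and its inverse, noting that $\phi_k$ has Jacobian of size $\simeq h^{-1}$ where $h = |J^\ell_k| \simeq \sigma^{\ell-k}$ (more precisely $h = \sigma^{\ell-k+1}\sigmaratio$, with $\sigmaratio = (1-\sigma)/\sigma$ as in \eqref{eq:sigmaratio}). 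By definition $\pi^k_p = \phi_k^\ast \circ \hpi_p \circ (\phi_k^{-1})^\ast$, so $v - \pi^k_p v$ pulls back to $\hv - \hpi_p \hv$ with $\hv = v \circ \phi_k^{-1}$. Under this change of variables the $L^2$ norms and derivative norms scale by explicit powers of $h$: $\| w \|_{L^2(J^\ell_k)}^2 \simeq h \| \hat w \|_{L^2(I)}^2$ and $\| \partial^m w \|_{L^2(J^\ell_k)}^2 \simeq h^{1-2m} \| \partial^m \hat w \|_{L^2(I)}^2$.

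The second step is to invoke the reference-interval estimate. On $I$ one has, for $1 \le s \le p$, a bound of the form $\| \hv - \hpi_p \hv \|_{L^2(I)}^2 + \| \partial(\hv - \hpi_p \hv) \|_{L^2(I)}^2 \le C\, \Psi_{p,s}\, \| \hv^{(s+1)} \|_{L^2(I)}^2$; this is exactly the type of $p$-version approximation estimate underlying \cite[Lemma 8.1]{SchSch2018} and consistent with Lemma \ref{lemma:ref-proj} specialized to $d=1$. Pulling this back via $\phi_k$ and collecting the powers of $h$ yields $h^{-2}\| v - \pi^k_p v\|_{L^2(J^\ell_k)}^2 + \| \nabla(v - \pi^k_p v)\|_{L^2(J^\ell_k)}^2 \lesssim h^{-2}\Psi_{p,s}\, h^{2(s+1)-2}\| v^{(s+1)}\|_{L^2(J^\ell_k)}^2 = \Psi_{p,s}\, h^{2s}\| v^{(s+1)}\|_{L^2(J^\ell_k)}^2$, modulo the factor $\sigmaratio^{2(s+1)}$ which arises because the exact relation between $h$ and $\sigma^{\ell-k}$ carries a $\sigmaratio$ per scaling power; tracking these constants carefully is what produces the stated $\sigmaratio^{2(s+1)}$ prefactor.

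The third step is to convert the unweighted right-hand side $\| v^{(s+1)}\|_{L^2(J^\ell_k)}^2$, with its clean $h^{2s}$, into the weighted form $h^{2\min\{\gamma-1,s\}}\,\| |x|^{(s+1-\gamma)_+} v^{(s+1)}\|_{L^2(J^\ell_k)}^2$ appearing in \eqref{eq:oned-appx}. The point is that on $J^\ell_k$ the coordinate $|x|$ satisfies $|x| \simeq \sigma^{\ell-k} \simeq h$ (up to the factor $\sigma$), uniformly for $0 < k \le \ell$ since the interval is bounded away from the origin. Hence $\| v^{(s+1)}\|_{L^2(J^\ell_k)}^2 \simeq h^{-2(s+1-\gamma)_+}\| |x|^{(s+1-\gamma)_+} v^{(s+1)}\|_{L^2(J^\ell_k)}^2$, and multiplying by the $h^{2s}$ already obtained gives $h^{2s - 2(s+1-\gamma)_+} = h^{2\min\{s,\gamma-1\}}$ by a short case distinction on whether $s+1-\gamma \le 0$ or $> 0$. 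Absorbing all $\sigma$-dependent multiplicative constants into $C$ (noting $\sigma \in (0,1/2]$ is fixed) completes the argument.

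The main obstacle — really a bookkeeping rather than a conceptual one — is the careful tracking of the exact powers of $h$, $\sigma$, and $\sigmaratio$ through the affine scaling in steps one and two, so that the prefactor emerges as precisely $\sigmaratio^{2(s+1)}\Psi_{p,s}$ and not some other combination; one must be careful that $h = |J^\ell_k|$ and the comparison quantity $\sigma^{\ell-k}$ differ by the bounded factor $(1-\sigma)$, and that $|x|$ on $J^\ell_k$ is comparable to $\sigma^{\ell-k}$ with the lower bound $\sigma^{\ell-k+1} = \sigma\cdot\sigma^{\ell-k}$. Since $\sigma$ is fixed these factors are harmless, but they must be handled explicitly to land on the stated constant structure. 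The reference estimate on $I$ itself is quoted, so no new approximation theory is required.
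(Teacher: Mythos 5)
Your proposal is correct and follows essentially the same route as the paper: the paper cites \cite[Lemma 8.1]{SchSch2018} directly for the physical-interval bound $h^{-2}\|v-\pi^k_p v\|_{L^2}^2+\|\nabla(v-\pi^k_p v)\|_{L^2}^2\leq C\Psi_{p,s}h^{2s}\|v^{(s+1)}\|_{L^2}^2$ (so your explicit scaling step is just unpacking that citation), and then inserts the weight exactly as in your third step. The only small imprecision is the attribution of the $\sigmaratio^{2(s+1)}$ prefactor: it does not come from the $h$ versus $\sigma^{\ell-k}$ scaling but from the lower bound $x|_{J^\ell_k}\geq \sigmaratio^{-1}h$ used when trading $\|v^{(s+1)}\|$ for $\||x|^{(s+1-\gamma)_+}v^{(s+1)}\|$ — an $s$-dependent factor $\sigmaratio^{2(s+1-\gamma)_+}\leq\sigmaratio^{2(s+1)}$ that cannot be absorbed into the $s$-independent constant $C$, which you do ultimately acknowledge must be tracked.
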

\begin{proof}
 From \cite[Lemma 8.1]{SchSch2018}, there exists $C>0$ independent of $p$, $k$,
 $s$, and $v$ such that
 \begin{equation*}
    h^{-2}\| v - \pi^k_p v\|_{L^2(J^\ell_k)}^2 +  \| \nabla (v -\pi^k_p v)\|_{L^2(J^\ell_k)}^2 \leq 
C \Psi_{p,s} h^{2s} \| v^{(s+1)} \|_{L^2(J^\ell_k)}^2.
 \end{equation*}
In addition, for all $k=1, \dots, \ell$, there holds 
$x|_{J^\ell_k}\geq \frac{\sigma}{1-\sigma}h$. 
Hence, for all $\gamma < s+1$,
\begin{equation*}
h^{2s} \| v^{(s+1)} \|_{L^2(J^\ell_k)}^2  
\leq 
\sigmaratio^{2(s+1-\gamma)}h^{2\gamma-2}\| x^{s+1-\gamma} v^{(s+1)}\|_{L^2(J^\ell_k)}^{2}.
\end{equation*}
This concludes the proof.
\end{proof}
\subsubsection{Estimate at a corner in dimension $d=2$}
\label{sec:twod}
We consider now a setting with a two dimensional corner singularity. 
Let $\beta\in\R$, 
$\fK =J_0^\ell\times J_0^\ell$, $r(x) = |x-x_0|$ with $x_0 = (0,0)$, 
and define the corner-weighted norm $\| v \|_{\cJ^{2}_\beta(\fK)}$ by
\begin{equation*}
\| v \|_{\cJ^{2}_\beta(\fK)}^2 \coloneqq \sum_{\alpham\leq 2} \|r^{(\alpham - \beta)_+}\dalpha v\|^2_{L^2(\fK)}.
\end{equation*}
\begin{lemma}
\label{lemma:2d-square}
Let $d = 2$,  $\beta\in (1,2)$. There exists $C_1, C_2>0$ such that for
all $v\in \cJ^2_\beta(\fK)$ 
\begin{equation} \label{eq:2d-square-stab}
    \sum_{\alpha \in \mathbb{N}^2_0:\alpham\leq 1} \|\dalpha(\pi^0_1 \otimes \pi^0_1) v\|_{L^2(\fK)} \leq C_1
\left(  \|v\|_{H^1(\fK)} + \sum_{\alpha \in \mathbb{N}^2_0 : \alpham =2}\sigma^{(\beta-1)\ell}\| r^{2 - \beta} \dalpha v\|_{L^2(\fK)}  \right)
\;.
\end{equation}
and
  \begin{equation}
    \label{eq:2d-square-approx}
    \sum_{\alpha \in \mathbb{N}^2_0: \alpham \leq 1} \sigma^{-\ell(1-\alpham)}\|\dalpha (v - (\pi^0_1 \otimes \pi^0_1) v)\|_{L^2(\fK)} \leq C_2 \sigma^{\ell (\beta-1)}\sum_{\alpha \in \mathbb{N}^2_0: \alpham =2}\|r^{2-\beta}\dalpha v\|_{L^2(\fK)}.
  \end{equation}
\end{lemma}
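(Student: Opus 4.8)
\textbf{Proof plan for Lemma \ref{lemma:2d-square}.}
The plan is to reduce the two-dimensional estimate to the one-dimensional building block \eqref{eq:ondim-proj-explicit}--\eqref{eq:onedim-prop} together with a weighted Hardy/Poincar\'e-type inequality on the square $\fK = J_0^\ell \times J_0^\ell$, exploiting the tensor-product structure of $\pi^0_1 \otimes \pi^0_1$. First I would record the explicit form of the lowest-order interpolant: $(\pi^0_1 \hv)(x) = \hv(0) + x\,[\hv(\sigma^\ell)-\hv(0)]/\sigma^\ell$ on $J_0^\ell = (0,\sigma^\ell)$, so $\pi^0_1 \otimes \pi^0_1$ is the bilinear interpolant at the four vertices of $\fK$; in particular it reproduces bilinear functions, and it reproduces the trace of $v$ on the two edges $\{x_i = 0\}$ and $\{x_i = \sigma^\ell\}$ by \eqref{eq:onedim-prop}. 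The key scaling observation is that on $\fK$ one has $r(x) = |x| \ge c\,\sigma^\ell$ away from the origin corner, but the estimate must be \emph{uniform in $\ell$}, so I would rescale to the fixed reference square $(0,1)^2$ via $x \mapsto \sigma^{-\ell} x$: under this map $\|r^{2-\beta}\dalpha v\|_{L^2(\fK)}$ and $\sigma^{\ell(\beta-1)}$ combine, after tracking the Jacobian and the $\sigma$-powers from differentiation, into the corresponding quantity on $(0,1)^2$ with weight $\tilde r(y) = |y|$, so it suffices to prove both \eqref{eq:2d-square-stab} and \eqref{eq:2d-square-approx} on the unit square with $\sigma^\ell$ replaced by $1$ and the explicit $\sigma$-powers removed; this is where all the $\ell$-dependence is absorbed.

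On the unit reference square $\hfK = (0,1)^2$ the statement becomes: for $\beta \in (1,2)$ and $v \in \cJ^2_\beta(\hfK)$ (equivalently $r^{2-\beta}\dalpha v \in L^2$ for $|\alpha|=2$, plus $v \in H^1$), the bilinear vertex interpolant $Q_1 v$ satisfies $\|Q_1 v\|_{H^1} \lesssim \|v\|_{H^1} + \sum_{|\alpha|=2}\|r^{2-\beta}\dalpha v\|_{L^2}$ and $\|v - Q_1 v\|_{H^1} \lesssim \sum_{|\alpha|=2}\|r^{2-\beta}\dalpha v\|_{L^2}$ (the $\sigma^{-\ell(1-|\alpha|)}$ weights on the left of \eqref{eq:2d-square-approx} become trivial after rescaling). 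For the approximation bound I would first argue that $Q_1 v$ is well-defined, i.e. that point evaluation at the vertices makes sense: for $\beta \in (1,2)$ one has $\cJ^2_\beta(\hfK) \hookrightarrow C^0(\overline{\hfK})$ — this follows because $r^{2-\beta}D^2 v \in L^2$ with $2-\beta < 1$ forces, via a weighted Sobolev embedding in two dimensions (Kondrat'ev-type, see e.g. the embeddings used for $\cJ$-spaces elsewhere in the paper), $v \in W^{1,q}$ for some $q>2$ locally and continuity up to the corner. Then I would split $v = v_0 + w$ where $v_0$ is the Taylor polynomial of degree $1$ of $v$ at the corner $0$ (which $Q_1$ reproduces exactly, $Q_1 v_0 = v_0$), so $v - Q_1 v = w - Q_1 w$ with $w(0)=0$, $\nabla w(0) = 0$. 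For such $w$ the standard anisotropic argument applies: write the interpolation error via one-dimensional estimates in each direction (first interpolate in $x_1$, then in $x_2$), reducing to univariate estimates of the type in Lemma \ref{lemma:oned}, but now keeping the weight $r^{s+1-\gamma}$ with $s=1$, $\gamma=\beta$: on slices $\{x_2 = \text{const}\}$ one gets $\|w(\cdot,x_2) - \pi^0_1 w(\cdot,x_2)\|_{H^1(J)} \lesssim \|\,|x_1|^{2-\beta}\partial_{x_1}^2 w(\cdot,x_2)\|_{L^2}$-type bounds only after replacing $|x_1|$ by $|x|=r$, which is legitimate since $|x_1| \le |x|$ and we need an \emph{upper} bound on the error in terms of the \emph{larger} weight — here I would use that $(2-\beta)_+ = 2-\beta > 0$ so $|x_1|^{2-\beta} \le |x|^{2-\beta}$ pointwise. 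Integrating the squared slice estimates over $x_2$, and symmetrically handling the $x_2$-interpolation of $\pi^0_1|_{x_1} w$ (which requires controlling $\partial_{x_2}^2(\pi^0_1 w)$ by $\partial_{x_2}^2 w$ and cross terms $\partial_{x_1}\partial_{x_2}w$ via the stability of $\pi^0_1$ in weighted norms), yields \eqref{eq:2d-square-approx} on the reference square; the stability bound \eqref{eq:2d-square-stab} then follows from the approximation bound plus the triangle inequality $\|Q_1 v\|_{H^1} \le \|v\|_{H^1} + \|v - Q_1 v\|_{H^1}$.

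The main obstacle I anticipate is the rigorous handling of the \emph{weighted stability of the one-dimensional projector} $\pi^0_1$ along slices when the weight $r$ is genuinely two-dimensional: on a slice $x_2 = t$ the relevant one-dimensional weight is $|x_1|$, but the hypothesis only controls $\|r^{2-\beta}D^2 v\|_{L^2(\hfK)}$, and near the bottom edge $\{x_2 = 0\}$ a slice can pass arbitrarily close to the corner, so the slice-wise integrals are not uniformly bounded and must be summed \emph{after} squaring and integrating in $t$ — i.e. one cannot take a supremum over slices. The correct order of operations is therefore: (i) fix the direction, (ii) apply the univariate weighted estimate on each slice with the slice weight bounded above by $r$, (iii) square and integrate over the transverse variable, using Tonelli. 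A secondary delicate point is that the reference-square reduction must correctly produce the factor $\sigma^{\ell(\beta-1)}$ (and not some other power): differentiating twice pulls out $\sigma^{-2\ell}$, the weight $r^{2-\beta}$ scales as $\sigma^{\ell(2-\beta)}$, the $L^2(\fK)$ norm scales as $\sigma^{\ell}$ (area$^{1/2}$), and the left-hand $H^1$-type norm with its $\sigma^{-\ell(1-|\alpha|)}$ weights scales so that all factors of $\sigma^\ell$ cancel except exactly $\sigma^{\ell(\beta-1)}$ — I would verify this bookkeeping explicitly since it is the only place the $(1,2)$-range of $\beta$ and the precise weight exponents interact, and the analogous computation will recur in the three-dimensional case treated next.
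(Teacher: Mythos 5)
Your reduction of \eqref{eq:2d-square-stab} to \eqref{eq:2d-square-approx} by the triangle inequality is fine (the paper proves stability separately, via the imbedding $\cJ^2_\beta\hookrightarrow L^\infty$ and scaling, but your shortcut works since $\sigma^{-\ell(1-\alpham)}\geq 1$). The approximation bound, however, is where your route breaks down, in two places. First, the decomposition $v=v_0+w$ with $v_0$ the \emph{pointwise} degree-one Taylor polynomial at the corner is not available: for $\beta\in(1,2)$ the space $\cJ^2_\beta(\fK)$ contains functions such as $r^\lambda$ with $\beta-1<\lambda<1$ (check: $r^{2-\beta}\dalpha(r^\lambda)\sim r^{\lambda-\beta}\in L^2$ iff $\lambda>\beta-1$), whose gradient is unbounded at the origin, so $\nabla v(0)$ need not exist. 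Only $v(0)$ is well defined (via $\cJ^2_\beta\hookrightarrow W^{2,1}\hookrightarrow C^0$), so you may subtract a constant but not an affine function; to kill affine functions you must pass to an averaged Taylor polynomial, i.e.\ a Bramble--Hilbert argument, which is exactly what the paper does.

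Second, and more fundamentally, the ``interpolate in $x_1$, then in $x_2$'' tensor argument cannot be closed with the available norm. The intermediate object $\pi^0_{1,x_1}v$ is built from the edge traces $v(0,\cdot)$ and $v(\sigma^\ell,\cdot)$, and its $x_2$-interpolation error requires $\partial_{x_2}^2 v$ restricted to the lines $\{x_1=0\}$ and $\{x_1=\sigma^\ell\}$. Traces of \emph{pure} second derivatives on lines through (or near) the corner are not controlled by $\|v\|_{\cJ^2_\beta(\fK)}$ --- that would require a third mixed derivative --- so the ``stability of $\pi^0_1$ in weighted norms'' you invoke has nothing to act on. Relatedly, your slice-wise Cauchy--Schwarz step is dimensionally wrong for part of the range: $|x_1|^{\beta-2}\in L^2((0,\sigma^\ell))$ only for $\beta>3/2$, whereas $r^{\beta-2}\in L^2(\fK)$ for all $\beta>1$; the square-integrability of the inverse weight is a genuinely two-dimensional fact and cannot be used slice by slice. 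The paper's proof of \eqref{eq:2d-square-approx} avoids all of this by staying two-dimensional throughout: a scaled imbedding $W^{2,1}(\fK)\hookrightarrow H^1(\fK)$, the Bramble--Hilbert estimate for the bilinear interpolant in $W^{2,1}(\fK)$, and then a single Cauchy--Schwarz $\|\dalpha v\|_{L^1(\fK)}\leq\|r^{\beta-2}\|_{L^2(\fK)}\|r^{2-\beta}\dalpha v\|_{L^2(\fK)}$ with $\|r^{\beta-2}\|_{L^2(\fK)}\lesssim\sigma^{\ell(\beta-1)}$, which is the only point where $\beta>1$ is used. I would recommend adopting that structure rather than repairing the tensorized one.
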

\begin{proof}
  Denote by $\corn_i$, $i=1, \dots, 4$ the corners of $\fK$ and by $\psi_i$, $i=1,
  \dots, 4$ the bilinear functions such that $\psi_i(\corn_j) = \delta_{ij}$. Then,
  \begin{equation*}
    (\pi^0_1\otimes \pi_1^0) v = \sum_{i=1}^4 v(\corn_i)\psi_i.
  \end{equation*}
  Therefore, writing $h=\sigma^\ell$, we have 
  \begin{equation}
    \label{eq:stab-proof-1}
   \|  (\pi^0_1\otimes \pi_1^0) v  \|_{L^2(\fK)} \leq \sum_{i=1, \dots, 4}|v(\corn_i)| \|\psi_i \|_{L^2(\fK)} \leq 4 \|v\|_{L^\infty(\fK)} |\fK|^{1/2}\leq 4 h \|v\|_{L^\infty(\fK)}.
  \end{equation}
   With the imbedding $\cJ^2_\beta ((0,1)^2)\hookrightarrow L^\infty((0,1)^2)$ 
   which is valid for $\beta>1$
  (which follows e.g. from Lemma \ref{lemma:W11J3} and 
  $\Wmix^{1,1}((0,1)^2)\hookrightarrow L^\infty((0,1)^2)$), a scaling argument gives
  \begin{equation*}
    h^2 \|v \|_{L^\infty(\fK)}^2 \leq C h^2 \left( h^{-2} \|v\|_{L^2(\fK)}^2 + |v|_{H^1(\fK)}^2 + \sum_{\alpham =2}h^{2\beta-2}\| r^{2 - \beta} \dalpha v\|_{L^2(\fK)}^2 \right),
  \end{equation*}
  so that we obtain 
  \begin{equation}
  \label{eq:2d-square-stab-L2}
  \|  (\pi^0_1\otimes \pi_1^0) v  \|_{L^2(\fK)}^2
  \leq C \left( \|v\|_{L^2(\fK)}^2 + h^2 |v|_{H^1(\fK)}^2 
  	+ \sum_{\alpham =2}h^{2\beta}\| r^{2 - \beta} \dalpha v\|_{L^2(\fK)}^2 \right).
  \end{equation}
  For any $\alpham = 1$, denoting $v_0  = v(0,0)$ and using the
    fact that $(\pi^0_1\otimes \pi_1^0) v_0 = v_0$ hence $\dalpha (\pi^0_1\otimes \pi_1^0) v_0 = 0$,
  \begin{equation}
    \label{eq:stab-proof-2}
    \| \dalpha (\pi^0_1\otimes \pi_1^0) v  \|_{L^2(\fK)}
 	= \| \dalpha (\pi^0_1\otimes \pi_1^0) (v - v_0)  \|_{L^2(\fK)}
 	\leq \sum_{i=1, \dots, 4}|(v-v_0)(\corn_i)| \| \dalpha \psi_i \|_{L^2(\fK)}
    \leq C \|v - v_0\|_{L^\infty(\fK)}.
  \end{equation}
  With the imbedding $\cJ^2_\beta ((0,1)^2)\hookrightarrow L^\infty((0,1)^2)$, Poincaré's inequality, and rescaling
  we obtain
    \begin{align*}
  \| \dalpha (\pi^0_1\otimes \pi_1^0) v  \|_{L^2(\fK)}^2
  \leq C \left( |v|_{H^1(\fK)}^2 
  	+ \sum_{\alpham =2}h^{2\beta - 2}\| r^{2 - \beta} \dalpha v\|_{L^2(\fK)}^2 \right),
  \end{align*}
 which finishes the proof of \eqref{eq:2d-square-stab}.
To prove \eqref{eq:2d-square-approx}, note that by the Sobolev imbedding of
  $W^{2,1}(\fK)$ into $H^1(\fK)$ and by scaling, 
  we have
  \begin{equation*}
\sum_{\alpham \leq 1} h^{\alpham-1}\|\dalpha (v - (\pi^0_1 \otimes \pi^0_1) v)\|_{L^2(\fK)} \leq 
C \sum_{\alpham \leq 2} h^{\alpham-2}\|\dalpha (v - (\pi^0_1 \otimes \pi^0_1) v)\|_{L^1(\fK)} .
  \end{equation*}
  By classical interpolation estimates \cite[Theorem 4.4.4]{Brenner2008}, we additionally conclude that
  \begin{equation*}
    \sum_{\alpham \leq 1} h^{\alpham-2}\|\dalpha (v - (\pi^0_1 \otimes \pi^0_1) v)\|_{L^1(\fK)} 
    \leq C | v |_{W^{2,1}(\fK)}.
  \end{equation*}
  Using the Cauchy-Schwarz inequality,
  \begin{align*}
\sum_{\alpham \leq 1} h^{\alpham-1}\|\dalpha (v - (\pi^0_1 \otimes \pi^0_1) v)\|_{L^2(\fK)}
    &\leq C \sum_{\alpham =2}\| \dalpha v \|_{L^1(\fK)}
    \\ &\leq C \sum_{\alpham =2}\|r^{-2+\beta}\|_{L^2(\fK)}\| r^{2-\beta}\dalpha v \|_{L^2(\fK)}
    \\ &\leq C \sum_{\alpham =2}h^{\beta-1}\| r^{2-\beta}\dalpha v \|_{L^2(\fK)}
  \end{align*}
  where we also have used, in the last step, the facts that $r(x)\leq \sqrt{2}h$ for all $x\in \fK$
  and that
  $\beta >1$.
\end{proof}
\subsection{Interior estimates}
\label{sec:internal}
%
The following lemmas give the estimate of the approximation error on the elements not
belonging to edge or corner layers.
   For $d=3$, all $\ell\in \mathbb{N}$, all $k_1,k_2,k_3\in\{0,\ldots,\ell\}$ 
   and all $K = J^\ell_{k_1}\times J^\ell_{k_2}\times J^{\ell}_{k_3}$,
   we denote, by $\hpar$ the length of
   $K$ in the direction parallel to the closest singular edge, and by $\hperpone$
   and $\hperptwo$ the lengths of $K$ in the other two directions. 
   If an element has multiple closest singular edges, 
   we choose one of those and consider it as ``closest edge'' for all points in that element.
   When considering functions from $\cJ^d_\ugamma(Q)$, 
   $\gamma_e$ will refer to the weight of this closest edge.
   Similarly, we
   denote by $\dpar$ (resp. $\dperpone$ and $\dperptwo$) the derivatives in the
   direction parallel (resp. perpendicular) to the closest singular edge.
\begin{lemma}
\label{lemma:internal-appx-1}
Let $d=3$, $\ell\in \mathbb{N}$ and $K = J^\ell_{k_1}\times J^\ell_{k_2}\times J^{\ell}_{k_3}$ for
$0< k_1, k_2, k_3\leq \ell$. Let also $v\in
  \cJ^\varpi_\ugamma(Q; \Cset,\Eset; C_v, A_v)$
  with $
  \gamma_\corn \in (3/2, 5/2)$, $
  \gamma_e \in (1, 2)$. Then, there exists $C>0$ dependent only on
  $\sigma$, $\Cappxii$, $C_v$ and $A>0$ dependent only on $\sigma$, $A_v$ such that
  for all $1\leq s\leq p$
  \begin{equation}
\label{eq:internal-appx-1}
        \|\dpar ( v - \Pi^K_p v )\|^2_{L^2(K)}
\leq C \Psi_{p,s}  A^{2s+6}\left( (d_\corn^K)^{2} + (d_\corn^K)^{2(\gamma_\corn-1)} \right)((s+3)!)^2,
  \end{equation}
  where $\dpar$ is the derivative in the direction parallel to the closest
  singular edge.
  \end{lemma}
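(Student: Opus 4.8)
The plan is to reduce the estimate on a fixed interior element $K = J^\ell_{k_1}\times J^\ell_{k_2}\times J^\ell_{k_3}$ to the reference-cube bound of Lemma~\ref{lemma:ref-proj2}, then to re-express the right-hand side in terms of the weighted seminorms controlled by membership in $\cJ^\varpi_\ugamma(Q;\Cset,\Eset;C_v,A_v)$, and finally to sum the geometric factors. First I would apply Lemma~\ref{lemma:ref-proj2} with the permutation $\{i,j,k\}$ chosen so that $x_i$ is the coordinate parallel to the closest singular edge $e$; after pulling back by the affine map $\Phi_K = \bigotimes_{m=1}^d \phi_{k_m}$ and tracking the anisotropic scaling factors $\hpar \simeq \sigma^{\ell-k_1}$, $\hperpone, \hperptwo \simeq \sigma^{\ell-k_2}, \sigma^{\ell-k_3}$ (using $d^K_\corn$ and $d^K_e$ via \eqref{eq:dist1}–\eqref{eq:dist2}), this bounds $\|\dpar(v-\Pi^K_p v)\|_{L^2(K)}^2$ by $C\,\Psi_{p,s}$ times a sum of three terms, each a scaled $L^2(K)$-norm of a mixed derivative of $v$ of total order $s+3$: one with $s+1$ derivatives in the parallel direction and at most one in each perpendicular direction, and two with $s+1$ derivatives in a perpendicular direction and at most one each in the remaining two directions.

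Next I would convert each of these scaled derivative norms into the corner/edge-weighted quantities. On an interior element $K$ (not in a corner or edge layer) the weights $r_c$ and $r_e$ are comparable to $d^K_\corn$ and $d^K_e$ respectively, uniformly on $K$ up to a constant depending only on $\sigma$; hence a scaled plain $L^2(K)$-norm of a derivative $\dalpha v$ of total order $|\alpha| = s+3$ is bounded by $C\,(d^K_\corn)^{-2(s+3-\gamma_\corn)_+}(d^K_e)^{-2(\alphaperpm-\gamma_e)_+}$ times the corresponding weighted norm appearing in the definition of $\cJ^\varpi_\ugamma$, provided one also inserts the correct powers of $\hpar$ and $\hperp$ coming from the element scaling. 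Combining the element-size powers with these negative powers of $d^K_\corn,d^K_e$, and using the geometric relations $\hpar \simeq d^K_e$ or $\hpar \simeq d^K_\corn$ as appropriate, most of the weight powers cancel and one is left with an overall factor of the shape $(d^K_\corn)^2 + (d^K_\corn)^{2(\gamma_\corn-1)}$: the first summand comes from the ``parallel'' derivative term (where $\alphaperpm \leq 2 < 3 \le s+3$ forces the edge weight, and $\gamma_\corn<5/2$ keeps the corner exponent positive with the clean value $2$ at $s\ge 0$ after simplification), and the second from the ``perpendicular'' terms near the corner. The analytic bound $\sum_{|\alpha|=k}\|r_c^{(\cdot)}\rho_{ce}^{(\cdot)}\dalpha v\|_{L^2}\le C_v A_v^k k!$ with $k=s+3$ then yields the factor $A^{2s+6}((s+3)!)^2$, with $A$ absorbing $A_v$ and the $\sigma$-dependent constants from $\sigmaratio$.

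The main obstacle I expect is bookkeeping the anisotropic scaling correctly: the reference-element estimate is isotropic in form but the physical element has three different edge lengths, and one must be careful that the powers of $\sigma^{\ell-k_m}$ produced by the chain rule exactly match the negative powers of $d^K_\corn$ and $d^K_e$ that arise when trading a plain $L^2$-norm for a weighted one. In particular one has to verify, using \eqref{eq:dist1}–\eqref{eq:dist2} and the constraint $\sigma \le 1/2$, that $d^K_e$ is comparable to $\min(\hperpone,\hperptwo)$ (or to the relevant transverse size) and that $d^K_\corn$ is comparable to the largest of the three, so that after cancellation no uncontrolled powers of $1/\sigma$ survive beyond what is hidden in the constant $C$ and the base $A$. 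A second, milder point is checking that the derivative multi-indices produced by Lemma~\ref{lemma:ref-proj2} indeed have $\alphaperpm \le 2$ and total order $s+3$, so that the exponents $(\alphaperpm-\gamma_e)_+$ and $(s+3-\gamma_\corn)_+$ land in the ranges where $\gamma_e\in(1,2)$ and $\gamma_\corn\in(3/2,5/2)$ give the stated clean powers; once the indices are enumerated this is routine, and the weighted analytic bound then closes the estimate.
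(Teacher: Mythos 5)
Your proposal follows essentially the same route as the paper's proof: apply Lemma \ref{lemma:ref-proj2} on the reference cube, rescale anisotropically to $K$, trade the unweighted $L^2$-norms of the order-$(s+3)$ mixed derivatives for the weighted norms of the class $\cJ^\varpi_\ugamma$ using the element distances, and close with the analytic bound to produce $A^{2s+6}((s+3)!)^2$. Two small corrections to your bookkeeping. First, the comparability you propose to verify is backwards: by \eqref{eq:dist2}, $d_e^K$ is the Euclidean norm of the two transverse sizes, hence comparable (up to $\sigmaratio$) to $\max(\hperpone,\hperptwo)$, not to the minimum; what the argument actually uses is only the one-sided bounds $\hpar\leq\sigmaratio d_\corn^K$, $\hperpone,\hperptwo\leq\sigmaratio d_e^K\leq \sigmaratio d_\corn^K$ together with $r_\corn\geq d_\corn^K$ and $r_e\geq d_e^K$ on $K$, so no two-sided comparability is needed. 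Second, since the weighted class imposes different weights on $\Omega_0$, $\Omega_\corn$, $\Omega_e$ and $\Omega_{\corn e}$, one must split $K$ into its intersections with these four subdomains and insert the appropriate weight (corner, edge, or both via $\rho_{\corn e}$) separately on each piece, also distinguishing the case $s+1+\alpha_1+\alpha_2-\gamma_\corn<0$ (which occurs only for $s=1$, $\alpha_1=\alpha_2=0$ and is the source of the $(d_\corn^K)^2$ term alongside $(d_\corn^K)^{2(\gamma_\corn-1)}$); your sketch glosses over this but it is routine once stated.
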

\begin{proof}
  We write $d_a = d_a^K$, $a \in \{\corn, e\}$.
   There holds
  \begin{equation*}
    d_\corn^2 = \left( \frac{\sigma}{1-\sigma} \right)^2(\hpar^2+\hperpone^2+\hperptwo^2),
    \qquad
    d_e^2 = \left( \frac{\sigma}{1-\sigma} \right)^2(\hperpone^2+\hperptwo^2).
  \end{equation*}
  Denoting $\hv = v \circ \Phi_K^{-1}$ and 
  $\hPi_p \hv = \Pi^K_p v \circ \Phi_K^{-1} = \hPi_p (v \circ \Phi_K)$,
  using the result of Lemma \ref{lemma:ref-proj2} and rescaling, we have
  \begin{equation}
    \label{eq:Kappx1}
\begin{aligned}
\| \widehat{\partial}_{\parallel}( \hv - \hPi_p \hv  )\|_{L^2(\hK)}^2 
    &\leq \Cappxii \Psi_{p,s} \frac{\hpar}{\hperpone\hperptwo}\left(\sum_{\alpha_1, \alpha_2 \leq 1}
    \hpar^{2s}\hperpone^{2\alpha_1}\hperptwo^{2\alpha_2}\| \dpar^{s+1}\dperpone^{ \alpha_1}\dperptwo^{\alpha_2}v \|^2_{L^2(K)} 
    \right.
    \\ &\qquad +
    \sum_{\alpha_1\leq 1}\hperpone^{2s+2}\hperptwo^{2\alpha_1}\| \dpar\dperpone^{s+1}\dperptwo^{\alpha_1}v \|^2_{L^2(K)} 
    \\ &\qquad +
    \left.
    \sum_{\alpha_1\leq 1}\hperpone^{2\alpha_1}\hperptwo^{2s+2}\| \dpar\dperpone^{\alpha_1}\dperptwo^{s+1}v \|^2_{L^2(K)} 
  \right)
  \\
  & = \Cappxii \Psi_{p,s}  \frac{\hpar}{\hperpone\hperptwo}\bigg((I) + (II) + (III)  \bigg).
  \end{aligned}
  \end{equation}
Denote $K_\corn = K\cap Q_\corn$, $K_e=K\cap Q_e$, $K_{\corn e} = K\cap Q_{\corn e}$, and $K_0 = K\cap Q_0$. 
Furthermore, we indicate
  \begin{equation*}
    (I)_{\corn} =   \sum_{\alpha_1, \alpha_2\leq 1}\hpar^{2s}\hperpone^{2\alpha_1}\hperptwo^{2\alpha_2}\| \dpar^{s+1}\dperpone^{ \alpha_1}\dperptwo^{\alpha_2}v \|^2_{L^2(K_\corn)},
  \end{equation*}
  and do similarly for the other terms of the sum $(II)$ and $(III)$ and the other subscripts $e$,
  $\corn e$, $0$.
  Remark also that $r_{i|_K}\geq d_i$, $i\in\{\corn, e\}$, and that for $a, b\in \mathbb{R}$ holds
  $r_\corn^ar_e^b  = r_\corn^{a+b} \rho_{\corn e}^{b}$.

  We will also write $\tgamma = \gamma_\corn-\gamma_e$.
We start by considering the term $(I)_{\corn e}$. Let $\alpha_1= \alpha_2 = 1$; then,
\begin{align*}
\hpar^{2s}\hperpone^{2}\hperptwo^{2}\| \dpar^{s+1}\dperpone\dperptwo v \|^2_{L^2(K_{\corn e})}
   & \leq \sigmaratio^{2s+4} d_\corn^{2s}d_e^{4}\| \dpar^{s+1}\dperpone\dperptwo v \|^2_{L^2(K_{\corn e})}
  \\  &
     \leq \sigmaratio^{2s+4} d_\corn^{2\tgamma-2}d_e^{2\gamma_e}
           \|r_\corn^{s+3-\gamma_\corn}\rho_{\corn e}^{2-\gamma_e} \dpar^{s+1}\dperpone\dperptwo v \|^2_{L^2(K_{\corn e})}\;,
\end{align*}
where $\sigmaratio$ is as in \eqref{eq:sigmaratio}.
Furthermore, 
if $\alpha_1 + \alpha_2\leq 1$ and $s+1+\alpha_1+\alpha_2-\gamma_\corn\geq0$,
\begin{align*}
\hpar^{2s}\hperpone^{2\alpha_1}\hperptwo^{2\alpha_2}\| \dpar^{s+1}\dperpone^{\alpha_1}\dperptwo^{\alpha_2} v \|^2_{L^2(K_{\corn e})}
   & \leq \sigmaratio^{2s+2(\alpha_1+\alpha_2)} d_\corn^{2s}d_e^{2(\alpha_1+\alpha_2)}\| \dpar^{s+1}\dperpone^{\alpha_1}\dperptwo^{\alpha_2} v \|^2_{L^2(K_{\corn e})}
  \\  &
        \leq \sigmaratio^{2s+2(\alpha_1+\alpha_2)} d_\corn^{2\gamma_\corn-2}\|r_\corn^{s+1+\alpha_1+\alpha_2-\gamma_\corn} \dpar^{s+1}\dperpone^{\alpha_1}\dperptwo^{\alpha_2} v \|^2_{L^2(K_{\corn e})},
\end{align*}
where we have also used $d_e\leq d_\corn$. Therefore,
\begin{equation*}
  (I)_{\corn e} \leq \sigmaratio^{2s+4}
  d_\corn^{2\gamma_\corn-2}
  \sum_{\alpha_1, \alpha_2\leq 1}\|r_\corn^{s+1+\alpha_1+\alpha_2-\gamma_\corn}\rho_{\corn e}^{(\alpha_1+\alpha_2-\gamma_e)_+} \dpar^{s+1}\dperpone^{\alpha_1}\dperptwo^{\alpha_2} v \|^2_{L^2(K_{\corn e})}.
\end{equation*}
If $s+1+\alpha_1+\alpha_2-\gamma_\corn<0$, then $s=1$ and $\alpha_1=\alpha_2=0$, thus 
  $$(I)_{\corn e} \leq \sigmaratio^{2s+4} d_\corn^{2} 
  \| r_\corn^{(s+1+\alpha_1+\alpha_2-\gamma_\corn)_+}\rho_{\corn e}^{(\alpha_1+\alpha_2-\gamma_e)_+}\dpar^{s+1}\dperpone^{ \alpha_1}\dperptwo^{\alpha_2}v \|^2_{L^2(K_{\corn e})}. $$
 Then, if $s+1+\alpha_1+\alpha_2-\gamma_\corn\geq0$
\begin{align*}
   (I)_{\corn}
  & = \sum_{\alpha_1, \alpha_2 \leq 1}
    \hpar^{2s}\hperpone^{2\alpha_1}\hperptwo^{2\alpha_2}\| \dpar^{s+1}\dperpone^{ \alpha_1}\dperptwo^{\alpha_2}v \|^2_{L^2(K_\corn)} 
    \\
  & \leq \sigmaratio^{2s+4}\sum_{\alpha_1, \alpha_2 \leq 1}
    d_\corn^{2s}d_e^{2(\alpha_1+\alpha_2)}\| \dpar^{s+1}\dperpone^{ \alpha_1}\dperptwo^{\alpha_2}v \|^2_{L^2(K_\corn)} 
    \\
  & \leq \sigmaratio^{2s+4}
    d_\corn^{2\gamma_\corn-2}\sum_{\alpha_1, \alpha_2 \leq 1}\| r_\corn^{(s+1+\alpha_1+\alpha_2-\gamma_\corn)_+}\dpar^{s+1}\dperpone^{ \alpha_1}\dperptwo^{\alpha_2}v \|^2_{L^2(K_\corn)} 
  \end{align*}
  where the last inequality follows also from $d_e\leq d_\corn$.
If $s+1+\alpha_1+\alpha_2-\gamma_\corn<0$, then the same bound holds 
with $d_\corn^{2\gamma_\corn-2}$ replaced by $d_\corn^2$.
  Similarly,
\begin{align*}
   (I)_{e}
  & = \sum_{\alpha_1, \alpha_2 \leq 1}
    \hpar^{2s}\hperpone^{2\alpha_1}\hperptwo^{2\alpha_2}\| \dpar^{s+1}\dperpone^{ \alpha_1}\dperptwo^{\alpha_2}v \|^2_{L^2(K_e)} 
    \\
  &\leq\sigmaratio^{2s+4}  \sum_{\alpha_1, \alpha_2 \leq 1}
    d_\corn^{2s}d_e^{2\alpha_1+2\alpha_2 - 2(\alpha_1+\alpha_2-\gamma_e)_+}\| r_e^{(\alpha_1+\alpha_2-\gamma_e)_+}\dpar^{s+1}\dperpone^{ \alpha_1}\dperptwo^{\alpha_2}v \|^2_{L^2(K_e)} 
    \\
  &\leq\sigmaratio^{2s+4}  
    d_\corn^{2s}\sum_{\alpha_1, \alpha_2 \leq 1}\| r_e^{(\alpha_1+\alpha_2-\gamma_e)_+}\dpar^{s+1}\dperpone^{ \alpha_1}\dperptwo^{\alpha_2}v \|^2_{L^2(K_e)},
\end{align*}
where we used that $d_e\leq 1$.
The bound on $(I)_0$ follows directly from the definition:
\begin{align*}
   (I)_0
  & = \sum_{\alpha_1, \alpha_2 \leq 1}
    \hpar^{2s}\hperpone^{2\alpha_1}\hperptwo^{2\alpha_2}\| \dpar^{s+1}\dperpone^{ \alpha_1}\dperptwo^{\alpha_2}v \|^2_{L^2(K_0)}
  \leq\sigmaratio^{2s+4}  
    d_\corn^{2s}\sum_{\alpha_1, \alpha_2 \leq 1}\| \dpar^{s+1}\dperpone^{ \alpha_1}\dperptwo^{\alpha_2}v \|^2_{L^2(K_0)}. 
\end{align*}
  Using \eqref{eq:analytic}, there exists $C>0$ dependent only on $C_v$ and
  $\sigma$ and $A>0$ dependent only on $A_v$ and $\sigma$
  such that
  \begin{equation}
    \label{eq:Ibound}
    (I) \leq C A^{2s+6}((s+3)!)^2\left( 
    d_\corn^2 + d_\corn^{2\gamma_\corn-2} \right).
  \end{equation}
   We then apply the same argument to the terms $(II)$ and $(III)$. Indeed,
\begin{align*}
   (II)_{\corn e}
  &=
\sum_{\alpha_1\leq 1}\hperpone^{2s+2}\hperptwo^{2\alpha_1}\| \dpar\dperpone^{s+1}\dperptwo^{\alpha_1}v \|^2_{L^2(K_{\corn e})} 
    \\
  &\leq \sigmaratio^{2s+4}
\sum_{\alpha_1\leq 1}d_e^{2s+2+2\alpha_1}\| \dpar\dperpone^{s+1}\dperptwo^{\alpha_1}v \|^2_{L^2(K_{\corn e})} 
    \\
  &\leq \sigmaratio^{2s+4}
\sum_{\alpha_1\leq 1}d_\corn^{2\tgamma-2}d_e^{2\gamma_e}\| r_\corn^{s+2+\alpha_1-\gamma_\corn}\rho_{\corn e}^{s+1+\alpha_1-\gamma_e}\dpar\dperpone^{s+1}\dperptwo^{\alpha_1}v \|^2_{L^2(K_{\corn e})} 
  \end{align*}
  and the estimate for $(III)_{\corn e}$ follows by exchanging $\hperpone$ and
  $\dperpone$ with $\hperptwo$ and $\dperptwo$ in the inequality above.
  The estimates for $(II)_{\corn,e,0}$ and $(III)_{\corn, e, 0}$ can be obtained as for
  $(I)_{\corn, e, 0}$:
  \begin{align*}
   (II)_{\corn}
  &\leq \sigmaratio^{2s+4}
\sum_{\alpha_1\leq 1}d_\corn^{2\gamma_\corn-2}\| r_\corn^{s+2+\alpha_1-\gamma_\corn}\dpar\dperpone^{s+1}\dperptwo^{\alpha_1}v \|^2_{L^2(K_{\corn})},
\\
   (II)_{e}
  &\leq \sigmaratio^{2s+4}
\sum_{\alpha_1\leq 1}d_e^{2\gamma_e}\| r_e^{s+1+\alpha_1-\gamma_e}\dpar\dperpone^{s+1}\dperptwo^{\alpha_1}v \|^2_{L^2(K_{e})},
\\
   (II)_{0}
  &\leq \sigmaratio^{2s+4}
\sum_{\alpha_1\leq 1}d_e^{2s+2}\| \dpar\dperpone^{s+1}\dperptwo^{\alpha_1}v \|^2_{L^2(K_{0})}.
  \end{align*}
  Therefore, we have 
  \begin{equation}
    \label{eq:IIbound}
    (II), (III) \leq C A^{2s+6}(d_\corn^2+d_\corn^{2\gamma_\corn-2})((s+3)!)^2. 
  \end{equation}
  We obtain, from \eqref{eq:Kappx1}, \eqref{eq:Ibound}, and \eqref{eq:IIbound}
  that there exists $C>0$ (dependent only on $\sigma$, $\Cappxii$, $C_v$ and 
  $A>0$ (dependent only on $\sigma$, $A_v$) 
  such that
  \begin{equation*}
  \| \widehat{\partial}_\parallel(\hv - \hPi_p \hv)\|_{L^2(\hK)}^2  
    \leq 
   C \frac{\hpar}{\hperpone\hperptwo}\Psi_{p,s} A^{2s+6}(d_\corn^2+d_\corn^{2\gamma_\corn-2})((s+3)!)^2.
  \end{equation*}
  Considering that
  \begin{equation*}
  \|\dpar (v-\Pi_p v)\|^2_{L^2(K)}\leq \frac{\hperpone\hperptwo}{\hpar} \|\widehat{\partial}_\parallel (\hv - \hPi_p\hv)\|^2_{L^2(\hK)}
  \end{equation*}
  completes the proof.
  \end{proof}
\begin{lemma}
\label{lemma:internal-appx-2}
Let $d=3$, $\ell\in \mathbb{N}$ and $K = J^\ell_{k_1}\times J^\ell_{k_2}\times J^{\ell}_{k_3}$ for
$0< k_1, k_2, k_3\leq \ell$.
 Let also $v\in
  \cJ^\varpi_\ugamma(Q; \Cset, \Eset; C_v, A_v)$
  with $\gamma_\corn \in (3/2, 5/2)$, $\gamma_e \in (1, 2)$. Then, there exists $C>0$ dependent only on
  $\sigma$, $\Cappxii$, $C_v$ and $A>0$ dependent only on $\sigma$, $A_v$ such that
  for all $p\in\N$ and all $1\leq s \leq p$
  \begin{multline}
\label{eq:internal-appx-2}
 \|\dperpone ( v - \Pi^K_p v )\|^2_{L^2(K)}+
\|\dperptwo ( v - \Pi^K_p v )\|^2_{L^2(K)}
\\
\leq  C \Psi_{p,s} A^{2s+6}\left((d_\corn^K)^{2(\gamma_\corn-1)} + (d_\corn^K)^{2(\gamma_e-1)}\right)((s+3)!)^2,
  \end{multline}
  where $\dperpone$, $\dperptwo$ are the derivatives in the directions perpendicular to the closest
  singular edge.
  \end{lemma}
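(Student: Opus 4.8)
The plan is to mirror closely the argument used for Lemma~\ref{lemma:internal-appx-1}, but now controlling the two derivatives $\dperpone$ and $\dperptwo$ transverse to the closest singular edge instead of the parallel one. First I would fix $K = J^\ell_{k_1}\times J^\ell_{k_2}\times J^{\ell}_{k_3}$ with $0<k_1,k_2,k_3\le\ell$, set $\hv = v\circ\Phi_K^{-1}$, $\hPi_p\hv = \Pi_p^K v\circ\Phi_K^{-1}$, and invoke Lemma~\ref{lemma:ref-proj2} with $i$ chosen to be the index of one of the two perpendicular directions (say $i$ corresponds to $\dperpone$, with $j$ the parallel index and $k$ the other perpendicular index, and then symmetrically for $\dperptwo$). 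After rescaling from $\hK$ back to $K$ via the product of one-dimensional affine maps, this yields
\begin{equation*}
\|\dperpone(v-\Pi_p^K v)\|_{L^2(K)}^2 \le \Cappxii \Psi_{p,s} \frac{\hperpone}{\hpar\hperptwo}\left((\mathrm{I}) + (\mathrm{II}) + (\mathrm{III})\right),
\end{equation*}
where $(\mathrm{I})$ collects the $\dperpone^{s+1}$-terms, and $(\mathrm{II}),(\mathrm{III})$ the mixed terms with $s+1$ derivatives in the parallel direction and in $\dperptwo$ respectively, each weighted by the appropriate powers of $\hpar,\hperpone,\hperptwo$.

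The core of the proof is then, as before, to split each of these terms over the four subdomains $K_\corn = K\cap Q_\corn$, $K_e = K\cap Q_e$, $K_{\corn e} = K\cap Q_{\corn e}$, and $K_0 = K\cap Q_0$, and to bound the mesh-size powers $\hpar,\hperpone,\hperptwo$ by the element distances $d_\corn = d_\corn^K$ and $d_e = d_e^K$ using $\hpar,\hperpone,\hperptwo \le \sigmaratio\, d_\corn$ together with $d_e\le d_\corn$, $d_e\le 1$, and $d_\corn\le\sqrt{d}$; then one converts the remaining powers of $d_\corn,d_e$ into the weights $r_\corn^{(\cdot-\gamma_\corn)_+}$, $\rho_{\corn e}^{(\cdot-\gamma_e)_+}$, $r_e^{(\cdot-\gamma_e)_+}$ appearing in $\cJ^\varpi_\ugamma$ via $r_{\corn|_K}\ge d_\corn$, $r_{e|_K}\ge d_e$ and $r_\corn^a r_e^b = r_\corn^{a+b}\rho_{\corn e}^b$. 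The crucial bookkeeping difference from Lemma~\ref{lemma:internal-appx-1} is the exponent of $d_\corn$ that emerges: when the $s+1$ derivatives fall in the perpendicular direction $\dperpone$, the relevant total transverse derivative order is at most $s+1+1 = s+2 < \gamma_\corn+1$ only for small $s$, so I expect the leading power near the corner to be $d_\corn^{2(\gamma_\corn-1)}$ in the generic case (with a harmless $d_\corn^{2}$ fallback when $s+1+\cdot-\gamma_\corn<0$, which forces $s$ small and only improves the bound since $d_\corn\le 1$), while on $K_e$ the transverse weight $r_e^{(\cdot-\gamma_e)_+}$ produces the factor $d_\corn^{2(\gamma_e-1)}$ after using $d_e\le d_\corn$ to absorb the edge-distance powers; on $K_0$ there is no loss. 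Combining, and using the analytic bounds \eqref{eq:analytic} to estimate $\sum_{\alpham=k}\|\cdots\|_{L^2}$ by $CA^k k!$ with $k\le s+3$, one gets $(\mathrm{I}),(\mathrm{II}),(\mathrm{III}) \le C A^{2s+6}\big(d_\corn^{2(\gamma_\corn-1)} + d_\corn^{2(\gamma_e-1)}\big)((s+3)!)^2$.

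Finally I would multiply by the geometric prefactor and use $\|\dperpone(v-\Pi_p^K v)\|_{L^2(K)}^2 \le \frac{\hpar\hperptwo}{\hperpone}\|\hat\partial_{\bot,1}(\hv-\hPi_p\hv)\|_{L^2(\hK)}^2$ to cancel the $\hperpone/(\hpar\hperptwo)$ factor, obtaining the stated bound for $\|\dperpone(v-\Pi_p^K v)\|_{L^2(K)}^2$; the estimate for $\|\dperptwo(v-\Pi_p^K v)\|_{L^2(K)}^2$ follows by exchanging the roles of $\hperpone,\dperpone$ with $\hperptwo,\dperptwo$, and adding the two gives \eqref{eq:internal-appx-2}. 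The main obstacle I anticipate is purely in the careful case analysis of the weight exponents on $K_{\corn e}$: one must track simultaneously the $r_\corn$ and the $\rho_{\corn e}$ weights, verify that $(\alphaperpm - \gamma_e)_+$ and $(\alpham-\gamma_\corn)_+$ are exactly matched by the available powers of $d_e$ and $d_\corn$, and handle the low-regularity corner cases $s+1+\cdots-\gamma_\corn<0$ (which occur only for $s=1$ with $\gamma_\corn$ close to $5/2$, resp. $s+1+\cdots-\gamma_e<0$ for $\gamma_e$ close to $2$) without breaking the uniformity of $C$ and $A$ in $p,s$. Everything else is a routine repetition of the scaling and summation steps already carried out in the proof of Lemma~\ref{lemma:internal-appx-1}.
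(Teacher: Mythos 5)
Your proposal follows essentially the same route as the paper's proof: apply Lemma~\ref{lemma:ref-proj2} with the distinguished direction taken perpendicular to the closest singular edge, rescale to obtain the prefactor $\hperpone/(\hpar\hperptwo)$, split the three resulting terms over $K_{\corn e}$, $K_\corn$, $K_e$, $K_0$, convert mesh sizes to the distances $d_\corn^K$, $d_e^K$ and match them against the weights of $\cJ^\varpi_\ugamma$, handle the low-order exceptional cases ($s+1+\alpha_1+\alpha_2-\gamma_\corn<0$, which indeed only occur for $s=1$ and are absorbed since $d_\corn^{2s}\le d_\corn^{2(\gamma_e-1)}$), and finish by the inverse rescaling and the symmetric argument for $\dperptwo$. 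The exponent bookkeeping you anticipate ($d_\corn^{2(\gamma_\corn-1)}$ from the corner terms, $d_\corn^{2(\gamma_e-1)}$ after absorbing the $d_e$ powers on the edge subdomains) matches the paper's computation, so the plan is correct.
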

\begin{proof}
  The proof follows closely that of Lemma \ref{lemma:internal-appx-1} and we use
  the same notation.
   From Lemma \ref{lemma:ref-proj2} and rescaling, we have
  \begin{equation}
    \label{eq:Kappx1-2}
\begin{aligned}
\| \widehat{\partial}_{\bot,1}( \hv - \hPi_p \hv  )\|_{L^2(\hK)}^2 
    &\leq \Cappxii \Psi_{p,s} \frac{\hperpone}{\hpar\hperptwo}\left(\sum_{\alpha_1 \leq 1}
    \hpar^{2s+2}\hperptwo^{2\alpha_1}\| \dpar^{s+1}\dperpone\dperptwo^{\alpha_1}v \|^2_{L^2(K)} 
    \right.
    \\ &\qquad +
    \sum_{\alpha_1, \alpha_2\leq 1}\hpar^{2\alpha_1}\hperpone^{2s}\hperptwo^{2\alpha_2}\| \dpar^{\alpha_1}\dperpone^{s+1}\dperptwo^{\alpha_2}v \|^2_{L^2(K)} 
    \\ &\qquad +
    \left.
    \sum_{\alpha_1\leq 1}\hpar^{2\alpha_1}\hperptwo^{2s+2}\| \dpar^{\alpha_1}\dperpone\dperptwo^{s+1}v \|^2_{L^2(K)} 
  \right)
  \\
  & = \Cappxii \Psi_{p,s} \frac{\hperpone}{\hpar\hperptwo}\bigg((I) + (II) + (III)  \bigg).
  \end{aligned}
  \end{equation}
      As before, we will write $\tgamma = \gamma_\corn-\gamma_e$.
We start by considering the term $(I)_{\corn e}$. When $\alpha_1 = 1$,
\begin{align*}
\hpar^{2s+2}\hperptwo^{2}\| \dpar^{s+1}\dperpone\dperptwo v \|^2_{L^2(K_{\corn e})}
   & \leq \sigmaratio^{2s+4} d_\corn^{2s+2}d_e^{2}\| \dpar^{s+1}\dperpone\dperptwo v \|^2_{L^2(K_{\corn e})}
  \\  &
        \leq \sigmaratio^{2s+4} d_\corn^{2\tgamma}d_e^{2\gamma_e-2}\|r_\corn^{s+3-\gamma_\corn}\rho_{\corn e}^{2-\gamma_e} \dpar^{s+1}\dperpone\dperptwo v \|^2_{L^2(K_{\corn e})},
\end{align*}
where $d_\corn^{2\tgamma}d_e^{2\gamma_e-2} \leq d_\corn^{2\gamma_\corn-2}$.
Furthermore, if $\alpha_1 =0$,
\begin{align*}
\hpar^{2s+2}\| \dpar^{s+1}\dperpone v \|^2_{L^2(K_{\corn e})}
   & \leq \sigmaratio^{2s+2} d_\corn^{2s+2}\| \dpar^{s+1}\dperpone v \|^2_{L^2(K_{\corn e})}
  \\  &
        \leq \sigmaratio^{2s+2} d_\corn^{2\gamma_\corn-2}\|r_\corn^{s+2-\gamma_\corn} \dpar^{s+1}\dperpone v \|^2_{L^2(K_{\corn e})}.
\end{align*}
Therefore,
\begin{equation*}
  (I)_{\corn e} \leq \left(  \frac{1-\sigma}{\sigma}\right)^{2s+4} d_\corn^{2\gamma_\corn-2}\sum_{\alpha_1\leq 1}\|r_\corn^{s+2+\alpha_1-\gamma_\corn}\rho_{\corn e}^{(1+\alpha_1-\gamma_e)_+} \dpar^{s+1}\dperpone\dperptwo^{\alpha_1} v \|^2_{L^2(K_{\corn e})}.
\end{equation*}
The estimates for $(I)_{\corn, e, 0}$ follow from the same technique:
\begin{align*}
(I)_{e}
   & \leq \sum_{\alpha_1 \leq 1} \sigmaratio^{2s+4} d_\corn^{2s+2}\|r_e^{(1+\alpha_1-\gamma_e)_+} \dpar^{s+1}\dperpone\dperptwo^{\alpha_1} v \|^2_{L^2(K_{e})},
\\
(I)_{\corn}
  & \leq \sum_{\alpha_1 \leq 1} \sigmaratio^{2s+4} d_\corn^{2\gamma_\corn-2}\|r_\corn^{s+2+\alpha_1-\gamma_\corn} \dpar^{s+1}\dperpone\dperptwo^{\alpha_1} v \|^2_{L^2(K_{\corn})}, 
\\
(I)_{0}
   & \leq \sum_{\alpha_1 \leq 1} \sigmaratio^{2s+4} d_\corn^{2s+2}\| \dpar^{s+1}\dperpone\dperptwo^{\alpha_1} v \|^2_{L^2(K_{0})}. 
\end{align*}
  Hence, from \eqref{eq:analytic}, there exists $C>0$ dependent only on $C_v$ and
  $\sigma$ and $A>0$ dependent only on $A_v$ and $\sigma$
  such that
  \begin{equation}
    \label{eq:Ibound-2}
    (I) \leq C A^{2s+6}((s+3)!)^2 d_\corn^{2\gamma_\corn-2} .
  \end{equation}
   We then apply the same argument to the terms $(II)$ and $(III)$. Indeed, 
   if $s+1+\alpha_1+\alpha_2-\gamma_\corn\geq0$ 
\begin{align*}
   (II)_{\corn e}
  &=
\sum_{\alpha_1, \alpha_2\leq 1}\hpar^{2\alpha_1}\hperpone^{2s}\hperptwo^{2\alpha_2}\| \dpar^{\alpha_1}\dperpone^{s+1}\dperptwo^{\alpha_2}v \|^2_{L^2(K_{\corn e})} 
    \\
  &\leq \sigmaratio^{2s+4}
\sum_{\alpha_1, \alpha_2\leq 1}d_\corn^{2\alpha_1}d_e^{2s+2\alpha_2}\| \dpar^{\alpha_1}\dperpone^{s+1}\dperptwo^{\alpha_2}v \|^2_{L^2(K_{\corn e})} 
    \\
  &\leq \sigmaratio^{2s+4}
\sum_{\alpha_1\leq 1}d_\corn^{2\tgamma}d_e^{2\gamma_e-2}\| r_\corn^{s+1+\alpha_1+\alpha_2-\gamma_\corn}\rho_{\corn e}^{s+1+\alpha_2-\gamma_e}\dpar^{\alpha_1}\dperpone^{s+1}\dperptwo^{\alpha_2}v \|^2_{L^2(K_{\corn e})} 
    \\
  &\leq \sigmaratio^{2s+4}
\sum_{\alpha_1\leq 1}d_\corn^{2\gamma_\corn-2}\| r_\corn^{s+1+\alpha_1+\alpha_2-\gamma_\corn}\rho_{\corn e}^{s+1+\alpha_2-\gamma_e}\dpar^{\alpha_1}\dperpone^{s+1}\dperptwo^{\alpha_2}v \|^2_{L^2(K_{\corn e})},
  \end{align*}
  where at the last step we have used that $\gamma_e>1$ and $d_e\leq d_\corn$.
  If $s+1+\alpha_1+\alpha_2-\gamma_\corn<0$, then
\begin{align*}
   (II)_{\corn e}
  &=
\sum_{\alpha_1, \alpha_2\leq 1}\hpar^{2\alpha_1}\hperpone^{2s}\hperptwo^{2\alpha_2}\| \dpar^{\alpha_1}\dperpone^{s+1}\dperptwo^{\alpha_2}v \|^2_{L^2(K_{\corn e})} 
    \\
  &\leq \sigmaratio^{2s+4}
\sum_{\alpha_1, \alpha_2\leq 1}d_\corn^{2\alpha_1}d_e^{2s+2\alpha_2}\| \dpar^{\alpha_1}\dperpone^{s+1}\dperptwo^{\alpha_2}v \|^2_{L^2(K_{\corn e})} 
    \\
  &\leq \sigmaratio^{2s+4}
\sum_{\alpha_1\leq 1}d_\corn^{2\alpha_1}d_e^{2s+2\alpha_2}(d_e/d_\corn)^{-2s-2-2\alpha_2+2\gamma_e}\| \rho_{\corn e}^{s+1+\alpha_2-\gamma_e}\dpar^{\alpha_1}\dperpone^{s+1}\dperptwo^{\alpha_2}v \|^2_{L^2(K_{\corn e})} 
    \\
  &\leq \sigmaratio^{2s+4}
\sum_{\alpha_1\leq 1}d_\corn^{2s+2-2\gamma_e}d_e^{2\gamma_e-2}\| \rho_{\corn e}^{s+1+\alpha_2-\gamma_e}\dpar^{\alpha_1}\dperpone^{s+1}\dperptwo^{\alpha_2}v \|^2_{L^2(K_{\corn e})}. 
  \end{align*}  
  Thus, using $d_e\leq d_\corn$,
  \begin{align*}
   (II)_{\corn e}
  &\leq \sigmaratio^{2s+4}
\sum_{\alpha_1\leq 1}(d_\corn^{2s}+d_\corn^{2\gamma_\corn-2})\| r_\corn^{(s+1+\alpha_1+\alpha_2-\gamma_\corn)_+}\rho_{\corn e}^{s+1+\alpha_2-\gamma_e}\dpar^{\alpha_1}\dperpone^{s+1}\dperptwo^{\alpha_2}v \|^2_{L^2(K_{\corn e})}. 
  \end{align*} 
  The estimates for $(II)_{\corn,e,0}$ and $(III)_{\corn e, \corn, e, 0}$ can be obtained as
  above:
  \begin{align*}
     (II)_{e}
  &\leq \sigmaratio^{2s+4}
\sum_{\alpha_1\leq 1}d_e^{2\gamma_e-2}\| r_e^{s+1+\alpha_2-\gamma_e}\dpar^{\alpha_1}\dperpone^{s+1}\dperptwo^{\alpha_2}v \|^2_{L^2(K_{e})},
\end{align*}
if $s+1+\alpha_1+\alpha_2-\gamma_\corn\geq0$, then
\begin{align*}
   (II)_{\corn}
  &\leq \sigmaratio^{2s+4}
\sum_{\alpha_1\leq 1}d_\corn^{2\gamma_\corn-2}\| r_\corn^{s+1+\alpha_1+\alpha_2-\gamma_\corn}\dpar^{\alpha_1}\dperpone^{s+1}\dperptwo^{\alpha_2}v \|^2_{L^2(K_{\corn})},
  \end{align*}
if $s+1+\alpha_1+\alpha_2-\gamma_\corn<0$, then
\begin{align*}
   (II)_{\corn}
  &\leq \sigmaratio^{2s+4}
\sum_{\alpha_1\leq 1}d_\corn^{2s}\| \dpar^{\alpha_1}\dperpone^{s+1}\dperptwo^{\alpha_2}v \|^2_{L^2(K_{\corn})},
  \end{align*}
so that
\begin{align*}
   (II)_{\corn}
  &\leq \sigmaratio^{2s+4}
\sum_{\alpha_1\leq 1}(d_\corn^{2s}+d_\corn^{2\gamma_\corn-2})\| \dpar^{\alpha_1}\dperpone^{s+1}\dperptwo^{\alpha_2}v \|^2_{L^2(K_{\corn})},
\\
   (II)_{0}
  &\leq \sigmaratio^{2s+4}
\sum_{\alpha_1\leq 1}d_\corn^{2s}\| \dpar^{\alpha_1}\dperpone^{s+1}\dperptwo^{\alpha_2}v \|^2_{L^2(K_{0})},
\\
   (III)_{\corn e}
  &\leq \sigmaratio^{2s+4}
\sum_{\alpha_1\leq 1}d_\corn^{2\gamma_\corn-2}\| r_\corn^{s+2+\alpha_1-\gamma_\corn}\rho_{\corn e}^{s+2-\gamma_e} \dpar^{\alpha_1}\dperpone\dperptwo^{s+1} v \|^2_{L^2(K_{\corn e})},
\\
   (III)_{e}
  &\leq \sigmaratio^{2s+4}
\sum_{\alpha_1\leq 1}d_e^{2\gamma_e-2}\| r_e^{s+2-\gamma_e} \dpar^{\alpha_1}\dperpone\dperptwo^{s+1} v \|^2_{L^2(K_{e})},
\\
   (III)_{\corn}
  &\leq \sigmaratio^{2s+4}
\sum_{\alpha_1\leq 1}d_\corn^{2\gamma_\corn-2}\| r_\corn^{s+2+\alpha_1-\gamma_\corn} \dpar^{\alpha_1}\dperpone\dperptwo^{s+1} v \|^2_{L^2(K_{\corn})},
\\
   (III)_{0}
  &\leq \sigmaratio^{2s+4}
\sum_{\alpha_1\leq 1}d_e^{2s+2}\| \dpar^{\alpha_1}\dperpone\dperptwo^{s+1} v \|^2_{L^2(K_{0})}.
\end{align*}
  Therefore, we have 
  \begin{equation}
    \label{eq:IIbound-2}
    (II)  + (III) 
    \leq C A^{2s+6}(d_\corn^{2\gamma_\corn-2} + d_\corn^{2\gamma_e-2})((s+3)!)^2.
  \end{equation}
  We obtain, from \eqref{eq:Kappx1-2}, \eqref{eq:Ibound-2}, and \eqref{eq:IIbound-2}
  that there exists $C>0$ dependent only on $\sigma$, $\Cappxii$, $C_v$ and $A>0$
  dependent only on $\sigma$, $A_v$ such that
  \begin{equation*}
    \| \widehat{\partial}_{\bot, 1}(\hv - \hPi_p \hv)\|_{L^2(\hK)}^2  \leq C \frac{\hperpone}{\hpar\hperptwo}\Psi_{p,s} A^{2s+6}\left(d_\corn^{2(\gamma_\corn-1)} + d_\corn^{2(\gamma_e-1)}\right)((s+3)!)^2.
  \end{equation*}
  Considering that
  \begin{equation*}
     \|\dperpone (v-\Pi_p v)\|^2_{L^2(K)}\leq \frac{\hpar\hperptwo}{\hperpone} \|\widehat{\partial}_{\bot,1} (\hv - \hPi_p\hv)\|^2_{L^2(\hK)}
  \end{equation*}
  and considering that the estimate for the other term at the left-hand side of
  \eqref{eq:internal-appx-2} is obtained by exchanging
  $\{h, \partial\}_{\bot,1}$ with $\{h, \partial\}_{\bot,2}$ completes the proof.
  \end{proof}
  \begin{lemma}
\label{lemma:internal-appx-3}
Let $d=3$, $\ell\in \mathbb{N}$ and $K = J^\ell_{k_1}\times J^\ell_{k_2}\times J^{\ell}_{k_3}$ for
$0< k_1, k_2, k_3\leq \ell$.
 Let also $v\in
  \cJ^\varpi_\ugamma(Q; \Cset, \Eset; C_v, A_v)$
  with $\gamma_\corn \in (3/2, 5/2)$, $\gamma_e \in (1, 2)$. Then, there exists $C>0$ dependent only on
  $\sigma$, $\Cappxi$, $C_v$ and $A>0$ dependent only on $\sigma$, $A_v$ such that
  for all $p\in\N$ and all $1\leq s \leq p$
  \begin{equation}
\label{eq:internal-appx-3}
        \|v - \Pi^K_p v \|^2_{L^2(K)}
        \leq C \Psi_{p,s} A^{2s+6}\left(d_\corn^{2(\gamma_\corn-1)} + d_\corn^{2(\gamma_e-1)}\right)((s+3)!)^2.
  \end{equation}
  \end{lemma}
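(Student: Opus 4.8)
The plan is to follow the proofs of Lemmas~\ref{lemma:internal-appx-1} and~\ref{lemma:internal-appx-2} almost verbatim, with two small changes: this time I invoke the $H^1$-estimate of Lemma~\ref{lemma:ref-proj} on the reference cube (whose left-hand side also controls $\|\hv-\hPi_p\hv\|_{L^2(\hK)}$) instead of the derivative-wise estimate of Lemma~\ref{lemma:ref-proj2}, and I use the scaling appropriate to the $L^2$ norm. With $\hv=v\circ\Phi_K^{-1}$, $\hPi_p\hv=(\Pi_p^K v)\circ\Phi_K^{-1}$, and $\hpar,\hperpone,\hperptwo$ the side lengths of $K$ in the direction parallel, resp.\ perpendicular, to the closest singular edge, the affine change of variables gives $\|v-\Pi_p^K v\|_{L^2(K)}^2\simeq\hpar\hperpone\hperptwo\,\|\hv-\hPi_p\hv\|_{L^2(\hK)}^2$; substituting Lemma~\ref{lemma:ref-proj} with $p_i=p$, $s_i=s$, undoing the scaling in each reference-cube term (a mixed derivative $\widehat{\partial}^{(s+1,\alpha_1,\alpha_2)}\hv$ carries $\hpar^{2(s+1)}\hperpone^{2\alpha_1}\hperptwo^{2\alpha_2}(\hpar\hperpone\hperptwo)^{-1}$ relative to its physical counterpart, and similarly for the other two groups), and cancelling the global factor $\hpar\hperpone\hperptwo$, one is left with
\begin{equation*}
\|v-\Pi_p^K v\|_{L^2(K)}^2\le C\,\Psi_{p,s}\big((\mathrm{I})+(\mathrm{II})+(\mathrm{III})\big),
\end{equation*}
where $(\mathrm{I})=\sum_{\alpha_1,\alpha_2\le 1}\hpar^{2(s+1)}\hperpone^{2\alpha_1}\hperptwo^{2\alpha_2}\|\dpar^{s+1}\dperpone^{\alpha_1}\dperptwo^{\alpha_2}v\|_{L^2(K)}^2$ and $(\mathrm{II}),(\mathrm{III})$ are the analogous sums with the $s+1$ derivatives taken in the $\bot,1$, resp.\ $\bot,2$, direction.

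Next I would estimate $(\mathrm{I}),(\mathrm{II}),(\mathrm{III})$ region by region, splitting $K$ into $K_\corn=K\cap Q_\corn$, $K_e=K\cap Q_e$, $K_{\corn e}=K\cap Q_{\corn e}$ and $K_0=K\cap Q_0$, exactly as in Lemma~\ref{lemma:internal-appx-2}: I use $\hpar\le\sigmaratio d_\corn^K$, $\hperpone,\hperptwo\le\sigmaratio d_e^K$, $r_\corn|_K\ge d_\corn^K$, $r_e|_K\ge d_e^K$, $d_e^K\le d_\corn^K$ and $\rho_{\corn e}\ge d_e^K/d_\corn^K$, and I insert the weight powers $r_\corn^{(\alpham-\gamma_\corn)_+}$, $\rho_{\corn e}^{(\alphaperpm-\gamma_e)_+}$ or $r_e^{(\alphaperpm-\gamma_e)_+}$ needed to recognise the norms in \eqref{eq:analytic}. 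The one term that behaves slightly differently from Lemma~\ref{lemma:internal-appx-1} is the group $(\mathrm{I})$: because of the extra factor $\hpar^2$ relative to the $\dpar$-estimate there, the $s+1$-parallel-derivative contributions now produce on $K_\corn$ and $K_{\corn e}$ the weight exponent $(d_\corn^K)^{2\gamma_\corn}\le C(d_\corn^K)^{2(\gamma_\corn-1)}$, and in the only borderline subcase $s+1+\alpha_1+\alpha_2<\gamma_\corn$ (which forces $s=1$, $\alpha_1=\alpha_2=0$) the bare factor $\hpar^{2(s+1)}\le\sigmaratio^4(d_\corn^K)^4$ is absorbed into $(d_\corn^K)^{2(\gamma_\corn-1)}$ using $\gamma_\corn<5/2<3$; on $K_e$ one obtains contributions bounded by $(d_\corn^K)^{2(\gamma_e-1)}$ (via $(d_e^K)^{2\gamma_e}\le C(d_\corn^K)^{2(\gamma_e-1)}$, that quantity being bounded below there), and on $K_0$ every prefactor is $\simeq 1$. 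The groups $(\mathrm{II})$ and $(\mathrm{III})$ are handled exactly like the corresponding terms in Lemma~\ref{lemma:internal-appx-2} and produce the exponents $(d_\corn^K)^{2(\gamma_\corn-1)}$ (from $K_\corn,K_{\corn e}$) and $(d_\corn^K)^{2(\gamma_e-1)}$ (from $K_e$).

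Finally, each term occurring in $(\mathrm{I}),(\mathrm{II}),(\mathrm{III})$ has total derivative order $k\le s+3$, so \eqref{eq:analytic} bounds each $\|(\text{weight})\,\dalpha v\|_{L^2(\cdot)}^2$ by $(C_v A_v^{k}k!)^2\le C_v^2 A_v^{2(s+3)}((s+3)!)^2$ (taking $A_v\ge1$ without loss of generality and $k!\le(s+3)!$). Summing the finitely many terms and absorbing the $\sigmaratio$-powers into constants yields \eqref{eq:internal-appx-3} with $A>0$ depending only on $\sigma,A_v$ and $C>0$ only on $\sigma,\Cappxi,C_v$. As in the previous two lemmas, the only genuine difficulty is the bookkeeping of the positive-part weight exponents $(\alpham-\gamma_\corn)_+$ and $(\alphaperpm-\gamma_e)_+$ in the low-regularity cases $s=1$ with $\gamma_\corn$ near $5/2$ or $\gamma_e$ near $2$; I expect this to be the main, though purely technical, obstacle, since the additional power of $\hpar$ present here only ever improves the bounds.
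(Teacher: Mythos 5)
Your proposal is correct and follows essentially the same route as the paper: apply the reference-element estimate of Lemma~\ref{lemma:ref-proj}, rescale via $\|v-\Pi_p^K v\|_{L^2(K)}^2\leq \hpar\hperpone\hperptwo\|\hv-\hPi_p\hv\|_{L^2(\hK)}^2$, and observe that the resulting terms are (up to the extra, harmless factors of $\hpar^2$, $\hperpone^2$, $\hperptwo^2$) exactly those already bounded region by region in Lemmas~\ref{lemma:internal-appx-1} and~\ref{lemma:internal-appx-2}. The paper's proof is just a terser version of the same argument, and your handling of the borderline case $s=1$, $\alpha_1=\alpha_2=0$ and of the absorbed powers of $d_\corn^K\leq 1$ is consistent with it.
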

\begin{proof}
  The proof follows closely that of Lemmas \ref{lemma:internal-appx-1} and
  \ref{lemma:internal-appx-2}; we use the same notation.
   From Lemma \ref{lemma:ref-proj} and rescaling, we have
  \begin{equation}
    \label{eq:Kappx1-3}
\begin{aligned}
\| \hv - \hPi_p \hv  \|_{L^2(\hK)}^2 
    &\leq \Cappxi \Psi_{p,s} \frac{1}{\hpar\hperpone\hperptwo}\left(\sum_{\alpha_1, \alpha_2 \leq 1}
    \hpar^{2s+2}\hperpone^{2\alpha_1}\hperptwo^{2\alpha_2}\| \dpar^{s+1}\dperpone^{\alpha_1}\dperptwo^{\alpha_2}v \|^2_{L^2(K)} 
    \right.
    \\ &\qquad +
    \sum_{\alpha_1, \alpha_2\leq 1}\hpar^{2\alpha_1}\hperpone^{2s+2}\hperptwo^{2\alpha_2}\| \dpar^{\alpha_1}\dperpone^{s+1}\dperptwo^{\alpha_2}v \|^2_{L^2(K)} 
    \\ &\qquad +
    \left.
    \sum_{\alpha_1, \alpha_2\leq 1}\hpar^{2\alpha_1}\hperpone^{2\alpha_2}\hperptwo^{2s+2}\| \dpar^{\alpha_1}\dperpone^{\alpha_2}\dperptwo^{s+1}v \|^2_{L^2(K)} 
  \right)
  .
  \end{aligned}
  \end{equation}
  Most terms at the right-hand side above have already been considered in the proofs of Lemmas
  \ref{lemma:internal-appx-1} and \ref{lemma:internal-appx-2}, 
  and the terms with $\alpha_1 = \alpha_2 = 0$ can be estimated similarly; the observation that
  \begin{equation*}
    \| v - \Pi_p v\|^2_{L^2(K)}\leq \hpar\hperpone\hperptwo
\| \hv - \hPi_p \hv  \|_{L^2(\hK)}^2 
  \end{equation*}
  concludes the proof.
\end{proof}
We summarize Lemmas \ref{lemma:internal-appx-1}  to
\ref{lemma:internal-appx-3} in the following result.
\begin{lemma}
  Let $d=3$, $\ell\in \mathbb{N}$ and $K=J^\ell_{k_1}\times J^\ell_{k_2}\times
  J^\ell_{k_3}$ such that $0<k_1,k_2,k_3\leq \ell$. Let also $v\in
  \cJ^\varpi_\ugamma(Q; \Cset, \Eset;  C_v, A_v)$
  with $\gamma_\corn \in (3/2, 5/2)$, $\gamma_e \in (1, 2)$. Then, there exists $C>0$ dependent only on
  $\sigma$, $\Cappxi$, $\Cappxii$, $C_v$ and $A>0$ dependent only on $\sigma$, $A_v$ such that
  for all $p\in\N$ and all $1\leq s \leq p$
  \begin{equation}
\label{eq:internal-appx}
        \|v - \Pi^K_p v\|^2_{H^1(K)} 
\leq C \Psi_{p,s} A^{2s+6}\left(d_\corn^{2(\gamma_\corn-1)} + d_\corn^{2(\gamma_e-1)}\right)((s+3)!)^2.
  \end{equation}
  \end{lemma}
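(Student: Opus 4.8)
The plan is to obtain \eqref{eq:internal-appx} by simply summing the three one‑derivative bounds already established together with the zeroth‑order bound. On the interior element $K=J^\ell_{k_1}\times J^\ell_{k_2}\times J^\ell_{k_3}$ the three coordinate directions split, as defined before Lemma \ref{lemma:internal-appx-1}, into one direction parallel and two perpendicular to the chosen closest singular edge, so that for the tensor‑product projector $\Pi^K_p$ of \eqref{eq:PiK} one has
\begin{align*}
\|v - \Pi^K_p v\|^2_{H^1(K)}
&= \|v - \Pi^K_p v\|^2_{L^2(K)} + \|\dpar (v - \Pi^K_p v)\|^2_{L^2(K)} \\
&\quad + \|\dperpone (v - \Pi^K_p v)\|^2_{L^2(K)} + \|\dperptwo (v - \Pi^K_p v)\|^2_{L^2(K)}.
\end{align*}
First I would bound the $\dpar$‑term by Lemma \ref{lemma:internal-appx-1}, the sum of the two $\dperpone$‑ and $\dperptwo$‑terms by Lemma \ref{lemma:internal-appx-2}, and the $L^2$‑term by Lemma \ref{lemma:internal-appx-3}. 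Each of these lemmas applies verbatim under the present hypotheses $\gamma_\corn\in(3/2,5/2)$, $\gamma_e\in(1,2)$ and $1\le s\le p$, and delivers a bound of the form $C\,\Psi_{p,s}\,A^{2s+6}\,((s+3)!)^2$ times a power of $d^K_\corn$, with $C$ depending only on $\sigma$, $\Cappxi$, $\Cappxii$, $C_v$ and $A$ depending only on $\sigma$, $A_v$.

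The only bookkeeping then is to unify the geometric prefactors: Lemma \ref{lemma:internal-appx-1} produces $(d^K_\corn)^2+(d^K_\corn)^{2(\gamma_\corn-1)}$, whereas Lemmas \ref{lemma:internal-appx-2} and \ref{lemma:internal-appx-3} produce $(d^K_\corn)^{2(\gamma_\corn-1)}+(d^K_\corn)^{2(\gamma_e-1)}$. Since $K\subset Q=(0,1)^d$ we have $d^K_\corn\le\sqrt d\le\sqrt 3$, and because $\gamma_e<2$ we have $2(\gamma_e-1)<2$, hence $(d^K_\corn)^2\le 3^{\,2-\gamma_e}(d^K_\corn)^{2(\gamma_e-1)}$. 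Thus all four contributions are dominated by a fixed constant multiple of $(d^K_\corn)^{2(\gamma_\corn-1)}+(d^K_\corn)^{2(\gamma_e-1)}$; collecting the finitely many constants (each still depending only on $\sigma$, $\Cappxi$, $\Cappxii$, $C_v$, and the common $A$ only on $\sigma$, $A_v$) yields \eqref{eq:internal-appx}, with $d_\corn$ standing for $d^K_\corn$ in the notation of the statement.

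I do not anticipate any genuine obstacle: this statement is purely a consolidation of Lemmas \ref{lemma:internal-appx-1}–\ref{lemma:internal-appx-3}, and the only mildly delicate point is the absorption of the $(d^K_\corn)^2$ term into $(d^K_\corn)^{2(\gamma_e-1)}$, which uses nothing beyond the boundedness of $Q$ and the restriction $\gamma_e\in(1,2)$. (If desired one could also recall $d^K_e\le d^K_\corn$ on interior elements, but this is already incorporated into the cited lemmas.)
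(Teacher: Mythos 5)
Your proof is correct and is essentially the paper's own argument: the paper presents this lemma explicitly as a summary of Lemmas \ref{lemma:internal-appx-1}--\ref{lemma:internal-appx-3}, obtained precisely by splitting $\|\cdot\|_{H^1(K)}^2$ into the $L^2$ term and the three directional derivatives and summing the corresponding bounds. Your absorption of the $(d_\corn^K)^2$ term into $(d_\corn^K)^{2(\gamma_e-1)}$ via $d_\corn^K\lesssim 1$ and $\gamma_e<2$ is the right (and only) piece of bookkeeping needed.
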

  We then consider elements on the faces (but not abutting edges) of $Q$.
\begin{lemma}
  \label{lemma:internal-appx-face}
Let $d=3$, $\ell\in \mathbb{N}$ and $K = J^\ell_{k_1}\times J^\ell_{k_2}\times
J^{\ell}_{k_3}$ such that $k_j =0$ for one $j\in\{1,2,3\}$ and $0<k_i\leq \ell$
for $i\neq j$. For all $p\in\N$ and all $1\leq s \leq p$, 
let $p_j= 1$ and  $p_i = p\in\mathbb{N}$ for $i\neq j$.
Let also $v\in \cJ^\varpi_\ugamma(Q; \Cset, \Eset;  C_v, A_v)$
  with $\gamma_\corn \in (3/2, 5/2)$, $\gamma_e \in (1, 2)$. Then, there exists $C>0$ dependent only on
  $\sigma$, $\Cappxi$, $\Cappxii$, $C_v$ and $A>0$ dependent only on $\sigma$, $A_v$ such that
  \begin{equation}
\label{eq:internal-appx-face}
        \|v - \Pi^K_{p_1 p_2 p_3} v\|^2_{H^1(K)} 
C \left(\Psi_{p,s} A^{2s+6}(d^K_\corn)^{2(\min(\gamma_\corn, \gamma_e)-1)}((s+3)!)^2 
+ 
         (d_e^K)^{2(\min(\gamma_\corn, \gamma_e)-2)}\sigma^{2\ell}A^8
 \right).
  \end{equation}
\end{lemma}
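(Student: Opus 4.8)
The plan is to follow the proofs of Lemmas~\ref{lemma:internal-appx-1}--\ref{lemma:internal-appx-3} almost verbatim, exploiting that here the polynomial degree is reduced to $p_j=1$ in the one coordinate direction $x_j$ transverse to the face $\{x_j=0\}$ of $Q$ containing part of $\overline K$. Up to relabelling coordinates I would take $j=1$, so $p_1=1$, $p_2=p_3=p$, and pair this with the anisotropic smoothness indices $(s_1,s_2,s_3)=(1,s,s)$ in Lemmas~\ref{lemma:ref-proj} and~\ref{lemma:ref-proj2}; note $\Psi_{1,1}=\tfrac12$. Since $k_2,k_3\geq1$, the set $\overline K$ lies at distance $\gtrsim d^K_\corn$ from $\Cset$ and $\gtrsim d^K_e$ from $\Eset$ (with $\sigma$-dependent constants), and the closest singular edge is always one of the two coordinate-axis edges lying in that face, hence a direction perpendicular to it is always $x_1$; moreover $v$ is real analytic on a neighbourhood of $\overline K$, so $v\circ\Phi_K^{-1}$ has all the mixed Sobolev regularity needed to apply those two lemmas.

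Writing $\hv=v\circ\Phi_K^{-1}$, $h_i=|J^\ell_{k_i}|$ (so $h_1=\sigma^\ell$) and $\hPi=\hPi_{p_1p_2p_3}$, I would apply Lemma~\ref{lemma:ref-proj} to $\|\hv-\hPi\hv\|_{H^1(\hK)}^2$ and Lemma~\ref{lemma:ref-proj2} (for $i=1,2,3$) to $\|\partial_{x_i}(\hv-\hPi\hv)\|^2_{L^2(\hK)}$, then rescale to $K$ via $\|\partial^{\beta}\hv\|^2_{L^2(\hK)}\simeq\big(\prod_m h_m^{2\beta_m-1}\big)\|\partial^{\beta}v\|^2_{L^2(K)}$. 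The terms of the resulting bound split into two groups: a group $T_2$ in which every summand carries the factor $\Psi_{p,s}$ and an $(s+1)$-st derivative in the $x_2$- or $x_3$-direction, and a group $T_1$ in which every summand carries the fixed factor $\Psi_{1,1}$ together with a $\partial_{x_1}^2$ and derivatives of order at most $1$ in the other two directions (plus at most one extra first-order derivative coming from the outer $\partial_{x_i}$ in Lemma~\ref{lemma:ref-proj2}). In $T_1$ the $h_1$-scaling yields the factor $h_1^2=\sigma^{2\ell}$ for the first-derivative contributions and $h_1^4\leq\sigma^{2\ell}$ for the $L^2$ contribution, and every summand involves $\partial^{\alpha}v$ with $\alpha_1=2$ and $\alpha_2,\alpha_3\leq1$, hence $|\alpha|\leq4$; in $T_2$ the $x_1$-derivative order is always at most $1$, so these terms are exactly those already handled in Lemmas~\ref{lemma:internal-appx-1}--\ref{lemma:internal-appx-3}, the degree $p_1=1$ contributing only the harmless factor $h_1^{2\beta_1}\leq1$.

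For $T_2$ I would therefore simply repeat the estimates of the proofs of Lemmas~\ref{lemma:internal-appx-1}--\ref{lemma:internal-appx-3}, obtaining $T_2\leq C\Psi_{p,s}A^{2s+6}\big((d^K_\corn)^{2(\gamma_\corn-1)}+(d^K_\corn)^{2(\gamma_e-1)}\big)((s+3)!)^2$, which, because $d^K_\corn\leq1$ and $\min(\gamma_\corn,\gamma_e)-1\leq\gamma_\corn-1,\gamma_e-1$, is at most $C\Psi_{p,s}A^{2s+6}(d^K_\corn)^{2(\min(\gamma_\corn,\gamma_e)-1)}((s+3)!)^2$. For $T_1$ I would, for each of the finitely many multi-indices $\alpha$ with $\alpha_1=2$, $\alpha_2,\alpha_3\leq1$, estimate $h_2^{2\alpha_2}h_3^{2\alpha_3}\|\partial^{\alpha}v\|^2_{L^2(K)}$ by decomposing $K$ into $K\cap Q_0$, $K\cap Q_e$, $K\cap Q_\corn$, $K\cap Q_{\corn e}$ exactly as in Lemma~\ref{lemma:internal-appx-1}: shifting the Kondrat'ev weight $r_\bullet^{(|\alpha_\bullet|-\gamma_\bullet)_+}$ onto the constant side costs negative powers of $d^K_e$ and $d^K_\corn$, and these are compensated using $h_i\lesssim d^K_\corn$ (on $K\cap Q_\corn$ and $K\cap Q_{\corn e}$), $h_i\lesssim d^K_e$ for the index $i$ perpendicular to the closest singular edge, $d^K_e\leq d^K_\corn$, $\rho_{\corn e}=r_e/r_\corn$, and the usual $(\cdot)_+$-casework on whether $|\alpha|<\gamma_\corn$; here the key point is that, since $x_1$ is perpendicular to the closest edge, $|\alpha_\perp|\geq2>\gamma_e$, so the $\Omega_e$-weight exponent is strictly positive, which is what produces the shift to $\gamma-2$. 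This gives $h_2^{2\alpha_2}h_3^{2\alpha_3}\|\partial^{\alpha}v\|^2_{L^2(K)}\leq CA^{2|\alpha|}(|\alpha|!)^2(d^K_e)^{2(\min(\gamma_\corn,\gamma_e)-2)}$ for each such $\alpha$; summing, multiplying by $\sigma^{2\ell}$, and absorbing the fixed constant $(4!)^2$ and the bounded powers of $A$ into $C$ and $A^8$ yields $T_1\leq C\sigma^{2\ell}A^8(d^K_e)^{2(\min(\gamma_\corn,\gamma_e)-2)}$. Adding the bounds on $T_1$ and $T_2$ gives the assertion.

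The only genuinely new ingredient relative to the interior case is that the transverse mesh width is the thin-layer width $h_1=|J^\ell_0|=\sigma^\ell$ abutting the face, which is what produces both the extra $\sigma^{2\ell}$ and the shift of the corner/edge exponent from $2(\gamma-1)$ to $2(\gamma-2)$ in the second term. Accordingly I expect the main obstacle to be purely a matter of careful bookkeeping: verifying, over the handful of admissible face geometries and the two possible choices of ``closest singular edge'', that the scaling factors $h_2^{2\alpha_2}h_3^{2\alpha_3}$ always dominate the negative powers of $d^K_e$ (and, via $d^K_e\leq d^K_\corn$, of $d^K_\corn$) created by moving the weight to the constant side — precisely the computation performed in Lemmas~\ref{lemma:internal-appx-1}--\ref{lemma:internal-appx-3} with a generic degree $s$, here specialised to the fixed orders $|\alpha|\leq4$ arising from $p_1=1$.
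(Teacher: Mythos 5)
Your proposal is correct and follows essentially the same route as the paper: the paper's proof likewise applies Lemmas \ref{lemma:ref-proj} and \ref{lemma:ref-proj2} anisotropically with smoothness index $1$ in the direction normal to the face, bounds the $\Psi_{p,s}$-carrying tangential terms exactly as in Lemmas \ref{lemma:internal-appx-1}--\ref{lemma:internal-appx-3}, and handles the fixed-order normal-derivative terms by the same weight shift over $K\cap Q_{0}$, $K\cap Q_e$, $K\cap Q_\corn$, $K\cap Q_{\corn e}$, using $\sigma^{2\ell}\lesssim (d_e^K)^2\leq (d_\corn^K)^2$ to convert the surplus thin-layer powers into the stated $(d_e^K)^{2(\min(\gamma_\corn,\gamma_e)-2)}\sigma^{2\ell}$ term. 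The only cosmetic discrepancy is in the scaling bookkeeping (the direction of the outer derivative $\partial_{x_i}$ in Lemma \ref{lemma:ref-proj2} contributes $h_i^{2(\alpha_i-1)}$ rather than $h_i^{2\alpha_i}$), but the spare factor $\sigma^{2\ell}\lesssim (d_e^K)^2$ absorbs this, exactly as in the paper's estimates \eqref{eq:IIIface-a}--\eqref{eq:IIIface}.
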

\begin{proof}
  We write $d_a = d_a^K$, $a \in \{\corn, e\}$.
 Suppose, for ease of notation, that $j=3$, i.e. $k_3=0$. 
 The projector is then given by 
 $\Pi^K_{p p 1} = \pi^{k_1}_p\otimes \pi^{k_2}_p\otimes \pi^0_1$. 
 Also, we denote 
 $\hperptwo = \sigma^\ell$ and $\dperptwo = \partial_{x_3}$. 
 By \eqref{eq:disc-approx2},
 \begin{align*}
   \| \dpar( v - \Pi^K_{pp1} v  )\|_{L^2(K)}^2 
    &\leq \Cappxii\left( \Psi_{p,s} \bigg(\sum_{\alpha_1, \alpha_2 \leq 1}
    \hpar^{2s}\hperpone^{2\alpha_1}\hperptwo^{2\alpha_2}\| \dpar^{s+1}\dperpone^{ \alpha_1}\dperptwo^{\alpha_2}v \|^2_{L^2(K)} 
    \right.
    \\ &\qquad +
    \sum_{\alpha_1\leq 1}\hperpone^{2s+2}\hperptwo^{2\alpha_1}\| \dpar\dperpone^{s+1}\dperptwo^{\alpha_1}v \|^2_{L^2(K)} \bigg)
    \\ &\qquad +
    \left.
    \sum_{\alpha_1\leq 1}\hperpone^{2\alpha_1}\hperptwo^{4}\| \dpar\dperpone^{\alpha_1}\dperptwo^{2}v \|^2_{L^2(K)} 
  \right)
  \\
  & = \Cappxii \bigg((I) + (II) + (III)  \bigg).
 \end{align*}
 The bounds on the terms $(I)$ and $(II)$ can be derived as in Lemma
 \ref{lemma:internal-appx-1}, and give
 \begin{equation*}
   (I) + (II) \leq C \Psi_{p,s} A^{2s+6}\left( (d_\corn^K)^{2} + (d_\corn^K)^{2(\gamma_\corn-1)}\right)((s+3)!)^2. 
 \end{equation*}
 We consider then term $(III)$: with the usual notation, writing $\tgamma = \gamma_c-\gamma_e$,
 \begin{equation}
   \label{eq:IIIface-a}
   \begin{aligned}
   (III)_{\corn e} &= \sum_{\alpha_1\leq 1}\hperpone^{2\alpha_1}\hperptwo^{4}\| \dpar\dperpone^{\alpha_1}\dperptwo^{2}v \|^2_{L^2(K_{\corn e})} \\
   & \leq\sum_{\alpha_1\leq 1} \sigmaratio^{4+2\alpha_1}
          d_\corn^{2\tgamma-2}d_e^{2\gamma_e-4}\sigma^{4\ell}
       \| r_\corn^{3+\alpha_1-\gamma_\corn}\rho_{\corn e}^{2+\alpha_1-\gamma_e}\dpar\dperpone^{\alpha_1}\dperptwo^{2}v \|^2_{L^2(K_{\corn e})} 
   \\ &\leq C
        \sigmaratio^{6}d_\corn^{2\tgamma-2}d_e^{2\gamma_e-4}\sigma^{4\ell}A^{8}.
   \end{aligned}
 \end{equation}
 Note that $d_\corn\geq d_e$ and
 \begin{equation}
   \label{eq:tgamma-gammae}
   d_\corn^{\tgamma}d_e^{\gamma_e}\leq
  \begin{cases}
     1^{\tgamma}d_e^{\gamma_e} & \text{if }\tgamma\geq 0\\
     d_e^{\tgamma}d_e^{\gamma_e} & \text{if }\tgamma\geq 0
   \end{cases}
\leq d_e^{\min(\gamma_\corn, \gamma_e)},
 \end{equation}
 where we have also used that $d_c\leq 1$.
 Hence,
 \begin{equation}
   \label{eq:IIIface}
   (III)_{\corn e}\leq C
        \sigmaratio^{6}d_e^{2\min(\gamma_e, \gamma_\corn)-6}\sigma^{4\ell}A^{8}
        \leq C
        \sigmaratio^{6}d_e^{2\min(\gamma_e, \gamma_\corn)-4}\sigma^{2\ell}A^{8}.
 \end{equation}
 The bounds on the terms $(III)_{\corn, e, 0}$ follow by the same argument:
 \begin{align*}
   (III)_{e} 
    &\leq C \sigmaratio^{6}d_e^{2\gamma_e-4}\sigma^{4\ell} A^{8},
\\
   (III)_{\corn} 
   &\leq C \sigmaratio^{6}d_\corn^{2\gamma_\corn-6}\sigma^{4\ell} A^{8}
      \leq C\sigmaratio^{6} d_e^{2\gamma_\corn -4}\sigma^{2\ell}A^8,
\\
   (III)_{0} 
   &\leq C \sigmaratio^{6}\sigma^{4\ell} A^{8}.
  \end{align*}
 Then,
 \begin{align*}
   \| \dperpone( v - \Pi^K_{pp1} v  )\|_{L^2(K)}^2 
    &\leq \Cappxii\left(\frac{(p-s)!}{(p+s)!} \bigg(
\sum_{\alpha_1\leq 1} \hpar^{2s+2}\hperptwo^{2\alpha_1}\| \dpar^{s+1}\dperpone\dperptwo^{\alpha_1}v \|^2_{L^2(K)} 
    \right.
    \\ &\qquad +
\sum_{\alpha_1, \alpha_2 \leq 1}
    \hpar^{2\alpha_1}\hperpone^{2s}\hperptwo^{2\alpha_2}\| \dpar^{\alpha_1}\dperpone^{s+1}\dperptwo^{\alpha_2}v \|^2_{L^2(K)} \bigg)
    \\ &\qquad +
    \left.
    \sum_{\alpha_1\leq 1}\hpar^{2\alpha_1}\hperptwo^{4}\| \dpar^{\alpha_1}\dperpone\dperptwo^{2}v \|^2_{L^2(K)} 
  \right)
  \\
  & \leq \Cappxii \Big( (I) + (II) + (III) \Big).
 \end{align*}
The bounds on the first two terms at the right-hand side above can be obtained
as in Lemma \ref{lemma:internal-appx-2}:
\begin{align*}
(I) + (II)
& \leq C \Psi_{p,s} A^{2s+6}\left((d_\corn^K)^{2(\gamma_\corn-1)} + (d_\corn^K)^{2(\gamma_e-1)}\right)((s+3)!)^2 ,
\end{align*}
 while the last term can be bounded as in \eqref{eq:IIIface}, 
\begin{align*}
(III)_{\corn e}
	& \leq \sigmaratio^{6} 
		d_\corn^{2\tgamma}d_e^{2\gamma_e-6}\sigma^{4\ell} A^8
      \leq C\sigmaratio^{6} d_e^{2\min(\gamma_\corn, \gamma_e) -4}\sigma^{2\ell}A^8,
\\
(III)_{e}
	& \leq \sigmaratio^{6} 
		d_e^{2\gamma_e-6}\sigma^{4\ell} A^8
      \leq C\sigmaratio^{6} d_e^{2\gamma_e -4}\sigma^{2\ell}A^8,
\\
(III)_{\corn}
	& \leq \sigmaratio^{6} 
		d_\corn^{2\gamma_\corn - 6}\sigma^{4\ell} A^8
      \leq C\sigmaratio^{6} d_e^{2\gamma_\corn -4}\sigma^{2\ell}A^8,
\\
(III)_{0}
	& \leq \sigmaratio^{6} 
		\sigma^{4\ell} A^8,
\end{align*}
so that
\begin{equation*}
    \sum_{\alpha_1\leq 1}\hpar^{2\alpha_1}\hperptwo^{4}\| \dpar^{\alpha_1}\dperpone\dperptwo^{2}v \|^2_{L^2(K)} 
    \leq C
  d_e^{2\min(\gamma_\corn, \gamma_e) -4}\sigma^{2\ell}A^{8}.
\end{equation*}
The same holds true for the last term of the gradient of
the approximation error, given by
\begin{align*}
   \| \dperptwo( v - \Pi^K_{pp1} v  )\|_{L^2(K)}^2 
    &\leq \Cappxii\left( \Psi_{p,s} \bigg(
\sum_{\alpha_1\leq 1} \hpar^{2s+2}\hperpone^{2\alpha_1}\| \dpar^{s+1}\dperpone^{\alpha_1}\dperptwo v \|^2_{L^2(K)} 
    \right.
    \\ &\qquad +
\sum_{\alpha_1 \leq 1}
    \hpar^{2\alpha_1}\hperpone^{2s+2}\| \dpar^{\alpha_1}\dperpone^{s+1}\dperptwo v \|^2_{L^2(K)} \bigg)
    \\ &\qquad +
    \left.
    \sum_{\alpha_1, \alpha_2\leq 1}\hpar^{2\alpha_1}\hperpone^{2\alpha_2}\hperptwo^{2}\| \dpar^{\alpha_1}\dperpone^{\alpha_2}\dperptwo^{2}v \|^2_{L^2(K)} 
         \right)
 \\      
 & \leq \Cappxii \Big( (I) + (II) + (III) \Big).
 \end{align*}
From Lemma \ref{lemma:internal-appx-2}, we obtain 
\begin{align*}
(I)+(II) & \leq C \Psi_{p,s} A^{2s+6}\left((d_\corn^K)^{2(\gamma_\corn-1)} + (d_\corn^K)^{2(\gamma_e-1)}\right)((s+3)!)^2,
\end{align*}
whereas for the third term, it holds that if $\alpha_1+\alpha_2+2-\gamma_\corn \geq0$
\begin{align*}
(III)_{\corn e}
    & \leq \sigmaratio^{6} d_\corn^{2\tgamma}d_e^{2\gamma_e-4}\sigma^{2\ell} A^8
      \leq C\sigmaratio^{6} d_e^{2\min(\gamma_\corn, \gamma_e) -4}\sigma^{2\ell}A^8,
\quad 
(III)_{\corn}
     \leq \sigmaratio^{6} d_\corn^{2\gamma_\corn-4}\sigma^{2\ell} A^8,
\end{align*}
and if $\alpha_1+\alpha_2+2-\gamma_\corn < 0$, then
\begin{align*}
(III)_{\corn e}
    & \leq \sigmaratio^{6} d_e^{2\gamma_e-4}\sigma^{2\ell} A^8,
 \quad  (III)_{\corn}
    \leq \sigmaratio^{6} \sigma^{2\ell} A^8,
\end{align*}
and for all $\alpha_1+\alpha_2+2-\gamma_\corn\in\R$, 
$(III)_{e}$ and $(III)_{0}$ satisfy the bounds that $(III)_{\corn e}$ and $(III)_{\corn}$ satisfy
in case $\alpha_1+\alpha_2+2-\gamma_\corn < 0$,
so that
\begin{align*}
   \| \dperptwo( v - \Pi^K_{pp1} v)\|_{L^2(K)}^2 
	&\leq C
\left( \Psi_{p,s} A^{2s+6}((s+3)!)^2d_\corn^{2(\min(\gamma_\corn, \gamma_e)-1)} 
+
A^{8} d_e^{2(\min(\gamma_\corn, \gamma_e)-2)}\sigma^{2\ell}\right) \;.
 \end{align*} 
Finally, the bound on the $L^2(K)$ norm of the approximation error can be
obtained by a combination of the estimates above.
\end{proof}
  The exponential convergence of the approximation in internal elements (i.e., elements
  not abutting a singular edge or corner) follows, from Lemmas
  \ref{lemma:internal-appx-1} to \ref{lemma:internal-appx-face}.
  \begin{lemma}
    \label{lemma:exp-int}
    Let $d=3$ and $v\in \cJ^\varpi_\ugamma(Q; \Cset, \Eset)$ with $\gamma_\corn>3/2$,
    $\gamma_e>1$. There exists a constant $C_0>0$ such that if $p\geq C_0 \ell$, 
    there exist constants $C, b>0$ such that for every $\ell \in \mathbb{N}$ holds
  \begin{equation*}
        \sum_{K: d_e^K>0} \|v - \Pihpelldim v\|^2_{H^1(K)}
        \leq C e^{-b\ell}.
  \end{equation*}
  \end{lemma}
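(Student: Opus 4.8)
The plan is to sum the per-element estimates from Lemmas~\ref{lemma:internal-appx-1} through~\ref{lemma:internal-appx-face} over all interior elements $K$ of the tensor-product geometric mesh $\cG^\ell_d$ (those with $d^K_e>0$), choosing the Sobolev order $s$ in terms of the polynomial degree $p$, and then exploiting the geometric decay of the weights $d^K_\corn$, $d^K_e$ and of the factor $\Psi_{p,s}$. First I would split the interior elements into two groups: (a) elements $K = J^\ell_{k_1}\times J^\ell_{k_2}\times J^\ell_{k_3}$ with all $k_i>0$ (genuinely interior, away from faces), handled by the summary estimate~\eqref{eq:internal-appx}, and (b) face elements with exactly one index $k_j=0$, handled by Lemma~\ref{lemma:internal-appx-face}. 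In both cases the local bound has the schematic form $\|v-\Pi^K_p v\|^2_{H^1(K)} \le C\,\Psi_{p,s}\,A^{2s+6}((s+3)!)^2\,\omega(K) + (\text{face remainder})$, where $\omega(K) = (d^K_\corn)^{2(\gamma_\corn-1)} + (d^K_\corn)^{2(\gamma_e-1)}$ (both exponents positive since $\gamma_\corn>3/2>1$ and $\gamma_e>1$) and the face remainder is $C\,A^8\,(d^K_e)^{2(\min(\gamma_\corn,\gamma_e)-2)}\sigma^{2\ell}$.

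\textbf{Summing the weights.} The key combinatorial step is to bound $\sum_{K:\,d^K_e>0} \omega(K)$ and the analogous sum of face remainders, uniformly in $\ell$. Using~\eqref{eq:dist1}, for $K=\bigtimes_i J^\ell_{k_i}$ one has $d^K_\corn \simeq \sigma^{\ell - \max_i k_i}$ (up to the factor $\tau_\sigma$), so grouping elements by the value $m = \ell - \max_i k_i \in \{0,\dots,\ell\}$ and noting there are $\CO(m^{d-1})$, hence $\CO(\ell^{d-1})$, elements with a given $m$, we get $\sum_K (d^K_\corn)^{2\beta} \le C\sum_{m=0}^\ell \ell^{d-1}\sigma^{2\beta m} \le C\ell^{d-1}(1-\sigma^{2\beta})^{-1}$ for any $\beta>0$, which is a bounded-degree polynomial in $\ell$. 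The face remainder sum is handled the same way using $d^K_e \simeq \sigma^{\ell-\max(k_i,k_j)}$ with the two in-face indices, giving $\sum \sigma^{2(\min(\gamma_\corn,\gamma_e)-2)\cdot(\text{something})}\sigma^{2\ell}$; here the exponent $\min(\gamma_\corn,\gamma_e)-2$ may be negative (e.g. if $\gamma_e\in(1,2)$), so I would verify that the geometric sum $\sum_m \sigma^{2(\min-2)m}$, which grows like $\sigma^{-2(2-\min)\ell}$, is still dominated by the prefactor $\sigma^{2\ell}$ — i.e. that $\sigma^{2\ell}\cdot\sigma^{-2(2-\min)\ell} = \sigma^{2(\min-1)\ell}\to 0$ exponentially since $\min(\gamma_\corn,\gamma_e)>1$; multiplying by the $\CO(\ell^{d-1})$ element count keeps this exponentially small.

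\textbf{Choosing $s$ and concluding.} It remains to absorb the factor $\Psi_{p,s}A^{2s+6}((s+3)!)^2$. By~\eqref{eq:Psi} and Stirling, $\Psi_{p,s}((s+3)!)^2 \le C\big(\tfrac{(s+3)!}{(p+s)!}(p-s)!\big)\cdot(\text{poly})$; choosing $s = \lceil c\, p\rceil$ for a fixed small $c\in(0,1)$ and using $p\ge C_0\ell$, a standard computation (as in the univariate $hp$ analysis of~\cite{SchSch2018}) shows $\Psi_{p,s}A^{2s+6}((s+3)!)^2 \le C e^{-b' p} \le C e^{-b'' \ell}$ for suitable $b',b''>0$, where the constant $C_0$ is chosen large enough (depending on $A$, i.e. on $A_v,\sigma$) that the growth of $A^{2s}$ and $((s+3)!)^2$ is beaten by the decay of $\Psi_{p,s}$. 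Combining: the main term contributes $C e^{-b''\ell}\cdot C\ell^{d-1} \le Ce^{-b\ell}$ after adjusting $b$, and the face remainder contributes $\le C\ell^{d-1}\sigma^{2(\min(\gamma_\corn,\gamma_e)-1)\ell} \le Ce^{-b\ell}$; adding the two and renaming constants gives $\sum_{K:d^K_e>0}\|v-\Pihpelldim v\|^2_{H^1(K)}\le Ce^{-b\ell}$, using $\Pihpelldim v|_K = \Pi^K_p v$ on these elements by construction~\eqref{eq:Pihpell} (and $\Pi^K_{p p 1}v$ on the face elements, which is exactly Lemma~\ref{lemma:internal-appx-face}'s hypothesis). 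The main obstacle I anticipate is the bookkeeping in the face-element case: one must carefully check that the negative power $(d^K_e)^{2(\min(\gamma_\corn,\gamma_e)-2)}$ combined with $\sigma^{2\ell}$ really yields exponential decay after summation, since the mesh refines toward the edge — this is where the condition $\gamma_e>1$ (rather than merely $\gamma_e>0$) is essential and must be used sharply. The $s$-versus-$p$ calibration is routine but must be stated with the dependence of $C_0$ on $A_v,\sigma$ made explicit so that the constants $C,b$ in the conclusion are genuinely $\ell$-independent.
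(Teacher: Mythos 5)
Your proposal follows essentially the same route as the paper's proof: summing the element-wise bounds of Lemmas \ref{lemma:internal-appx-1}--\ref{lemma:internal-appx-face}, controlling the weight sums by grouping elements according to their distance from the singular corner/edge, using $\gamma_e>1$ to show that $\sigma^{2\ell}$ dominates the negative power of $d_e^K$ in the face-remainder term, and absorbing $\Psi_{p,s}A^{2s}((s+3)!)^2$ via the standard calibration $s\sim cp$ with $p\geq C_0\ell$. The only step you omit is the initial reduction to $\gamma_\corn\in(3/2,5/2)$ and $\gamma_e\in(1,2)$ (via the inclusion $\cJ^\varpi_{\ugamma_1}(Q;\Cset,\Eset)\subset\cJ^\varpi_{\ugamma_2}(Q;\Cset,\Eset)$ for $\ugamma_1\geq\ugamma_2$), which is needed because the element-wise lemmas you invoke are only stated for those ranges.
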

  \begin{proof}
    We suppose, without loss of generality, that $\gamma_\corn\in (3/2, 5/2)$, and
    $\gamma_e\in (1,2)$. The general case follows from the inclusion
    $\cJ^\varpi_{\ugamma_1}(Q;\Cset, \Eset) \subset \cJ^\varpi_{\ugamma_2}(Q;\Cset, \Eset)$, 
    valid for $\gamma_1 \geq \gamma_2$.
   Fix any $C_0>0$ and choose $p\geq C_0 \ell$.  
   For all $A>0$ there exist $C_1, b_1> 0$ such that 
   (see, e.g., \cite[Lemma 5.9]{SSWII})
   \begin{equation*}
      \forall p\in\N: \quad \min_{1\leq s\leq p} \Psi_{p,s} A^{2s} (s!)^2 \leq C_1 e^{-b_1 p}.
   \end{equation*}
   From \eqref{eq:internal-appx} and \eqref{eq:internal-appx-face}, there holds
   \begin{align*}
     &\sum_{K:d_e^K>0} \|v - \Pihpelldim v\|^2_{H^1(K)}
     \\ & \qquad
          \leq C_2 \left( \sum_{K:d_e^K>0}e^{-b_1\ell}(d_\corn^K)^{2(\min(\gamma_\corn, \gamma_e)-1)}  + \sum_{K:d_e^K>0, d_f^K=0}(d_e^K)^{2(\min(\gamma_e, \gamma_\corn)-2)}\sigma^{2\ell}\right)
     \\ & \qquad
          = C_2\big((I)+ (II)\big),
   \end{align*}
   where $d_f^K$  indicates the distance of an element $K$ to one of the faces
   of $Q$. There holds directly $(I)\leq C\ell^2 e^{-b_1\ell}$. 
   Furthermore, because $(\min(\gamma_\corn, \gamma_e)-2)<0$,
   \begin{align*}
     (II) &\leq 6 \sigma^{2\ell}\sum_{k_1=1}^\ell\sum_{k_2=1}^{k_1} \sigma^{2(\ell-k_2)(\min(\gamma_e, \gamma_\corn)-2)}
     \\
   & \leq C \sigma^{2\ell}\sum_{k_1=1}^\ell\sigma^{2\ell(\min(\gamma_\corn, \gamma_e)-2)}\\
  & \leq C \ell \sigma^{2(\min(\gamma_\corn, \gamma_e)-1)\ell}.
   \end{align*}
   Adjusting the constants at the exponent to absorb the terms in $\ell$ and $\ell^2$, we obtain
   the desired estimate.
  \end{proof}
A similar statement holds when $d=2$, and the proof follows along the same lines.
  \begin{lemma}
    \label{lemma:exp-int-2d}
    Let $d=2$ and $v\in \cJ^\varpi_\ugamma(Q;\Cset, \Eset)$ with $\gamma_\corn>1$. 
    There exists a constant $C_0>0$ such that if $p\geq C_0 \ell$, 
    there exist constants $C, b>0$ such that 
  \begin{equation*}
        \sum_{K: d_\corn^K>0} \|v - \Pihpelldim v\|^2_{H^1(K)}
        \leq C e^{-b\ell}, \qquad \forall \ell \in \mathbb{N}.
  \end{equation*}
  \end{lemma}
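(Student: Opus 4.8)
The plan is to mimic the proof of Lemma \ref{lemma:exp-int} for $d=3$, but with a considerably simplified geometry, since in two dimensions there are no singular edges and only a single singular corner at the origin. First I would reduce, exactly as in the three-dimensional case, to the situation $\gamma_\corn \in (1,2)$, using the inclusion $\cJ^\varpi_{\ugamma_1}(Q;\Cset,\Eset)\subset \cJ^\varpi_{\ugamma_2}(Q;\Cset,\Eset)$ valid for $\gamma_1\geq\gamma_2$ and the fact that the upper bound only becomes larger when $\gamma_\corn$ is decreased. Then I would set up the same normalized notation: for $K=J^\ell_{k_1}\times J^\ell_{k_2}$ with $0<k_1,k_2\leq\ell$ (so $d_\corn^K>0$), write $\hK=(-1,1)^2$, $\hv=v\circ\Phi_K^{-1}$, and invoke the two-dimensional analogue of Lemma \ref{lemma:ref-proj} (available from \cite{SSWII}, as noted in the remark following that lemma) together with the rescaling and weight-tracking bookkeeping, to obtain for all $1\leq s\leq p$
\begin{equation*}
\|v-\Pi^K_p v\|_{H^1(K)}^2 \leq C\,\Psi_{p,s}\,A^{2s+c_0}\,(d_\corn^K)^{2(\gamma_\corn-1)}\,((s+c_1)!)^2,
\end{equation*}
for constants $C,A>0$ depending only on $\sigma$, the $\Cappx$-constants, and $C_v,A_v$, and fixed small integers $c_0,c_1$. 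This step is entirely parallel to Lemmas \ref{lemma:internal-appx-1}--\ref{lemma:internal-appx-3}, only easier: with no edge present there is no $\rho_{ce}$ factor and no splitting of $K$ into the four pieces $K_\corn,K_e,K_{\corn e},K_0$; one only distinguishes $K\cap Q_\corn$ from $K\cap Q_0$, and the weight $r_\corn^{(\alpham-\gamma_\corn)_+}$ is handled exactly as the corresponding terms $(I)_\corn,(I)_0$ in those lemmas.

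Next I would sum over all internal elements. Using $p\geq C_0\ell$ and choosing $s$ to optimize, the standard estimate $\min_{1\leq s\leq p}\Psi_{p,s}A^{2s}(s!)^2\leq C_1 e^{-b_1 p}\leq C_1 e^{-b_1 C_0 \ell}$ (as in \cite[Lemma 5.9]{SSWII}) gives
\begin{equation*}
\sum_{K:\,d_\corn^K>0}\|v-\Pihpelldim v\|_{H^1(K)}^2 \leq C_2\, e^{-b_1 C_0\ell}\sum_{K:\,d_\corn^K>0}(d_\corn^K)^{2(\gamma_\corn-1)}.
\end{equation*}
Since $\gamma_\corn>1$ the exponent $2(\gamma_\corn-1)$ is positive, so $(d_\corn^K)^{2(\gamma_\corn-1)}\leq 1$ and the remaining sum is bounded by the number of elements, which is $(\ell+1)^2$. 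Absorbing the polynomial factor $\ell^2$ into the exponential by slightly decreasing the rate constant yields the claimed bound $\le C e^{-b\ell}$ with $C,b>0$ independent of $\ell$. Here I would also recall, via Lemma \ref{lemma:reg-cont} and Remark \ref{rem:global-continuity}, that $\Pihpelldim v$ restricted to an internal element $K$ coincides with the local projection $\Pi^K_p v$ (the layer $J_0^\ell$ uses the degree-one projector, but internal elements all have $k_i\geq 1$, so only the degree-$p$ projectors enter), which justifies replacing $\Pihpelldim v$ by $\Pi^K_p v$ in each term of the sum.

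I do not expect any genuine obstacle here; the statement is the two-dimensional, edge-free specialization of Lemma \ref{lemma:exp-int}, whose proof is explicitly flagged in the excerpt as following ``along the same lines.'' The only points requiring a little care are: (i) citing the correct two-dimensional version of the reference-element projection estimate from \cite{SSWII}; (ii) keeping the weight exponents consistent — one must verify that when $\alpham-\gamma_\corn<0$ the $(\cdot)_+$ truncation is harmless and the bound acquires a benign factor $(d_\corn^K)^2$ rather than $(d_\corn^K)^{2(\gamma_\corn-1)}$, but since $\gamma_\corn<2$ we have $2>2(\gamma_\corn-1)$ so this is the better of the two and is still $\leq 1$; and (iii) making sure that the element-count bound $(\ell+1)^2$ is indeed polynomial in $\ell$ so that it is absorbed. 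None of these presents a real difficulty, so the proof reduces to transcribing the $d=3$ argument with the edge contributions deleted.
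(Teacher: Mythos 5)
There is a genuine gap: you have misidentified the index set of the sum. In two dimensions, $d_\corn^K>0$ excludes only the single corner element $J_0^\ell\times J_0^\ell$; it does \emph{not} exclude the $2\ell$ boundary-layer elements $J_0^\ell\times J_k^\ell$ and $J_k^\ell\times J_0^\ell$ with $k\geq 1$, which touch a side of $Q$ but not the corner. Your parenthetical ``$0<k_1,k_2\leq\ell$ (so $d_\corn^K>0$)'' reverses a one-way implication, and your later assertion that ``internal elements all have $k_i\geq 1$, so only the degree-$p$ projectors enter'' is false for exactly these elements: by \eqref{eq:pihpell1d}, on $J_0^\ell\times J_k^\ell$ the global projector restricts to the anisotropic $\pi^0_1\otimes\pi^k_p$, which is only affine in the direction perpendicular to the side. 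Your element-wise estimate, which is the isotropic degree-$p$ bound of the type in Lemmas \ref{lemma:internal-appx-1}--\ref{lemma:internal-appx-3}, does not apply there, so a nontrivial (indeed $O(\ell)$-many) portion of the sum is left unbounded by your argument.

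The omission is repairable but requires an additional estimate that you cannot obtain by ``deleting the edge contributions'' from the $d=3$ proof: note that in Lemma \ref{lemma:exp-int} the sum over $\{K:d_e^K>0\}$ is explicitly split into truly interior elements and face elements with one $k_j=0$, the latter handled by Lemma \ref{lemma:internal-appx-face} with mixed degrees $(p,p,1)$ and contributing the separate term $(II)\lesssim \ell\,\sigma^{2(\min(\gamma_\corn,\gamma_e)-1)\ell}$. The correct $d=2$ transcription must do the same: for $K=J_0^\ell\times J_k^\ell$ one decomposes $v-(\pi^0_1\otimes\pi^k_p)v$ as in \eqref{eq:edge-err-decomp}, uses a one-dimensional low-order estimate on $J_0^\ell$ (controlling second derivatives weighted by $r_\corn^{(2-\gamma_\corn)_+}$, in the spirit of Lemma \ref{lemma:2d-square}) together with Lemma \ref{lemma:oned} in the other direction, and obtains an extra contribution of order $\ell\,\sigma^{2(\gamma_\corn-1)\ell}$, which is then absorbed into $Ce^{-b\ell}$. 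The rest of your argument (reduction to $\gamma_\corn\in(1,2)$, the interior element bound, the $\min_s\Psi_{p,s}A^{2s}(s!)^2\leq C_1e^{-b_1p}$ optimization, and absorbing the $(\ell+1)^2$ element count) is sound and matches the route the paper indicates.
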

  %
  \subsection{Estimates on elements along an edge in three dimensions}
  \label{sec:edge-estimates}
   In the following lemma, we consider the elements $K$ along one edge, but separated
   from the singular corner.
  \begin{lemma}
    \label{lemma:edge-elem}
   Let $d=3$, $e\in\Eset$ and let $K\in \cG^\ell_3$  be such that $d_\corn^K >0$ for
   all $\corn\in \Cset$ and $d_e^K=0$. Let $C_v, A_v>0$. Then, if $v\in
   \cJ^\varpi_\ugamma(Q;\Cset, \Eset; C_v, A_v)$ with 
   $\gamma_\corn\in(3/2,5/2)$, $\gamma_e\in(1,2)$, there exist $C, A>0$ such that
   for all $p\in\N$ and all $1\leq s\leq p$, with $(p_1,p_2,p_3)\in\N^3$ such that
   $p_\parallel = p$, $p_{\perp,1} = 1 = p_{\perp,2}$,
   \begin{equation}
     \label{eq:edge-elem}
   \| v - \Pi^K_{p_1p_2p_3}v\|_{H^1(K)}^2 
   \leq C\left(\sigma^{2\{\min\{\gamma_\corn-1,s\}(\ell-k)} \Psi_{p,s} A^{2s}((s+3)!)^2 + \sigma^{ 2(\min(\gamma_e, \gamma_\corn)-1)\ell}\right),
   \end{equation}
   where $k\in \{1, \dots, \ell\}$ is such that $d_\corn^K = \sigma^{\ell-k+1}$.
  \end{lemma}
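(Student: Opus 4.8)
The plan is to treat the element $K$ lying along a singular edge $e$ but at a positive distance $\sigma^{\ell-k+1}$ from the singular corner by combining a one-dimensional high-order estimate in the edge-parallel direction with a low-order ($p=1$) estimate in the two perpendicular directions, exactly as the tensor-product structure of $\Pi^K_{p_1p_2p_3} = \pi^{k}_p \otimes \pi^0_1 \otimes \pi^0_1$ (up to permutation of axes) dictates. First I would fix notation: suppose $e$ is the $x_1$-axis, so that $K = J^\ell_k \times J^\ell_0 \times J^\ell_0$ with $\hpar = |J^\ell_k| \simeq \sigma^{\ell-k}$, $\hperpone = \hperptwo = \sigma^\ell$, and $d^K_e \simeq \sigma^\ell$, $d^K_c \simeq \sigma^{\ell-k+1}$. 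On $K$ the weights behave like $r_e(x) \simeq r_\bot := (x_2^2+x_3^2)^{1/2}$ and $r_c(x) \simeq x_1$, so $\rho_{ce}(x) \simeq r_\bot/x_1$, and the relevant regularity from $\cJ^\varpi_\ugamma(Q;\Cset,\Eset;C_v,A_v)$ reads, for $|\alpha|=m$,
$\| r_c^{(|\alpha|-\gamma_c)_+} \rho_{ce}^{(|\alphaperp|-\gamma_e)_+} \dalpha v \|_{L^2(K_{ce})} \le C_v A_v^m m!$,
with analogous bounds on $K_c$, $K_e$, $K_0$ (the intersections of $K$ with $\Omega_{ce}$, $\Omega_c$, $\Omega_e$, $\Omega_0$).

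Next I would apply the reference-element estimates of Lemma \ref{lemma:ref-proj} and Lemma \ref{lemma:ref-proj2} with the choice $(p_1,p_2,p_3)=(p,1,1)$ and $(s_1,s_2,s_3)=(s,1,1)$, pulled back to $K$ by the affine scaling $\Phi_K$. This bounds $\|v-\Pi^K_{p11}v\|_{H^1(K)}^2$ by a sum of three groups of terms: (i) terms carrying $\Psi_{p,s}$ and $\hpar^{2s}$ (high-order in the edge direction), (ii) terms carrying $\hperpone^{2\cdot 1}$ or $\hperptwo^{2\cdot 1}$ and involving two derivatives in a perpendicular direction (the "$s_i+1=2$" low-order terms), and possibly cross terms. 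For group (i) I would use the one-dimensional-in-$x_1$ part of the argument from Lemma \ref{lemma:oned}/Lemma \ref{lemma:internal-appx-1}: write $\hpar^{2s} \lesssim \sigmaratio^{2(s+1-\gamma_c)} \sigma^{2(\ell-k)\min\{\gamma_c-1,s\}}$ using $x_1|_{J^\ell_k} \gtrsim \sigma^{\ell-k}$, absorbing the extra factor $\sigma^{(\ell-k)\cdot 2(\gamma_c-1)}$-type weight into the corner weight $r_c^{(s+1-\gamma_c)_+}$, which yields the first term $\sigma^{2\min\{\gamma_c-1,s\}(\ell-k)}\Psi_{p,s}A^{2s}((s+3)!)^2$. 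For group (ii), the key point is that these are genuinely second-order perpendicular derivatives against the singular edge, so one has to pay the edge weight: $\hperpone^2 = \sigma^{2\ell}$, and $\| \dpar^{\alpha_1}\dperpone^2 \dperptwo^{\alpha_2} v\|_{L^2(K)}$ picks up, on $K_e$ and $K_{ce}$, a negative power of $r_e \simeq r_\bot \gtrsim \sigma^\ell$; carefully, $\sigma^{2\ell}\cdot \sigma^{2\ell(\gamma_e-2)} = \sigma^{2\ell(\gamma_e-1)}$ on $K_e$, and similarly on $K_{ce}$ one uses $d_c^{2(\gamma_c-\gamma_e)}d_e^{2(\gamma_e-2)}\sigma^{2\ell} \lesssim \sigma^{2(\min(\gamma_c,\gamma_e)-1)\ell}$ via the elementary inequality $d_c^{\gamma_c-\gamma_e}d_e^{\gamma_e} \le d_e^{\min(\gamma_c,\gamma_e)}$ already used in \eqref{eq:tgamma-gammae} (recall $d_e \le d_c \le 1$). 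This produces the second, $\ell$-only, term $\sigma^{2(\min(\gamma_e,\gamma_c)-1)\ell}$.

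The main obstacle I anticipate is the careful bookkeeping for the mixed subregion $K_{ce}$: here both weights $r_c$ and $\rho_{ce}$ are active simultaneously, the exponents $(|\alpha|-\gamma_c)_+$ and $(|\alphaperp|-\gamma_e)_+$ may or may not be positive depending on whether $s+1+\alpha_1+\alpha_2 \ge \gamma_c$ (which can fail only when $s=1$, $\alpha_1=\alpha_2=0$, since $\gamma_c < 5/2$), and one must convert products of element sizes $\hpar^{2s}\hperpone^{2\alpha_1}\hperptwo^{2\alpha_2}$ into powers of $d_c$ and $d_e$ consistently — paralleling Lemmas \ref{lemma:internal-appx-1}--\ref{lemma:internal-appx-face} but now with one element size being $O(\sigma^\ell)$ (edge-touching) rather than bounded below. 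I would handle the case distinction $s+1+\alpha_1+\alpha_2 \gtrless \gamma_c$ explicitly, noting that in the exceptional low-order case the corner factor $d_c^{2(\gamma_c-1)}$ is simply replaced by $d_c^2$ (harmless, being larger), and I would collect all resulting terms, using $\sum_\alpha (C_v A_v^{|\alpha|}|\alpha|!) \lesssim A^{2s+6}((s+3)!)^2$ for the finitely many multi-indices with at most $s+3$ total derivatives, and $\sigmaratio^{2s+O(1)} A_v^{2s} \le A^{2s}$ after redefining $A$ in terms of $\sigma$ and $A_v$. Finally, transferring back from $\hK$ to $K$ costs the Jacobian factors $\hpar\hperpone\hperptwo$ and $\hperpone\hperptwo/\hpar$ etc., which are absorbed into the constants since only polynomial (indeed bounded) powers of $\sigma$ appear; assembling the three directional gradient bounds and the $L^2$ bound gives \eqref{eq:edge-elem}.
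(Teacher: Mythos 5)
Your overall architecture (anisotropic degrees $(p,1,1)$, scaling, weight bookkeeping on $K_{\corn e}$, $K_\corn$, $K_e$, $K_0$, and the inequality $d_\corn^{\tgamma}d_e^{\gamma_e}\le d_e^{\min(\gamma_\corn,\gamma_e)}$) matches the paper's, but the central analytical step fails. You propose to apply the reference-element estimates of Lemmas \ref{lemma:ref-proj} and \ref{lemma:ref-proj2} directly with $(s_1,s_2,s_3)=(s,1,1)$. Those lemmas require $v\in H^{s+1}(I)\otimes H^{2}(I)\otimes H^{2}(I)$ and their right-hand sides contain \emph{unweighted} $L^2(\hK)$-norms of second-order perpendicular derivatives such as $\|\partial_{x_1}^{\alpha_1}\dperpone^{2}\dperptwo^{\alpha_2}v\|_{L^2(K)}$. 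For an element with $d_e^K=0$ we have $K = J^\ell_k\times J^\ell_0\times J^\ell_0$ with $J^\ell_0=(0,\sigma^\ell)$, so $r_e$ ranges down to $0$ on $K$; since $\gamma_e<2$, the class $\cJ^\varpi_\ugamma$ only controls $\|r_e^{2-\gamma_e}\dalphaperp v\|_{L^2}$ for $\alphaperpm=2$, and the unweighted norms you need may be infinite. Your claim that on $K_e$ and $K_{\corn e}$ one "picks up a negative power of $r_e\simeq r_\bot\gtrsim\sigma^\ell$" is the step that breaks: that lower bound holds for the face elements of Lemma \ref{lemma:internal-appx-face} (where $d_e^K>0$), not for edge-touching elements, which is precisely why the lemma you are proving is stated and proved separately.

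The paper circumvents this with a different decomposition: writing $\Pi^K_{p11}=\pipar\otimes\piperp$ with $\piperp=\pi^0_1\otimes\pi^0_1$, it splits
$v-\Pi^K_{p11}v = (v-\piperp v) + \piperp(v-\pipar v)$.
The first summand is handled by the two-dimensional corner estimate \eqref{eq:2d-square-approx} of Lemma \ref{lemma:2d-square}, applied in the perpendicular cross-section for each fixed $x_1$ (and to $\dpar v$ for the parallel derivative); that lemma only uses the weighted norm $\|r^{2-\beta}\dalpha v\|_{L^2}$ because it passes through the $W^{2,1}\hookrightarrow H^1$ embedding and Cauchy--Schwarz with $r^{-2+\beta}\in L^2$ — this is exactly where $\gamma_e>1$ enters. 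The second summand combines the weighted \emph{stability} bound \eqref{eq:2d-square-stab} for $\piperp$ with the univariate Lemma \ref{lemma:oned} in the $x_1$-direction, using that $r_e$ is independent of $x_1$ so that $\pipar$ commutes with multiplication by $r_e^{2-\gamma_e}$. To repair your argument you would either have to reproduce this splitting or first prove a weighted analogue of Lemma \ref{lemma:ref-proj2}; as written, the proposal does not go through.
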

  \begin{proof}
   We suppose that $K=J^\ell_k\times J^\ell_0\times J^\ell_0$ for some $k\in \{1, \dots, \ell\}$, 
   the elements along other edges follow by symmetry. This implies that the singular edge is parallel to
    the first coordinate direction. Furthermore, we denote
    \begin{equation*}
      \Pi^K_{p11} = \pi^k_p\otimes (\pi^0_1 \otimes \pi^0_1) = \pipar\otimes\piperp.
    \end{equation*}
    For $\alpha = (\alpha_1, \alpha_2, \alpha_3) \in \mathbb{N}_0^3$, we write
  $\alphapar = (\alpha_1, 0, 0)$ and $\alphaperp = (0, \alpha_2, \alpha_3)$.
  Also,
  \begin{equation*}
    h_\parallel = |J^\ell_k| = \sigma^{\ell-k}(1-\sigma)\qquad h_\bot = \sigma^\ell.
  \end{equation*}
  We have
  \begin{equation}
    \label{eq:edge-err-decomp}
    v - \Pi^K_{p11}v = v - \piperp v + \piperp \left( v - \pipar v \right).
  \end{equation}
  We start by considering the first terms at the right-hand side of the above
  equation. We also compute the norms over $K_{\corn e} = K\cap Q_{\corn e}$; the estimate
  on the norms over $K_\corn = K\cap Q_\corn$ and $K_e = K\cap Q_e$ follow by similar or
  simpler arguments.
  By \eqref{eq:2d-square-approx} from Lemma \ref{lemma:2d-square}, we have that 
  if $\gamma_\corn < 2$
  \begin{subequations}\label{eqs:edge-err-perp-1}
  \begin{equation}
   \begin{aligned}
   \sum_{\alphaperpm\leq 1}\hperp^{-2(1-\alphaperpm)}\|\dalphaperp( v - \piperp v)\|_{L^2(K_{\corn e})}^2
    & \lesssim \hperp^{2(\gamma_e-1)}\sum_{\alphaperpm = 2} \| r_e^{2 -\gamma_e} \dalphaperp v\|_{L^2(K_{\corn e})}^2
   \\ 
   & \lesssim \hpar^{2(\gamma_\corn-\gamma_e)}\hperp^{2(\gamma_e-1)}\sum_{\alphaperpm = 2} \| r_\corn^{(2-\gamma_\corn)_+}\rho_{\corn e}^{2 -\gamma_e} \dalphaperp v\|_{L^2(K_{\corn e})}^2
    \\ 
    & \lesssim \sigma^{2k(\gamma_e-1)}\sigma^{2(\ell-k)(\gamma_\corn-1)} A^{4}
    \lesssim \sigma^{2\ell(\min\{\gamma_\corn,\gamma_e\}-1)} A^{4},
   \end{aligned}
  \end{equation}
  whereas for $\gamma_\corn\geq  2$
  \begin{align}
  \nonumber
  \sum_{\alphaperpm\leq 1}\hperp^{-2(1-\alphaperpm)}\|\dalphaperp( v - \piperp v)\|_{L^2(K_{\corn e})}^2
    & \lesssim \hperp^{2(\gamma_e-1)}\sum_{\alphaperpm = 2} \| r_e^{2 -\gamma_e} \dalphaperp v\|_{L^2(K_{\corn e})}^2
   \\ 
    & \lesssim \sigma^{2\ell(\gamma_e-1)} A^{4}.
  \end{align}
  \end{subequations}
  On $K_e$, the same bound holds as on $K_{\corn e}$ for $\gamma_\corn\geq2$, 
  and on $K_\corn$ the same bounds hold as on $K_{\corn e}$ for $\gamma_\corn<2$.
  By the same argument, for
  $\alphaparm=1$,
\begin{subequations}\label{eqs:edge-err-perp-2} 
\begin{equation}
   \begin{aligned}
   \|\dalphapar ( v - \piperp v)\|_{L^2(K_{\corn e})}^2
    & = 
   \| (\dalphapar v )- \piperp (\dalphapar v)\|_{L^2(K_{\corn e})}^2
   \\ &
   \lesssim \hperp^{2\gamma_e}\sum_{\alphaperpm = 2} \| r_e^{2 -\gamma_e} \dalphaperp \dalphapar v\|_{L^2(K_{\corn e})}^2
   \\ &
   \lesssim \hpar^{2\tgamma-2}\hperp^{2\gamma_e}\sum_{\alphaperpm = 2} \| r_\corn^{3-\gamma_\corn}\rho_{\corn e}^{2 -\gamma_e} \dalpha v\|_{L^2(K_{\corn e})}^2
    \\ & \lesssim \sigma^{2(\ell-k)(\gamma_\corn-1)}\sigma^{2k(\gamma_e-1)} A^{6}
    \lesssim \sigma^{2\ell(\min\{\gamma_\corn,\gamma_e\}-1)} A^{6},
   \end{aligned}
 \end{equation}
and 
\begin{align}
   \| (\dalphapar v )- \piperp (\dalphapar v)\|_{L^2(K_{e})}^2
   & \lesssim \sigma^{2\ell\gamma_e} A^{6},
\\
   \| (\dalphapar v )- \piperp (\dalphapar v)\|_{L^2(K_{\corn})}^2
    & \lesssim \sigma^{2(\ell-k)(\gamma_\corn-1)}\sigma^{2k(\gamma_e-1)} A^{6}
    \lesssim \sigma^{2\ell(\min\{\gamma_\corn,\gamma_e\}-1)} A^{6}.
\end{align}
\end{subequations}
 We now turn to the second part of the right-hand side of
 \eqref{eq:edge-err-decomp}. We use \eqref{eq:2d-square-stab} from Lemma
 \ref{lemma:2d-square}
 so that 
 \begin{equation}
   \label{eq:prelemmaoned}
   \begin{aligned}
    &\sum_{\alphaperpm \leq 1}\|\dalphaperp\piperp ( v - \pipar v)\|_{L^2(K)}^2\\
    &\qquad \lesssim
    \sum_{\alphaperpm \leq 1}\| \dalphaperp (v- \pipar v )\|_{L^2(K)}^2 +
    \sum_{\alphaperpm = 2} \hperp^{2(\gamma_e-1)}\| r_e^{2- \gamma_e} \dalphaperp (v- \pipar v)\|_{L^2(K)}^2.
  \end{aligned}
\end{equation}
By Lemma \ref{lemma:oned} we have, 
  recalling that $\alphapar=s+1$ and $1\leq s \leq p$, for all $\alphaperpm\leq1$, 
  \begin{align*}
    \| \dalphaperp (v- \pipar v )\|_{L^2(K)}^2
    &=
\| (\dalphaperp v)- \pipar(\dalphaperp v )\|_{L^2(K)}^2
    \\
    &\lesssim\sigmaratio^{2s+2}
    \hpar^{2\min\{\gamma_\corn,s+1\}}\Psi_{p, s}\||x_1|^{(s+1-\gamma_\corn)_+}\dalphapar\dalphaperp v)\|^2_{L^2(K)},
  \end{align*}
  and, for all $\alphaperpm =2$,
  using that $\pipar$ and multiplication by $r_e$ commute, because $r_e$ does not depend on $x_1$,
\begin{align*}
    \| r_e^{2-\gamma_e}\dalphaperp (v- \pipar v )\|_{L^2(K)}^2
    &=
\| (r_e^{2-\gamma_e}\dalphaperp v)- \pipar(r_e^{2-\gamma_e}\dalphaperp v )\|_{L^2(K)}^2\\
    &\lesssim\sigmaratio^{2s+2}
    \hpar^{2\min\{\gamma_\corn,s+1\}}\Psi_{p, s}\||x_1|^{(s+1-\gamma_\corn)_+}r_e^{2-\gamma_e}\dalphapar\dalphaperp v)\|^2_{L^2(K)}.
\end{align*}
Then, remarking that $|x_1| \lesssim r_\corn \lesssim |x_1|$, combining \eqref{eq:prelemmaoned} with
the two inequalities above we obtain
\begin{equation*}
   \begin{aligned}
&\sum_{\alphaperpm \leq  1}\|\dalphaperp\piperp ( v - \pipar v)\|_{L^2(K)}^2
    \\ & \qquad\begin{multlined}[][.95\linewidth]
      \lesssim \sigmaratio^{2s+2}\Psi_{p, s}\hpar^{2\min\{\gamma_\corn-1,s\}}\hpar^2
      \left(\sum_{\alphaperpm \leq 1}\| r_\corn^{(s+1-\gamma_\corn)_+}\dalpha v\|_{L^2(K)}^2
      \right.
      \\
      \left.
        +\sum_{\alphaperpm =2}\hperp^{2(\gamma_e-1)}\| r_\corn^{(s+1-\gamma_\corn)_+} r_e^{2 - \gamma_e} \dalpha v\|_{L^2(K)}^2\right).
    \end{multlined}
  \end{aligned}
\end{equation*}
Adjusting the exponent of the weights, replacing $h_\parallel$ and $h_\bot$ with their
definition, 
we find that there exists $A>0$ depending only on $\sigma$ and $A_v$ such that
\begin{subequations} \label{eqs:edge-err-par-1}
\begin{equation}
   \begin{aligned}
&\sum_{\alphaperpm \leq  1}\|\dalphaperp\piperp ( v - \pipar v)\|_{L^2(K_{\corn e})}^2
    \\ & \qquad
    \begin{multlined}[][.95\linewidth]
    \lesssim\sigmaratio^{2s+2} \Psi_{p, s}\hpar^{2\min\{\gamma_\corn-1,s\}}\hpar^2\left(\sum_{\alphaperpm \leq  1}\hpar^{-2\alphaperpm}\| r_\corn^{(s+1+\alphaperpm-\gamma_\corn)_+}\dalpha v\|_{L^2(K_{\corn e})}^2
     \right.
      \\
      \left.
 +\sum_{\alphaperpm =2}\hperp^{2(\gamma_e-1)}\hpar^{-2\gamma_e}\| r_\corn^{s+3-\gamma_\corn} \rho_{\corn e}^{2 - \gamma_e} \dalpha v\|_{L^2(K_{\corn e})}^2\right)
    \end{multlined}
    \\ & \qquad\lesssim \sigma^{2(\ell-k)\min\{\gamma_\corn-1,s\}}\Psi_{p, s} A^{2s+4}
    ((s+3)!)^2,
   \end{aligned}
 \end{equation}
and similarly
\begin{align}
& \sum_{\alphaperpm \leq  1}\|\dalphaperp\piperp ( v - \pipar v)\|_{L^2(K_{e})}^2
    \lesssim \sigma^{2(\ell-k)\min\{\gamma_\corn,s+1\}}\Psi_{p, s} A^{2s+4}
    ((s+3)!)^2,
\end{align}
and the estimate on $K_\corn$ is the same as that on $K_{\corn e}$.
\end{subequations}
Similarly to \eqref{eq:prelemmaoned}, 
using first \eqref{eq:2d-square-stab-L2} from the proof of Lemma \ref{lemma:2d-square},
and then Lemma \ref{lemma:oned}
\begin{align*}
     &\sum_{\alphaparm \leq 1}\|\dalphapar\piperp ( v - \pipar v)\|_{L^2(K)}^2
     \\
     &\qquad \lesssim
\sum_{\alphaparm\leq 1}\left(
    \sum_{\alphaperpm \leq 1}h_\bot^{2\alphaperpm} \| \dalphaperp \dalphapar (v- \pipar v )\|_{L^2(K)}^2 +
     \sum_{\alphaperpm = 2} \hperp^{2\gamma_e}\| r_e^{2 - \gamma_e} \dalphaperp \dalphapar (v- \pipar v)\|_{L^2(K)}^2
  \right)
     \\
     & \qquad\begin{multlined}[][.95\linewidth]
       \lesssim \sigmaratio^{2s+2}\Psi_{p, s}\hpar^{2\min\{\gamma_\corn-1,s\}}
       \left(\sum_{\alphaparm=s+1}\sum_{\alphaperpm \leq 1}h_\bot^{2\alphaperpm}\| r_\corn^{(s+1-\gamma_\corn)_+}\dalphapar \dalphaperp v\|_{L^2(K)}^2
       \right.
       \\
       \left.
         +\sum_{\alphaparm=s+1}\sum_{\alphaperpm =2}h_\bot^{2\gamma_e}\| r_e^{2 - \gamma_e} r_\corn^{(s+1-\gamma_\corn)_+} \dalphapar \dalphaperp v\|_{L^2(K)}^2\right).
     \end{multlined}
\end{align*}
As before, there exists $A>0$ depending only on $\sigma$ and $A_v$ such that
\begin{subequations}\label{eqs:edge-err-par-2}
\begin{equation}
   \begin{aligned}
     &\sum_{\alphaparm \leq 1}\|\dalphapar\piperp ( v - \pipar v)\|_{L^2(K_{\corn e})}^2
     \\ & \qquad\begin{multlined}[][.95\linewidth]
       \lesssim \sigmaratio^{2s+2}\Psi_{p, s}\hpar^{2\min\{\gamma_\corn-1,s\}}
       \left(\sum_{\alphaparm=s+1}\sum_{\alphaperpm \leq 1}\hperp^{2\alphaperpm }\hpar^{-2\alphaperpm}\| r_\corn^{(s+1+\alphaperpm-\gamma_\corn)_+}\dalpha v\|_{L^2(K_{\corn e})}^2
       \right.
       \\
       \left.
         +\sum_{\alphaparm=s+1}\sum_{\alphaperpm =2}\hperp^{2\gamma_e}\hpar^{-2\gamma_e}\| r_\corn^{s+3-\gamma_\corn} \rho_{\corn e}^{2 - \gamma_e} \dalpha v\|_{L^2(K_{\corn e})}^2\right)
     \end{multlined}
    \\ &\qquad \lesssim \sigma^{2(\ell-k)\min\{\gamma_\corn-1,s\}}\Psi_{p, s} A^{2s+4} 
    ((s+3)!)^2,
  \end{aligned}
\end{equation}
and 
\begin{equation}
\begin{aligned}
     &\sum_{\alphaparm \leq 1}\|\dalphapar\piperp ( v - \pipar v)\|_{L^2(K_{e})}^2
     \\ & \qquad\begin{multlined}[][.95\linewidth]
       \lesssim \sigmaratio^{2s+2}\Psi_{p, s}\hpar^{2\min\{\gamma_\corn-1,s\}}
       \left(\sum_{\alphaparm=s+1}\sum_{\alphaperpm \leq 1}\hperp^{2\alphaperpm }\| r_\corn^{(s+1-\gamma_\corn)_+}\dalpha v\|_{L^2(K_{e})}^2
       \right.
       \\
       \left.
         +\sum_{\alphaparm=s+1}\sum_{\alphaperpm =2}\hperp^{2\gamma_e}\| r_\corn^{(s+1-\gamma_\corn)_+} r_e^{2 - \gamma_e} \dalpha v\|_{L^2(K_{e})}^2\right)
     \end{multlined}
    \\ &\qquad \lesssim \sigma^{2(\ell-k)\min\{\gamma_\corn-1,s\}}\Psi_{p, s} A^{2s+4} 
    ((s+3)!)^2,
\end{aligned}
\end{equation}
and the estimate on $K_\corn$ is the same as that on $K_{\corn e}$.
\end{subequations}
The assertion now follows from \eqref{eqs:edge-err-perp-1},
\eqref{eqs:edge-err-perp-2}, \eqref{eqs:edge-err-par-1}, and
\eqref{eqs:edge-err-par-2}, upon possibly adjusting the value of the constant $A$.
\end{proof}
\begin{lemma}
  \label{lemma:exp-edge}
  Let $d=3$ and $v\in\cJ^\varpi_\ugamma(Q;\Cset, \Eset)$ with $\gamma_\corn>3/2$, $\gamma_e>1$. There exists a constant $C_0>0$ such that if $p\geq C_0 \ell$, there exist
   constants $C, b>0$ such that
\begin{equation*}
   \sum_{K:d_\corn^K>0,\atop d_e^K=0}\| v - \Pihpelldim v\|_{H^1(K)} 
   \leq Ce^{-b\ell}, \qquad \forall \ell\in \mathbb{N}.
 \end{equation*}
\end{lemma}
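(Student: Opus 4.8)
The plan is to follow exactly the template used for the interior elements in Lemma~\ref{lemma:exp-int}, but now summing over the family of elements $K$ that lie along a singular edge $e$ while staying away from all singular corners. First I would invoke Lemma~\ref{lemma:edge-elem}, which provides, for each such element $K = J^\ell_k \times J^\ell_0 \times J^\ell_0$ (up to symmetry among the three edges), the elementwise bound
\begin{equation*}
\| v - \Pihpelldim v\|_{H^1(K)}^2
\leq C\left(\sigma^{2\min\{\gamma_\corn-1,s\}(\ell-k)} \Psi_{p,s} A^{2s}((s+3)!)^2 + \sigma^{2(\min(\gamma_e,\gamma_\corn)-1)\ell}\right),
\end{equation*}
valid for every $1\le s\le p$ and with $p_\parallel = p$, $p_{\perp,1}=p_{\perp,2}=1$. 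As in Lemma~\ref{lemma:exp-int}, I reduce to the case $\gamma_\corn\in(3/2,5/2)$, $\gamma_e\in(1,2)$ using the inclusion $\cJ^\varpi_{\ugamma_1}\subset\cJ^\varpi_{\ugamma_2}$ for $\gamma_1\ge\gamma_2$, and I note that $\Pihpelldim$ restricted to edge elements coincides with the tensorized projector $\Pi^K_{p11}$ appearing in that lemma (the choice $p_{\perp,i}=1$ is consistent with the definition of $\pi^0_1$ on $J^\ell_0$ in \eqref{eq:pihpell1d}).

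Next I would carry out the summation. For a fixed edge there are, up to symmetry, $O(\ell)$ relevant elements, indexed by $k\in\{1,\dots,\ell\}$ (together with a bounded number of companion elements in the two perpendicular directions near the edge). Summing the first term gives a geometric-type series $\sum_{k=1}^\ell \sigma^{2\min\{\gamma_\corn-1,s\}(\ell-k)}$, which is bounded by a constant depending only on $\sigma$ and $\gamma_\corn$ (since the exponent is positive); summing the second term over the $O(\ell)$ elements contributes a factor $O(\ell)$ times $\sigma^{2(\min(\gamma_e,\gamma_\corn)-1)\ell}$. Thus
\begin{equation*}
\sum_{K:\,d_\corn^K>0,\ d_e^K=0}\| v - \Pihpelldim v\|_{H^1(K)}^2
\leq C\left(\Psi_{p,s} A^{2s}((s+3)!)^2 + \ell\,\sigma^{2(\min(\gamma_e,\gamma_\corn)-1)\ell}\right),
\end{equation*}
summing also over the (finitely many) singular edges in $\Eset$.

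Then I would close the estimate exactly as in Lemma~\ref{lemma:exp-int}: choose $p\ge C_0\ell$ for a suitable $C_0$, and use the elementary fact (cf.\ \cite[Lemma~5.9]{SSWII}) that for every $A>0$ there exist $C_1,b_1>0$ with $\min_{1\le s\le p}\Psi_{p,s}A^{2s}((s+3)!)^2 \le C_1 e^{-b_1 p}$, so that the first term is $\le C_1 e^{-b_1 C_0 \ell}$. For the second term, $\min(\gamma_e,\gamma_\corn)-1>0$ gives $\sigma^{2(\min(\gamma_e,\gamma_\corn)-1)\ell} = e^{-c\ell}$ with $c>0$, and the polynomial factor $\ell$ is absorbed by slightly decreasing the exponent. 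Taking a square root and combining yields the claimed bound $\sum_{K}\|v-\Pihpelldim v\|_{H^1(K)}\le Ce^{-b\ell}$ for suitable $C,b>0$, for all $\ell\in\mathbb N$.

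The only mildly delicate point — and hence the main thing to get right — is the bookkeeping of which elements are actually summed over: these are the elements touching a singular edge $e$ but not any singular corner, i.e.\ with one index $k_i$ equal to $0$ (the direction realizing $d_e^K=0$) along two coordinate directions perpendicular to $e$, and the third index ranging over $1,\dots,\ell$, with the subtlety that the parallel direction must also stay away from the corner. One must also ensure that the projector $\Pihpelldim$ on these elements is exactly the anisotropic $\Pi^K_{p11}$ of Lemma~\ref{lemma:edge-elem}, which follows from \eqref{eq:pihpell1d} and \eqref{eq:Pihpell} together with the interelement continuity noted in Remark~\ref{rem:global-continuity}. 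Everything else is the same geometric-series-plus-factorial-decay argument already used for the interior estimate, so no genuinely new obstacle arises. The analogous $d=2$ statement is not needed here since edges occur only for $d=3$.
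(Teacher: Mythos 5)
Your proposal is correct and follows essentially the same route as the paper's proof: reduce to $\gamma_\corn\in(3/2,5/2)$, $\gamma_e\in(1,2)$, sum the elementwise bound of Lemma~\ref{lemma:edge-elem} over the $O(\ell)$ edge elements per singular edge, bound the geometric sum in $k$ by a constant, absorb the factor $\ell$ multiplying $\sigma^{2(\min(\gamma_\corn,\gamma_e)-1)\ell}$ into the exponent, and use $\min_{1\le s\le p}\Psi_{p,s}A^{2s}((s+3)!)^2\le C_1e^{-b_1p}$ together with $p\ge C_0\ell$. Your additional checks (that $\Pihpelldim$ restricted to edge elements is the anisotropic $\Pi^K_{p11}$, and the element bookkeeping) are correct and consistent with what the paper leaves implicit.
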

\begin{proof}
  As in the proof of Lemma \ref{lemma:exp-int}, we may assume that 
  $\gamma_\corn\in(3/2,5/2)$ and $\gamma_e\in(1,2)$.
  The proof of this statements follows by summing over the right-hand side of
  \eqref{eq:edge-elem}, i.e.,
  \begin{multline*}
      \begin{aligned}
    \sum_{K:d_\corn^K>0,\atop d_e^K=0}\| v - \Pihpelldim v\|_{H^1(K)}^2 
    & \leq
    C\left(\sum_{k=1}^\ell \sigma^{2\min\{\gamma_\corn-1,s\}(\ell-k)} \Psi_{p, s} A^{2s}((s+3)!)^2 + \sigma^{ 2(\min(\gamma_\corn,\gamma_e)-1)\ell}\right)
      \\ &
      =C
      ((I) + (II)).
    \end{aligned}
  \end{multline*}
  We have $(II) \lesssim \ell \sigma^{2(\min(\gamma_\corn, \gamma_e)-1)\ell}$;
  the observation that for all $A>0$ there exist $C_1, b_1>0$ such that 
  \begin{equation*}
  \min_{1\leq s\leq p}\Psi_{p, s} ((s+3)!)^2 A^{2s} \leq  C_1 e^{-b_1 p},
  \end{equation*}
  (see, e.g., \cite[Lemma 5.9]{SSWII}).
  Combining with $p\geq C_0\ell$ concludes the proof.
\end{proof}
\subsection{Estimates at the corner}
\label{sec:corner-estimates}
The lemma below follows from classic low-order finite element approximation
results and from the embedding $\cJ^2_\ugamma(Q;\Cset, \Eset)\subset H^{1+\theta}(Q)$, valid
for a $\theta>0$ if $ \gamma_\corn-d/2>0$, for all $\corn\in\Cset$, and, when $d=3$, $\gamma_e >1$ for all $e\in\Eset$ 
(see, e.g., \cite[Remark 2.3]{SchSch2018}).
\begin{lemma}
  \label{lemma:exp-corner}
  Let $d \in \{2,3\}$, $K = \bigtimes_{i=1}^dJ_0^{\ell}$. Then, if $v\in\cJ^\varpi_\ugamma(Q;\Cset, \Eset)$ with
  \begin{alignat*}{2}
    &\gamma_\corn>1,\;\text{for all }\corn\in\Cset ,  &&\text{ if } d = 2,\\
    & \gamma_\corn>3/2\text{ and }
    \gamma_e>1,\; \text{for all }\corn\in\Cset\text{ and }e\in\Eset,\quad &&\text{ if } d = 3,
  \end{alignat*}
  there exists a constant $C_0>0$ independent of $\ell$ such that if $p\geq C_0\ell$, there exist constants $C,b>0$ such that
  \begin{equation*}
    \| v - \Pihpelldim v\|_{H^1(K)} 
    \leq Ce^{-b\ell}.
  \end{equation*}
\end{lemma}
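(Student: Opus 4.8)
The plan is as follows. On the single corner element $K=\bigtimes_{i=1}^d J_0^{\ell}=(0,\sigma^\ell)^d$ the operator $\Pihpelldim$ reduces to the $\mathbb{Q}_1$ nodal interpolant at the $2^d$ vertices of $K$: by \eqref{eq:pihpell1d} each factor $\pihpell$ acts as $\pi^0_1$ on $J_0^\ell$, and since $\pi^0_1 u$ on $J_0^\ell$ is determined by the two endpoint values of $J_0^\ell$, the restriction $(\Pihpelldim v)|_K$ is the $d$-linear function matching $v$ at the corners of $K$ (exactly as in Lemma \ref{lemma:2d-square}). This interpolant is well defined because $\cJ^2_\ugamma(Q;\Cset,\Eset)\hookrightarrow\Wmix^{1,1}(Q)\hookrightarrow C^0(\overline Q)$ (Lemma \ref{lemma:W11J3}), so $v$ has continuous point values. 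Since the polynomial degree on $K$ is fixed (equal to $1$), independently of $p$, the hypothesis $p\ge C_0\ell$ is immaterial for this lemma and is stated only for compatibility with Lemmas \ref{lemma:exp-int} and \ref{lemma:exp-edge}; one may take $C_0=1$. Finally, as in Lemma \ref{lemma:exp-int}, the inclusion $\cJ^\varpi_{\ugamma_1}\subset\cJ^\varpi_{\ugamma_2}$ for $\ugamma_1\ge\ugamma_2$ lets us assume $\gamma_\corn\in(1,2)$ when $d=2$, and $\gamma_\corn\in(3/2,5/2)$, $\gamma_e\in(1,2)$ when $d=3$.

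For $d=2$ the estimate is immediate from \eqref{eq:2d-square-approx} of Lemma \ref{lemma:2d-square} applied with $\beta=\gamma_\corn\in(1,2)$ and with $\fK$ being precisely the element $K$: since the prefactors $\sigma^{-\ell(1-\alpham)}\ge1$, it yields
\[
  \|v-\Pihpelldim v\|_{H^1(K)}\le C\,\sigma^{\ell(\gamma_\corn-1)}\sum_{\alpham=2}\|r_\corn^{2-\gamma_\corn}\dalpha v\|_{L^2(K)}\le C\,\sigma^{\ell(\gamma_\corn-1)}\|v\|_{\cJ^2_\ugamma(Q)},
\]
where $\|v\|_{\cJ^2_\ugamma(Q)}<\infty$ since $v\in\cJ^\varpi_\ugamma\subset\cJ^2_\ugamma$. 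This is the asserted bound with $b=(\gamma_\corn-1)\snormc{\log\sigma}>0$.

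For $d=3$ I would combine the weighted embedding with classical low-order finite element theory on a fixed reference cube. Under the present hypotheses on $\ugamma$ there is $\theta\in(0,1]$, with $\theta<\min\bigl(\gamma_\corn-\tfrac32,\ \min_{e\in\Eset}(\gamma_e-1)\bigr)$, such that $\cJ^2_\ugamma(Q;\Cset,\Eset)\hookrightarrow H^{1+\theta}(Q)$ (see \cite[Remark 2.3]{SchSch2018}); hence $v\in H^{1+\theta}(K)$ with $\snormc[H^{1+\theta}(K)]{v}\le\snormc[H^{1+\theta}(Q)]{v}<\infty$ uniformly in $\ell$. Passing to the reference cube $\hK$ via the affine map $\Phi_K$ of \eqref{eq:Phi}, with $\hv=v\circ\Phi_K^{-1}$ and $(\Pihpelldim v)|_K=(\hPi_1\hv)\circ\Phi_K$, $\hPi_1=\bigotimes_{i=1}^d\pi^0_1$, a fractional Bramble--Hilbert/Deny--Lions argument on the fixed cube $\hK$ (the kernel of the $H^{1+\theta}(\hK)$-seminorm is $\mathbb{P}_1\subset\mathbb{Q}_1$, which $\hPi_1$ reproduces) gives $\|\hv-\hPi_1\hv\|_{H^1(\hK)}\le C\,\|\hv\|_{H^{1+\theta}(\hK)}$ with $C$ independent of $\hv$. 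Tracking the affine change of variables ($h:=\sigma^\ell$; the $H^1$- and $H^{1+\theta}$-seminorm scalings combine to a factor $h^\theta$, and a Poincaré/scaling bound handles the $L^2$-part since $v-\Pihpelldim v$ vanishes at the vertices of $K$) then yields
\[
  \|v-\Pihpelldim v\|_{H^1(K)}\le C\,h^{\theta}\,\|v\|_{H^{1+\theta}(K)}\le C\,\sigma^{\theta\ell}\,\|v\|_{H^{1+\theta}(Q)},
\]
the desired bound with $b=\theta\snormc{\log\sigma}>0$.

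I expect the main obstacle to be the rigorous treatment of the $d=3$ corner element when $\theta\le\tfrac12$, where $H^{1+\theta}(\hK)$ no longer embeds into $C^0(\overline{\hK})$ and $\hPi_1$ is not bounded on $H^{1+\theta}(\hK)$ by its $H^{1+\theta}$-norm alone. This is circumvented by exploiting the continuity $\cJ^2_\ugamma\hookrightarrow C^0$: split $v=v(\bm 0)+w$, where the constant $v(\bm 0)$ is reproduced exactly by $\Pihpelldim$ and $w=v-v(\bm 0)$ vanishes at the singular corner, so that the localized weighted regularity of $v$ near the corner gives $\|w\|_{L^\infty(K)}\lesssim\sigma^{\theta\ell}$; the $d$-linear interpolant of $w$ is then controlled crudely by $\|w\|_{L^\infty(K)}$ together with the trivial $H^1(K)$-scaling of $\mathbb{Q}_1$ functions on a cube of side $\sigma^\ell$, while $\|w\|_{H^1(K)}=\|v\|_{H^1(K)}$ is bounded directly from the weighted norm, and subtracting produces the same exponential rate. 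This, together with the precise dependence of $\theta$ on $\ugamma$ in the presence of edge singularities, is exactly the content borrowed from \cite[Remark 2.3]{SchSch2018} and the classical low-order interpolation results referenced there; the remaining steps are routine scaling.
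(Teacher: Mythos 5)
Your argument is correct and follows the same route the paper takes: the paper gives no written proof of this lemma, only the preceding remark that it follows from classic low-order finite element approximation and the embedding $\cJ^2_\ugamma(Q;\Cset,\Eset)\subset H^{1+\theta}(Q)$ of \cite[Remark 2.3]{SchSch2018}, which is exactly what you carry out (with the $d=2$ case handled even more directly via the paper's own Lemma \ref{lemma:2d-square}). The only slips are cosmetic: $\|w\|_{H^1(K)}=\|v\|_{H^1(K)}$ should be an equality of seminorms only, and the bounds $|v|_{H^1(K)}\lesssim\sigma^{\theta\ell}$ and $\|v-v(\bm{0})\|_{L^\infty(K)}\lesssim\sigma^{\theta\ell}$ still require the (standard) weighted-norm computations you allude to rather than following from $H^{1+\theta}$-membership alone.
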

\subsection{Exponential convergence}
\label{sec:exp-conv}
The exponential convergence of the approximation in the full domain $Q$ follows then from Lemmas
\ref{lemma:exp-int}, \ref{lemma:exp-int-2d}, \ref{lemma:exp-edge}, and \ref{lemma:exp-corner}.

\begin{proposition}
  \label{lemma:exp-conv}
  Let $d \in \{2,3\}$, $v\in\cJ^\varpi_\ugamma(Q;\Cset, \Eset)$ with
  \begin{alignat*}{2}
    &\gamma_\corn>1,\;\text{for all }\corn\in\Cset ,  &&\text{ if } d = 2,\\
    & \gamma_\corn>3/2\text{ and }
    \gamma_e>1,\; \text{for all }\corn\in\Cset\text{ and }e\in\Eset,\quad &&\text{ if } d = 3.
  \end{alignat*}
   Then, there exist constants $c_p>0$ and 
     $C, b>0$ such that, for all $\ell\in \mathbb{N}$, 
  \begin{equation*}
    \| v - \PihpellCpdim v\|_{H^1(Q)}\leq Ce^{-b\ell}.
  \end{equation*}
  With respect to the dimension of the discrete space $\Ndof = \dim(\XhpellCpdim)$, the above bound reads
  \begin{equation*}
    \| v - \PihpellCpdim v\|_{H^1(Q)}\leq C\exp(-b \Ndof^{1/(2d)}).
  \end{equation*}
\end{proposition}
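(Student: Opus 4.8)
The plan is to reduce the claimed global estimate to the local, element-wise $hp$-bounds already obtained in Lemmas \ref{lemma:exp-int}, \ref{lemma:exp-int-2d}, \ref{lemma:exp-edge} and \ref{lemma:exp-corner}. The key structural observation is that the quasi-interpolant $\Pihpelldim = \bigotimes_{i=1}^d \pihpell$ acts elementwise on the tensor-product geometric mesh $\cG^\ell_d$: on an element $K = \bigtimes_{i=1}^d J^\ell_{k_i}$ it coincides with a local projector $\Pi^K_{\ps}$ whose degree in the $i$-th coordinate equals $p$ if $k_i \geq 1$ and $1$ if $k_i = 0$, precisely the configuration treated in the cited lemmas. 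Since the closed elements cover $\overline{Q}$ with pairwise disjoint interiors and $\Pihpelldim v \in H^1(Q)$ (Remark \ref{rem:global-continuity}), one has the splitting
\[
\|v - \Pihpelldim v\|_{H^1(Q)}^2 = \sum_{K \in \cG^\ell_d} \|v - \Pi^K_{\ps} v\|_{H^1(K)}^2 ,
\]
so it remains only to control the three natural families of elements. Well-posedness of the local projectors near the corner and the singular edges involves only pointwise evaluation, which is legitimate by the embedding $\cJ^2_\ugamma(Q;\Cset,\Eset) \hookrightarrow C^0(\overline{Q})$ valid under the stated weight hypotheses (cf.\ the discussion preceding Lemma \ref{lemma:exp-corner}).

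Next I would fix the degree-to-level ratio. Let $c_p \geq 1$ be a common upper bound for the constants $C_0$ produced by Lemmas \ref{lemma:exp-int}/\ref{lemma:exp-int-2d}, \ref{lemma:exp-edge} and \ref{lemma:exp-corner}, and take $p = c_p\ell$, so that $p \geq C_0\ell$ in each of these lemmas. Partition $\cG^\ell_d$ into: (i) the single corner element $\bigtimes_{i=1}^d J_0^\ell$; (ii) the edge elements, i.e.\ those $K$ with $d_\corn^K > 0$ but $d_e^K = 0$ for some $e \in \Eset$ — an empty family when $d = 2$; and (iii) the remaining interior elements, characterized by $d_e^K > 0$ (resp.\ $d_\corn^K > 0$ when $d = 2$). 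Summing the squared $H^1$-errors over group (i) and invoking Lemma \ref{lemma:exp-corner}, over group (ii) and invoking Lemma \ref{lemma:exp-edge}, and over group (iii) and invoking Lemma \ref{lemma:exp-int} (if $d = 3$) or Lemma \ref{lemma:exp-int-2d} (if $d = 2$) each yields a bound $\leq C e^{-b\ell}$; adding the at most three contributions and relabelling $C, b$ gives
\[
\|v - \PihpellCpdim v\|_{H^1(Q)} \leq C e^{-b\ell}, \qquad \forall \ell \in \N .
\]

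It then remains to restate this in terms of $\Ndof = \dim(\XhpellCpdim)$. Since $\XhpellCpdim = \bigotimes_{i=1}^d \Xhpellone$ and $\dim \Xhpellone = (\ell+1)p + 1$ with $p = c_p\ell$, we have $\dim \Xhpellone \asymp \ell^2$ and hence $\Ndof \asymp \ell^{2d}$, i.e.\ $\ell \geq c\,\Ndof^{1/(2d)}$ for some $c > 0$; substituting into $e^{-b\ell}$ and absorbing $c$ into $b$ yields $\|v - \PihpellCpdim v\|_{H^1(Q)} \leq C\exp(-b\Ndof^{1/(2d)})$, as asserted. The bulk of the analytic effort — the weighted, element-by-element estimates with their delicate powers of $d_\corn^K$ and $d_e^K$, the optimization over the approximation order $s$, and the summation of the geometric series over the corner and edge layers (in particular the absorption of the negative exponent $\min(\gamma_\corn,\gamma_e)-2 < 0$ appearing in Lemma \ref{lemma:exp-edge}) — has already been carried out in Sections \ref{sec:internal}--\ref{sec:corner-estimates}. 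Accordingly, the only genuinely delicate point left is verifying that a single choice of $c_p$ renders the hypotheses of all three groups of lemmas valid at once, so the remainder is bookkeeping together with the degree-of-freedom count; this is where I would be most careful.
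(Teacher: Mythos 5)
Your proof is correct and follows essentially the same route as the paper, which simply observes that the proposition follows by combining Lemmas \ref{lemma:exp-int} (resp.\ \ref{lemma:exp-int-2d}), \ref{lemma:exp-edge}, and \ref{lemma:exp-corner} over the three families of elements with a single $c_p$ dominating the individual thresholds $C_0$, and then counting $\Ndof=\dim(\XhpellCpdim)=((\ell+1)p+1)^d\asymp\ell^{2d}$. Your explicit handling of the common degree-to-level constant and the element-wise splitting of the squared $H^1$-norm is exactly the intended bookkeeping.
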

\subsection{Explicit representation of the approximant in terms of continuous
  basis functions}
\label{sec:basis}
  Let $p\in\N$. Let $\hat{\zeta}_1(x) = (1+x)/2$ and $\hat{\zeta}_2 = (1-x)/2$. Let also
  $\hat{\zeta}_n(x) = \frac{1}{2}\int_{-1}^xL_{n-2}(\xi)d\xi$, for $n=3, \dots, p+1$,
  where $L_{n-2}$ denotes the $L^\infty((-1,1))$-normalized Legendre polynomial of degree $n-2$
  introduced in Section \ref{sec:loc-proj}.
  Then, fix $\ell\in \mathbb{N}$ and write $\zeta^k_n = \hat{\zeta}_n \circ
  \phi_k$, $n=1,\dots, p+1$ and $k=0, \dots, \ell$,
  with the affine map $\phi_k:J_{k}^\ell \to (-1,1)$ introduced in Section \ref{sec:loc-proj}.
  We construct those functions explicitly: denoting $J^\ell_k = (x_k, x_{k+1})$ and
  $h_k = |x_{k+1}-x_k|$, there holds, for $x\in J_k^\ell$,
  \begin{equation}
    \label{eq:zeta-12}
    \zeta^k_1 (x)= \frac{1}{h_k}(x-x_k),\qquad \qquad \zeta^k_2(x)= \frac{1}{h_k}(x_{k+1} -x),
  \end{equation}
  and
  \begin{equation}
    \label{eq:zeta-n}
    \zeta_n^k(x)= \frac{1}{h_k}\int_{x_k}^x L_{n-2}(\phi_k(\eta))d\eta\qquad n=3, \dots, p+1.
  \end{equation}
Then, for any element $K\in \cG^\ell_3$, with $K = J^\ell_{k_1}\times
J^\ell_{k_2}\times J^\ell_{k_3}$, there exist coefficients $c^K_{\is}$
such that
\begin{equation}
  \label{eq:local-proj-c}
  \Pihpelldim u_{|_K} (x_1, x_2, x_3)= \sum_{i_1, i_2, i_3=1}^{p+1} c^K_{\is} \zeta^{k_1}_{i_1}(x_1)\zeta^{k_2}_{i_2}(x_2)\zeta^{k_3}_{i_3}(x_3), \quad \forall (x_1, x_2, x_3)\in K
\end{equation}
by construction.
We remark that, whenever $i_j > 2$ for all $j=1,2,3$, the basis
functions vanish on the boundary of the element:
\begin{equation*}
  \left(\zeta^{k_1}_{i_1}\zeta^{k_2}_{i_2}\zeta^{k_3}_{i_3}  \right)_{|_{\partial K}} = 0 
       \qquad\text{if }i_j \geq 3, \, j=1,2,3.
\end{equation*}
Furthermore, write
\begin{equation*}
  \psi_{\is}^K (x_1, x_2, x_3)= \zeta^{k_1}_{i_1}(x_1)\zeta^{k_2}_{i_2}(x_2)\zeta^{k_3}_{i_3} (x_3)
\end{equation*}
and consider $t_{\is} = \# \{i_j\leq 2,\, j=1,2,3\}$. We have
\begin{itemize}
\item if $t_{\is} = 1$,  then $\psi_{\is}^K$ is not zero only on one face of the boundary of $K$,
\item if $t_{\is} = 2$,  then $\psi_{\is}^K$ is not zero only on one edge and neighboring
  faces of\ the boundary of $K$,
\item if $t_{\is} = 3$,  then $\psi_{\is}^K$ is not zero only on one corner and neighboring
  edges and faces of the boundary of $K$.
\end{itemize}
Similar arguments hold when $d=2$.
\subsubsection{Explicit bounds on the coefficients}
\label{sec:c-bounds}
We derive here a bound on the coefficients of the local projectors with respect
to the norms of the projected function. 
We will use that
\begin{equation}
  \label{eq:L2-L}
\|L_{i}\circ  \phi_{k}\|_{L^2(J^\ell_{k})}
	= \left(\frac{h_k}{2}\right)^{1/2} \|L_{i} \|_{L^2((-1,1))} 
	= \left( \frac{h_{k}}{2i +1} \right)^{1/2} \qquad \forall i\in \mathbb{N}_0,\, \forall k\in\{0, \dots, \ell\}.
\end{equation}
\begin{remark}
\label{rem:hpw11mix}
As mentioned in Remark \ref{rem:bigger-space-hp},
the $hp$-projector $\Pihpelldim$ can be defined for more general functions than $u\in\Hmix^1(Q)$. 
As follows from Equations \eqref{eq:c-int-def}, \eqref{eq:c-face-def}, 
\eqref{eq:c-edge-def} and \eqref{eq:c-node-def} below, 
the projector is also defined for $u\in\Wmix^{1,1}(Q)$.
\end{remark}
%
\begin{lemma}
  \label{lemma:cbound}
  There exist constants $C_1, C_2$ such that, for all $u\in \Wmix^{1,1}(Q)$, all $\ell
  \in \mathbb{N}$, all $p\in \mathbb{N}$
  \begin{equation}
    \label{eq:c-1}
    |c^K_{\is} |\leq C \left(\prod_{j=1}^di_j  \right) \| u \|_{\Wmix^{1,1}(Q)}\qquad 
    \forall K\in\cG^\ell_d, \, \forall  (\iscomma)\in \{1, \dots, p+1\}^d
  \end{equation}
  and for all $(\iscomma)\in \{1, \dots, p+1\}^d$
\begin{align}
\sum_{K\in \cG^\ell_3}|c^K_{\is} | \leq 
C \| u \|_{\Wmix^{1,1}(Q)}
\begin{cases}
\left( \prod_{j=1}^d i_j  \right)
& \text{ if } t_{\is} = 0, \\
 (\ell+1)\left(\sum_{j_1=1}^d\sum_{j_2=j_1+1}^d i_{j_1}i_{j_2} \right)
& \text{ if } t_{\is} = 1, \\
(\ell+1)^2\left(\sum_{j=1}^di_j  \right) 
& \text{ if } t_{\is} = 2, \\
(\ell+1)^d 
& \text{ if } t_{\is} = 3. 
\end{cases}
\label{eq:c-2}
\end{align}

\end{lemma}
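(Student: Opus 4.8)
\textbf{Proof plan for Lemma~\ref{lemma:cbound}.}
The plan is to extract explicit integral representations of the local coefficients $c^K_{\is}$ from the tensor-product structure of $\Pihpelldim$ and the univariate projector $\hpi_p$ in \eqref{eq:ondim-proj-explicit}, then bound each representation using \eqref{eq:L2-L} together with $\|L_n\|_{L^\infty((-1,1))}=1$. First I would treat the one-dimensional case. Recall $\Pihpelldim = \bigotimes_{i=1}^d\pihpell$, so on an element $K = \bigtimes_j J^\ell_{k_j}$ the coefficient $c^K_{\is}$ is obtained by applying, in each coordinate $j$, the univariate functional dual to $\zeta^{k_j}_{i_j}$. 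From \eqref{eq:ondim-proj-explicit}, after affine scaling to $J^\ell_k=(x_k,x_{k+1})$ with $h_k=|J^\ell_k|$, these functionals are: for $i_j=1,2$ point evaluation of $u$ (restricted to the appropriate face/edge/corner of $K$), cf. \eqref{eq:zeta-12}; and for $i_j\ge 3$ an integral $u\mapsto \big(\partial_{x_j}u,\tfrac{2(i_j-2)+1}{2}L_{i_j-2}\circ\phi_{k_j}\big)_{L^2(J^\ell_{k_j})}$ against a scaled Legendre polynomial, cf. \eqref{eq:zeta-n}. I would write out the exact mixed functional — a tensor of point evaluations (on the modes with $i_j\le2$) and $L^2$ inner products with $\partial_{x_j}$ (on the modes with $i_j\ge3$) — giving \eqref{eq:c-int-def}, \eqref{eq:c-face-def}, \eqref{eq:c-edge-def}, \eqref{eq:c-node-def}; note this already justifies Remark~\ref{rem:hpw11mix}, since only first-order mixed derivatives and traces of $u$ appear, so $u\in\Wmix^{1,1}(Q)$ suffices (traces of $\Wmix^{1,1}$ functions being well defined via $\Wmix^{1,1}(Q)\hookrightarrow C^0(\overline Q)$, cf. Lemma~\ref{lemma:W11J3}).

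For the pointwise bound \eqref{eq:c-1} I would estimate each factor of the tensor functional separately. For a Legendre-type factor, by Cauchy--Schwarz and \eqref{eq:L2-L},
\begin{equation*}
\left| \left(\partial_{x_j}u, \tfrac{2(i_j-2)+1}{2}L_{i_j-2}\circ\phi_{k_j}\right)_{L^2(J^\ell_{k_j})}\right|
\le \tfrac{2i_j-3}{2}\, \|L_{i_j-2}\circ\phi_{k_j}\|_{L^\infty}\,\|\partial_{x_j}u\|_{L^1(J^\ell_{k_j})}
\le C i_j \|\partial_{x_j}u\|_{L^1(J^\ell_{k_j})},
\end{equation*}
where the $1/h_{k_j}$ from $\zeta^{k_j}_{i_j}$ in \eqref{eq:zeta-n} is absorbed after renormalising the dual basis (equivalently, the $L^2$ pairing is against $L_{i_j-2}\circ\phi_{k_j}$ without the $h_{k_j}$ factor). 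For a nodal factor ($i_j\le2$) one gets a trace of $u$ in the remaining $d-1$ variables, bounded in $\Wmix^{1,1}$ of the corresponding face by the continuous embedding into $C^0$. Multiplying the $d$ factors and bounding the product of $L^1$-type norms on $K$ by $\|u\|_{\Wmix^{1,1}(Q)}$ yields \eqref{eq:c-1}, with the product $\prod_j i_j$ coming from the Legendre normalisations (a factor $i_j$ for each $i_j\ge3$, and $O(1)$ for $i_j\le2$, which is dominated by $\prod_j i_j$).

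The summed bound \eqref{eq:c-2} is the main point and requires care with the \emph{overlap structure} of the elements. Here I would sum $|c^K_{\is}|$ over all $K\in\cG^\ell_d$ and, crucially, in each coordinate direction $j$ use additivity/telescoping of the relevant univariate functional over the $\ell+1$ cells $\{J^\ell_k\}_{k=0}^\ell$: for a Legendre factor, $\sum_{k}\|\partial_{x_j}u(\cdot)\|_{L^1(J^\ell_k)} = \|\partial_{x_j}u(\cdot)\|_{L^1((0,1))}$ in that variable, i.e. summing over that direction is ``free'' (no $\ell$ factor, only the $i_j$ from normalisation); whereas for a nodal factor ($i_j\le2$), the trace at a node is shared by the (at most two) adjacent cells, and summing the $\ell+1$ nodal contributions in that direction produces a factor $\ell+1$ — but no extra $i_j$, since $i_j\le 2$. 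Thus for each $K$ the contribution is a product over the $d$ directions of either ``$i_j$ and sum freely'' (Legendre, $i_j\ge3$) or ``$\le C$ and pay one factor $\ell+1$'' (nodal, $i_j\le2$). Separating the sum over $K$ into its $d$ one-dimensional sums and combining with $\|u\|_{\Wmix^{1,1}(Q)}$ as the majorant of the resulting mixed $L^1$ norm of $u$, one reads off exactly the four cases of \eqref{eq:c-2} according to $t_{\is}=\#\{j: i_j\le2\}\in\{0,1,2,3\}$: no $\ell$ and full product $\prod_j i_j$ when $t_{\is}=0$; one factor $\ell+1$ and the product of the $d-1$ large indices (written as $\sum_{j_1<j_2}i_{j_1}i_{j_2}$ for $d=3$) when $t_{\is}=1$; and so on. The main obstacle is bookkeeping the node-sharing and the ``sum-for-free'' cancellation carefully enough that the powers of $\ell+1$ and the index products come out matching \eqref{eq:c-2}; once the univariate functionals are written explicitly this is a routine, if somewhat lengthy, separation-of-variables computation, and the $d=2$ case follows by dropping one coordinate.
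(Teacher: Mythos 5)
Your proposal is correct and follows essentially the same route as the paper's proof: explicit integral/trace representations of the coefficient functionals (the paper's \eqref{eq:c-int-def}--\eqref{eq:c-node-def}), the $L^\infty$-normalization of the Legendre polynomials to extract the factors $2i_j-3\lesssim i_j$, trace inequalities and the imbedding $\Wmix^{1,1}\hookrightarrow L^\infty$ for the low-order directions, and the observation that summation over elements is free in Legendre directions while each nodal direction costs a factor $\ell+1$. The paper merely organizes the argument by mode type (internal, face, edge, node) rather than direction by direction, which is a cosmetic difference.
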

\begin{proof}
  Let $d=3$ and $K = J_{k_1}^\ell \times J_{k_2}^\ell \times J_{k_3}^\ell \in \cG^\ell_3$. 
\\\textbf{Internal modes.}
We start by considering the case of the
coefficients of internal modes, i.e., $c^K_{i_1, i_2, i_3}$ as defined in
\eqref{eq:local-proj-c} for $i_n\geq 3$, $n=1,2, 3$. Let then $i_1, i_2, i_3\in \{3, \dots, p+1\}$ and
write $L_n^k = L_n \circ \phi_k$: there holds
\begin{align}
  c^K_{i_1,i_2, i_3} =&\, (2i_1-3)(2i_2-3)(2i_3-3)\nonumber\\
  & \int_{K} \left({\partial_{x_1}\partial_{ x_2}\partial_{x_3}} u(x_1, x_2, x_3)  \right) L_{i_1-2}^{k_1}(x_1)L_{i_2-2}^{k_2}(x_2)L_{i_3-2}^{k_3}(x_3) dx_1dx_2dx_3.
  \label{eq:c-int-def}
\end{align}
If $u\in \Wmix^{1,1}(K)$, since $\|L_n\|_{L^\infty(-1,1)} = 1$ for all $n$, we have
\begin{equation}
  \label{eq:c-int-1}
 | c^K_{\is} | \leq (2i_1-3)(2i_2-3)(2i_3-3) \| \partial_{x_1}\partial_{ x_2}\partial_{x_3}u \|_{L^1(K)} \qquad i_n\geq 3,\,n=1,2,3,
\end{equation}
hence,
\begin{equation}
  \label{eq:c-int-2}
 \sum_{K\in \cG^\ell_3}| c^K_{\is} | \leq (2i_1-3)(2i_2-3)(2i_3-3) \| \partial_{x_1}\partial_{ x_2}\partial_{x_3}u \|_{L^1(Q)} \qquad i_n\geq 3,\,n=1,2,3.
\end{equation}
\textbf{Face modes.}
We continue with face modes and fix, for ease of notation, $i_1 =1$. 
We also denote $F = J^\ell_{k_2}\times J^\ell_{k_3}$. 
The estimates will then also hold for $i_1=2$ and for any permutation of the indices by symmetry.
We introduce the trace inequality
constant $C^{T,1}$, independent of $K$, such that, 
for all $v\in W^{1,1}(Q)$ and $\hx\in (0,1)$,
\begin{equation}
  \label{eq:trace}
  \| v(\hx, \cdot, \cdot) \|_{L^1(F)} 
  \leq 
   \| v(\hx, \cdot, \cdot)\|_{L^1((0,1)^2)}
  \leq 
   C^{T,1} \left( \| v\|_{L^1(Q)} + \|\partial_{x_1} v \|_{L^1(Q)} \right).
\end{equation}
This follows from the trace estimate in \cite[Lemma 4.2]{Schotzau2013a} 
and from the fact that
\begin{multline*}
  \| v(\hx, \cdot, \cdot)\|_{L^1((0,1)^2)}
  \leq C\min
  \bigg\{
  \frac{1}{|1-\hx|}\| v\|_{L^1((\hx,1)\times (0,1)^2)} + \|\partial_{x_1} v \|_{L^1((\hx, 1)\times (0,1)^2)},\\
  \frac{1}{|\hx|}\| v\|_{L^1((0,\hx)\times (0,1)^2)} + \|\partial_{x_1} v \|_{L^1((0,\hx)\times (0,1)^2)}
\bigg\}.
\end{multline*}
 There
holds, for $i_2, i_3\in \{3,
\dots, p+1\}$,
\begin{equation}
\label{eq:c-face-def}
  c^K_{1,i_2, i_3} = (2i_2-3)(2i_3-3)\int_{F} \left({\partial_{ x_2}\partial_{x_3}} u(x_{k_1}^{\ell}, x_2, x_3)  \right) L_{i_2-2}^{k_2}(x_2)L_{i_3-2}^{k_3}(x_3) dx_2dx_3.
\end{equation}
Since the Legendre polynomials 
are $L^\infty$ normalized and using the trace
inequality \eqref{eq:trace},
\begin{equation}
  \label{eq:c-face-1}
  |c_{1, i_2, i_3}^K| \leq 
(2i_2-3)(2i_3-3) \| ({\partial_{ x_2}\partial_{x_3}} u )(x_{k_1}^{\ell}, \cdot, \cdot)\|_{L^1(F)}
\leq  C^{T,1}(2i_2-3)(2i_3-3) \| u \|_{\Wmix^{1,1}(Q)}. 
\end{equation}
Summing over all internal faces, furthermore,
\begin{equation}
  \label{eq:c-face-2}
  \begin{aligned}
  \sum_{K\in \cG^\ell_3}|c_{1, i_2, i_3}^K| &\leq 
(2i_2-3)(2i_3-3) \sum_{k_1=0}^\ell\| ({\partial_{ x_2}\partial_{x_3}} u )(x_{k_1}^{\ell}, \cdot, \cdot)\|_{L^1((0,1)^2)}\\
&\leq  C^{T,1}(\ell+1)(2i_2-3)(2i_3-3) \| u \|_{\Wmix^{1,1}(Q)}.
  \end{aligned}
\end{equation}
\textbf{Edge modes.}
We now consider edge modes. Fix for ease of notation $i_1 = i_2 = 1$; as
before, the estimates will hold for $(i_1, i_2)\in \{1,2\}^2$ and for any
permutation of the indices. By the same arguments as for \eqref{eq:trace}, there exists
a trace constant $C^{T,2}$ such that, denoting $e = J^\ell_{k_3}$, for all $v\in
W^{1,1}((0,1)^2)$ and for all $\hx\in (0,1)$,
\begin{equation}
  \label{eq:trace-2}
  \| v (\hx, \cdot)\|_{L^1(e)} \leq \| v (\hx, \cdot)\|_{L^1((0,1))} \leq C^{T,2} \left( \| u\|_{L^1((0,1)^2)} + \|\partial_{x_2} u \|_{L^1((0,1)^2)} \right).
\end{equation}
By definition, 
\begin{equation}
\label{eq:c-edge-def}
  c^K_{1,1, i_3} = (2i_3-3)\int_{e} \left({\partial_{x_3}} u(x_{k_1}^{\ell}, x_{k_2}^{\ell}, x_3)  \right) L_{i_3-2}^{k_3}(x_3) dx_3.
\end{equation}
Using \eqref{eq:trace} and \eqref{eq:trace-2}
  \begin{equation}
    \label{eq:c-edge-1}
      |c^K_{1,1, i_3} |
      \leq (2i_3-3) \| (\partial_{x_3} u)(x_{k_1}^{\ell}, x_{k_2}^{\ell}, \cdot) \|_{L^1(e)}
      \leq  C^{T,1} C^{T,2}(2i_3-3) \| u\|_{\Wmix^{1,1}(Q)}.
  \end{equation}
  Summing over edges, in addition,
  \begin{equation}
    \label{eq:c-edge-2}
  \begin{aligned}
      \sum_{K\in \cG^\ell_3}|c^K_{1,1, i_3} |
      &\leq (2i_3-3) \sum_{k_1=0}^\ell\sum_{k_2=0}^\ell\| (\partial_{x_3} u)(x_{k_1}^{\ell}, x_{k_2}^{\ell}, \cdot) \|_{L^1((0,1))}\\
      &\leq  C^{T,1} C^{T,2}(\ell+1)^2(2i_3-3) \| u\|_{\Wmix^{1,1}(Q)}.
  \end{aligned}
\end{equation}
\textbf{Node modes.}
Finally, we consider the coefficients of nodal modes, i.e., $c^K_{i_1,
i_2, i_3}$ for $i_1, i_2, i_3\in \{1,2\}$, 
which by construction equal function values of $u$, e.g. 
\begin{equation}
\label{eq:c-node-def}
c_{111} = u(x_{k_1}^{\ell},x_{k_2}^{\ell},x_{k_3}^{\ell}).
\end{equation}
The Sobolev
imbedding $\Wmix^{1,1}(Q)\hookrightarrow L^{\infty}(Q)$ and
scaling implies the existence of a uniform
constant $C_{\mathrm{imb}}$ such that, for any $v\in
\Wmix^{1,1}(Q)$ 
\begin{equation*}
\| v \|_{L^\infty(K)} \leq\| v \|_{L^\infty(Q)} \leq C_{\mathrm{imb}} \| v \|_{\Wmix^{1,1}(Q)}.
\end{equation*}
Then, by construction,
\begin{equation}
  \label{eq:c-node-1}
  |c^K_{i_1, i_2, i_3}| \leq \| u \|_{L^\infty(K)} \leq
    C_{\mathrm{imb}} \| u \|_{\Wmix^{1,1}(Q)}
  \qquad \forall i_1, i_2, i_3\in \{1,2\}.
\end{equation}
  Summing over nodes, it follows directly that
  \begin{equation}
    \label{eq:c-node-2}
  \sum_{{K\in \cG^\ell_3}}|c^K_{i_1, i_2, i_3}| \leq \sum_{K\in \cG^\ell_3}\| u \|_{L^\infty(K)} \leq
    C_{\mathrm{imb}} (\ell+1)^3\| u \|_{\Wmix^{1,1}(Q)}
  \qquad \forall i_1, i_2, i_3\in \{1,2\}.
  \end{equation}
 We obtain \eqref{eq:c-1} from \eqref{eq:c-int-1}, \eqref{eq:c-face-1}, 
 \eqref{eq:c-edge-1}, and \eqref{eq:c-node-1}. Furthermore, \eqref{eq:c-2}
 follows from \eqref{eq:c-int-2}, \eqref{eq:c-face-2}, 
 \eqref{eq:c-edge-2}, and \eqref{eq:c-node-2}.
 The estimates for the case $d=2$ follow from the same argument.
\end{proof}
The following lemma shows the continuous imbedding of $\cJ^{d}_{\ugamma}(Q;\Cset, \Eset)$ 
into $\Wmix^{1,1}(Q)$, 
given sufficiently large weights $\ugamma$.
\begin{lemma}
  \label{lemma:W11J3}
  Let $d\in \{2,3\}$.
  Let $\ugamma$ be such that $ \gamma_\corn>d/2$, for all $\corn\in\Cset$ and
  (if $d=3$) $\gamma_e>1$ for all $e\in\Eset$. There exists a constant $C>0$ such that, for all $u \in \cJ^d_\ugamma(Q;\Cset, \Eset)$, 
  \begin{equation*}
    \| u \|_{\Wmix^{1,1}(Q)} \leq C \| u \|_{\cJ^d_{\ugamma}(Q)}.
  \end{equation*}
\end{lemma}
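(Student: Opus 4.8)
The plan is to reduce the assertion, by the triangle inequality and finiteness of the index set $\{\alpha\in\mathbb{N}_0^d:\alphainf\le1\}$, to a single bound $\|\dalpha u\|_{L^1(Q)}\le C\|u\|_{\cJ^d_\ugamma(Q)}$ for each such $\alpha$, with $C$ independent of $u$. The key structural remark is that every $\alpha$ with $\alphainf\le1$ has $\alpham\le d$, so the weighted $L^2$-seminorm of $\dalpha u$ attached to each of the four subdomain types of Section~\ref{sec:Notat} is already a summand of $\|u\|_{\cJ^d_\ugamma(Q)}$; this is exactly why the order $k=d$ is the natural one. I would then split $Q$, up to a Lebesgue-null set, into the disjoint subdomains $\Omega_0$, the $\Omegac$ ($c\in\Cset$), the $\Omegae$ ($e\in\Eset$), and the $\Omegace$ ($c\in\Cset$, $e\in\Eset_c$), and estimate $\|\dalpha u\|_{L^1}$ on each piece; when $d=2$ only the first two kinds occur.

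On $\Omega_0$ the bound is immediate from Cauchy--Schwarz and $|\Omega_0|<\infty$. On each remaining piece I would insert the relevant weight and its reciprocal and apply Cauchy--Schwarz, e.g.\ on $\Omegac$
\[
\|\dalpha u\|_{L^1(\Omegac)}
\le\bigl\|r_c^{-(\alpham-\gamma_c)_+}\bigr\|_{L^2(\Omegac)}\,
\bigl\|r_c^{(\alpham-\gamma_c)_+}\dalpha u\bigr\|_{L^2(\Omegac)},
\]
and analogously on $\Omegae$ with weight $r_e^{(\alphaperpm-\gamma_e)_+}$ and on $\Omegace$ with weight $r_c^{(\alpham-\gamma_c)_+}\rho_{ce}^{(\alphaperpm-\gamma_e)_+}$. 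In each case the last factor is controlled by $\|u\|_{\cJ^d_\ugamma(Q)}$, so everything reduces to checking that the weight-reciprocal lies in $L^2$ of the corresponding subdomain. For $\Omegac$, contained in a ball of radius $\hvarepsilon$ about $c$, polar coordinates give $\int_{\Omegac}r_c^{-2(\alpham-\gamma_c)_+}\,dx\lesssim\int_0^{\hvarepsilon}r^{d-1-2(\alpham-\gamma_c)_+}\,dr<\infty$ because $(\alpham-\gamma_c)_+\le(d-\gamma_c)_+<d/2$ by the hypothesis $\gamma_c>d/2$. For $\Omegae$ (only $d=3$), writing the integral as an integral along the one-dimensional edge $e$ of integrals over the perpendicular $2$-plane gives $\int_{\Omegae}r_e^{-2(\alphaperpm-\gamma_e)_+}\,dx\lesssim\int_0^{\hvarepsilon}\varrho^{1-2(\alphaperpm-\gamma_e)_+}\,d\varrho<\infty$, since $\alphaperpm\le2$ and $\gamma_e>1$ force $(\alphaperpm-\gamma_e)_+<1$ (the contribution near the far endpoint of $e$ is even less singular).

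The one step needing genuine care is $\Omegace$. Here I would put the corner $c$ at the origin with the edge $e$ along a coordinate axis and use spherical coordinates $(r,\psi,\phi)$ about $c$ with $\psi$ the polar angle from that axis, so that $r_c=r$, $\rho_{ce}=\sin\psi$, $dx=r^2\sin\psi\,dr\,d\psi\,d\phi$, and the condition $\rho_{ce}<\hvarepsilon$ restricts $\psi$ to a neighborhood $(0,\psi_0)$ of $0$; this factorizes the relevant integral,
\[
\int_{\Omegace}r_c^{-2(\alpham-\gamma_c)_+}\rho_{ce}^{-2(\alphaperpm-\gamma_e)_+}\,dx
\lesssim\int_0^{\hvarepsilon}r^{\,2-2(\alpham-\gamma_c)_+}\,dr\;\cdot\;\int_0^{\psi_0}(\sin\psi)^{\,1-2(\alphaperpm-\gamma_e)_+}\,d\psi,
\]
and both factors converge precisely under the stated hypotheses: the $r$-integral since $(\alpham-\gamma_c)_+<3/2$ (from $\gamma_c>3/2$ and $\alpham\le3$), the $\psi$-integral since $(\alphaperpm-\gamma_e)_+<1$ (from $\gamma_e>1$ and $\alphaperpm\le2$). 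Assembling the four estimates and summing over the pieces and over $\alpha$ gives the imbedding. Thus the only real obstacle is setting up the corner-edge change of variables correctly and matching the two integrability thresholds there to the weight conditions $\gamma_c>3/2$, $\gamma_e>1$; the rest is routine Cauchy--Schwarz and elementary radial integration.
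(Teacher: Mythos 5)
Your proposal is correct, and its overall architecture --- decomposition of $Q$ into $\Omega_0$, $\Omega_c$, $\Omega_e$, $\Omega_{ce}$, followed by Cauchy--Schwarz against the weight appearing in the $\cJ^d_\ugamma$-norm and a check that the weight reciprocal is square-integrable on each subdomain --- is exactly the paper's. The one place where you genuinely diverge is the corner-edge neighborhood. The paper first reduces, using monotonicity of the spaces in $\ugamma$, to the case $\gamma_c-\gamma_e>1/2$ and $\gamma_e<2$, and then works in Cartesian-type coordinates, bounding $r_c\gtrsim x_\parallel$ and splitting the reciprocal weight as a product $x_\parallel^{-(\gamma_e+1-\gamma_c)_+}\cdot r_e^{-2+\gamma_e}$ of functions of the parallel and perpendicular variables; the auxiliary condition $\gamma_c-\gamma_e>1/2$ is what makes the parallel factor square-integrable. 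Your spherical-coordinate computation about the corner, with $r_c=r$, $\rho_{ce}=\sin\psi$ and Jacobian $r^2\sin\psi$, factorizes the integral exactly and yields the two thresholds $(\alpham-\gamma_c)_+<3/2$ and $(\alphaperpm-\gamma_e)_+<1$ directly from $\gamma_c>3/2$, $\gamma_e>1$, with no preliminary adjustment of the weights. Both arguments are valid; yours is slightly cleaner on this step because it exploits that $\rho_{ce}$ is a purely angular variable, while the paper's Cartesian splitting is the one it reuses verbatim elsewhere (e.g.\ in the coefficient bounds of Lemma~\ref{lemma:cbound}).
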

\begin{proof}
  We recall the decomposition of $Q$  as
  \begin{equation*}
   \overline{Q}= \overline{Q_0}\cup \overline{Q_{\Cset}} \cup \overline{Q_{\Eset}} \cup \overline{Q_{\Cset\Eset}},
  \end{equation*}
  where $Q_{\Eset}= Q_{\Cset\Eset} = \emptyset$ if $d=2$.
  There holds
  \begin{equation}
    \label{eq:W11J3-1}
    \| u \|_{\Wmix^{1,1}(Q_0)} \leq C |Q_0|^{1/2}\| u \|_{H^d(Q_0)}\leq C |Q_0|^{1/2} \| u \|_{\cJ^d_{\ugamma}(Q)}. 
  \end{equation}
  We now consider the subdomain $Q_{\corn}$, for any $\corn\in \Cset$. 
  There holds, 
  with constant $C$ that depends only on $\gamma_\corn$ and on $|Q_\corn|$,
  \begin{equation}
    \label{eq:W11J3-2}
    \begin{aligned}
      \| u \|_{\Wmix^{1,1}(Q_\corn)}
      &= \| u \|_{W^{1,1}(Q_\corn)} + \sum_{\substack{2\leq \alpham \leq d\\ \alphainf\leq 1}} \|\dalpha u \|_{L^1(Q_\corn)}\\
   &\leq C |Q_\corn|^{1/2} \| u \|_{H^{1}(Q_\corn)}  + C \sum_{\substack{2\leq \alpham \leq d\\ \alphainf \leq 1}} \|r_\corn^{-(\alpham-\gamma_\corn)_+}\|_{L^2(Q_\corn)}\|r_\corn^{(\alpham-\gamma_\corn)_+}\dalpha u \|_{L^2(Q_\corn)}\\
   &\leq C \| u \|_{\cJ^d_{\ugamma}(Q)},
    \end{aligned}
  \end{equation}
  where the last inequality follows from the fact that $\gamma_\corn > d/2$, hence
  the norm $\|r_\corn^{-(\alpham-\gamma_\corn)_+}\|_{L^2(Q_\corn)}$ is bounded for all
  $\alpham\leq d$.
  Consider then $d=3$ and any $e\in \Eset$. 
Suppose also, without loss of generality, that $\gamma_\corn-\gamma_e >1/2$ and $\gamma_e<2$
(otherwise, it is sufficient to replace
$\gamma_e$ by a smaller $\tgamma_e$ such that $1<\tgamma_e< \gamma_\corn-1/2$ and $\gamma_e<2$ 
and remark that
$\cJ^d_\ugamma(Q;\Cset, \Eset)\subset \cJ_\utgamma^d(Q;\Cset, \Eset)$ if $\tgamma_e <
\gamma_e$).  
  Since $\gamma_e > 1$, then
  $\|r_e^{-\alphaperpm+\gamma_e}\|_{L^2(Q_e)}$ is bounded by a constant depending
  only on $\gamma_e$ and $|Q_e|$ as long as $\alpha$ is such that
  $\alphaperpm\leq 2$. Hence, denoting by $\dpar$ the derivative in the
  direction parallel to $e$, 
\begin{equation}
    \label{eq:W11J3-3}
    \begin{aligned}
      \| u \|_{\Wmix^{1,1}(Q_e)}
      &= \| u \|_{W^{1,1}(Q_e)} + \sum_{\alphaperpm = 1}\|\dpar\dalphaperp u \|_{L^1(Q_e)}
       +\sum_{\alpha_1 =0,1} \|\dpar^{\alpha_1}\dperpone\dperptwo u \|_{L^1(Q_e)}
   \\
   &\begin{multlined}[][.7\linewidth]
     \leq C |Q_e|^{1/2} \left(\| u \|_{H^{1}(Q_e)}  +\sum_{\alphaperpm = 1}  \|\dpar\dalphaperp v\|_{L^2(Q_e)}\right)
   \\
     +C \sum_{\alpha_1=0,1}  \|r_e^{-2+\gamma_e}\|_{L^2(Q_e)}\|r_e^{2-\gamma_e}\dpar^{\alpha_1}\dperpone\dperptwo u \|_{L^2(Q_e)}
   \end{multlined}
   \\
   &\leq C \| u \|_{\cJ^3_{\ugamma}(Q)}.
   \end{aligned}
  \end{equation}
Since $\xpar\leq r_\corn(x)\leq \hepsilon$ for all $x\in Q_{\corn e}$,  
and because
$Q_{\corn e}\subset \left\{\xpar\in(0,\hepsilon), (\xperpone, \xperptwo)\in(0, \hepsilon^2)^2\right\}$,
there holds
\begin{equation*}
 \|r_\corn^{-(\gamma_e+1-\gamma_\corn)_+} r_e^{-2+\gamma_e} \|_{L^2(Q_{\corn e})}
 \leq 
 \|\xpar^{-(\gamma_e+1-\gamma_\corn)_+} \|_{L^2((0, \hepsilon))}\|r_e^{-2+\gamma_e} \|_{L^2((0,\hepsilon^2)^2)}\leq C,
\end{equation*}
for a constant $C$ that depends only on $\hepsilon$, $\gamma_\corn$, and $\gamma_e$. Hence,
\begin{equation}
    \label{eq:W11J3-4}
    \begin{aligned}
      \| u \|_{\Wmix^{1,1}(Q_{\corn e})}
      &= \| u \|_{W^{1,1}(Q_{\corn e})} + \sum_{\alphaperpm = 1}\|\dpar\dalphaperp u \|_{L^1(Q_{\corn e})}+ \sum_{\alpha_1=0,1}\|\dpar^{\alpha_1}\dperpone\dperptwo u \|_{L^1(Q_{\corn e})}\\
   &\begin{multlined}[][.7\textwidth]
     \leq C |Q_{\corn e}|^{1/2} \| u \|_{H^{1}(Q_e)}  + C \sum_{\alphaperpm = 1} \|r_\corn^{-(2-\gamma_\corn)_+}\|_{L^2(Q_{\corn e})} \|r_\corn^{(2-\gamma_\corn)_+}\dpar\dalphaperp u \|_{L^2(Q_{\corn e})}\\
     + C \sum_{\alpha_1 = 0,1}  \|r_\corn^{-(\alpha_1+\gamma_e-\gamma_\corn)_+}r_e^{-2+\gamma_e}\|_{L^2(Q_{\corn e})}\|r_\corn^{(\alpha_1+2-\gamma_\corn)_+}\rho_{\corn e}^{2-\gamma_e}\dpar^{\alpha_1}\dperpone\dperptwo u \|_{L^2(Q_{\corn e})}
   \end{multlined}\\
   &\leq C \| u \|_{\cJ^3_{\ugamma}(Q)},
    \end{aligned}
  \end{equation}
with $C$ independent of $u$. Combining inequalities
\eqref{eq:W11J3-1} to \eqref{eq:W11J3-4} concludes the proof.
\end{proof}
The following statement is a direct consequence of the two lemmas above
and the fact that \linebreak $\normc[L^\infty(K)]{\psi_{\is}^K} \leq 1$ for all $K\in \cG^\ell_3$
and all $\iscomma\in\{1,\ldots,p+1\}$.
\begin{corollary}
 Let $\ugamma$ be such that $ \gamma_\corn-d/2>0$, for all $\corn\in\Cset$ and,
 if $d=3$, $ \gamma_e>1$ for all $e\in\Eset$. There exists a constant $C>0$  
 such that for all $\ell,p\in \mathbb{N}$ and for all $u \in \cJ^d_\ugamma(Q;\Cset, \Eset)$,
 \begin{equation*}
   \|\Pihpelldim u \|_{L^\infty(Q)}\leq Cp^{2d}\| u \|_{\cJ^d_\ugamma(Q)}.
 \end{equation*}
\end{corollary}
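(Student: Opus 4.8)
The plan is to read the bound off directly from the explicit modal representation of $\Pihpelldim u$ together with the coefficient estimates already in hand. First I would invoke Lemma \ref{lemma:W11J3}: the hypotheses $\gamma_\corn>d/2$ for all $\corn\in\Cset$ (and $\gamma_e>1$ for all $e\in\Eset$ when $d=3$) give a continuous imbedding $\cJ^d_\ugamma(Q;\Cset,\Eset)\hookrightarrow\Wmix^{1,1}(Q)$, with $\|u\|_{\Wmix^{1,1}(Q)}\le C\|u\|_{\cJ^d_\ugamma(Q)}$. By Remark \ref{rem:hpw11mix} this already guarantees that $\Pihpelldim u$ is well defined for every $u\in\cJ^d_\ugamma(Q;\Cset,\Eset)$, so it remains only to control its $L^\infty$-norm by $\|u\|_{\Wmix^{1,1}(Q)}$ up to the factor $p^{2d}$.

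Next, fix $\ell,p\in\N$ and $x\in Q$; then $x$ lies in the closure of some element $K\in\cG^\ell_d$, on which $\Pihpelldim u|_K=\sum_{\iscomma=1}^{p+1}c^K_{\is}\psi_{\is}^K$ by the representation \eqref{eq:local-proj-c} (and its $d=2$ analogue). I would then estimate $|\Pihpelldim u(x)|\le\sum_{\iscomma=1}^{p+1}|c^K_{\is}|\,\|\psi_{\is}^K\|_{L^\infty(K)}$, use the bound $\|\psi_{\is}^K\|_{L^\infty(K)}\le1$ recorded just before the statement, and insert the coefficient estimate \eqref{eq:c-1} of Lemma \ref{lemma:cbound}, namely $|c^K_{\is}|\le C\bigl(\prod_{j=1}^d i_j\bigr)\|u\|_{\Wmix^{1,1}(Q)}$. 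This yields $|\Pihpelldim u(x)|\le C\|u\|_{\Wmix^{1,1}(Q)}\bigl(\sum_{i=1}^{p+1}i\bigr)^{d}$, and since $\sum_{i=1}^{p+1}i=(p+1)(p+2)/2\le 3p^2$ for all $p\ge1$, the right-hand side is $\le C'p^{2d}\|u\|_{\Wmix^{1,1}(Q)}$ with $C'$ independent of $\ell,p,u$. Taking the supremum over $x\in Q$ and chaining with the imbedding from the first step gives the claimed bound.

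Since the two genuinely nontrivial ingredients --- the $\Wmix^{1,1}$-stability of the local projection coefficients (Lemma \ref{lemma:cbound}) and the imbedding $\cJ^d_\ugamma\hookrightarrow\Wmix^{1,1}$ (Lemma \ref{lemma:W11J3}) --- are already established, this corollary is essentially bookkeeping; there is no real obstacle. The only point that needs care is verifying that the polynomial factor emerging from the sum over modal indices is exactly $p^{2d}$ and that all constants are independent of $\ell$ and $p$; both follow from the elementary arithmetic indicated above and from the fact that the constants in Lemmas \ref{lemma:cbound} and \ref{lemma:W11J3}, as well as the uniform bound on $\|\psi_{\is}^K\|_{L^\infty(K)}$, do not depend on $\ell$, $p$, or $u$.
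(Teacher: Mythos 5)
Your argument is correct and is exactly the route the paper intends: the corollary is stated there as a direct consequence of Lemma \ref{lemma:cbound}, Lemma \ref{lemma:W11J3}, and the uniform bound $\|\psi_{\is}^K\|_{L^\infty(K)}\le 1$, and your write-up simply makes the elementary summation over the $(p+1)^d$ modal indices explicit. The bookkeeping $\sum_{\iscomma}\prod_j i_j=\bigl(\sum_{i=1}^{p+1}i\bigr)^d\le C p^{2d}$ is right, and all constants are indeed independent of $\ell$, $p$, and $u$.
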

\subsubsection{Basis of continuous functions with compact support}
\label{sec:compactbasis}
It is possible to construct a basis for
$\Pihpelldim$ in $Q$  such that all basis functions are continuous and have compact
support.
For all $\ell\in \mathbb{N}$ and all $p\in \mathbb{N}$,
extend to zero outside of their domain of definition the functions $\zeta^k_n$
defined in  \eqref{eq:zeta-12} and \eqref{eq:zeta-n}, for $k=0, \dots, \ell$ and
$n=1, \dots, p+1$.
We introduce the univariate functions with compact support $v_j : (0,1)\to
\mathbb{R}$, for $j=1, \dots, (\ell+1)p+1$ so that
  $v_1 = \zeta^0_{2}$, $v_{\ell+2} = \zeta^\ell_{1}$, 
\begin{equation}\label{eq:DefinitionOfvk}
v_{k} = \zeta^{k-2}_{1} + \zeta^{k-1}_{2}, \qquad \text{for all }k =2, \dots, \ell+1
\end{equation}
and
\begin{equation*}
v_{\ell+2 + k(p-1)+n} = \zeta^{k}_{n+2}, \qquad \text{for all }k =0, \dots, \ell\text{ and }n = 1, \dots, p-1.
\end{equation*}

\begin{proposition}
  \label{prop:compactbasis}
Let $\ell\in \mathbb{N}$ and $p\in \mathbb{N}$. Furthermore, let $u\in
\cJ^d_\ugamma(Q;\Cset, \Eset)$ with $\ugamma$ such that $\gamma_\corn-d/2>0$ and, if $d=3$,
$\gamma_e>1$. Let $\Noned = (\ell+1)p+1$. There
exists an array of coefficients
\[ 
	c  = \left\{c_{\is}: (\iscomma) \in \{1, \dots, \Noned\}^d\right\}
 \]
 such that
 \begin{equation}
 \label{eq:compactbasistensor}
 \left(\Pihpelldim u\right)(x_1,\dots,x_d) = \sum_{\iscomma = 1}^{\Noned} c_{\is} \prod_{j=1}^dv_{i_j}(x_j)\qquad \forall (x_1, \dots, x_d)\in Q.
 \end{equation}
Furthermore, there exist constants $C_1, C_2>0$ independent of $\ell$, $p$, and
$u$,  such that 
\begin{equation*}
  |c_{\is} |\leq C_1 (p+1)^d \| u \|_{\cJ^d_\ugamma(Q)}\qquad \forall \iscomma \in\{ 1,\dots, \Noned\}^d
\end{equation*}
and 
\begin{equation*}
\sum_{\iscomma=1}^{\Noned} |c_{\is}|
	\leq C_2 \left(\sum_{t=0}^d (\ell+1)^{t}(p+1)^{2(d-t)}  \right)\| u \|_{\cJ^d_\ugamma(Q)}.
\end{equation*}
\end{proposition}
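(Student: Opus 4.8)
The plan is to tensorize and then read the global coefficients off the local representation \eqref{eq:local-proj-c}, using only that $\Pihpelldim$ is continuous across interelement interfaces (Remark \ref{rem:global-continuity}, Lemma \ref{lemma:reg-cont}) and is well defined on $\Wmix^{1,1}(Q)$ (Remark \ref{rem:hpw11mix}); the quantitative bounds will then follow from Lemma \ref{lemma:cbound} together with the imbedding $\cJ^d_\ugamma(Q;\Cset,\Eset)\hookrightarrow\Wmix^{1,1}(Q)$ of Lemma \ref{lemma:W11J3}.

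\emph{Step 1 (the representation).} First I would check that the univariate functions $v_1,\dots,v_{\Noned}$ introduced before the proposition form a basis of $\Xhpellone$, the space of continuous, piecewise (on $\cG^\ell_1$) polynomials of degree $p$. Restricted to a single element $J^\ell_k$, each $v_j$ either vanishes or coincides with exactly one of the linearly independent shape functions $\zeta^k_1,\zeta^k_2,\zeta^k_3,\dots,\zeta^k_{p+1}$ (the two summands $\zeta^{k-2}_1,\zeta^{k-1}_2$ of an interior hat in \eqref{eq:DefinitionOfvk} are supported on different elements), which gives linear independence; their number $\Noned=(\ell+1)p+1$ equals $\dim\Xhpellone$. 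Hence the $\Noned^d$ products $\bigotimes_{j=1}^d v_{i_j}$ form a basis of $\Xhpelldim=\bigotimes_{i=1}^d\Xhpellone$. Since $u\in\cJ^d_\ugamma(Q;\Cset,\Eset)\subset\Wmix^{1,1}(Q)$ by Lemma \ref{lemma:W11J3}, the quasi-interpolant $\Pihpelldim u$ is defined and lies in $\Xhpelldim$, so a unique array $c=\{c_{\is}\}$ with \eqref{eq:compactbasistensor} exists.

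\emph{Step 2 (identifying the coefficients).} The key observation is that on every element $K=J^\ell_{k_1}\times\cdots\times J^\ell_{k_d}$ meeting the support of $\bigotimes_j v_{i_j}$, the product restricts to a \emph{single} tensorized local shape function $\bigotimes_j\zeta^{k_j}_{m_j}$, with $m_j=m_j(i_j)\in\{1,\dots,p+1\}$. Comparing with \eqref{eq:local-proj-c} and using linear independence of the $\zeta^{k_1}_{m_1}\otimes\cdots\otimes\zeta^{k_d}_{m_d}$ on $K$ yields $c_{\is}=c^K_{m_1\dots m_d}$, a number independent of the admissible choice of $K$ by continuity of $\Pihpelldim$. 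Fixing for each $\is$ one such $K$ (in a nodal coordinate direction, the adjacent element of smallest index), the assignment $\is\mapsto(K,(m_1\dots m_d))$ is injective and maps a global multi-index with exactly $t$ components $i_j\le\ell+2$ to a pair $(K,(m_1\dots m_d))$ with exactly $t$ local indices $m_j\in\{1,2\}$: in each nodal direction the pair (element, local index in $\{1,2\}$) recovers the node, and in each bubble direction the pair (element, degree $\ge 3$) recovers the global bubble index.

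\emph{Step 3 (the bounds).} The pointwise estimate is immediate: $|c_{\is}|=|c^K_{m_1\dots m_d}|\le C\bigl(\prod_j m_j\bigr)\|u\|_{\Wmix^{1,1}(Q)}\le C(p+1)^d\|u\|_{\cJ^d_\ugamma(Q)}$ by \eqref{eq:c-1} (each $m_j\le p+1$) and Lemma \ref{lemma:W11J3}. For the $\ell^1$ bound, split according to the number $t$ of nodal components of $\is$. By the injectivity and image description of Step 2,
\[
\sum_{\is:\ \#\{j:\,i_j\le\ell+2\}=t}|c_{\is}|\ \le\ \sum_{(m_1\dots m_d):\ \#\{j:\,m_j\le2\}=t}\ \sum_{K\in\cG^\ell_d}|c^K_{m_1\dots m_d}| ,
\]
and each inner sum is bounded by \eqref{eq:c-2}. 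Summing the right-hand side of \eqref{eq:c-2} over the remaining free local indices — the $t$ vertex indices run over $\{1,2\}$ (a bounded factor, and the vertex indices appearing in the products/sums of \eqref{eq:c-2} are $\le 2$), each of the $d-t$ bubble indices runs over $\{3,\dots,p+1\}$ contributing a factor $\sum_{m=3}^{p+1}m\le(p+1)^2$ — and over the $\binom{d}{t}$ choices of which directions are nodal, gives $\le C(\ell+1)^t(p+1)^{2(d-t)}\|u\|_{\Wmix^{1,1}(Q)}$. Adding over $t=0,\dots,d$ and using Lemma \ref{lemma:W11J3} gives the stated $\ell^1$ bound; the case $d=2$ is identical. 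I expect Step 2 to be the only real work: one has to verify that the global basis functions restrict element by element to single local shape functions (so no $\zeta^k_m$ is shared between two global basis functions) and that the resulting global-to-local coefficient correspondence is injective with the claimed image — once this bookkeeping is in place, Steps 1 and 3 are routine consequences of Lemmas \ref{lemma:cbound} and \ref{lemma:W11J3}.
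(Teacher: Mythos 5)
Your proposal is correct and follows essentially the same route as the paper: identify the global coefficients with the element-wise coefficients of \eqref{eq:local-proj-c}, then obtain the pointwise and $\ell^1$ bounds from Lemma \ref{lemma:cbound} (summing over the number $t_{\is}$ of nodal directions) together with the imbedding of Lemma \ref{lemma:W11J3}. The only difference is in justifying \eqref{eq:compactbasistensor}: you argue via a basis/dimension count for $\Xhpelldim$ plus the continuity of $\Pihpelldim u$ (Remark \ref{rem:global-continuity}), whereas the paper verifies directly from the explicit formulas \eqref{eq:c-face-def}, \eqref{eq:c-edge-def}, \eqref{eq:c-node-def} that $c^{K}_{1i_2\ldots i_d}=c^{K'}_{2i_2\ldots i_d}$ on neighboring elements — both are valid, since the continuity you invoke is itself established by nodal exactness independently of the proposition.
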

\begin{proof}
  The statement follows directly from the construction of the projector, see
  \eqref{eq:local-proj-c}, and from the bounds in Lemmas \ref{lemma:cbound} and \ref{lemma:W11J3}.
  In particular, \eqref{eq:compactbasistensor} holds because the element-wise coefficients 
  related to $\zeta_2^{k-1}$ and to $\zeta_1^{k-2}$ are equal:
  it follows from Equations 
  \eqref{eq:c-face-def}, 
  \eqref{eq:c-edge-def} and \eqref{eq:c-node-def}
  that $c^{K}_{1i_2\ldots i_d} = c^{K'}_{2i_2\ldots i_d}$
  for all $i_2,\ldots,i_d\in\{1,\ldots,p+1\}$,
  all $K = J_{k_1}^\ell \times J_{k_2}^\ell \times J_{k_3}^\ell \in \cG^\ell_3$ satisfying $k_1<\ell$
  and $K' =  J_{k_1+1}^\ell \times J_{k_2}^\ell \times J_{k_3}^\ell \in \cG^\ell_3$.
  The same holds for permutations of $\iscomma$.
  Because $(v_k)_{k=1}^{(\ell+1)p+1}$ are continuous, 
  this again shows continuity of $\Pihpelldim u$ (Remark \ref{rem:global-continuity}).
  The last estimate is obtained with \eqref{eq:c-2}:
  \begin{equation*}
\sum_{\iscomma=1}^{\Noned} |c_{\is}|
	\leq \sum_{t=0}^d \sum_{\substack{\iscomma=1\\ t_{\is}=t}}^{p+1} 
		\left(\sum_{K\in\cG^\ell_d} |c_{\is}| \right)
	\leq C_2 \left(\sum_{t=0}^d (\ell+1)^{t}(p+1)^{2(d-t)}  \right)\| u \|_{\cJ^d_\ugamma(Q)}.
\end{equation*}
\end{proof}
\subsubsection{Proof of Theorem \ref{thm:Interface}}\label{subsec:ProofOfInterface}
\begin{proof}[Proof of Theorem \ref{thm:Interface}]
  Fix $A_f$, $C_f$, and $\ugamma$ as in the hypotheses. Then, by Proposition \ref{lemma:exp-conv}, there exists
  $c_p$, $C_{\hp}$, $b_{\hp}>0$ such that for every $\ell \in \N$ and for all 
  $v\in \cJ^\varpi_{\ugamma}(Q;\Cset, \Eset; C_f, A_f)$, 
  there exists $v_{\hp}^\ell\in \XhpellCpdim$ such that
  (see Section \ref{sec:hp-prdmshspc} for the definition of the space $\XhpellCpdim$)
    \begin{equation*}
      \| v - v_{\hp}^\ell\|_{H^1(Q)}\leq C_{\hp}e^{-b_{\hp} \ell}.
    \end{equation*}%
  For $\epsilon > 0$, we choose
    \begin{equation}
      \label{eq:L-eps}
      L \coloneqq \left\lceil  \frac{1}{b_{\hp}}\left|\log(\epsilon/C_{\hp}) \right|\right\rceil,
    \end{equation}
    so that 
    \begin{equation*}
      \| v - v_{\hp}^L\|_{H^1(Q)}\leq \epsilon.
    \end{equation*}%
  Furthermore, $v_{\hp}^L = \sum_{\iscomma}^{\Noned} c_{\is} \phi_{\is}$ and, for all $(\iscomma)\in\{1, \dots, \Noned\}^d$, there exists $v_{i_j}$, $j=1, \dots, d$ such that $\phi_{\is} =
  \bigotimes_{j=1}^dv_{i_j}$, see Section
  \ref{sec:compactbasis} and Proposition \ref{prop:compactbasis}. By construction of $v_{i}$ in \eqref{eq:DefinitionOfvk}, and by using 
 \eqref{eq:zeta-12} and \eqref{eq:zeta-n}, we observe that 
  $\|v_{i}\|_{L^\infty(I)}\leq 1$ for all $i=1, \dots, \Noned$. 
In addition, \eqref{eq:L2-L}, demonstrates that 
 \begin{equation*}
    \|v_{i}\|_{H^1(I)} \leq 
    \frac{2}{\left|\supp(v_{i})\right|^{1/2}\deg(v_{i})^{1/2}}
    \leq 2 \sigma^{-L/2}\qquad \forall  i\in\{1, \dots, \Noned\}.
  \end{equation*}
  Then, by \eqref{eq:L-eps},
  \begin{equation*}
    \sigma^{-L} \leq \sigma^{-\frac{1}{b_{\hp}}\log(C_{\hp})} \epsilon^{-\frac{1}{b_{\hp}}\log(1/\sigma)}. 
  \end{equation*}
  This concludes the proof of Items \ref{item:vli} and \ref{item:appx-eps}. Finally, Item
  \ref{item:c} follows from Proposition \ref{prop:compactbasis} and the fact that
  $p\leq C_p\left( 1+\left| \log(\epsilon) \right| \right)$ for a constant $C_p>0$
  independent of $\epsilon$.
\end{proof}
\subsection{Combination of multiple patches}
\label{sec:patches}
The approximation results in the domain $Q=(0,1)^d$ can be generalized to
include the combination of multiple patches. We give here an example,
relevant for the PDEs considered in Section \ref{sec:applications}. For the
sake of conciseness, we show a single 
construction that takes into account all
singularities of the problems in Section \ref{sec:applications}. 
We will then use this construction to prove expression rate bounds for realizations of NNs.

Let $a>0$ and $\Omega = (-a,a)^d$. 
Denote the set of corners 
\begin{equation}
  \label{eq:Cset-int}
\Cset_\Omega = \bigtimes_{j=1}^d\{-a, 0, a\},
\end{equation}
and the set of edges
$\Eset_\Omega = \emptyset$ if $d=2$, and, if $d=3$,
\begin{equation}
  \label{eq:Eset-int}
\Eset_\Omega =
  \bigcup_{j=1}^d \bigtimes_{k=1}^{j-1}\{-a, 0, a\} \times \{(-a,-a/2),(-a/2,0),(0,a/2),(a/2,a)\} 
  	\times \bigtimes_{k=j+1}^d\{-a, 0, a\}.
\end{equation}
We introduce the affine transformations $\psi_{1, +}:(0,1)\to(0, a/2)$,
$\psi_{2,+}:(0,1)\to(a/2,a)$, $\psi_{1, -} :(0,1)\to(-a/2, 0)$, $\psi_{2, -}:(0,1)\to(-a,-a/2)$ such that
\begin{equation*}
  \psi_{1, \pm}(x) = \pm\frac{a}{2}x, \qquad \psi_{2,\pm}(x) = \pm\left(a-\frac{a}{2}x\right).
\end{equation*}
For all $\ell\in \mathbb{N}$, define then
\begin{equation*}
  \tcG^\ell_1 = \bigcup_{i\in \{1,2\}, \star\in \{+.-\}}\psi_{i, \star}(\cG^\ell_1).
\end{equation*}
Consequently, for $d=2,3$, denote 
$\tcG^\ell_d = \{\bigtimes_{i=1}^dK_i: K_1, \dots, K_d\in \tcG^\ell_1\}$, 
see Figure \ref{fig:multipatch}.
\begin{figure}
  \centering
  \begin{subfigure}[b]{.45\textwidth}
    \centering
    \includegraphics[width=.7\textwidth]{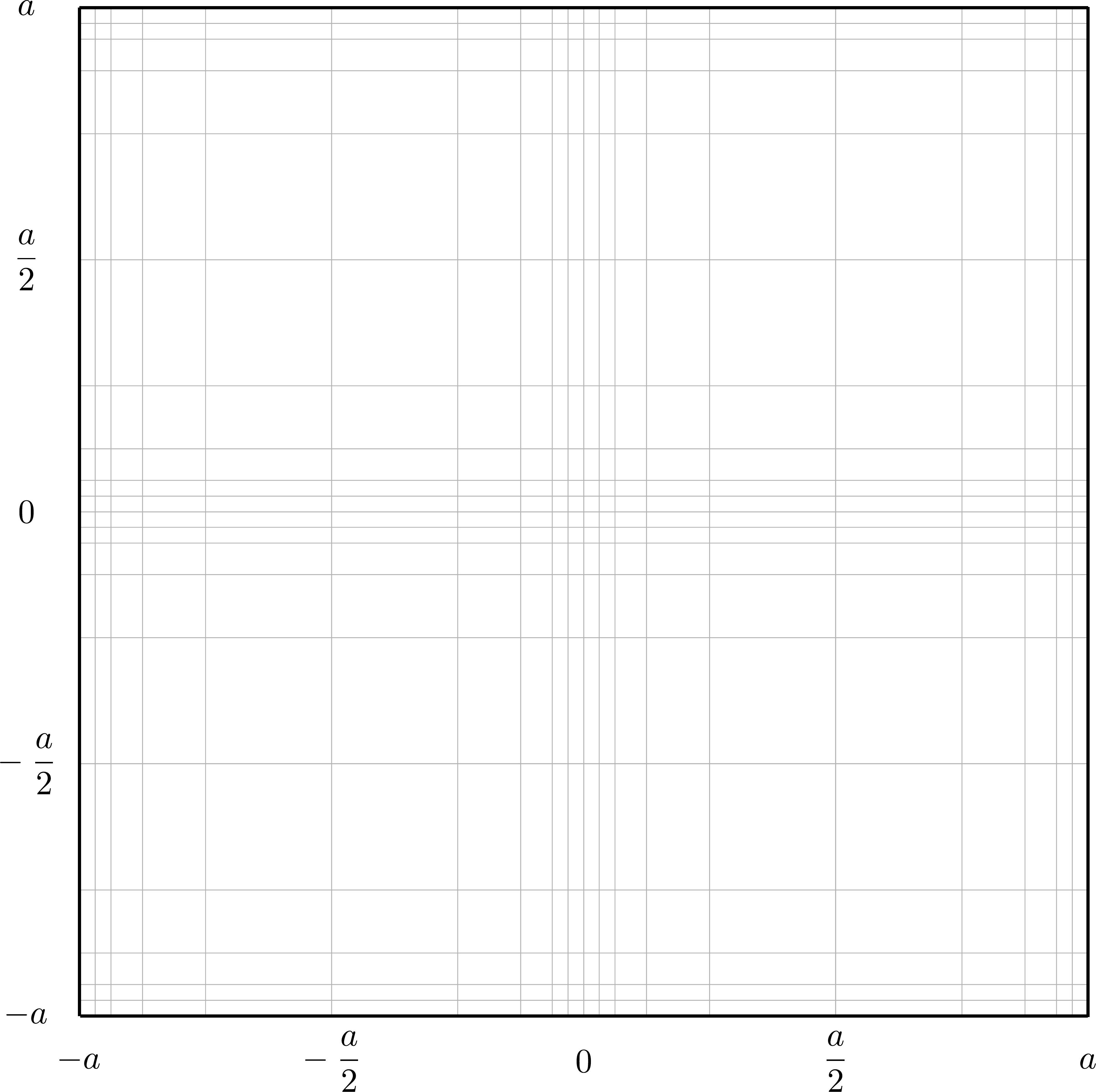}
  \end{subfigure}%
  \begin{subfigure}[b]{.55\textwidth}
    \centering
    \includegraphics[width=.7\textwidth]{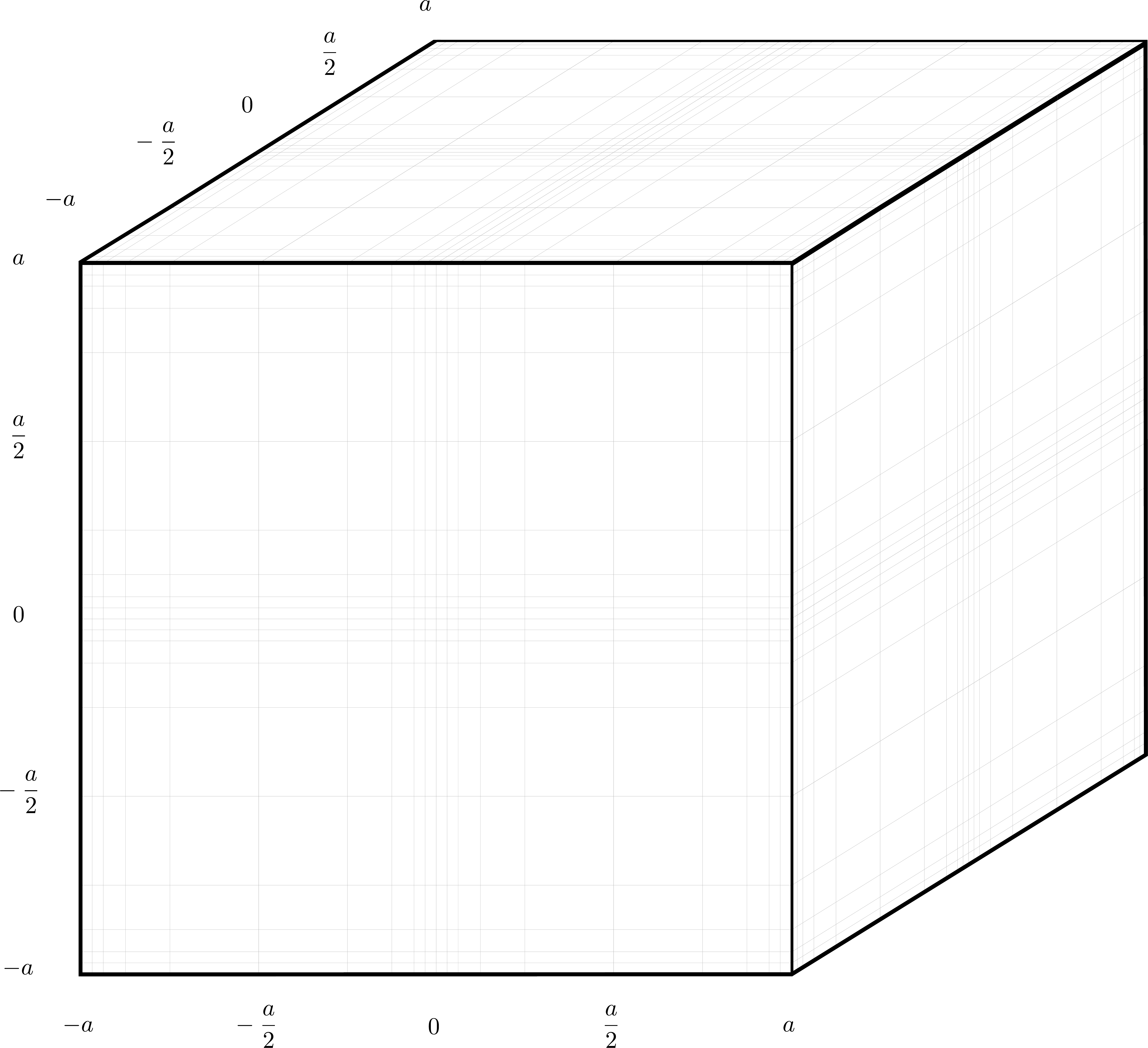}
  \end{subfigure}
  \caption{Multipatch geometric tensor product meshes $\tcG^\ell_d$, for $d=2$ (left) and $d=3$ (right).}
  \label{fig:multipatch}
\end{figure}
The $hp$ space in $\Omega = (-a,a)^d$ is then given by
\begin{equation*}
  \tXhpelldim = \{ v\in H^1(\Omega): v_{|_{K}} \in \mathbb{Q}_{p}(K), \text{ for all } K\in \tcG^\ell_d\}.
\end{equation*}
Finally, recall the definition of $\pihpell$ from
\eqref{eq:pihpell1d} and construct
\begin{equation*}
  \tpihpell  : W^{1,1}((-a,a))\to \tXhpellone
\end{equation*}
such that, for all $v\in W^{1,1}((-a,a))$,
\begin{equation}
\label{eq:tpihpell}
\begin{aligned}
  &\left(\tpihpell v  \right)|_{(0,\frac{a}{2})} = \left(\pihpell (v|_{(0,\frac{a}{2})}\circ\psi_{1,+})\right)\circ\psi_{1,+} ^{-1},\quad 
  \left(\tpihpell v  \right)|_{(\frac{a}{2},a)} = \left(\pihpell (v|_{(\frac{a}{2},a)}\circ\psi_{2,+})\right)\circ\psi_{2,+} ^{-1},\\
  &\left(\tpihpell v  \right)|_{(-\frac{a}{2},0)} = \left(\pihpell (v|_{(-\frac{a}{2},0)}\circ\psi_{1,-}\right)\circ\psi_{1,-} ^{-1},\quad 
  \left(\tpihpell v  \right)|_{(-a, -\frac{a}{2})} = \left(\pihpell (v|_{(-a, -\frac{a}{2})}\circ\psi_{2,-})\right)\circ\psi_{2,-} ^{-1}.
\end{aligned}
\end{equation}
Then, the global $hp$ projection operator $\tPihpelldim :
\Wmix^{1,1} (\Omega)\to \tXhpelldim$ is defined as
\begin{equation*}
 \tPihpelldim = \bigotimes_{i=1}^d \tpihpell.
\end{equation*}
	
\begin{theorem}
\label{prop:internal}
For $a>0$, let $\Omega = (-a,a)^d$, $d=2,3$.
Denote by $\Omega^k$, $k=1, \dots, 4^d$ the patches composing $\Omega$,
i.e., 
the sets $\Omega^k = \bigtimes_{j=1}^d(a^k_j, a^k_j+a/2)$ 
with $a^k_j\in \{-a,-a/2,0,a/2\}$. 
Denote also $\Cset^k =\Cset_\Omega \cap \overline{\Omega}^k$ and
$\Eset^k = \{ e\in \Eset_\Omega: e\subset \overline{\Omega}^k\}$,
which contain one singular corner, 
and three singular edges abutting that corner, as in \eqref{eq:Cset} and \eqref{eq:Eset}.

Let $\mathcal{I}\subset \{1, \dots, 4^d\}$ and
let $v\in \Wmix^{1,1}(\Omega)$ be such that, for all $k\in \mathcal{I}$, 
there holds
$v_{|\Omega^k}\in \cJ^\varpi_{\ugamma^k}(\Omega^k; \Cset^k, \Eset^k)$ 
with 
  \begin{alignat*}{2}
    &\gamma^k_\corn>1,\;\text{for all }\corn\in\Cset^k ,  &&\text{ if } d = 2,\\
    & \gamma^k_\corn>3/2\text{ and }
    \gamma^k_e>1,\; \text{for all }\corn\in\Cset^k\text{ and }e\in\Eset^k,\quad &&\text{ if } d = 3.
  \end{alignat*}
Then,
there exist constants $c_p>0$ and $C, b>0$ such that, for all $\ell\in \mathbb{N}$, 
with $p = c_p \ell$,
\begin{equation}
    \label{eq:hp-Omega}
    \| v - \tPihpelldim v\|_{H^1(\Omega^k)}\leq Ce^{-b\ell} \leq C\exp(-b\sqrt[2d]{\Ndof}).
\end{equation}
Here, $\Ndof = \mathcal{O}(\ell^{2d})$ denotes the overall number of degrees of freedom 
in the piecewise polynomial approximation.
Furthermore, writing $\tNoned = 4(\ell+1)p+1$, there exists an array of coefficients
\[ 
	\tc  = \left\{\tc_{\is}: (\iscomma) \in \{1, \dots, \tNoned\}^d\right\}
 \]
 such that
 \begin{equation*}
 \left(\tPihpelldim v\right)(x_1,\dots,x_d) = \sum_{\iscomma = 1}^{\Noned} 
          \tc_{\is} \prod_{j=1}^d\tv_{ i_j}(x_j)\qquad \forall (x_1, \dots, x_d)\in \Omega,
 \end{equation*}
where for all $j=1, \dots, d$ and $i_j=1, \dots, \tNoned$, $\tv_{ i_j}\in \tXhpellone$ 
with support in at most two, neighboring elements of $\tcG^\ell_1$.
Finally, 
there exist constants $C_1, C_2>0$ independent of $\ell$ 
such that 
\begin{equation}
  \label{eq:v-Omega}
\|v_{i}\|_{H^1((-a,a))} \leq C_1 \sigma^{-\ell/2}\qquad\forall  i=1, \dots, \tNoned,
\end{equation}
and
\begin{equation}
  \label{eq:c-Omega}
\sum_{\iscomma=1}^{\tNoned}|\tc_{\is} |\leq C_2 \sum_{j=0}^d (\ell+1)^{j}(p+1)^{2(d-j)}\|v\|_{\Wmix^{1,1}(\Omega)}.
\end{equation}
\end{theorem}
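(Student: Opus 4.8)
The plan is to reduce everything to the single-patch results of Section~\ref{sec:hp-analysis} by exploiting that both $\tpihpell$ and $\tPihpelldim$ act block-wise over the four subintervals of $(-a,a)$, respectively the $4^d$ patches of $\Omega$. I first record that for each $k\in\{1,\dots,4^d\}$ there is an affine bijection $T^k\colon Q=(0,1)^d\to\Omega^k$, a $d$-fold product of the one-dimensional maps $\psi_{i,\pm}$, which identifies the geometric mesh $\cG^\ell_d$ on $Q$ with the part of $\tcG^\ell_d$ lying in $\Omega^k$ and which sends the origin of $Q$ — towards which $\cG^\ell_d$ is refined — to the unique point of $\Cset_\Omega$ in $\overline{\Omega}^k$ and the three adjacent edges of $Q$ to the elements of $\Eset^k$ (this holds because $\tcG^\ell_1$ is obtained by refining each of the four subintervals of $(-a,a)$ towards the nodes $0,\pm a$). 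From the definition \eqref{eq:tpihpell} of $\tpihpell$, the identity $\tPihpelldim=\bigotimes_{i=1}^d\tpihpell$, and \eqref{eq:Pihpell}, one then has
\[
(\tPihpelldim v)|_{\Omega^k}=\big(\Pihpelldim(v|_{\Omega^k}\circ T^k)\big)\circ (T^k)^{-1},\qquad k=1,\dots,4^d,\ v\in\Wmix^{1,1}(\Omega).
\]
Moreover, nodal exactness of the univariate projectors (Lemma~\ref{lemma:reg-cont}, Remark~\ref{rem:global-continuity}) — valid at the patch interfaces $0,\pm a/2$ as well, since there the image value is simply $v$ evaluated at the node — shows that this block-wise definition yields a function in $H^1(\Omega)$, hence in $\tXhpelldim$, so that $\tPihpelldim$ is a well-defined map $\Wmix^{1,1}(\Omega)\to\tXhpelldim$ (cf.\ Remark~\ref{rem:hpw11mix}).

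To prove \eqref{eq:hp-Omega}, fix $k\in\mathcal I$. Since $T^k$ is affine and maps the singular corner and edges of $Q$ to $\Cset^k$ and $\Eset^k$, the pullback $v|_{\Omega^k}\circ T^k$ lies again in a weighted analytic class on $Q$ with the same weight exponents and with constants changed only by $a$-dependent factors — this is precisely the rotation/reflection invariance noted at the beginning of Section~\ref{sec:hp-analysis}; the weights $r_c,r_e$ scale like $(a/2)$ times the reference ones, $\rho_{ce}$ is scale-invariant, and derivatives pick up powers of $2/a$. Choosing $p=c_p\ell$ with $c_p$ the constant of Proposition~\ref{lemma:exp-conv} and applying that proposition on $Q$ gives $\|(v|_{\Omega^k}\circ T^k)-\Pihpelldim(v|_{\Omega^k}\circ T^k)\|_{H^1(Q)}\le Ce^{-b\ell}$; transforming back through $T^k$, whose Jacobian and its inverse are bounded in terms of $a$, yields the first inequality in \eqref{eq:hp-Omega}. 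The reformulation in terms of $\Ndof$ follows from $\Ndof=\dim\tXhpelldim=(4(\ell+1)p+1)^d=\mathcal O(\ell^{2d})$ when $p=c_p\ell$, so that $e^{-b\ell}=\exp(-b'\Ndof^{1/(2d)})$ after adjusting the constant.

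Next I construct the univariate basis $\{\tv_i\}_{i=1}^{\tNoned}$, $\tNoned=4(\ell+1)p+1$, following Section~\ref{sec:compactbasis} verbatim but over the $4(\ell+1)$ intervals of $\tcG^\ell_1$: extend the local shape functions $\zeta^k_n$ by zero, keep the two boundary hat functions, glue the interior hat functions $\zeta^{k}_1+\zeta^{k+1}_2$ across each interior node (the patch interfaces $0,\pm a/2$ included, the gluing being legitimate there by the nodal exactness already used), and keep the higher modes $\zeta^{k}_{n+2}$, $n=1,\dots,p-1$; each $\tv_i$ is then continuous with support in at most two neighbouring elements of $\tcG^\ell_1$. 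Since $\tPihpelldim=\bigotimes_{i=1}^d\tpihpell$, and since on each patch the local coefficients of $\tpihpell v$ coincide on $\zeta^k_1$ and $\zeta^{k+1}_2$ exactly as in the proof of Proposition~\ref{prop:compactbasis}, the tensor representation $(\tPihpelldim v)(x)=\sum_{\iscomma=1}^{\tNoned}\tc_{\is}\prod_{j=1}^d\tv_{i_j}(x_j)$ holds. Estimate \eqref{eq:v-Omega} then follows from \eqref{eq:L2-L} together with $\|\tv_i\|_{L^\infty((-a,a))}\le 1$, exactly as in the proof of Theorem~\ref{thm:Interface}, the smallest element having length $\simeq (a/2)\sigma^\ell$, so $\|\tv_i\|_{H^1((-a,a))}\lesssim\sigma^{-\ell/2}$.

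Finally, for \eqref{eq:c-Omega} I would transport Lemma~\ref{lemma:cbound} patch by patch: by the displayed factorization, the elementwise coefficients of $\tPihpelldim v$ on the elements of $\tcG^\ell_d$ inside $\Omega^k$ coincide (up to index permutation) with the coefficients $c^K_{\is}$ of $\Pihpelldim(v|_{\Omega^k}\circ T^k)$ on $Q$, and the latter obey the $\Wmix^{1,1}$-based bounds of Lemma~\ref{lemma:cbound} with $\|\cdot\|_{\Wmix^{1,1}(Q)}$ replaced by $\|v|_{\Omega^k}\circ T^k\|_{\Wmix^{1,1}(Q)}\le C\|v\|_{\Wmix^{1,1}(\Omega^k)}\le C\|v\|_{\Wmix^{1,1}(\Omega)}$ (using that $T^k$ is affine with volume comparable to $1$ and that $\Wmix^{1,1}$ norms are monotone in the domain). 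Summing these bounds over the elements of $\tcG^\ell_d$ within $\Omega^k$, then over the $4^d$ patches, and counting how many multi-indices $(\iscomma)\in\{1,\dots,\tNoned\}^d$ have exactly $t$ ``nodal'' components produces the asserted $\sum_{j=0}^d(\ell+1)^{j}(p+1)^{2(d-j)}$ behaviour, the combinatorial factor $4^d$ being absorbed into $C_2$. I expect the only genuine bookkeeping to be the verification of the block-wise factorization above together with $H^1$-conformity across all interfaces, and the check that the affine pullbacks land in the correct weighted analytic class with the singular set placed at the origin; once these are in place, \eqref{eq:hp-Omega}, \eqref{eq:v-Omega} and \eqref{eq:c-Omega} are immediate consequences of Proposition~\ref{lemma:exp-conv}, Proposition~\ref{prop:compactbasis} and Lemma~\ref{lemma:cbound}.
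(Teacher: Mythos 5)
Your proposal is correct and follows essentially the same route as the paper's proof: continuity of $\tPihpelldim v$ across the patch interfaces via nodal exactness of the univariate projectors, followed by a per-patch application of Proposition~\ref{lemma:exp-conv} for \eqref{eq:hp-Omega} and of the basis construction of Section~\ref{sec:compactbasis} together with Lemma~\ref{lemma:cbound} for \eqref{eq:v-Omega} and \eqref{eq:c-Omega}. The only difference is presentational: you make explicit the affine identification $T^k$ of each patch with the reference cube and the invariance of the weighted analytic classes under it, which the paper leaves implicit in its opening remark of Appendix~\ref{sec:hp-analysis} on extension by rotation and reflection.
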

\begin{proof}
The statement is a direct consequence of 
Propositions \ref{lemma:exp-conv} and \ref{prop:compactbasis}.
We start the proof by showing that for any function $v\in \Wmix^{1,1}(\Omega)$, 
the approximation $\tPihpelldim v$ is continuous; 
the rest of the theorem will then follow from the results in each
sub-patch. Let now $w\in W^{1,1}((-a,a))$. 
Then, it holds that
 $ \left(\tpihpell w  \right)|_{I} \in C(I)$, for all $I\in \{(0, a/2), (a/2,a),
 (-a/2, 0), (-a, -a/2)\}$, by definition \eqref{eq:tpihpell}.
 Furthermore, by the nodal exactness of the local projectors, for $\tilde{x} \in
 \{-a/2, 0, a/2\}$, there holds
 \begin{equation*}
   \lim_{x \to \tilde{x}^-} (\tpihpell w)(x) = w(\tilde{x}) = 
\lim_{x \to \tilde{x}^+} (\tpihpell w)(x),
 \end{equation*}
 implying then that $\tpihpell w$ is continuous. Since $\tPihpelldim =
 \bigotimes_{j=1}^d\tpihpell$, this implies that $\tPihpelldim v$ is continuous
 for all $v\in \Wmix^{1,1}(\Omega)$.
Fix $k\in \{1, \dots, 4^d\}$ such that $v\in \cJ^\varpi_{\ugamma^k}(\Omega^k; \Cset^k,\Eset^k)$.
 There exist then, by Proposition
 \ref{lemma:exp-conv}, constants $C, b, c_p>0$ such that for all $\ell\in\mathbb{N}$
 \begin{equation*}
   \| v -  \tPihpellCpdim \|_{H^1(\Omega^k)}\leq Ce^{-b\ell}. 
 \end{equation*}
 Equation \eqref{eq:hp-Omega} follows. The bounds \eqref{eq:v-Omega} and
 \eqref{eq:c-Omega} follow from the construction of the basis functions
 \eqref{eq:zeta-12}--\eqref{eq:zeta-n} and from the application of Lemma \ref{lemma:cbound} 
 in each patch, respectively.
\end{proof}

\section{Proofs of Section \ref{sec:applications}}
\label{sec:proofs-appendix}
\subsection{Proof of Lemma \ref{lemma:exist-PU}} \label{sec:triangulationpolygon}
\begin{proof}[Proof of Lemma \ref{lemma:exist-PU}]
  For any two sets $X,Y\subset\Omega$,
  we denote by $\distomega(X,Y)$ the infimum of Euclidean lengths of paths in $\Omega$ 
  connecting an element of $X$ with one of $Y$.
  We introduce several domain-dependent quantities
  to be used in the construction of 
  the triangulation $\cT$ with the properties stated in the lemma.
 
  Let $\Eset$ denote the set of edges of the polygon $\Omega$.  
  For each corner $c\in \Cset$
  at which the interior angle of $\Omega$ is smaller than $\pi$ 
  (below called \emph{convex corner}), 
  we fix
  a parallelogram $G_c \subset \Omega$ and 
  a bijective, affine transformation $F_c : (0,1)^2\to G_c$ 
  such that
  \begin{itemize}
  \item $F_c((0,0)) = c$,
  \item two edges of $G_c$ coincide partially with the edges of 
        $\Omega$ abutting at the corner $c$.
  \end{itemize}
  If at $c$ the interior angle of $\Omega$ is greater than or equal to $\pi$ 
  (both are referred to by slight abuse of terminology as \emph{nonconvex corner}), 
  the same properties hold, 
  with $F_c : (-1,1)\times(0,1)\to G_c$ if the interior angle equals $\pi$,
  and $F_c : (-1,1)^2\setminus (-1, 0]^{2}\to G_c$ else,
  and with $G_c$ having the corresponding shape.
  Let now
  \begin{equation*}
   d_{c,1} \coloneqq \sup \{r>0: B_r(c) \cap\Omega \subset G_c\},
   \qquad 
   d_{\Cset,1} \coloneqq \min_{c\in\Cset}d_{c,1}.
  \end{equation*}
  Then, for each $c\in \Cset$, let $e_1$ and $e_2$ be the edges abutting
    $c$, and define
    \begin{equation*}
      d_{c, 2} \coloneqq \distomega \left(e_1\cap \left(B_{\frac{\sqrt{2}}{\sqrt{2}+1}d_{\Cset, 1}}(c)\right)^c
   , e_2\cap \left(B_{\frac{\sqrt{2}}{\sqrt{2}+1}d_{\Cset, 1}}(c)\right)^c\right), \qquad d_{\Cset, 2} \coloneqq \min_{c\in \Cset} d_{c, 2}.
    \end{equation*}
  Furthermore, for each $e\in\Eset$, denote $d_e := \infty$ if $\Omega$ is a triangle, otherwise
  \begin{equation*}
  d_e \coloneqq \min\left\{ \distomega(e, e_1): e_1\in\Eset \text{ and }\overline{e}\cap\overline{e_1} = \emptyset \right\},
  \qquad 
  d_{\Eset}  = \min_{e\in\Eset}d_e.
  \end{equation*}
Finally, for all $x\in\Omega$, let
  \begin{equation*}
    n_e(x) \coloneqq \#\{e_1, e_2, \ldots\in \Eset: \distomega(x,\partial\Omega) = \distomega(x, e_1) = \distomega(x, e_2) = \dots\}.
  \end{equation*}
  Then, in case $\Omega$ is a triangle, let $d_0$ be half of the radius of the inscribed circle,
  else let $d_0 := \tfrac13 d_{\Eset} < \tfrac12 d_{\Eset}$. It holds that
  \begin{equation*}
    \distomega(\{x\in \Omega: n_e(x)\geq 3\}, \partial\Omega) \geq d_0 >0.
  \end{equation*}

   For any shape regular triangulation $\cT$ of
  $\R^2$, such that for all $K\in \cT$, $K\cap \partial \Omega = \emptyset$, 
  denote $\cTOmega = \{K\in \cT: K\subset\Omega\}$ 
  and 
  $h(\cTOmega) = \max_{K\in\cTOmega} h(K)$, 
  where $h(K)$ denotes the diameter of $K$. 
  Denote by $\cNOmega$ the set of nodes of $\cT$ that are in $\overline{\Omega}$. 
  For any $n\in \cNOmega$, define
  \begin{equation*}
    \ptch(n) \coloneqq \interior\left(\bigcup_{K\in \cT: n\in\overline{K}} \overline{K}  \right).
  \end{equation*}
  Let $\mathcal{T}$ be a triangulation of $\R^2$ such that 
  \begin{equation}
    \label{eq:meshsize}
h(\cTOmega) \leq \min\left( \frac{d_0}{\sqrt{2}}, \frac{d_{\Cset,1}}{\sqrt{2}+1} , \frac{d_{\Cset, 2}}{2\sqrt{2}},\frac{d_{\Eset}}{2\sqrt{2}}\right),
  \end{equation}
  and such that for all $K\in\cT$ it holds $K\cap \partial\Omega = \emptyset$.

  The hat-function basis $\{\phi_n\}_{n\in\cNOmega}$ is a basis for 
  $\mathbb{P}_1(\cTOmega)$ such that
  $\supp(\phi_n) \subset \overline{\ptch(n)}$ for all $n\in \cNOmega$, 
  and it is a partition of unity.

  We will show that, for each $n\in \cNOmega$, there exists a subdomain
  $\Omega_n$ with
  the desired properties, such that $\ptch(n)\cap\Omega\subset \Omega_n$.
We point to Figure \ref{fig:patch-corner} for an illustration
  of the patches $\Omega_n$ that will be introduced in the proof, for 
  different sets of nodes.
  \begin{figure}
    \centering
    \begin{subfigure}[b]{.25\textwidth}
      \centering
      \includegraphics[width=\textwidth]{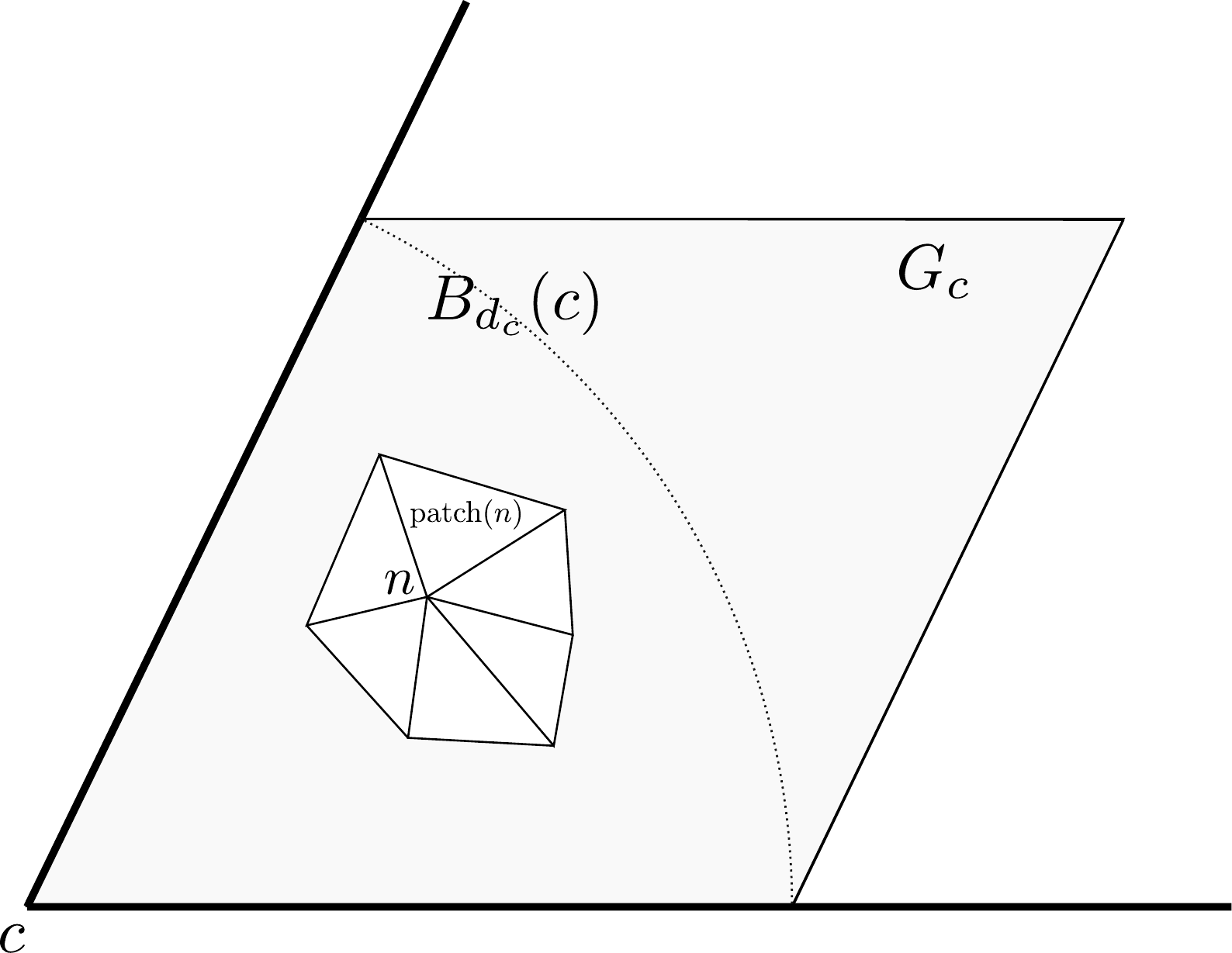}
      \caption{}\label{fig:patch-convex}
    \end{subfigure}%
    \begin{subfigure}[b]{.25\textwidth}
      \centering
      \includegraphics[width=\textwidth]{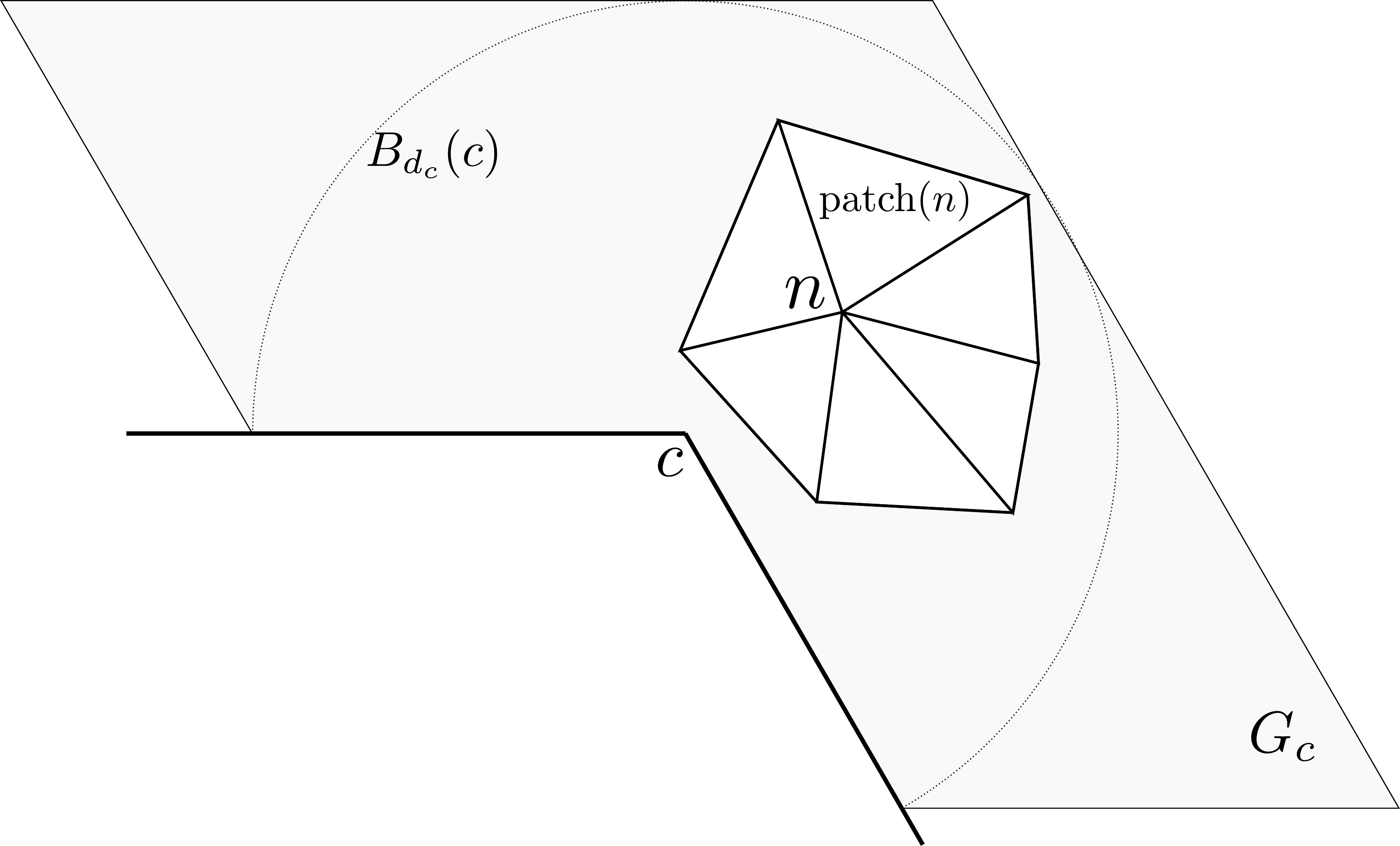}
      \caption{}\label{fig:patch-nonconvex}
    \end{subfigure}%
    \begin{subfigure}[b]{.25\textwidth}
      \centering
      \includegraphics[width=.8\textwidth]{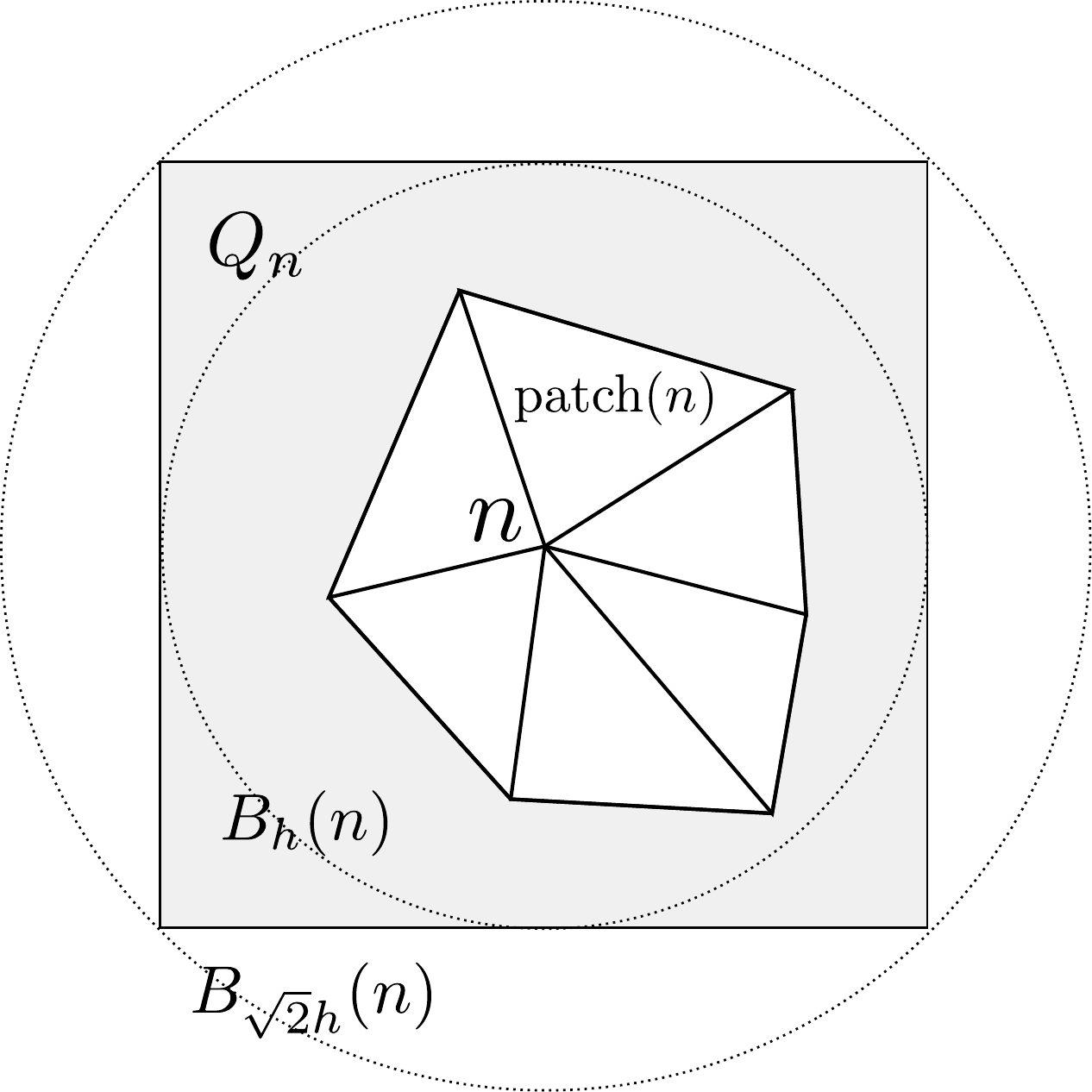}
      \caption{}\label{fig:patch-interior}
    \end{subfigure}%
    \begin{subfigure}[b]{.25\textwidth}
      \centering
      \includegraphics[width=.85\textwidth]{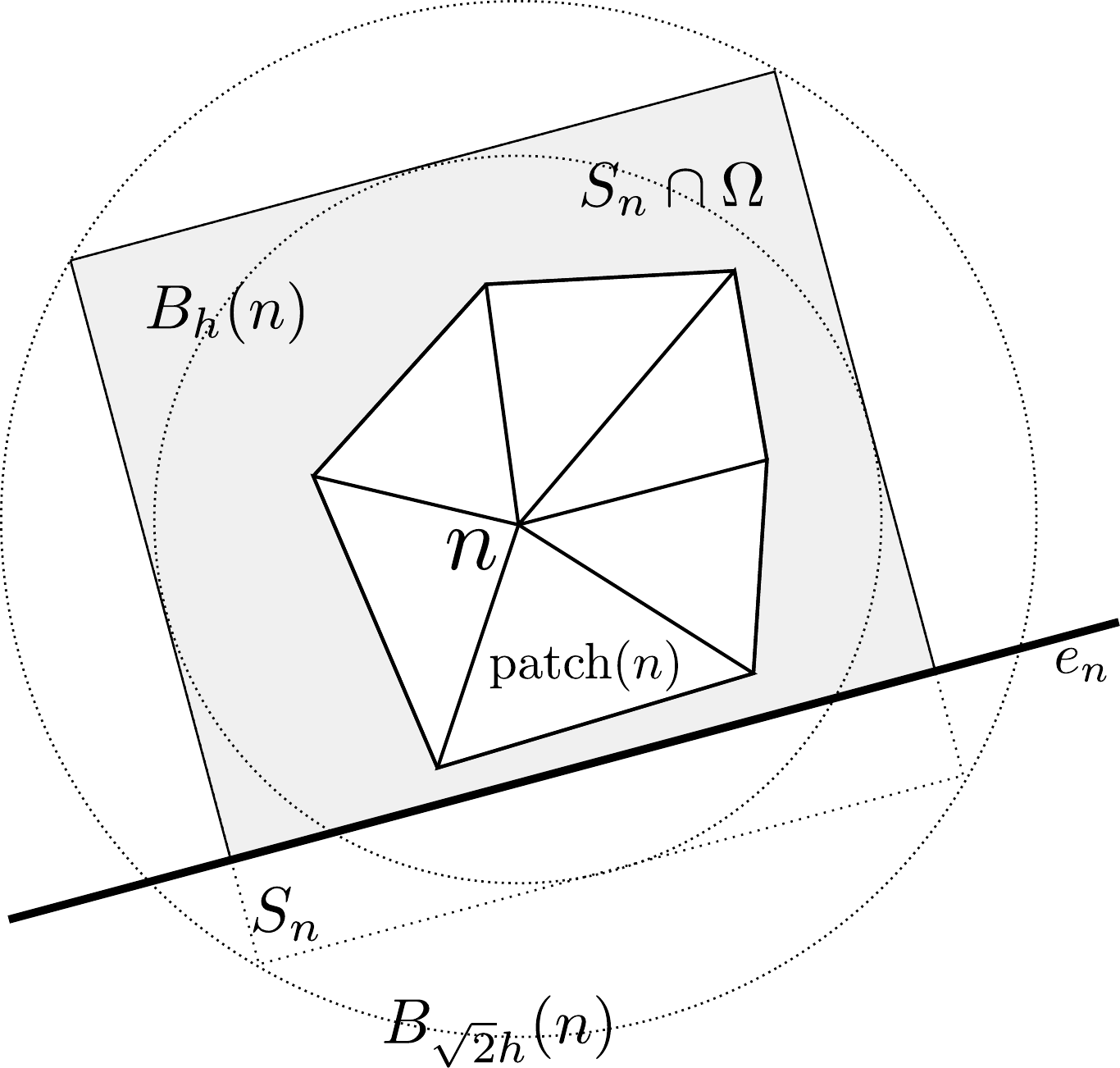}
      \caption{}\label{fig:patch-edge}
    \end{subfigure}%
    \caption{Patches $\Omega_n$ for nodes near a convex (\subref{fig:patch-convex}), nonconvex
      corner (\subref{fig:patch-nonconvex}), for nodes in the interior of $\Omega$ (\subref{fig:patch-interior}),
      and near an edge (\subref{fig:patch-edge}).}
    \label{fig:patch-corner}
  \end{figure}

  For each $c\in\Cset$, let $\hcN_c = \{n\in \cNOmega: \ptch(n)\cap\Omega\subset
  G_c\}$. There holds
  \begin{equation*}
    \cN_c\coloneqq\{n\in \cNOmega: \distomega(n, c)\leq d_{\Cset,1} - h(\cTOmega)\} \subset \hcN_c.
  \end{equation*}
  Therefore, all the nodes $n\in \cN_c$ are such that $\ptch(n)\cap\Omega \subset G_c =: \Omega_n$. 
  Denote then
  \begin{equation*}
\cN_{\Cset}  =\bigcup_{c\in\Cset}\cN_c.
  \end{equation*}
  Note that,
 due to
  \eqref{eq:meshsize}, 
there holds $\sqrt{2}h(\cTOmega) \leq 
  \frac{\sqrt{2}}{\sqrt{2}+1}d_{\Cset, 1} \leq d_{\Cset, 1} - h(\cTOmega)$.
  
  We consider the nodes in $\cN\setminus \cN_{\Cset}$. First, consider the nodes in
  \begin{equation*}
    \cN_0\coloneqq \{n\in \cN\setminus\cN_{\Cset} : \distomega(n, \partial \Omega) \geq \sqrt{2}h(\cTOmega)\}.
  \end{equation*}
  For all $n\in \cN_0$, there exists a square $Q_n$ such that
  \begin{equation*}
    \ptch(n) \subset B_{h(\cTOmega)}(n) \subset Q_n\subset B_{\sqrt{2}h(\cTOmega)}(n)\subset \Omega,
  \end{equation*}
  see Figure \ref{fig:patch-interior}.
  Hence, for all $n\in \cN_0$, we take $\Omega_n := Q_n$. 
  Define 
  \begin{equation*}
 \cN_{\Eset}\coloneqq   \cN \setminus(\cN_0\cup\cN_{\Cset}) = \left\{n\in \mathcal{N}: \distomega(n, c) > d_{\Cset,1}-h(\cTOmega), \forall c\in \Cset,\text{ and }\distomega(n, \partial\Omega) < \sqrt{2}h(\cTOmega)\right\}.
  \end{equation*}
  For all $n\in \cN_{\Eset}$, 
  from \eqref{eq:meshsize} it follows that 
  $\distomega(n, \partial \Omega) <\sqrt{2}h(\cTOmega) \leq
  d_0$, hence $n_e(n) \leq 2$. Furthermore, suppose there exists $n\in
  \cN_{\Eset}$ such that $n_e(n) =2$. Let the two closest edges to $n$ be
  denoted by $e_1$ and $e_2$, so that $\distomega(n, e_1) = \distomega(n, e_2)
  = \distomega(n, \partial\Omega) <\sqrt{2}h(\cTOmega)$. If
    $\overline{e_1}\cap\overline{e_2} = \emptyset$, there must hold $\distomega(n, e_1) + \distomega(n, e_2)\geq d_\Eset$, which is
a contradiction with $\distomega(n, \partial\Omega) < \sqrt{2}h(\cTOmega)\leq
d_{\Eset}/2$. If instead there exists $c\in\Cset$ such that $\overline{e_1}\cap
\overline{e_2} = \{c\}$, then $n$ is on the  bisector of the angle between $e_1$ and
$e_2$. 
Using that $2\sqrt{2} h(\cTOmega)\leq d_{\Cset, 2}$, we now show 
that all such nodes 
belong either to $\cN_{\Cset}$ or to $\cN_0$, which is a contradiction to $n\in\cN_{\Eset}$.
%
Let $x_0\in\Omega$ be the intersection of
$B_{\frac{\sqrt{2}}{\sqrt{2}+1}d_{\Cset, 1}}(c)$ and the bisector.
To show that $n\in\cN_{\Cset}\cup\cN_0$,
it suffices to show that $\dist(x_0,e_i) \geq \sqrt{2}h(\cTOmega)$ for $i=1,2$.
Because $\frac{\sqrt{2}}{\sqrt{2}+1}d_{\Cset, 1} \leq d_{\Cset, 1} - h(\cTOmega)$, 
it a fortiori holds for all points $y$ in $\Omega$ on the bisector
intersected with $\left(B_{d_{\Cset, 1}-h(\cTOmega)}(c)\right)^c$, 
that $\dist(y,e_i)\geq \sqrt{2}h(\cTOmega)$, 
which shows that 
if $\distomega(n,c)\geq d_{\Cset,1}-h(\cTOmega)$, then $n\in\cN_0$.
If $c$ is a nonconvex corner, then 
$\dist(x_0,e_i) \geq \sqrt{2}h(\cTOmega)$ for $i=1,2$ follows immediately from 
$\dist(x_0,e_i) = \dist(x_0,c) = \frac{\sqrt{2}}{\sqrt{2}+1}d_{\Cset, 1}$ and
\eqref{eq:meshsize}. 
%
To show that $\dist(x_0,e_i) \geq \sqrt{2}h(\cTOmega)$, $i=1,2$
in case $c$ is a convex corner, we make the following definitions 
(see Figure \ref{fig:e1ce2}):
\begin{figure}
    \centering
      \includegraphics[width=.4\textwidth]{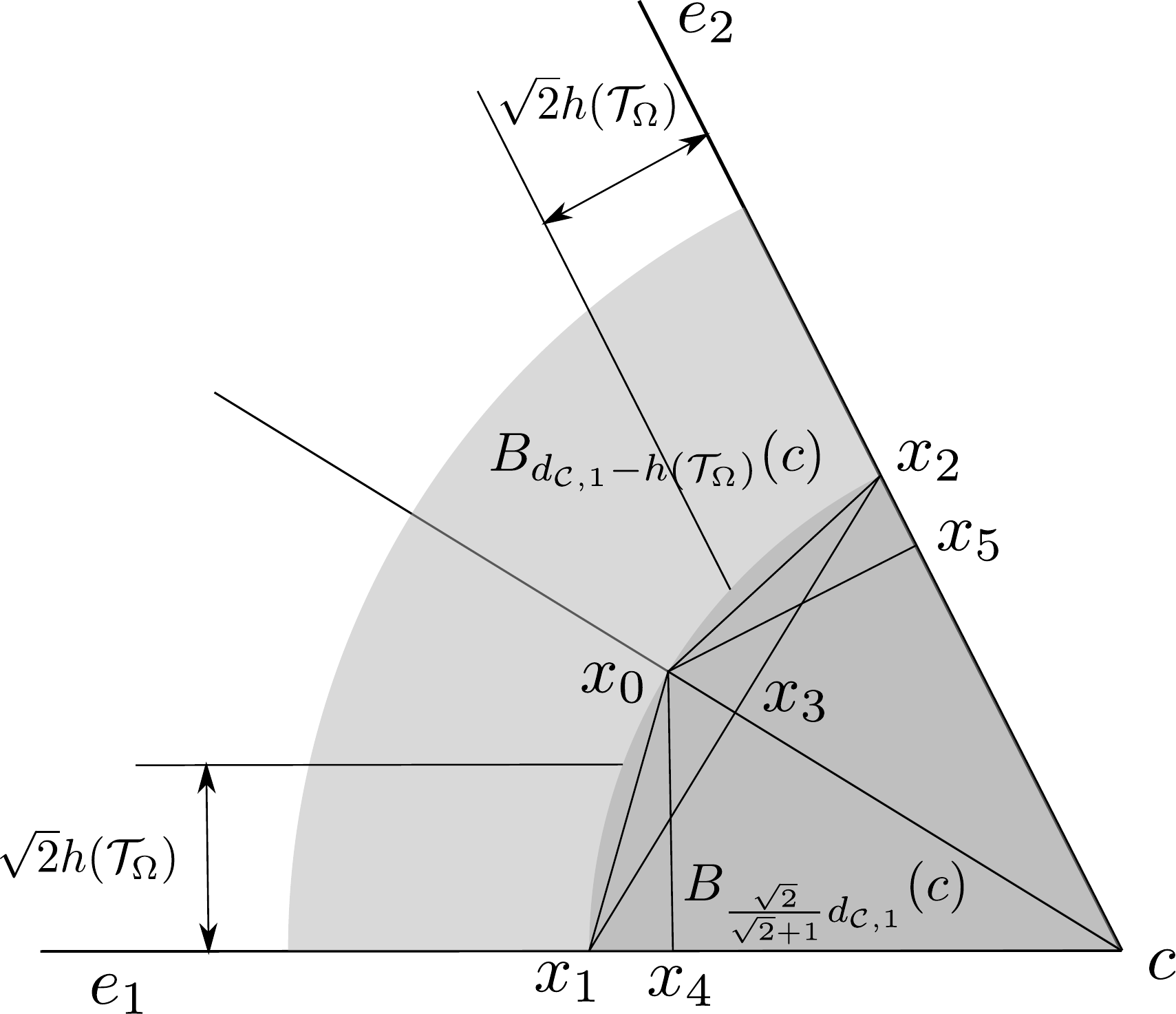}
         \caption{Situation near a convex corner $c$.}
    \label{fig:e1ce2}
  \end{figure}
\begin{itemize}
\item For $i=1,2$, let $x_i$ be the intersection of 
$e_i$ and $B_{\frac{\sqrt{2}}{\sqrt{2}+1}d_{\Cset, 1}}(c)$,
\item let $x_3$ be the intersection of $\overline{x_1x_2}$ with the bisector,
\item and for $i=1,2$, let $x_{i+3}$ be the orthogonal projection of $x_0$ onto $e_i$,
which is an element of $e_i$ because $c$ is a convex corner.
\end{itemize}
Then 
$d_{c,2} = |\overline{x_1x_2}| = |\overline{x_1x_3}| + |\overline{x_3x_2}| = 2 |\overline{x_ix_3}|$.
Because the triangle $cx_0x_{i+3}$ is congruent to $cx_1x_3$, it follows that
$\dist(x_0,e_i) = |\overline{x_0x_{i+3}}| = |\overline{x_ix_3}| = \tfrac12 d_{c,2} \geq \sqrt{2}h(\cTOmega)$.
%
We can conclude with \eqref{eq:meshsize} that $n_e(n) = 1$ for all $n\in \cN_{\Eset}$
and denote the edge closest to $n$ by $e_n$.
Let then
  $S_n$ be the square with two edges parallel to $e_n$ such that
  \begin{equation*}
   \ptch(n) \subset B_{h(\cTOmega)}(n)\subset S_n\subset B_{\sqrt{2}h(\cTOmega)}(n),
  \end{equation*}
  see Figure \ref{fig:patch-edge}, i.e. $S_n$ has center $n$ and sides of length $2h(\cTOmega)$.
 For each $n\in \cN_{\Eset}$, 
 the connected component of $S_n\cap \Omega$ containing $n$ is a rectangle:
\begin{itemize}
\item[(i)] Note that for all edges e such that $\overline{e}\cap\overline{e_n} = \emptyset$, 
it holds that $S_n\cap e \subset B_{\sqrt{2}h(\cTOmega)}(n) \cap e = \emptyset$.
The latter holds because $\sqrt{2}h(\cTOmega) \leq \tfrac12 d_{\Eset}$ 
and $\distomega(n,e_n)<\sqrt{2}h(\cTOmega)$ 
imply $\distomega(n,e)\geq \sqrt{2}h(\cTOmega)$.
\item[(ii)] From the previously given geometric argument
 considering a convex corner $c$ and the two neighboring edges $e_1$ and $e_2$, 
 showing that 
 $\dist(x_0,e_i) \geq \sqrt{2}h(\cTOmega)$ for $i=1,2$, we can additionally conclude 
that there is no $x\in\Omega\setminus B_{\frac{\sqrt{2}}{\sqrt{2}+1}d_{\Cset, 1}}(c)$
for which $\dist(x,e_n)<\sqrt2 h(\cTOmega)$ 
and such that there exists another edge $e$ so that $\overline{e_n}\cap\overline{e}\neq\emptyset$ 
and $\dist(x,e) < \sqrt2 h(\cTOmega)$.
This implies that $S_n\cap\partial\Omega \subset e_n$ or $S_n\cap\partial\Omega = \emptyset$.
\end{itemize}
Thus, the connected component of $S_n\cap\Omega$ containing $n$ is a rectangle,
which we define to be $\Omega_n$.

Setting $N_p := \#\cNOmega$ and $\{\Omega_i\}_{i=1,\ldots,N_p} = \{\Omega_n\}_{n\in\cNOmega}$ concludes the proof. 
  \end{proof}
  \subsection{Proof of Lemma \ref{lemma:W11ext}}
\label{sec:W11ext}
\begin{proof}[Proof of Lemma \ref{lemma:W11ext}]
  Let $d=3$ and denote $R = (-1, 0)^3$. 
  Denote by $O$ the origin, and
  let $E = \{e_1, e_2, e_3\}$ denote the set of edges of $R$ abutting the origin. 
  Let also $F=\{f_1, f_2, f_3\}$ denote 
  the set of faces of $R$ abutting the origin, i.e., the faces of $R$ such
  that $f_i \subset \overline{R}\cap\overline{\Omega}$, $i=1,2,3$.
  Let, finally, for each $f\in F$, $E_f = \{e\in E: e\subset\overline{f}$
  denote the subset of $E$ containing the edges neighboring $f$.

  For each $e\in E$, define $u_e$ to be the lifting of $u|_e$ into $R$, i.e.,
  the function such that $u_e|_e = u_e$ and $u_e$ is constant in the two
  coordinate directions perpendicular to $e$. Similarly, let, for each $f\in F$,
  $u_f$ be such that $u_f|_f = u|_f$ and $u_f$ is constant in the direction
  perpendicular to $f$.

  We define $w:R\to \mathbb{R}$ as
  \begin{equation}
    \label{eq:vR}
    w = u_0 + \sum_{e\in E}\big( u_{e}  - u_0\big) 
+ \sum_{f\in F}\big( u_{f}  - u_0 -  \sum_{e\in E_f}(u_{e}-u_0)\big) = u_0 -\sum_{e\in E}u_e + \sum_{f\in F}u_f,
  \end{equation}
  where $u_0 = u(O)$.
  Since $u|_{e}\in W^{1,1}(e)$, $u|_{f}\in\Wmix^{1,1}(f)$ for all
  $e\in E$ and $f\in F$, there holds $u_e\in \Wmix^{1,1}(R)$ and 
  $u_f\in \Wmix^{1,1}(R)$ for all $e\in E$ and $f\in F$ 
  (cf. Equations \eqref{eq:trace} and \eqref{eq:trace-2}), 
  hence $w\in\Wmix^{1,1}(R)$.
  Furthermore, note that
  \begin{equation*}
    \big( u_{e}  - u_0\big)|_{\widetilde{e}} = 0, \quad \text{for all }E\ni\widetilde{e}\neq e
  \end{equation*}
  and that
  \begin{equation*}
    \big(u_{f}  - u_0 -  \sum_{e\in E_f}(u_{e}-u_0)\big)|_{\widetilde{f}} = 0, \quad \text{for all }F\ni\widetilde{f}\neq f.
  \end{equation*}
  From the first equality in \eqref{eq:vR}, then, it follows that, for all $f\in F$,
  \begin{equation*}
    w|_f = u_0 + \sum_{e\in E_f}\big(u_e|_f - u_0\big) + u_f|_f - u_0 - \sum_{e\in E_f}\big(u_e|_f - u_0\big)= u|_f.
  \end{equation*}
  Let the function $v$ be defined as
  \begin{equation}
    \label{eq:vR2}
    v|_R = w, \qquad v|_{\Omega} =u.
  \end{equation}
  Then, $v$ is continuous in $(-1,1)^3$ and $v\in\Wmix^{1,1}((-1,1)^3)$.
  Now, for all $\alpha\in \mathbb{N}^3_0$ such that $|\alpha|_\infty\leq 1$,
  \begin{equation*}
   \|\dalpha u_e\|_{L^1(R)} 
 = \|\partial^{\alpha^e_\parallel} u_e\|_{L^1(R)} 
 = \|\partial^{\alpha^e_\parallel} u\|_{L^1(e)},\qquad \forall e\in E,
  \end{equation*}
  where $\alpha^e_\parallel$ denotes the index in the coordinate direction parallel to $e$, 
  and
  \begin{equation*}
  \|\dalpha u_f\|_{L^1(R)} 
= \|\partial^{\alpha^f_{\parallel,1}} \partial^{\alpha^f_{\parallel,2}} u_f\|_{L^1(R)} 
= \|\partial^{\alpha^f_{\parallel,1}} \partial^{\alpha^f_{\parallel,2}} u\|_{L^1(f)} ,\qquad \forall f\in F,
  \end{equation*}
where $\alpha^f_{\parallel, j}$, $j=1,2$ denote the indices in the coordinate directions
  parallel to $f$. Then, by a trace inequality (see \cite[Lemma 4.2]{Schotzau2013a}), 
  there exists a constant $C>0$ independent of $u$ such that
  \begin{equation*}
   \| u_e \|_{\Wmix^{1,1}(R)} \leq C \| u \|_{\Wmix^{1,1}(\Omega)},\qquad
   \| u_f \|_{\Wmix^{1,1}(R)} \leq C \| u \|_{\Wmix^{1,1}(\Omega)},
  \end{equation*}
  for all $e\in E$, $f\in F$. Then, by \eqref{eq:vR}  and \eqref{eq:vR2},
  \begin{equation*}
    \| v\|_{\Wmix^{1,1}((-1,1)^d)}\leq C \| u \|_{\Wmix^{1,1}(\Omega)},
  \end{equation*}
  for an updated constant $C$ independent of $u$.
  This concludes the proof when $d=3$. 
  The case $d=2$ can be treated by the same argument.
\end{proof}

\FloatBarrier

\bibliographystyle{abbrv}
\bibliography{references}
\end{document}